\let\e=\varepsilon
\let\G= \Gamma
\let\ve=\varepsilon
\newcommand{\p}{\partial}
\newcommand{\ee}{\mathbf{e}}
\newcommand{\XX}{\mathbf{X}}
\newcommand{\RR}{\mathscr{R}}
\newcommand{\FF}{\mathbf{F}}
\renewcommand{\(}{\left(}
\renewcommand{\)}{\right)}
\newcommand{\nn}{\mathbf{n}}
\renewcommand{\tilde}{\widetilde}
\newcommand{\Div}{\operatorname{div}}
\newtheorem{theorem}{Theorem}[section]
\newtheorem{definition}[theorem]{Definition}
\newtheorem{lemma}[theorem]{Lemma}
\newtheorem{proposition}[theorem]{Proposition}
\newtheorem{rmk}{Remark}[section]
\newtheorem{corol}[theorem]{Corollary}
\newtheorem{Theorem}{Theorem}[section]
\begin{document}
\title{Phase-field  approximation of the Willmore flow}

\author{Mingwen Fei}
\address{School of  Mathematics and Statistics, Anhui Normal University, Wuhu 241002, China}
\email{mwfei@ahnu.edu.cn}

\author{Yuning Liu }
\address{NYU Shanghai, 1555 Century Avenue, Shanghai 200122, China,
and NYU-ECNU Institute of Mathematical Sciences at NYU Shanghai, 3663 Zhongshan Road North, Shanghai, 200062, China}
\email{yl67@nyu.edu}


\renewcommand{\theequation}{\thesection.\arabic{equation}}
\setcounter{equation}{0}
\begin{abstract}
We  investigate the phase-field approximation of the Willmore flow by rigorously justifying  its sharp interface limit. This is a fourth-order diffusion  equation with a parameter $\epsilon>0$ that is proportional to the thickness of the diffuse interface. We show rigorously that for well-prepared initial data, as $\epsilon$ tends to zero  the level-set of the solution will converge to  the motion by  Willmore flow as long as the classical solution to  the latter exists. This is done by  constructing an  approximate solution from the limiting flow via matched asymptotic expansions, and then  estimating its difference with the real solution. The crucial step   is to prove  a  spectrum inequality of the linearized operator at the optimal profile, which  is a fourth-order  operator written as the square of the Allen-Cahn operator plus a singular   perturbation.


 ~\\
\noindent{\it Keywords:  Allen-Cahn operator, spectrum inequality, diffuse interface model, singular perturbation problem, matched asymptotic calculations.}
\end{abstract}

\maketitle

\numberwithin{equation}{section}

\setcounter{secnumdepth}{3}
\tableofcontents

\indent

\newpage

\section{Introduction}

\indent

In this paper we consider the  singular limit, as $\varepsilon\rightarrow0$, of the  following equations
\begin{eqnarray}
\left \{
\begin {array}{ll}
\varepsilon^3\p_t \phi_{\e}=\varepsilon^2\Delta \mu_\e-f''(\phi_{\e})\mu_\e,\ \quad&\text{in}\ \ \Omega\times(0,T),\\[3pt]
\varepsilon\mu_\e=-\varepsilon^2\Delta \phi_{\e}+f'(\phi_{\e}),\ &\text{in}\ \ \Omega\times(0,T).\label{model}
\end{array}
\right.
\end{eqnarray}
Here $\Omega\subset \mathbb{R}^N$ ($N\geq 2$) is a bounded domain with smooth boundary $\p\Omega$, the parameter  $\varepsilon>0$ represents  the width of a thin transition layer, and    $\phi_{\e}$ is a scalar   function
defined in the computational (physical) domain  $\Omega$ {which takes value approximately $-1$ inside the
vesicle membrane and approximately $1$ outside it.} The function $f$ is a double equal-well potential taking its
global minimum value $0$ at $\pm1$.
For simplicity we choose $ f(u)=\tfrac{1}{4}(u^2-1)^2$. Physically, $-\mu_\e$ represents the chemical potential.
System \eqref{model} is supplemented with initial and boundary conditions
\begin{eqnarray}
\left \{
\begin {array}{ll}
\phi_{\e}(x,0)=\phi_{\e,0}(x),\ \quad&\text{in}\ \ \Omega\times\{0\},\\[3pt]
\phi_{\e}(x,t)=1,~\p_{\nu}\phi_{\e}(x,t)=0,\ &\text{in}\ \ \p\Omega\times(0,T),\label{BC}
\end{array}
\right.
\end{eqnarray}
where $\p_\nu$ is the outward normal derivative to $\p\Omega$.
 The equation \eqref{model} is the gradient flow of
$$\mathcal{E}(\phi)=\frac{1}{2\varepsilon}\int_{\Omega}
\(\varepsilon\Delta\phi-\e^{-1}f'(\phi)\)^2dx,$$which
  was introduced in \cite{bellettini1993approssimazione} as an approximation of the Willmore energy $\int_{\Sigma}H^2$  where $H$ denotes the mean curvature of a surface $\Sigma$. Such an approximation can be understood formally by observing that
$\mu(\phi)=-\varepsilon\Delta\phi+\e^{-1}f'(\phi)$
 is the variation of the Allen-Cahn energy
$\int_{\Omega}\(\e |\nabla\phi|^2/2+\e^{-1} f(\phi)  \)dx$,
  and the latter converges to the area functional as $\e\to 0$. The rigourous justification is a challenging work, see for instance \cite{MR2253464} for the case $N=2,3$. See  also \cite{MR3383330} for   other types of approximations.

It was shown
informally  in \cite{MR1757511,wang2008asymptotic}     that the level set of solution $\phi_{\e}$ to \eqref{model} converges to a family of compact closed interfaces \begin{equation}\label{involvingsurface}
   \G^0=\bigcup_{t\in [0,T)}\G_t^0\times \{t\},
 \end{equation}
 which evolves under the Willmore flow, i.e.
 \begin{align}
V=\Delta_{\Gamma^0} H+H |A|^2- H^3/2,\label{interface law-geometric1}
\end{align}
where $V$ is the normal velocity of $\Gamma^0$, i.e. projection of the particle velocity to the outer-normal vector of $\G^0$, $H$ is the scalar mean curvature.
Moreover, $A$ is the second fundamental form, and $|A|^2$ is the sum of the squares of the principle curvatures.  {An equivalent form of \eqref{interface law-geometric1}
 using the signed distance function to $\G^0$ is given by \eqref{distance law} in the appendix.}
The derivation of \eqref{interface law-geometric1} from the Willmore energy can be found in  \cite{MR2882586}.
The evolution of a  closed curve  under (\ref{interface law-geometric1}) was investigated in \cite{MR1897710}, and the  surface case  was investigated in \cite{MR2119722}.  See also \cite{MR2430975} for  recent progresses in Willmore surface theory.

At the application level, Willmore energy is closely related to the Helfrich's theory of  bio-membranes \cite{helfrich1973elastic}, where the volume and area are constrained, and a spontaneous curvature is introduced. See \cite{seifert1997configurations,ou1999geometric} for a comprehensive study of this subject. So various  phase-field models were introduced to study the evolution of bio-membranes, see for instance \cite{MR2062909,MR2531189,MR2846014,MR3023428} for the  analysis and numerical simulations of such models. The model that couples hydrodynamics was investigated in \cite{MR2328722,MR3032974}.

\subsection{Statement of Main Results}
To the best of our knowledge, the  rigorous justification of the  singular limit from \eqref{model} to \eqref{interface law-geometric1}  remains open and it is the task of this work to establish this result.
To set up the problem, we introduce   the signed-distance $d^{(0)}(x,t)$ from $x$ to $\Gamma^0_t$ which takes negative values inside $\G^0_t$ and positive values outside it,  and the  bulk regions
 \begin{equation}\label{def:omegapm}
{\Omega_t^{\pm}\triangleq \{x\in\Omega\mid d^{(0)}(x,t)\gtrless0\}.}
\end{equation}
For a sufficiently small $\delta>0$,  the $\delta$-neighborhood of $\G^0_t$ is denoted by 
\begin{equation}
\Gamma_t^0(\delta)\triangleq \{x\in\Omega\mid |d^{(0)}(x,t)|<\delta\}.
\end{equation}
Moreover, we denote
$\theta(z)=\tanh(\frac{z}{\sqrt{2}})$
   the unique solution to
\begin{align}
\theta''(z)=f'(\theta(z)),\ \ \theta(\pm\infty)=\pm1,\ \ \ \theta(0)=0.\label{ODE-0}
\end{align}

The main result of this work is stated as follows:
\begin{Theorem}\label{main theorem}
Let $\G^0\subset \mathbb{R}^N\times [0,T)$ (with $N=2,3$) be a smooth solution of \eqref{interface law-geometric1}  and  $k\geq 10$ be a fixed integer. {Then there exists $\e_1>0$ such that for every $\e\in (0,\e_1)$}  there are smooth functions {$(\phi_a(x,t),\mu_a(x,t))$ which equal to $(\pm 1,0)$ in $\Omega_t^\pm\backslash \G^0_t(2\delta)$}.    Moreover, they are  approximate solutions   of  \eqref{model}  in the sense that  
\begin{eqnarray}
\left \{
\begin {array}{rll}
\e^3\p_t\phi_a&= \varepsilon^{2}\Delta \mu_a-f''(\phi_a)\mu_a+\e^{k-1}\mathfrak{R}_1,\ \quad&\text{in}\ \ \Omega\times(0,T),\\
\e\mu_a&=-\varepsilon^2\Delta \phi_a+ f'(\phi_a)+\e^{k-1}\mathfrak{R}_2,\ &\text{in}\ \ \Omega\times(0,T),\label{app model}
\end{array}
\right.
\end{eqnarray} where $\mathfrak{R}_1(x,t)$ and $\mathfrak{R}_2(x,t)$  are uniformly bounded  in $\e$ and $(x,t)$. Furthermore, if the initial data of $\phi_\e$ satisfies the following inequality for some   $C_{in}>0$,
\begin{align}\|\phi_\e(\cdot,0)-\phi_a(\cdot,0)\|_{L^2(\Omega)} \leq C_{in}\,\varepsilon^{7/2},\qquad \forall \e\in {(0,\e_1)},
\label{initial error}\end{align}
  then  there exist    $\e$-independent constant { $\Lambda>0$} and $T_{max}\in (0,T]$  such that
\begin{align}
{ \|\phi_\e-\phi_a\|_{L^\infty(0,T_{max};L^2(\Omega))}\leq  \Lambda \,\e^{7/2},\qquad \forall \e\in (0,\e_1)}.
\label{time error}
\end{align}
If $C_{in}$  is sufficiently small (but independent of $\e$), then $T_{max}=T$.
\end{Theorem}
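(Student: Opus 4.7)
The plan is to split the argument into two largely independent parts: (i) the \emph{construction} of the approximate pair $(\phi_a,\mu_a)$ via matched asymptotic expansions, and (ii) a \emph{stability estimate} for the difference $\phi_\e-\phi_a$ that exploits a spectral lower bound of the linearization.

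\textbf{Step 1: Construction of $(\phi_a,\mu_a)$.}
Near $\G^0$, in the tubular neighborhood $\G^0_t(2\delta)$, introduce the stretched coordinate $\rho=d^{(0)}(x,t)/\e$ and postulate inner expansions
\[
\phi_a(x,t)=\sum_{j=0}^{k}\e^j\phi_j(\rho,x,t),\qquad \mu_a(x,t)=\sum_{j=0}^{k}\e^j\mu_j(\rho,x,t),
\]
with $\phi_0=\theta(\rho)$ determined by \eqref{ODE-0}. Outside $\G^0_t(2\delta)$ set $\phi_a\equiv\pm1$ and $\mu_a\equiv0$, and glue using a smooth cutoff so that the matching conditions $\phi_j(\pm\infty,x,t)=0$ for $j\geq1$ are enforced. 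Plugging the expansions into \eqref{model} and collecting powers of $\e$ produces, at each order, a linear ODE in $\rho$ of the form $\mathcal{L}_0\phi_j=g_j$ and $\mathcal{L}_0\mu_j=h_j$, where $\mathcal{L}_0=-\p_\rho^2+f''(\theta)$. Its kernel is spanned by $\theta'$, so the Fredholm solvability $\int g_j\theta'\,d\rho=0$ gives, at the critical order, the Willmore law \eqref{interface law-geometric1} for $\G^0$ (and, at higher orders, evolution equations for the correctors of the interface). Iterating up to order $k\geq10$ yields smooth $(\phi_a,\mu_a)$ that satisfy \eqref{app model} with $\e$-uniform residuals $\mathfrak{R}_1,\mathfrak{R}_2$.

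\textbf{Step 2: Error equation and spectral estimate.}
Let $u=\phi_\e-\phi_a$ and $v=\mu_\e-\mu_a$. Subtracting \eqref{model} from \eqref{app model} gives
\[
\e^3\p_t u=\e^2\Delta v-f''(\phi_a)v-\bigl(f''(\phi_\e)-f''(\phi_a)\bigr)\mu_\e-\e^{k-1}\mathfrak{R}_1,
\]
\[
\e v=-\e^2\Delta u+f''(\phi_a)u+N(u,\phi_a)-\e^{k-1}\mathfrak{R}_2,
\]
where $N(u,\phi_a)=f'(\phi_\e)-f'(\phi_a)-f''(\phi_a)u$ is cubic in $u$. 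Eliminating $v$ shows that the principal part of the linearized flow is the square of the Allen--Cahn operator $\mathcal{L}_\e:=-\e^2\Delta+f''(\phi_a)$ plus a singular perturbation. The decisive ingredient is the \emph{spectrum inequality}
\[
\int_\Omega\bigl(\e^2|\Delta w|^2+2f''(\phi_a)\e^2|\nabla w|^2+f''(\phi_a)^2 w^2\bigr)\,dx
\;\geq\;-C_0\,\e^{-1}\int_\Omega w^2\,dx,
\]
which is uniform in $\e$ and handles the near-zero mode $\theta'(\rho)$ coming from translation invariance. I would prove this by localizing $w$ to $\G^0_t(2\delta)$ via a partition of unity, applying the one-dimensional spectral gap for $\mathcal{L}_0^2$ on the transversal variable, and controlling the remainder with the positivity of $f''(\pm1)=2$ in the bulk.

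\textbf{Step 3: Energy estimate and Gronwall.}
Test the error system with a carefully chosen multiplier (essentially $v$ on the first equation, coupling with the second) to derive, after integration by parts,
\[
\f{\e^3}{2}\f{d}{dt}\|u\|_{L^2}^2+c\,\|v\|_{L^2}^2\;\leq\;
C\e^{-1}\|u\|_{L^2}^2+C\e^{2k-3}+\text{(nonlinear terms)},
\]
where the spectral inequality has been used to absorb the indefinite quadratic contribution into the good terms. The cubic remainder $N(u,\phi_a)$ is dominated using $\|u\|_{L^\infty}$ bounds obtained from a bootstrap: assume a priori $\|u\|_{L^2}\leq 2\Lambda\e^{7/2}$ on $[0,T_{\max}]$ and combine with interpolation/Sobolev to close the nonlinear estimates (this is where the exponent $7/2$ is tailored so that $\e^{-1}\|u\|_{L^\infty}^2$ remains small). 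Dividing by $\e^3$ and applying Gronwall's inequality with \eqref{initial error} delivers \eqref{time error} with $\Lambda$ depending only on $C_{in}$ and the fixed data. When $C_{in}$ is small enough the a priori bound is recovered with a strictly smaller constant, so standard continuation yields $T_{\max}=T$.

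\textbf{Main obstacle.}
The core difficulty, and the reason for imposing $k\geq 10$, is establishing the spectral inequality \emph{uniformly in $\e$} for the fourth-order operator $\mathcal{L}_\e^2$ plus singular perturbation: one must simultaneously track the near-kernel direction $\theta'$, the transversal coercivity of $\mathcal{L}_0^2$, and the curvature-induced cross terms produced by $\e^2\Delta$ in tubular coordinates. Balancing these contributions against the $\e^{-1}$ loss from the near-zero mode is the heart of the argument and governs all downstream estimates, in particular the admissible power $\e^{7/2}$ in \eqref{time error}.
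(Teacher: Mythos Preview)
Your overall architecture (construction by matched asymptotics, then a spectral lower bound plus Gronwall) matches the paper, but the core step is misstated in a way that prevents the argument from closing.

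\textbf{The spectral inequality.} After eliminating $v$, the linear part of the error equation is not just $\e^{-2}\mathscr{L}_\e^2$ but
\[
-\tfrac{1}{\e^2}\mathscr{L}_\e^2[\phi]\;-\;\tfrac{1}{\e^3}f'''(\phi_a)\mu_a\,\phi,
\]
and the second term is genuinely of order $\e^{-3}$ near the interface. Your displayed spectral inequality omits this singular perturbation entirely and only bounds the $\mathscr{L}_\e^2$ part from below by $-C_0\e^{-1}\|w\|_{L^2}^2$. Even if that bound were correct for $\mathscr{L}_\e^2$ alone, it is useless here: testing the error equation with $u$ (the paper tests with $\phi$, not with $v$) gives
\[
\tfrac12\tfrac{d}{dt}\|u\|_{L^2}^2+\tfrac{1}{\e^2}\|\mathscr{L}_\e u\|_{L^2}^2+\tfrac{1}{\e^3}\int f'''(\phi_a)\mu_a\,u^2=\text{RHS},
\]
and a lower bound of $-C_0\e^{-1}\|u\|_{L^2}^2$ (let alone a naive $-C\e^{-3}$ coming from the perturbation) yields $\tfrac{d}{dt}\|u\|_{L^2}^2\le C\e^{-1}\|u\|_{L^2}^2+\cdots$, so Gronwall blows up like $e^{C\e^{-1}T}$. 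No choice of $k$ repairs this. What the paper actually proves (their Theorem~1.2) is the much sharper
\[
\tfrac{1}{\e^2}\int|\mathscr{L}_\e\phi|^2+\tfrac{1}{\e^3}\int f'''(\phi_a)\mu_a\,\phi^2\;\ge\;\tilde C\,\mathcal K(t)-\hat C\int\phi^2,
\]
with \emph{no} negative power of $\e$ on the right. Achieving this requires (i) decomposing $\phi=\check\phi+\phi^\bot$ along the first eigenfunction of the transversal operator, (ii) an exact cancellation $\int_{\mathbb R}(\theta''^2-3z\theta\theta'^3)\,dz=0$ that kills the would-be $\e^{-2}$ term in the $\check\phi$-part, and (iii) careful treatment of the cross term via the orthogonality of $\phi^\bot$. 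None of this is visible in your sketch, and the ``one-dimensional spectral gap for $\mathcal L_0^2$'' you invoke does not by itself control the $\e^{-3}$ perturbation.

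\textbf{The nonlinear terms.} The paper does not use an $L^\infty$ bootstrap. Instead it uses the positive quantity $\mathcal K(t)$ produced by the spectral inequality (which controls $\|Z\|_{H^2(\Gamma_t)}$, $\|\phi^\bot\|_{H^2}$, $\e^{-2}\|\phi^\bot\|_{H^1}$, $\e^{-4}\|\phi^\bot\|_{L^2}$) together with the same decomposition to estimate $\int\mathcal H\phi$, exploiting that several apparently singular pieces are in fact nonpositive after integration by parts. This is what fixes the exponent $7/2$; an $L^\infty$ interpolation argument at the level you indicate would require a stronger a priori rate than you have.

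In short: the missing idea is that the singular term $\e^{-3}f'''(\phi_a)\mu_a$ must be included in the spectral analysis and cancelled against the kernel contribution of $\mathscr{L}_\e^2$, yielding an $\e$-independent lower bound; without this the Gronwall step fails.
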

 

	 When $N=3$
  it is not clear to us whether  the power of the  convergence rate  $7/2$  is   optimal.  A detailed  discussion is given in the next subsection where an outline of the proof is described. We shall discuss the admissible initial data $\phi_\e|_{t=0}$ satisfying the  assumption \eqref{initial error} in Remark \ref{verification} below. Such issue seems not be fully addressed in previous works. For instance in \cite{Alikakos1994}, they  only consider the case when the initial data of the Cahn-Hilliard equation  coincides with the constructed approximate solution. When $N=2$, there are  some  rooms to reduce the power and thus obtain the convergence over $[0,T]$ for  arbitrarily large $C_{in}$ in \eqref{initial error}.  We will probably study this case more carefully in a future work.

  \subsection{Outline  of the Proof}

The proof of \eqref{app model} follows the strategy of \cite{Alikakos1994} by constructing an approximate solution  $(\phi_a,\mu_a)$ via matched asymptotic expansions. Since such constructions are  quite sophisticated and technical, we leave them in the appendix. Another  method which is  based on Hilbert expansion is given in \cite{MR2188465}.

In order to show the convergence rate \eqref{time error} under the  assumption  \eqref{initial error}, we shall investigate the equation of  the differences 
\begin{equation}
(\phi,\mu)=(\phi_{\e}-\phi_a,\mu_\e-\mu_a).
\end{equation}
To proceed, we introduce the linearized   Allen-Cahn operator  at $\phi_a$ by
\begin{equation}\label{omegadef}
  \mathscr{L}_\e[\phi]\triangleq\(-\varepsilon\Delta +\e^{-1}f''(\phi_a)\)\phi.
\end{equation}
  In view of  \eqref{app model}
 and \eqref{model},  it can be verified   that   $\phi$ satisfies
\begin{align}
\p_t\phi=-\frac 1{\e^2}\mathscr{L}_\e^2[\phi]
-\frac{1}{\varepsilon^3}f'''(\phi_a)\mu_a\phi+\mathcal{H},\label{error equation}
\end{align}
where $\mathcal{H}$ is the nonlinear terms about $\phi$, defined by \eqref{nonlinearterms} in the sequel.
 Standard energy estimate of (\ref{error equation})   yields
\begin{align}
&\frac{1}{2}\frac{d}{dt}\int_\Omega\phi^2dx+
\frac{1}{\varepsilon^2}
\int_\Omega|\mathscr{L}_\e[\phi]|^2dx+\frac{1}{\varepsilon^3}\int_\Omega f'''(\phi_a)\mu_a\phi^2\,dx=\int_\Omega\mathcal{H}\phi \,dx.\label{error energy-1}
\end{align}
In order to apply the Gr\"{o}nwall's inequality and close the energy estimate \eqref{error energy-1}
, we  first need   to prove  the   spectrum condition of the linear operator on the right of \eqref{error equation}. This fourth-order  operator is highly singular due to both the degeneracy of the Allen-Cahn part \eqref{omegadef} and the singular perturbation $- \varepsilon^{-3}f'''(\phi_a)\mu_a\phi$.  
To analyze it, we employ the spectrum decomposition method used  in \cite{chen1994spectrum} (see also \cite{de1995geometrical,fei2018isotropic}). We set the following operator
\begin{equation}\label{1doperator}
\mathscr{L}[u]\triangleq-\partial_z^2 u+f''(\theta(z)) u,\qquad \forall u(z)\in H^2(-\delta/\e,\delta/\e).
\end{equation} Then we decompose  any $\phi\in H^2(\Omega)$ (not necessarily the difference $\phi_{\e}-\phi_a$) along  $\varphi$, the  first eigenfunction of $\mathscr{L}$ with  homogenous Neumann boundary condition,
\begin{equation}\label{eq:decom1}
\phi(x)=
\varepsilon^{-\frac{1}{2}}Z(s)\varphi(\tfrac{r}{\varepsilon})\zeta(r)+
\phi^\bot(x).
\end{equation}
Here $s=(s_1,\cdots,s_{N-1})$ is the local coordinate of  interface $\G_t$ (a perturbation of $\G_t^0$ in \eqref{involvingsurface}),  and  $r$  stands for the signed-distance to $\G_t$. With an appropriate $\delta>0$, $(r,s)$   forms a local coordinate of the $\delta$-tubular neighborhood of $\G_t$ and are functions of $x$ here. Moreover  in \eqref{1doperator} $z=r/\e$  is the fast variable.
See Section \ref{gemmetry} for the precise definitions of $\G_t$ and related notations.
In \eqref{eq:decom1},
$\zeta$ is a smooth  cut-off function satisfying
\begin{equation}\label{cutoff}
0\leq \zeta\leq 1;~\zeta(r)=\zeta(-r);~\zeta(r)=1 ~\text{for}~|r|\leq\delta/2;~\zeta\in C_c^\infty((-\delta,\delta)).
\end{equation}
Moreover, $Z(s)$ is defined  by
\begin{align}
Z(s)=\varepsilon^{-\frac{1}{2}}\eta^{-1}(s)\int_{-\delta}^{\delta}\phi(r,s)\varphi(\tfrac{r}{\varepsilon})\zeta(r)J^{\frac{1}{2}}(r,s)dr,\label{eq:defz}
\end{align}
where $J(r,s)$ is the Jacobian of the change of variable among level-sets, and $\eta(s)$ is the renormalization constant
\begin{equation}\label{apptension}
  \eta(s)=\varepsilon^{-1}\int_{-\delta}^{\delta}
  \big(\varphi(\tfrac{r}{\varepsilon})\big)^2\zeta^2(r)J^{\frac{1}{2}}
  (r,s)dr.
\end{equation}
With the above definitions, we can state the spectrum condition of the linearized operator:
\begin{Theorem}\label{spectral theorem}
   Let $(\phi_a,\mu_a)$ be the approximate solution in Theorem \ref{main theorem}. Then
there exist  $\tilde{C},\hat{C}>0$ and $\e_0>0$ which are independent of $\e$ such that the inequality
{\begin{subequations}
\begin{align}
&\frac{1}{\varepsilon^2}\int_\Omega|\mathscr{L}_\e[\phi]|^2dx+
\frac{1}{\varepsilon^3}\int_\Omega f'''(\phi_a)\mu_a\phi^2dx \geq 
\tilde{C} \mathcal{K}(t)-\hat{C}\int_{\Omega}\phi^2dx\label{spectral condition}\\
&\quad \text{  with  }\qquad \mathcal{K}(t)\triangleq \|Z\|^2_{H^2(\G_t)}+ \|\phi^\bot\|_{H^2(\Omega)}^2+\e^{-2}\|\phi^\bot\|^2_{H^1(\Omega)}+ \e^{-4}\|\phi^\bot\|^2_{L^2(\Omega)},\label{def K(t)}
\end{align}
\end{subequations}}
holds for every  $\varepsilon\in (0,\e_0)$ and every $\phi\in H^2(\Omega)$.
\end{Theorem}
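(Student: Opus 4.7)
The plan is to follow the spectral-decomposition strategy used in \cite{chen1994spectrum} for the Allen--Cahn operator, adapted here to a fourth-order operator carrying an additional singular perturbation. Using the decomposition $\phi = \psi + \phi^\bot$ with $\psi := \e^{-1/2}Z(s)\varphi(r/\e)\zeta(r)$, the definitions \eqref{eq:defz} and \eqref{apptension} enforce the weighted orthogonality $\int_{-\delta}^{\delta}\phi^\bot(r,s)\varphi(r/\e)\zeta(r)J^{1/2}dr = 0$ along each fibre $\{s = \text{const}\}$. I would split both terms on the left of \eqref{spectral condition} according to this decomposition, estimate the diagonal contributions first, then control all cross terms by Cauchy--Schwarz.

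For the $\phi^\bot$ component, the one-dimensional operator $\mathscr{L}$ on $(-\delta/\e,\delta/\e)$ with homogeneous Neumann conditions has first eigenvalue $\lam_1 = O(e^{-c/\e})$ and a uniform spectral gap $\lam_2 - \lam_1 \geq c_0 > 0$, because $\varphi$ converges exponentially to a multiple of $\theta'$. After $\e$-rescaling this yields $\|\mathscr{L}_\e\phi^\bot\|_{L^2(\Omega)}^2 \geq c\,\e^{-2}\|\phi^\bot\|_{L^2(\Omega)}^2 - C\|\phi\|_{L^2}^2$. Combined with the trivial bound $\|\mathscr{L}_\e\phi^\bot\|^2 \geq \tfrac12\e^2\|\Delta\phi^\bot\|^2 - C\e^{-2}\|\phi^\bot\|^2$ through a convex combination, and with elliptic regularity for $-\e\Delta + \e^{-1}f''(\phi_a)$, this gives
\[
\tfrac{1}{\e^2}\|\mathscr{L}_\e\phi^\bot\|^2 \gtrsim \|\phi^\bot\|_{H^2}^2 + \e^{-2}\|\phi^\bot\|_{H^1}^2 + \e^{-4}\|\phi^\bot\|_{L^2}^2 - C\|\phi\|_{L^2}^2.
\]
The singular piece $\e^{-3}\int f'''(\phi_a)\mu_a(\phi^\bot)^2$ is then absorbed into a small fraction of $\e^{-4}\|\phi^\bot\|^2$ by Young's inequality, since $\mu_a$ is uniformly bounded.

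For the $\psi$ component, I would expand $\mathscr{L}_\e\psi$ in the boundary-layer coordinates $(r,s)$ using $\Delta = \p_r^2 + H(r,s)\p_r + \Delta_{\G_r}$ and $z = r/\e$. The most singular, $O(\e^{-3/2})$, contribution $\e^{-3/2}Z\zeta(\mathscr{L}\varphi)(z)$ is exponentially small. What survives is an $O(\e^{-1/2})$ curvature/potential-correction term $\e^{-1/2}Z\zeta[-H\varphi'(z) + f'''(\theta)\phi_a^{(1)}\varphi(z)]$ and an $O(\e^{1/2})$ tangential term $-\e^{1/2}\varphi\zeta\,\Delta_{\G_r}Z$. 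Squaring, dividing by $\e^2$, and integrating in the fast variable produces
\[
\tfrac{1}{\e^2}\|\mathscr{L}_\e\psi\|_{L^2(\Omega)}^2 = \e^{-2}\int_{\G_t}C_1(s)Z^2\,dS + \int_{\G_t}(\Delta_{\G_t}Z)^2\,dS + \text{(lower order)},
\]
with $C_1(s)\geq 0$ an explicit quadratic expression in $H$ and the matched-asymptotic coefficient $\phi_a^{(1)}$.

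The main obstacle is the interaction between $\psi$ and the singular term $\e^{-3}\int f'''(\phi_a)\mu_a\psi^2$. Expanding in the fast variable, this contribution also scales as $\e^{-2}\int_{\G_t}I_1(s)Z^2\,dS$; the would-be leading $O(\e^{-3})$ coefficient $\int f'''(\theta)\mu_a^{(0)}(z,s)\varphi^2\,dz$ vanishes by parity, thanks to the matched-asymptotic identity $\mu_a^{(0)}(z,s) = -H(s)\theta'(z)$, the oddness of $f'''(\theta) = 6\theta$, and the evenness of $\theta'\varphi^2$. Only the $O(\e^{-2})$ coefficient $I_1(s)$ survives, a priori without definite sign. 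The crux is then to verify $C_1(s) + I_1(s) \geq c_0 > 0$ uniformly in $s$, a sign condition reflecting the stability of $(\phi_a,\mu_a)$ as an approximate Willmore solution; its proof forces detailed use of the solvability identities that determine $\mu_a^{(1)}$ and $\phi_a^{(2)}$ in the expansion. Once this is secured, $\e^{-2}\|Z\|_{L^2(\G_t)}^2 + \|\Delta_{\G_t}Z\|_{L^2(\G_t)}^2$ upgrades to $\|Z\|_{H^2(\G_t)}^2$ by elliptic regularity on the compact surface $\G_t$, the cross terms between $\psi$ and $\phi^\bot$ in both $\mathscr{L}_\e^2$ and in the singular piece are handled by Cauchy--Schwarz with a small parameter and absorbed into the coercive $\phi^\bot$ estimates, and the residual $L^2$-norm of $\phi$ yields the term $-\hat C\int\phi^2$ on the right of \eqref{spectral condition}.
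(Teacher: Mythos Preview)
Your overall architecture is right, but the ``crux'' you identify is based on a false expectation, and this breaks the argument at two places.

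\textbf{The diagonal $Z$ piece.} You claim the combined $\e^{-2}$ coefficient $C_1(s)+I_1(s)$ is bounded below by some $c_0>0$. In fact it is \emph{exactly zero}. Carrying out your own expansion with $\widetilde\phi^{(1)}=0$ (so the term $f'''(\theta)\phi_a^{(1)}\varphi$ you wrote is absent), one finds $C_1(s)=h(s)^2\int_{\mathbb R}(\theta'')^2\,dz$ from the curvature term, while the singular contribution gives $I_1(s)=-3h(s)^2\int_{\mathbb R}z\theta(\theta')^3\,dz$ after accounting both for the $\e$-correction in the expansion $f'''(\phi_a)\mu_a=-6h\theta\theta'+3\e h^2 z\theta\theta'+\cdots$ and for the $rh(s)$ term in the Jacobian $J$. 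The identity
\[
\int_{\mathbb R}\Big((\theta'')^2-3z\theta(\theta')^3\Big)\,dz=0
\]
(checked by repeated use of $\sqrt2\,\theta'=1-\theta^2$ and integration by parts) kills the sum. So there is no $\e^{-2}\|Z\|_{L^2(\Gamma_t)}^2$ coercivity; the only coercive piece for $Z$ is the $O(1)$ term $\|\Delta_{\Gamma_t}Z\|_{L^2}^2$. This is consistent with the statement of the theorem, where $\mathcal K(t)$ carries $\|Z\|_{H^2(\Gamma_t)}^2$ with no negative power of $\e$. But it means that to close the estimate you must show all the sub-leading pieces at orders $\e^{-1}$ and $\e^0$ are bounded by $C\|Z\|_{L^2}^2$, which forces you to expand $\phi_a,\mu_a$ through $\widetilde\phi^{(2)},\widetilde\mu^{(2)}$ and exploit further parity cancellations (Lemma~\ref{reduce2}).

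\textbf{The cross terms.} Without the $\e^{-2}\|Z\|_{L^2}^2$ cushion, naive Cauchy--Schwarz on the cross terms fails. For instance the singular cross term $2\e^{-3}\int f'''(\phi_a)\mu_a\,\psi\,\phi^\bot$ produces, at leading order, $-2\e^{-7/2}\int Z\,h\,f'''(\theta)(\theta')^2\,\zeta\,\phi^\bot$; Young's inequality on this yields $\nu\e^{-4}\|\phi^\bot\|^2+C\nu^{-1}\e^{-2}\|Z\|^2$, and the second piece cannot be absorbed. The remedy in the paper is another exact cancellation: the $\e^{-7/2}$ contribution from $2\e^{-2}\int\mathscr L_\e[\psi]\mathscr L_\e[\phi^\bot]$ (after integrating the normal Laplacian by parts twice) combines with the singular cross term via the ODE identity $\theta''''-f''(\theta)\theta''=f'''(\theta)(\theta')^2$, and the sum vanishes at that order. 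Only then can the remaining cross terms be controlled by Cauchy--Schwarz with small parameter against $\e^{-4}\|\phi^\bot\|^2$, $\e^{-2}\|\nabla_\Gamma\phi^\bot\|^2$, and $\|\Delta_\Gamma Z\|^2$.

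In short: the mechanism is not a stability sign condition but two exact algebraic cancellations, one in the diagonal $Z$ block and one in the $Z$--$\phi^\bot$ cross block, both traceable to the optimal-profile ODE $\theta''=f'(\theta)$.
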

The above Theorem is proved under any dimension $N\geq 2$, and the constant $\hat{C}$ and $\tilde{C}$ depend on the geometry of the limiting interface \eqref{involvingsurface}.
Let us  outline  the proof of \eqref{spectral condition} by explaining the general consideration behind \eqref{eq:decom1}. It follows from \eqref{eq:decom1}, \eqref{eq:defz} and \eqref{apptension}  that  
  \begin{align}
\int_{-\delta}^{\delta}\phi^\bot(r,s)
\varphi(\tfrac{r}{\varepsilon})\zeta(r)J^{\frac{1}{2}}(r,s)dr=0.
\label{orthogonality}
\end{align}
This implies \eqref{eq:decom1} is a sort of orthogonal decomposition.
Then substituting \eqref{eq:decom1} into the left hand side of \eqref{spectral condition}   will lead to  integrals $I_1,I_3$ involving the squares of  $\phi^\bot$ and $\check{\phi}\triangleq \varepsilon^{-\frac{1}{2}}Z(s)\varphi(\tfrac{r}
   {\varepsilon})\zeta(r)$ respectively, 
 together with an integral $I_2$  about their product $\check{\phi}\phi^\bot$.  See \eqref{nasty} for the precise definitions.  In the course of estimating these three integrals,  the one corresponding to   $\check{\phi}\phi^\bot$ vanishes at the leading order  due to  \eqref{orthogonality}.  The part including merely $\check{\phi}$  carries the most singular  terms  but is very explicit because $\varphi(z)$ is very close to $\theta'(z)$ such that several cancellation techniques  can be applied. In contrast,  $\phi^\bot$ is more abstract but less singular because of a coercivity estimate of \eqref{omegadef}. We refer to Lemma \ref{spectral lemma} and Lemma \ref{equinorm} for the mathematical  rigor of these heuristic arguments.  


\begin{rmk}
In the Cahn-Hilliard case \cite{Alikakos1994,chen1994spectrum}, the equation of  the difference $\phi$ reads
\begin{align}
\partial_t\phi=\Delta\mathscr{L}_\e[\phi]+\text{nonlinear terms}.
\end{align}
Multiplying by $w\triangleq(-\Delta)^{-1}\phi$ and integrating by parts lead to
\begin{align*}
\frac{1}{2}\frac{d}{dt}\int_\Omega|\nabla w|^2dx+\int_\Omega\big(\varepsilon|\nabla \phi|^2+\varepsilon^{-1}f''(\phi_a)\phi^2\big)dx=\int_\Omega(\text{nonlinear terms}\times\phi) \,dx.
\end{align*}
It is proved in \cite{chen1994spectrum} that, there exist $C,\e_0>0$ independent of $\e$ such that
\begin{align}
\int_\Omega\big(\varepsilon|\nabla \phi|^2+\varepsilon^{-1}f''(\phi_a)\phi^2\big)dx\geq -C\int_\Omega|\nabla w|^2dx\label{chen}
\end{align} holds 
for any $\e\in (0,\e_0)$.
The proof heavily relies  on the spectrum condition  of the Allen-Cahn operator \eqref{omegadef}. Combining \eqref{chen} and the estimates of the nonlinear terms, Alikakos-Bates-Chen \cite{Alikakos1994} succeeded in constructing the approximate solutions and  verifying the smallness of $\phi=\phi_\e-\phi_a$.
In contrast, in our case  the difference $\phi$ satisfies \eqref{error equation}, i.e.,
\begin{align}
\partial_t\phi=\varepsilon^{-1}\Delta\mathscr{L}_\e[\phi]-\varepsilon^{-3}f''(\phi_a)\mathscr{L}_\e[\phi]-\varepsilon^{-3}f'''(\phi_a)\mu_a\phi+\text{nonlinear terms}.
\end{align}
The structure  seems not allow a test using $(-\Delta)^{-1}\phi$. Instead  we multiply by $\phi$ to get \eqref{error energy-1}, and then  investigate the  spectrum of    a fourth-order  operator. 

It is worth mentioning that there are other frameworks to  justify the small $\e$-limit of the Cahn-Hilliard equation, see for instance \cite{MR1425577,MR2402921}. It would be interesting  to study \eqref{model} through these frameworks.
\end{rmk}

Having \eqref{spectral condition},  the proof of Theorem \ref{main theorem} relies on the estimate of the integral  on the right hand side of \eqref{error energy-1}.  In view of \eqref{nonlinearterms},      the   integral corresponding to $\mathcal{H}_2$   can be  estimated by choosing a sufficiently large $k$. 
The $\mathcal{H}_1$ term  looks quite singular and nonlinear, and is the source of the major difficulties. 
However,   nice structures emerge  after some integration by parts: some highly singular terms in $\int_{\Omega}\mathcal{H}_1\phi\,dx$ turn out to be non-positive and can be disregarded in the energy estimate. One can refer to  the derivation of \eqref{H1H2sumb} for the details.
Moreover, applying the decomposition \eqref{eq:decom1} to the remaining terms allows  us to  treat  separately   the tangential derivatives and normal ones of $\check{\phi}$. This is crucial  towards  a reasonable convergence rate, as  each normal derivative of $\check{\phi}$ brings a factor of $\e^{-1}$.  We note that similar idea  has been employed in the study of free-boundary problems in hydrodynamics where such estimates are referred to as {\it conormal estimates}. See for instance the monograph \cite{MR2151414} and a recent article \cite{MR3590375}.
These combined with \eqref{spectral condition}
lead to an estimate like 
\begin{equation}\label{heuristic1}
\text{right hand side of \eqref{spectral condition}
}+\int_{\Omega}\mathcal{H}\phi\,dx\leq C_*\e^{-\beta}\|\phi\|_{L^2(\Omega)}^\gamma
\end{equation}
for some $\beta>0, \gamma>2$ and $C_*>0$ which are $\e$-independent. Substituting \eqref{heuristic1} and \eqref{spectral condition} into \eqref{error energy-1} and combining the Gr\"{o}nwall inequality with a continuity argument lead to the proof of \eqref{time error}  under the assumption  \eqref{initial error}. See Section \ref{section:estimate} for the details.


In seeking  an estimate  like    \eqref{time error}  with $L^\infty(\Omega\times (0,T))$ norm instead,  one can follow the approach of 
 \cite{fei2018isotropic}  by establishing a hierarchical Sobolev norm estimates. However this will lead to a larger power  than $7/2$ and makes \eqref{initial error} harder to verify.

We organize this paper as follows. In Section \ref{prelimiary}, we set up the geometry of the interface using the standard notations of differentiable manifolds. This setting is slightly different from the one employed in \cite{abels2016sharp,chen1994spectrum,chen2009mass}, and has the advantage of  clarifying the various analyses we shall perform over hypersurfaces. In Section \ref{approaximation}, we give the construction of approximate solution \eqref{app model} and then set up the proof of   \eqref{spectral condition} by reducing it  to the estimates of three integrals $\{I_\ell\}_{1\leq \ell\leq  3}$ in \eqref{nasty}. Loosely speaking we substitute \eqref{eq:decom1} into the left hand side of \eqref{spectral condition}, and this gives rise to the energy  term $I_1$ depending  on $Z^2(s)$, the energy term $I_3$ depending   on $(\phi^\bot)^2$, as well as a cross term $I_2$ depending on $Z(s)\phi^\bot$. The estimate of $I_1$ is done in Section \ref{section:spectrum} after establishing  some  cancellation lemmas. Such techniques bring certain novelties  to the existing literature, to the best of our knowledge.  They tame   the singular terms in $I_1$ which are up to order $\e^{-4}$. In Section \ref{section:spectumI2}, we estimate the cross term $I_2$ using the orthogonality condition \eqref{orthogonality}, together with some geometric analysis on the hypersurface. In Section \ref{section:spectrumI3}, we estimate the energy term $I_3$ corresponding to $\phi^\bot$, and thus complete the proof of Theorem \ref{spectral theorem}. In Section \ref{section:estimate}  we  give the proof of Theorem \ref{main theorem}.

\indent

\section{Preliminaries}\label{prelimiary}

\subsection{Level set and signed-distance function}\label{levelset}
Following \cite{Alikakos1994}, we shall approximate the perturbed level set $\Gamma^\varepsilon=\{(x,t):\phi_{\e}(x,t)=0\}$ for the solution of \eqref{model}.
 For any $t\in[0,T]$,  $d_\e(x,t)$ will denote  the signed-distance from $x$ to $\Gamma^\varepsilon$, taking negative values inside $\G^\e$. For the analytic properties of the signed-distance function, one can refer to \cite{MR3155251}. Among those, we shall use that $d_\e$ is smooth,  and $|\nabla d_\e|=1$ in a neighborhood of  $\Gamma^\varepsilon$. We  write  $d_\e$ and its $k$-th truncation $d^{[k]}$  by
\begin{align}
d_\e(x,t)&=\sum_{i\geq 0}\varepsilon^i d^{(i)}(x,t),\quad d^{[k]}(x,t)=\sum_{0\leq i\leq k}\varepsilon^i d^{(i)}(x,t),\label{d}
\end{align}
respectively. If we  assume   $\G^0$ to  be   the limit of $\G^\e$ and  denote   $d^{(0)}$ as the signed-distance function of $\G^0$, we shall have  $|\nabla d^{(0)}|=1$ in $\G^\e(4\delta)$ for some $\delta>0$. This implies  
\begin{align*}
1=|\nabla d_{\varepsilon}|^2=1+2\varepsilon\nabla d^{(0)}\cdot\nabla d^{(1)}+\sum _{\ell\geq 2}\varepsilon^\ell\bigg(\sum\limits_{0\leq i\leq \ell}\nabla d^{(i)}\cdot\nabla d^{(\ell-i)}\bigg).
\end{align*}
Matching  the order of $\varepsilon$ yields the following equations
\begin{eqnarray}
\nabla d^{(0)}\cdot\nabla d^{(1)}=0;\quad \nabla d^{(0)}\cdot\nabla d^{(\ell)}=-\tfrac{1}{2}\sum\limits_{1\leq i\leq \ell-1}\nabla d^{(i)}\cdot\nabla d^{(\ell-i)},\quad\forall \ell\geq2.\label{equ:gradient d}
\end{eqnarray}
So $d^{[k]}$  satisfies the eikonal equation $|\nabla d^{[k]}|=1+\e^k g(x,t)$
for some   smooth  function  $g(x,t)$. In general $d^{[k]}$  does not   satisfy $|\nabla d^{[k]}|=1$, and this would lead to some complications in the asymptotic  expansions.  The remedy is to define  
 \begin{equation}\label{nasty2}
 \Gamma_t=\{x\in\Omega:d^{[k]}(x,t)=0\},\quad \G=\bigcup_{t\in [0,T]} \G_t\times \{t\}
 \end{equation}
 for a sufficiently large $k$,
and to work with  the signed-distance $r(x,t)$ of $\Gamma_t$. Through $r(x,t)$   a coordinate system near $\Gamma_t$ will be established, in which the asymptotic expansion  will be performed.
The following estimate can be proved via  the method of characteristics
 \begin{align}
\big\|r(x,t)-d^{[k]}(x,t)\big\|_{C^3(\Gamma_t(2\delta))}
=O(\varepsilon^{k}).\label{app distance}
\end{align}
See for instance \cite[Section 3.2]{Evans2010PDE}  or \cite[Section 5]{Alikakos1994}.

\subsection{Geometry of the interface}\label{gemmetry}
This part  will be used for the proof of \eqref{spectral condition}, where $t$ is a fixed parameter. So we shall suppress  its dependence for various quantities. For instance, we shall simply write the signed-distance function $r(x,t)$ of $\G_t$ as $r(x)$ or $r$. Following  \cite{de1995geometrical},
we assume $\Gamma_t$ be a compact smooth hypersurface and  $\XX_0(s):U\mapsto \Gamma_t$ be  a local parametrization of it with $U$ being an open set in $ \mathbb{R}^{N-1}$.
For a sufficiently small $\delta>0$, the $\delta$-neighborhood of $\Gamma_t$ is well-defined and shall be  denoted by $\Gamma_t(\delta)$, and the projection  $s(x):\Gamma_t(\delta)\mapsto U$ is  a smooth function. So each point $x\in \Gamma_t(\delta)$ allows a unique expression 
\begin{equation}\label{parametrization}
  x=\XX  (s,r)=\XX _0 (s)+r \nn(s),
\end{equation}
 where $s=(s_1,\cdots,s_{N-1})$ are local coordinates of $\Gamma_t$,  and $\nn$ is the unit outward normal vector.
  The decomposition \eqref{parametrization} leads to a local parametrization of $\Gamma_t(\delta)$, and  induces a local coordinates $(s,r)$ in it. The corresponding tangent space is expanded by
 \begin{equation}\label{basis}
 \{\ee_1,\cdots, \ee_{N}\}\triangleq \left\{ {\p_{s_1} \XX },\cdots,  {\p_{s_{N-1}} \XX },\nn\right\}.
 \end{equation}
 We note that  the first $N-1$ components  span the tangent plane of the hypersurface $\Gamma_t^r$,
 the $r$-level set of the signed-distance $r(x)$
\begin{equation}\label{r'levelset}
\G_t^{r}\triangleq \{ x\in\mathbb{R}^N\mid r(x)=r\}.
\end{equation}
It follows from \eqref{parametrization} that ${\p_{s_i} \XX } \cdot  {\p_r \XX }=0$ for
$1\leq i\leq N-1$.
This implies that the  metric $g_{ij}$ of $\G_t(\delta)$ under $(s,r)$  writes
\begin{equation}\label{metric}
  (g_{ij})_{1\leq i,j\leq N} =\begin{pmatrix}
    (  {\p_{s_i} \XX }\cdot {\p_{s_j} \XX })_{1\leq i,j\leq N-1}& 0\\
    0& 1
  \end{pmatrix}.
\end{equation}
Conventionally, we denote  its inverse by $g^{ij}$.
Using \eqref{metric} we can decompose the Euclidean   gradient operator  by
 \begin{equation}\label{gradient}
   \nabla f= \nabla_{\G} f+\nn\p_rf\qquad\text{where}~\nabla_{\G}f\triangleq \sum_{1\leq i,j\leq N-1}g^{ij}\p_{s_j} f \ee_i
    \end{equation}
 is the gradient on $\G_t^r$.   Denoting $g=\det g_{ij}$, we can assume  without loss of generality  that
\begin{equation}\label{boundjacobian}
1/\Lambda\leq \sqrt{g}(r,s)\leq \Lambda,
\end{equation}
for some $\Lambda>0$ which only depends on $\delta$ and $\G_t$. Following \cite{MR3618120}, there holds
 \begin{equation}\label{eq:laplace1}
 \Delta f= \Delta_{\G}f+\tfrac 1{\sqrt{g}}
   \p_{r}\(\sqrt{g} \p_{r} f\)\qquad \text{where}~\Delta_{\G}f\triangleq \sum_{1\leq i,j\leq N-1}\tfrac 1{\sqrt{g}} \p_{s_i}\(\sqrt{g} g^{ij}\p_{s_j} f\)
\end{equation}
  is the   Laplace-Beltrami operators  on $\G_t^r$. The operator   $\Delta_{\G}\big|_{r=0}$ corresponds to the   Laplace-Beltrami operator on $\Gamma_t$. Its  difference to $\Delta_\G$ is measured by  a second order differential operator with smooth coefficients
 \begin{equation}\label{roperator}
 \RR_{\G}=\( \Delta_{\G}-\Delta_{\G}\big|_{r=0}\)/r.
 \end{equation}
Due to  elliptic regularity theory, one has the following estimates:
\begin{proposition}
Let $\Gamma_t$ be a compact smooth hypersurface, then \begin{subequations}
\begin{equation}\label{laplace1}
\sup_{|r|\leq \delta}\|\Delta_{\G} f\|_{L^2(\Gamma_t)}+\|  f\|_{L^2(\Gamma_t)}\lesssim  \|f\|_{H^2(\Gamma_t)}\lesssim \inf_{|r|\leq \delta}\|\Delta_{\G} f\|_{L^2(\Gamma_t)}+\|  f\|_{L^2(\Gamma_t)}.\end{equation}
\begin{equation}\label{roperatorest}
   \|\RR_{\G}f\|_{L^2(\Gamma_t)}\lesssim \|f\|_{H^2(\Gamma_t)}\lesssim \big\|\Delta_{\G}\big|_{r=0} f\big\|_{L^2(\Gamma_t)}+\|  f\|_{L^2(\Gamma_t)}\end{equation}
\end{subequations}
where the symbol $\lesssim$ is understood in the conventional sense in  Section \ref{convention1}.
\end{proposition}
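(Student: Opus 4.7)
The plan is to treat \eqref{laplace1} and \eqref{roperatorest} as standard consequences of $L^2$-elliptic regularity on the compact boundaryless hypersurface $\Gamma_t$, applied to the one-parameter family of second-order operators $\Delta_{\G}$ depending smoothly on $r\in[-\delta,\delta]$.

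I would first record that, once functions are written in the $s$-coordinates supplied by the parametrization \eqref{parametrization}, $\Delta_{\G}$ from \eqref{eq:laplace1} is a second-order differential operator whose coefficients $g^{ij}(s,r)$ and $\tfrac{1}{\sqrt{g}}\p_{s_i}(\sqrt{g}\,g^{ij})$ are $C^\infty$ in $(s,r)$ for $|r|\leq\delta$, and that, by \eqref{boundjacobian} together with the positive-definiteness of the induced metric at $r=0$, the principal symbol is uniformly elliptic for $|r|\leq\delta$. The bound $\|\Delta_{\G}f\|_{L^2(\Gamma_t)}\lesssim \|f\|_{H^2(\Gamma_t)}$ in \eqref{laplace1} then follows immediately from the uniform $C^0$-boundedness of these coefficients. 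For the reverse inequality I would invoke the classical interior elliptic estimate on the compact manifold $\Gamma_t$: a finite coordinate covering plus a partition of unity reduces the problem to the flat-space Calder\'on--Zygmund / G\aa{}rding estimate for a uniformly elliptic operator with bounded smooth coefficients, yielding $\|f\|_{H^2(\Gamma_t)}\lesssim \|\Delta_{\G}f\|_{L^2(\Gamma_t)}+\|f\|_{L^2(\Gamma_t)}$ with a constant depending only on the uniform ellipticity constant and on uniform $C^k$ bounds of the coefficients of $\Delta_{\G}$. Taking the infimum over $|r|\leq\delta$ then gives the upper bound in \eqref{laplace1}.

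For \eqref{roperatorest}, the second inequality is simply \eqref{laplace1} specialized to $r=0$. For the first inequality, the decisive observation is that the coefficients of $\Delta_{\G}$ depend smoothly on $r$, so Taylor's formula with integral remainder yields
\begin{equation*}
g^{ij}(s,r)-g^{ij}(s,0)=r\int_0^1\p_r g^{ij}(s,tr)\,dt,
\end{equation*}
and analogously for $\sqrt{g}(s,r)$ and its $s$-derivatives. Substituting these identities into \eqref{eq:laplace1} produces a decomposition $\Delta_{\G}-\Delta_{\G}\big|_{r=0}=r\,\widetilde{\RR}_{\G}$, where $\widetilde{\RR}_{\G}$ is a second-order differential operator in $s$ whose coefficients are uniformly bounded in $C^0(\Gamma_t)$ for $|r|\leq\delta$. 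By definition \eqref{roperator} this gives $\RR_{\G}=\widetilde{\RR}_{\G}$, and hence $\|\RR_{\G}f\|_{L^2(\Gamma_t)}\lesssim \|f\|_{H^2(\Gamma_t)}$.

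The only issue that requires attention is the book-keeping of the uniformity in $r$, but this is not a genuine obstacle: the uniform ellipticity supplied by \eqref{boundjacobian} and the smooth $r$-dependence of the parametrization \eqref{parametrization} guarantee that the constants produced by the standard elliptic estimate on $\Gamma_t$ depend only on finitely many $C^k$-norms of the coefficients of $\Delta_{\G}$, all of which are uniformly bounded for $|r|\leq\delta$. No further ingredients beyond standard elliptic regularity are required.
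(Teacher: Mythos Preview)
Your proposal is correct and is essentially the same approach as the paper's: the paper merely states that the proposition follows ``due to elliptic regularity theory'' without giving details, and your argument is precisely the standard elliptic-regularity justification (uniform ellipticity of the family $\Delta_{\G}$ plus Taylor expansion of the coefficients in $r$ for $\RR_{\G}$) that this phrase is invoking.
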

Now we consider the divergence operator in $\Gamma_t(\delta)$.  With
the convention  $s_N=r$,  any vector-field $\FF=\sum_{1\leq i\leq N}F_i\frac \p{\p  x_i }$ can be written under local coordinates $(s,r)$ by
$\FF=\sum_{1\leq j\leq N} \tilde{F}_j\ee_j$ where $\tilde{F}_j=\sum_{1\leq k\leq N}g^{kj}\ee_k\cdot\FF.$
 Then the Euclidean  divergence $\Div \FF=\sum_{1\leq j\leq N}\tfrac 1{\sqrt{g}} \p_{s_j}\(\sqrt{g} \tilde{F}_j\)$ is decomposed  by
\begin{equation}\label{divergence1}
\Div \FF=\Div_{\G} \FF+ \tfrac 1{\sqrt{g}}  \p_r\(\sqrt{g}  \tilde{F}_N\),
\end{equation}
where $\Div_{\G}$ is the   divergence operator on $\G_t^r$ \begin{equation}\label{divergence}
\Div_{\G} \FF=\sum_{1\leq j\leq N-1}\tfrac 1{\sqrt{g}} \p_{s_j}\(\sqrt{g} \tilde{F}_j\).\end{equation}
With these notations, it can be verified that
 $ \Delta_{\G}f=\Div_{\G}\nabla_{\G}f,$ consisting with \eqref{eq:laplace1}. Following \cite[Lemma 4]{chen2009mass}, we have the following expansion of   $J(r,s)=\frac{\sqrt{g}(r,s)}{\sqrt{g}(0,s)}$:\begin{equation}\label{jacobian}
J(r,s)=\prod_{1\leq i\leq N-1}(1+r\kappa_i(s))=1+rh(s)+r^2e(s)+O(r^3),
\end{equation}
where $\kappa_i$ are the  principal curvatures of $\G_t$. 
On the other hand, it  follows from \eqref{divergence1} and $\nabla r=\nn$ that $\p_r \ln \sqrt{g}=\Delta r$.  When combined with \eqref{jacobian}, we obtain 
 \begin{subequations}\label{eq:2.1}
\begin{align}
\p_r \ln \sqrt{g}&=\Delta r=h(s)+ b(s) r+a(s) r^2+ r^3O(1),\\
\partial_r^2\ln \sqrt{g}&=\nabla\Delta r\cdot\nabla r= b(s)+c(s) r+ r^2O(1).
\end{align}
\end{subequations}
We end this section by the   integration theory on hypersurfaces.
By coarea formula \cite{MR3409135},
\begin{equation}\label{coareaformula}
\int_{\Gamma_t(\delta)} f(x) dx=\int_{-\delta}^\delta
\(\int_{\G_t^r} f \,d\mathcal{H}^{N-1}\)dr,
\end{equation}
where
 $\mathcal{H}^{N-1}$ is the Hausdorff measure on $\G_t^r$. Under  local coordinates, $d\mathcal{H}^{N-1}=\sqrt{g}(r,s) ds$. We  also need the following formula
\begin{equation}\label{fubini}
\int_{\Gamma_t(\delta)} f(x) dx=\int_{\Gamma_t}\(\int_{-\delta}^\delta f(r,s) J(r,s)dr\)d\mathcal{H}^{N-1},
\end{equation}
where  $\mathcal{H}^{N-1}$ is the Hausdorff measure on $\G_t$, which writes  $d\mathcal{H}^{N-1}=\sqrt{g}(0,s) ds$ under  local coordinates.
Finally we state   the divergence theorem for tangential vector field:  
\begin{equation}\label{gauss}
\int_{\Gamma_t^r} \Div_{\G}   {\FF} \,d\mathcal{H}^{N-1}=0\quad \text{if}~\FF\cdot\nn=0,
\end{equation}
where   $\mathcal{H}^{N-1}$ is interpreted in the same way as in \eqref{coareaformula}.
See for instance \cite{MR1301070} for the proof of these formulas.

\subsection{Conventions}\label{convention1}
Throughout this work, $O(\e^\ell)$ will denote $\e^\ell g_\e(x,t)$ for some  $g_\e(x,t)$ that is uniformly bounded in $(x,t)$ and $\e$. If there exists a  constant $C$ which is independent of $\e$ so that the  inequality $X\leq CY$ holds, we shall briefly write $X\lesssim Y$. Similarly, $X\gtrsim Y$ means $X\geq CY$. In various estimates, the generic constant $C$ might change from line to line and we shall not relabel them for simplicity. 

For brevity, we shall not relabel  a function under equivalent  coordinates. For instance, for a function $f=f(x)$, its counterpart under local coordinates $(r,s)$ will be simply denoted by $f(r,s)$. Its integration in $\G_t(\delta)$ will be understood via \eqref{fubini}
or \eqref{coareaformula} depending on the context.
We make the convention that a function $h$ living on $\G_t$  shall be denoted by $h(s)$, even though a global coordinate of $\G_t$ does not exist in general. 
We shall omit $d\mathcal{H}^{N-1}$  in a surface integration like \eqref{gauss} if it is clear from the context.


\section{Approximate solutions and difference estimate}\label{approaximation}
\indent
In this section
we will build an approximate solution satisfying \eqref{app model}. This is done by gluing      the inner/outer expansions constructed  through the matched asymptotic expansions. We start by  an outline of the construction and leave the details to   Appendix \ref{innerexpan}.

In contrast to the  Cahn-Hilliard case \cite{Alikakos1994}, the outer expansions here are much simpler, and    thus  the boundary layer expansions can be avoided.
Recall  \eqref{def:omegapm} for the definition of  $\Omega_t^{\pm}$, it follows  from exactly the same calculation  as in  \cite{Alikakos1994} that
\begin{align}\phi_a^O(x,t)=\pm
1_{\Omega_t^\pm},\quad \mu_a^O(x,t)=0. \label{out-k}
\end{align}
We  construct the inner expansions in $\Gamma_t(2\delta)$   by
  \begin{align}\label{in-k}
\phi_a^I(x,t)=\sum_{0\leq i\leq k}\varepsilon^i\tilde{\phi}^{(i)}(z,x,t)\Big|_{z=\tfrac{ r(x,t) }{\varepsilon}},\qquad
\mu_a^I(x,t)=\sum_{0\leq i\leq k}\varepsilon^i\tilde{\mu}^{(i)}(z,x,t)\Big|_{z=\tfrac{ r(x,t) }{\varepsilon}},
\end{align}
where $z$ is the fast variable and  $r(x,t)$ is the signed-distance function of $\G_t$. See \eqref{nasty2} for the definition of $\G_t$. While we obtain recursive formulas for $(\widetilde{\phi}^{(i)},\widetilde{\mu}^{(i)})$ in the appendix, the first  few terms are determined in Subsection \ref{subsection matching 01} and \ref{subsection matching 2}:
\begin{equation}\label{phi12}
\widetilde{\phi}^{(0)}(z,x,t)=\theta(z),\quad
\widetilde{\phi}^{(1)}(z,x,t)=0,\quad
\widetilde{\phi}^{(2)}(z,x,t)=D^{(0)}(x,t)\theta'(z)\alpha(z),
\end{equation}
\begin{subequations}\label{mu12}
\begin{align}
\widetilde{\mu}^{(0)}(z,x,t)&=-\Delta d^{(0)}(x,t)\theta'(z),\\
\widetilde{\mu}^{(1)}(z,x,t)&=-\Delta d^{(1)}(x,t)\theta'(z)+D^{(0)}(x,t)z\theta'(z),\\
\widetilde{\mu}^{(2)}(z,x,t)&=\Delta d^{(0)} {D^{(0)}}\theta'(z)\gamma_1(z)
+\nabla d^{(0)}\cdot\nabla {D^{(0)}}\theta'(z)\gamma_2(z)
+ {D^{(1)}}z\theta'\nonumber \\&\qquad
+\mu_2(x,t)\theta'+\chi^{(0)}d^{(0)}\theta'(z)\gamma_3(z),
\end{align}
\end{subequations}
where  $\{d^{(i)}\}_{i=0}^2$ are defined by \eqref{distance law}, \eqref{equation of d1} and  \eqref{equation of dk} (with $k=2$) successively, and
$D^{(0)},D^{(1)}$ are given by
\begin{equation}
D^{(i)}=\sum\limits_{0\leq \ell \leq i}\(\nabla\Delta d^{(\ell)}\cdot\nabla d^{(i-\ell)}+\tfrac 12\Delta d^{(\ell)}\Delta d^{(i-\ell)}\).\label{def:D0D1}
\end{equation}
The functions $\alpha(z),\{\gamma_j(z)\}_{j=1}^3$ and $\mu_2(x,t)$ are given by \eqref{expression of phi2}, \eqref{eq:2.7} and \eqref{expression of mu2-1}  respectively.
The following lemma is a consequence of L'H\^{o}pital's rule.
\begin{lemma}\label{evenodd1}
$\alpha(z)$ is odd and $\gamma_1(z),\gamma_3(z)$  are  even, and they  grow at most quadratically at infinity. Moreover, $\gamma_2(z)=-z^2/2$.
\end{lemma}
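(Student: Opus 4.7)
The plan is to exploit the common structure behind all four functions: each of $\alpha,\gamma_1,\gamma_2,\gamma_3$ is produced by the matched asymptotic expansion as a solution of an inhomogeneous ODE whose left-hand side, after factoring out an appropriate multiple of $\theta'$, reduces to the linearized Allen--Cahn operator $\mathscr{L}=-\p_z^2+f''(\theta(z))$. Since $\theta'$ lies in $\ker\mathscr{L}$, differentiating $\mathscr{L}[\theta']=0$ yields the reduction formula
\begin{equation*}
\mathscr{L}[\theta'(z)\rho(z)]=-\tfrac{1}{\theta'(z)}\tfrac{d}{dz}\bigl((\theta'(z))^2\rho'(z)\bigr),
\end{equation*}
which converts each second-order equation into a first-order equation for $(\theta')^2\rho'$. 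Two successive integrations then give an explicit double-integral representation of $\rho$; the two constants of integration are pinned down by (i) the solvability condition that the matched asymptotic expansion imposes at each order, and (ii) the normalization fixing the component of $\theta'\rho$ along the kernel direction $\theta'$.

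For the parity statement I would use that $\theta$ is odd and therefore $\theta'$, $\theta''$ and $f''(\theta)$ are even while $\theta'''$ and $f'''(\theta)$ are odd. Unwinding \eqref{phi12}--\eqref{mu12} and the corresponding computations in the appendix, the source $F$ producing $\alpha$ is built from an odd combination of $\theta,\theta',\theta''$, while the sources producing $\gamma_1,\gamma_3$ are even. Because the reduction formula above preserves parity, one can choose the integration constants (which correspond to adding multiples of the even function $\theta'$ to $\theta'\rho$) so that $\alpha$ inherits the odd parity of its source and $\gamma_1,\gamma_3$ inherit the even parity. The explicit identity $\gamma_2(z)=-z^2/2$ is even more transparent: the source $F$ for $\gamma_2$ in \eqref{eq:2.7} is a linear combination of $\theta'$ and $z\theta''$, and substituting $\rho(z)=-z^2/2$ into $((\theta')^2\rho')'=-\theta' F$ verifies the identity by a short direct computation, after which uniqueness within the prescribed normalization closes that case.

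The at most quadratic growth is where L'H\^opital's rule enters. I would choose the first integration constant as $C_1=\int_0^\infty \theta'(\tau)F(\tau)\,d\tau$, whose convergence is exactly the solvability condition enforced by the matching procedure. This choice makes the remainder vanish at $+\infty$ and gives
\begin{equation*}
\rho'(z)=-\tfrac{1}{(\theta'(z))^2}\int_z^\infty \theta'(\tau)F(\tau)\,d\tau.
\end{equation*}
Since $\theta'(z)\sim e^{-\sqrt{2}z}$ as $z\to+\infty$ and $F$ is at most polynomial, the numerator decays like $\theta'$ times a polynomial while the denominator decays like $(\theta')^2$; applying L'H\^opital's rule to the quotient shows that $\rho'(z)$ is bounded by an affine function at $+\infty$, and one further integration gives the claimed quadratic bound on $\rho$. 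The parity established in the previous step allows the symmetric argument to be transplanted to $z\to-\infty$ without extra work.

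The main technical obstacle is the bookkeeping needed to read off, from the matched asymptotic construction in the appendix, the precise sources $F$ and the associated solvability and normalization constants for each of $\alpha,\gamma_1,\gamma_2,\gamma_3$. Once these are in hand, the parity arguments and the L'H\^opital estimate are essentially mechanical, and the explicit formula $\gamma_2(z)=-z^2/2$ provides a convenient consistency check for the whole scheme.
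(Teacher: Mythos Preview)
Your proposal is correct and follows essentially the same approach as the paper, which simply states that the lemma ``is a consequence of L'H\^opital's rule'' and relies on the explicit double-integral formulas \eqref{expression of phi2} and \eqref{eq:2.7}. Your reduction formula $\mathscr{L}[\theta'\rho]=-\tfrac{1}{\theta'}\tfrac{d}{dz}\bigl((\theta')^2\rho'\bigr)$ is exactly what underlies those formulas (cf.\ \eqref{odesolver}), the parity claims follow either from your ODE argument or directly by the change of variables $\tau\mapsto-\tau$, $\zeta\mapsto-\zeta$ in the integrals using that the solvability condition makes $\int_{-\infty}^{+\infty}$ of the integrand vanish, and your L'H\^opital computation for the growth is precisely what the paper is pointing to; note also that $\gamma_2=-z^2/2$ is given as a \emph{definition} in \eqref{eq:2.7}, so your direct verification that $\mathscr{L}[-\tfrac{z^2}{2}\theta']=\theta'+2z\theta''$ is the right check.
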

Using the cut-off function \eqref{cutoff}, we can glue the inner and outer expansions by
\begin{equation}
  \begin{split}
    \phi_a(x,t)=
 \phi_a^I+
 (1-\zeta(r))(\phi_a^O-\phi_a^I),\quad
 \mu_a(x,t)=
 \mu_a^I+
 (1-\zeta(r))(\mu_a^O-\mu_a^I). \label{definition app}
  \end{split}
\end{equation}  
\begin{proposition}\label{construction of app solution}
The functions $\phi_a$ and $\mu_a$ in \eqref{definition app} are smooth and  fulfill   \eqref{app model} in $\Omega\times (0,T)$. Moreover, they have the following expansions in $\G_t(\delta)$:
\begin{equation}\label{eq:expan phia}
\begin{split}
\phi_a(x,t)&=\theta(\tfrac{r(x,t)}\e)+\varepsilon^2\widetilde{\phi}^{(2)}
\(\tfrac{r(x,t)}\e,x,t\)+O(\varepsilon^3),\\
\mu_a(x,t)&=\sum_{0\leq i\leq 2}\varepsilon^i\widetilde{\mu}^{(i)}
\(\tfrac{r(x,t)}\e,x,t\) +O(\varepsilon^3).
\end{split}
\end{equation}
\end{proposition}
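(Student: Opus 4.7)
The plan is to verify the three claims of the proposition in turn: smoothness of $(\phi_a,\mu_a)$, the residual identity \eqref{app model} with uniformly bounded $\mathfrak{R}_1,\mathfrak{R}_2$, and the explicit two-term expansion \eqref{eq:expan phia}.

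First I would dispatch smoothness. The outer pair is locally constant on $\Omega_t^\pm$. The signed distance $r(x,t)$ is smooth on $\Gamma_t(2\delta)$ by the construction in Section \ref{levelset}, so each profile $\widetilde{\phi}^{(i)}(z,x,t),\widetilde{\mu}^{(i)}(z,x,t)$ composed with $z=r(x,t)/\varepsilon$ is smooth there; hence $(\phi_a^I,\mu_a^I)$ is smooth on $\Gamma_t(2\delta)$. Since the cutoff $\zeta(r)$ in \eqref{cutoff} is supported in $(-\delta,\delta)$, the glued functions \eqref{definition app} reduce to $(\pm 1,0)$ on $\Omega_t^\pm\setminus\Gamma_t^0(2\delta)$, and the smoothness across the transition region $\delta/2\leq|r|\leq\delta$ follows from the smoothness of all constituents.

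Second, I would plug the inner ansatz \eqref{in-k} into \eqref{model}, using the expansion of $\Delta$ and $\partial_t$ in the coordinates $(z,s,r)$: namely $\partial_t=\partial_t-\varepsilon^{-1}(\partial_t r)\partial_z$ (with $s$ fixed) and, by \eqref{eq:laplace1}, $\Delta = \varepsilon^{-2}\partial_z^2 + \varepsilon^{-1}\Delta r\,\partial_z + \Delta_\Gamma$ when applied to functions of $(z,s)$. Collecting like powers of $\varepsilon$ produces a hierarchy of inner equations whose solvability is handled order by order in Appendix \ref{innerexpan}. The recursive construction is designed so that, for every $j\leq k-2$, the coefficient of $\varepsilon^j$ vanishes identically in $(z,x,t)$, leaving a residual of the form $\varepsilon^{k-1}$ times a smooth function of $(z,x,t)$. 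Because the profiles $\widetilde{\phi}^{(i)},\widetilde{\mu}^{(i)}$ grow at most polynomially in $z$ and are (through the Allen--Cahn structure) multiplied by factors of $\theta'(z)$ and its derivatives that decay exponentially, cf.\ Lemma \ref{evenodd1}, these residuals are uniformly bounded in $\varepsilon$ and $(x,t)$ inside $\Gamma_t(2\delta)$. That provides $\mathfrak{R}_1,\mathfrak{R}_2$ in the subregion $\{|r|\leq\delta/2\}$ where $\zeta\equiv 1$ and $(\phi_a,\mu_a)=(\phi_a^I,\mu_a^I)$.

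Third, I would control the gluing error in the annulus $\delta/2\leq|r|\leq\delta$. There $|z|\geq\delta/(2\varepsilon)$, so by the exponential matching of each inner profile with the bulk values $\pm 1$ and $0$ (inherited from $\theta(\pm\infty)=\pm 1$ and the exponential decay of $\theta'$), one has
\begin{equation*}
\bigl|\partial^\alpha(\phi_a^O-\phi_a^I)\bigr|+\bigl|\partial^\alpha(\mu_a^O-\mu_a^I)\bigr|\lesssim e^{-c/\varepsilon}, \qquad|\alpha|\leq 2,
\end{equation*}
for some $c>0$. Substituting \eqref{definition app} into \eqref{model} then generates only terms proportional to derivatives of $\zeta$ multiplied by these exponentially small differences, which can be absorbed into $\varepsilon^{k-1}\mathfrak{R}_1,\varepsilon^{k-1}\mathfrak{R}_2$. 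Combined with the previous step and the fact that $(\phi_a^O,\mu_a^O)$ exactly solves \eqref{model} in $\Omega\setminus\Gamma_t(\delta)$, this yields \eqref{app model} on all of $\Omega\times(0,T)$. Finally, \eqref{eq:expan phia} is read off from \eqref{in-k}, \eqref{phi12} and \eqref{mu12}, since $\widetilde{\phi}^{(0)}=\theta(z)$, $\widetilde{\phi}^{(1)}=0$, and in $\Gamma_t(\delta)$ the cutoff $\zeta$ contribution only affects terms of order $O(e^{-c/\varepsilon})$ which are absorbed into $O(\varepsilon^3)$.

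\textbf{Main obstacle.} The genuinely delicate step is the recursion in Appendix \ref{innerexpan}: one must verify that each inner equation admits a solution whose $z$-growth is at most polynomial and whose matching at $z=\pm\infty$ with the trivial outer expansion is compatible, which in turn requires correct solvability conditions on the profile equations (these determine $d^{(i)}$ and the constants $\mu_i(x,t)$). Once this bookkeeping is in place, the present proposition is essentially a packaging statement, and the exponential smallness of the gluing error together with the uniform polynomial--times--exponential bounds on each profile make the $\varepsilon^{k-1}$ remainder estimate routine.
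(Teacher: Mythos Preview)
Your overall strategy matches the paper's: smoothness is routine, the gluing error in the annulus $\delta/2\leq|r|\leq\delta$ is exponentially small by the matching conditions \eqref{matching condition}, and the expansion \eqref{eq:expan phia} follows from \eqref{in-k}--\eqref{mu12}.

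There is, however, a gap in your second step. You compute the Laplacian with $z=r/\varepsilon$ and then invoke the Appendix \ref{innerexpan} hierarchy to conclude that the coefficients of $\varepsilon^j$ vanish for $j\leq k-2$. But that hierarchy is not set up with the signed distance $r$: the profiles $\widetilde{\phi}^{(i)},\widetilde{\mu}^{(i)}$ are constructed to satisfy \eqref{newequ-2}--\eqref{newequ-1-m} with the stretched variable $z=d_\varepsilon/\varepsilon$, and the residual statement in Proposition \ref{endprop1} is for $(\hat\phi_a^I,\hat\mu_a^I)$ evaluated at $z=d^{[k]}/\varepsilon$. The function $r$ is itself $\varepsilon$-dependent (through $\Gamma_t$, which is defined via $d^{[k]}=0$), and the appendix equations at each order involve the individual $d^{(i)}$, so substituting $z=r/\varepsilon$ does not reproduce that hierarchy term by term without further argument.

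The paper closes this by rewriting $\phi_a=\hat\phi_a^I+(1-\zeta)(\phi_a^O-\phi_a^I)+(\phi_a^I-\hat\phi_a^I)$ (and likewise for $\mu_a$), applying Proposition \ref{endprop1} to the first piece, and then using the key estimate \eqref{app distance}, namely $\|r-d^{[k]}\|_{C^3(\Gamma_t(2\delta))}=O(\varepsilon^k)$, to show $\phi_a^I-\hat\phi_a^I=O(\varepsilon^{k-1})$, $\Delta(\phi_a^I-\hat\phi_a^I)=O(\varepsilon^{k-3})$, $\partial_t(\phi_a^I-\hat\phi_a^I)=O(\varepsilon^{k-2})$, and so on. Once you insert this comparison with $(\hat\phi_a^I,\hat\mu_a^I)$, your argument closes and coincides with the paper's proof.
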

\begin{proof}
It is clear that $\phi_a$ is a smooth function that coincides with $\phi_a^I$ inside $\G_t(\delta/2)$ and with $\phi_a^O$ outside $\G_t(\delta)$.
 Moreover, the difference  $\phi_a-\phi_a^I$ decays at the order of $O(e^{-C/\e})$ when it is restricted in $\Omega\backslash \G_t(\delta/2)$, due to the matching condition \eqref{matching condition}. The same statement is valid for $\mu_a$. So
substituting  \eqref{out-k},  \eqref{in-k}, \eqref{phi12} and \eqref{mu12} into \eqref{definition app} yields \eqref{eq:expan phia}.
To verify (\ref{app model}), we need to employ Proposition \ref{endprop1}. We rewrite \eqref{definition app} by
\begin{equation}\label{nasty6}
  \begin{split}
    \phi_a(x,t)=&
 \hat{\phi}^I_a+
 (1-\zeta(r))(\phi_a^O-\phi_a^I)+\phi_a^I-\hat{\phi}^I_a,\\
 \mu_a(x,t)=&
 \hat{\mu}^I_a+
 (1-\zeta(r))(\mu_a^O-\mu_a^I)+\mu_a^I-\hat{\mu}^I_a,
  \end{split}
\end{equation}
where $ \hat{\phi}^I_a,\hat{\mu}^I_a$ are defined in Proposition \ref{endprop1}.
It follows from \eqref{app distance} that $\G(2\delta)\subset\G^0(3\delta)$ for a  sufficiently large  $k$. So the first terms on the right hand side of \eqref{nasty6} fulfill \eqref{newapprequ-5} and \eqref{newapprequ-6} respectively. For the second parts, one can employ \eqref{matching condition} to obtain the exponential decay in $\e$. Finally, it follows from \eqref{app distance} that
\begin{align*}
\phi_a^I-\hat{\phi}^I_a&=\sum_{0\leq i\leq k}\varepsilon^i \tilde{\phi}^{(i)}(z,x,t)\Big|^{z=
\frac{ r(x,t) }{\varepsilon}}_{z=
\frac{ d^{[k]}(x,t) }{\varepsilon}}
=O(\varepsilon^{k-1}).
\end{align*}
Similar consideration leads to
\begin{align*}
&\partial_t(\phi_a^I-\hat{\phi}^I_a)=O(\varepsilon^{k-2}),~\mu_a^I-\hat{\mu}^I_a=O(\varepsilon^{k-1}),\\
& \Delta(\phi_a^I- \hat{\phi}^I_a)=O(\varepsilon^{k-3}),~\Delta(\mu_a^I-\hat{\mu}^I_a)=O(\varepsilon^{k-3}).
\end{align*}
So  (\ref{app model}) is proved.  
\end{proof}
\begin{lemma}\label{spectral lemma}
Let  $\lambda_1$ be    the principal eigenvalue of the  operator \eqref{1doperator}  with homogenous Neumann boundary condition, and  $\varphi(z)$ be   the corresponding   eigenfunction, normalized    such that $\|\varphi\|_{L^2(I_\e)}=\|\theta'\|_{L^2(I_\e)}$. Denote $I_\e=(-\delta/\e,\delta/\e)$, then   
 \begin{align}
\|\varphi-\theta'\|_{W^{2,\infty}(I_\varepsilon)}=O(e^{-\frac{C}{\varepsilon}}),\label{difference of kernal}
\end{align}
and $\lambda_1=O(e^{-\frac{C}{\ve}})$.
Moreover,   for some $\Lambda_3>0$ there holds 
\begin{equation}\label{eigen:f_2der}
  \int_{I_\e}\(q'^2+f''(\theta)q^2\)\, dz\geq \Lambda_3\int_{I_\e} (q'^2+q^2)dz,\qquad\forall q\perp \varphi~\text{in}~L^2(I_\e).
\end{equation}

\end{lemma}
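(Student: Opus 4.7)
The plan is to exploit that $\theta'$ exactly spans the kernel of $\mathscr{L}$ on the whole real line: differentiating \eqref{ODE-0} yields $\theta''' = f''(\theta)\theta'$, i.e., $\mathscr{L}[\theta'] = 0$ on $\mathbb{R}$, and since $\theta'(z) = \tfrac{1}{\sqrt 2}\operatorname{sech}^2(z/\sqrt 2)$ and its derivatives decay exponentially, the boundary defect $|\theta''(\pm\delta/\e)| = O(e^{-C/\e})$ is the sole source of all exponential smallness in the lemma. For $\lambda_1 = O(e^{-C/\e})$, I would insert $\theta'$ into the Rayleigh quotient and integrate by parts on $I_\e$:
\begin{equation*}
\int_{I_\e}\!\big(|\theta''|^2 + f''(\theta)(\theta')^2\big)\,dz = \big[\theta'\,\theta''\big]_{-\delta/\e}^{\delta/\e} + \int_{I_\e}\!\theta'\,\mathscr{L}[\theta']\,dz = O(e^{-C/\e}),
\end{equation*}
while $\|\theta'\|_{L^2(I_\e)}^2$ is bounded below uniformly in $\e$, giving $\lambda_1 \leq C e^{-C/\e}$. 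The matching lower bound is a positive-quasi-mode / Allegretto--Piepenbrink argument: since $\theta'>0$ on $I_\e$ is an approximate Neumann eigenfunction with $O(e^{-C/\e})$ residue, the principal Neumann eigenvalue must lie within $O(e^{-C/\e})$ of the whole-line ground-state energy $0$.

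For \eqref{difference of kernal}, set $w = \varphi - \theta'$, which satisfies
\begin{equation*}
\mathscr{L}[w] = \lambda_1 \varphi \text{ on } I_\e, \qquad w'(\pm\delta/\e) = -\theta''(\pm\delta/\e) = O(e^{-C/\e}).
\end{equation*}
Both the forcing and the Neumann data are exponentially small. Decomposing $w = c\,\varphi + w^\perp$ with $w^\perp \perp \varphi$ in $L^2(I_\e)$, the coefficient $c$ is pinned by the normalization $\|\varphi\|_{L^2}=\|\theta'\|_{L^2}$, which forces $|c| = O(e^{-C/\e})$. For the orthogonal part, the uniform-in-$\e$ gap between $\lambda_1$ and $\lambda_2$ (addressed below) inverts $\mathscr{L}$ on $\{\varphi\}^\perp$, yielding $\|w^\perp\|_{L^2} = O(e^{-C/\e})$. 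I expect the main technical obstacle to be upgrading this $L^2$ estimate to a pointwise $W^{2,\infty}$ bound on an interval whose length grows like $\delta/\e$: the route is a one-dimensional elliptic/Sobolev bootstrap, or equivalently a variation-of-parameters representation of the ODE seeded by the exponentially small boundary data, carried out so that the factor $e^{-C/\e}$ is preserved despite the blow-up of the domain.

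Finally, for \eqref{eigen:f_2der}, the min-max principle immediately gives
\begin{equation*}
\int_{I_\e}\!\big(q'^2 + f''(\theta)q^2\big)\,dz \geq \lambda_2 \int_{I_\e}\!q^2\,dz, \qquad \forall q \perp \varphi \text{ in } L^2(I_\e),
\end{equation*}
where $\lambda_2$ is the second Neumann eigenvalue of $\mathscr{L}$ on $I_\e$. The key point is that $\lambda_2 \geq c > 0$ uniformly in $\e$: the whole-line operator $-\partial_z^2 + f''(\theta) = -\partial_z^2 + 2 - 3\operatorname{sech}^2(z/\sqrt 2)$ is an explicitly solvable P\"oschl--Teller Schr\"odinger operator with spectrum $\{0, 3/2\} \cup [2, \infty)$, providing a gap of $3/2$ above the ground state. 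A standard compactness/perturbation argument transfers this gap to the Neumann problem on the growing interval $I_\e$ up to exponentially small corrections. The full $H^1$ coercivity involving $\int q'^2$ then follows by combining this $L^2$ coercivity with the identity $\int q'^2 = \int(q'^2 + f''(\theta)q^2) - \int f''(\theta)q^2$ and the uniform pointwise bound $|f''(\theta)| \leq 2$.
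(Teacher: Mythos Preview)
The paper does not supply its own proof of this lemma: the remark immediately following the statement attributes the result to Chen \cite{chen1994spectrum}, noting only a cosmetic difference in the normalization of $\varphi$. Your outline is correct and is in fact the standard route that Chen carries out---the Rayleigh-quotient upper bound via the trial function $\theta'$, the explicit P\"oschl--Teller spectrum $\{0,\,3/2\}\cup[2,\infty)$ of $-\partial_z^2+f''(\theta)$ on $\mathbb{R}$ transferred to $I_\e$ by a perturbation/compactness argument to obtain the uniform gap $\lambda_2\geq c>0$, and an ODE analysis of $w=\varphi-\theta'$ driven by the exponentially small forcing $\lambda_1\varphi$ and the exponentially small inhomogeneous Neumann data $-\theta''(\pm\delta/\e)$.

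Your closing step for the full $H^1$ coercivity is also correct once spelled out: from $|f''(\theta)|\leq 2$ one has $\int_{I_\e}q'^2\,dz\leq \int_{I_\e}(q'^2+f''(\theta)q^2)\,dz+2\int_{I_\e}q^2\,dz$, and combining this with $\int_{I_\e}q^2\,dz\leq \lambda_2^{-1}\int_{I_\e}(q'^2+f''(\theta)q^2)\,dz$ gives \eqref{eigen:f_2der} with $\Lambda_3=\lambda_2/(\lambda_2+3)$. The one place to be careful, which you already flag, is that $w^\perp$ in your decomposition inherits the inhomogeneous boundary data and is therefore not in the domain of the Neumann operator; the variation-of-parameters representation you mention is indeed the cleanest way to bypass this and to obtain the $W^{2,\infty}$ bound directly on the growing interval.
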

\begin{rmk}
The above lemma  is mainly due to \cite{chen1994spectrum}.
There the author defines  the $\e$-dependent constant $\beta=\big(\|\theta'\|_{L^2(I_\varepsilon)}\big)^{-1}$ and   $\varphi$ such that $\|\varphi\|_{L^2(I_\e)}=1$ and $\varphi-\beta\theta'=O(e^{-C/\e})$. In this work,  $\beta$ is absorbed  into $\varphi$ for the convenience of Section \ref{section:spectrum}.
\end{rmk}

\begin{lemma}\label{equinorm}
The renormalization constant \eqref{apptension} satisfies
\begin{equation}\label{tension}
  \eta(s)=\int_{\mathbb{R}}(\theta'(z))^2dz+O(\varepsilon).
\end{equation}
Moreover, there exist   $\e_0>0$ and $\Lambda_5>1$ such that for every $t\in[0,T]$,
\begin{equation}
\Lambda_5\int_{\Gamma_t(\delta)}\phi^2dx\geq  \int_{\Gamma_t} Z^2(s)+\int_{\Gamma_t(\delta)}\big(\phi^\bot\big)^2dx\geq \frac 1{\Lambda_5}\int_{\Gamma_t(\delta)}\phi^2dx,~\forall \varepsilon\in (0,\e_0).\label{decompose L2norm}
\end{equation}
\end{lemma}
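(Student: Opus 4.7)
\textbf{Strategy.} Both parts reduce to the same change of variable $z=r/\e$ combined with the bounds for $\varphi-\theta'$ from Lemma \ref{spectral lemma} and the expansion \eqref{jacobian} for $J(r,s)$. The only subtlety is that the orthogonality relation \eqref{orthogonality} is weighted by $J^{1/2}$, whereas the volume integral in \eqref{fubini} carries the weight $J$; controlling this mismatch is the key analytic point.

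\textbf{Step 1 (proof of \eqref{tension}).} Starting from \eqref{apptension}, substitute $z=r/\e$ to obtain
\[
\eta(s)=\int_{-\delta/\e}^{\delta/\e}\varphi^2(z)\,\zeta^2(\e z)\,J^{1/2}(\e z,s)\,dz.
\]
Since $\zeta(\e z)\equiv 1$ for $|z|\leq \delta/(2\e)$ and both $\varphi^2$ and $(\theta')^2$ decay exponentially at infinity, the region $|z|\geq \delta/(2\e)$ contributes only $O(e^{-C/\e})$. On the remaining domain use \eqref{difference of kernal} to replace $\varphi$ by $\theta'$ up to exponentially small errors, and expand $J^{1/2}(\e z,s)=1+O(\e z)$ from \eqref{jacobian}. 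This yields $\eta(s)=\int_{\mathbb R}(\theta')^2\,dz+O(\e)$.

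\textbf{Step 2 (set-up for \eqref{decompose L2norm}).} Apply \eqref{fubini} to write
\[
\int_{\Gamma_t(\delta)}\phi^2\,dx=\int_{\Gamma_t}\int_{-\delta}^{\delta}\bigl(\check\phi+\phi^\bot\bigr)^2 J(r,s)\,dr\,d\mathcal{H}^{N-1},
\]
with $\check\phi=\e^{-1/2}Z(s)\varphi(r/\e)\zeta(r)$. Expand the square into three pieces $T_1+T_2+T_3$ corresponding to $\check\phi^2$, $2\check\phi\phi^\bot$ and $(\phi^\bot)^2$ respectively.

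\textbf{Step 3 (the diagonal terms).} For $T_1$, substitute $z=r/\e$ and argue exactly as in Step 1, but now with the weight $J(\e z,s)$ instead of $J^{1/2}(\e z,s)$; one gets
\[
T_1=\int_{\Gamma_t} Z^2(s)\bigl(\eta_0+O(\e)\bigr)\,d\mathcal{H}^{N-1},\qquad \eta_0\triangleq\int_{\mathbb R}(\theta')^2\,dz>0.
\]
For $T_3$, the bound $1/\Lambda\leq\sqrt g\leq\Lambda$ from \eqref{boundjacobian} (and hence an analogous bound for $J$) immediately gives $T_3\sim \|\phi^\bot\|_{L^2(\Gamma_t(\delta))}^2$.

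\textbf{Step 4 (controlling the cross term, the main point).} The orthogonality \eqref{orthogonality} reads $\int_{-\delta}^\delta \phi^\bot\,\varphi(r/\e)\,\zeta(r)\,J^{1/2}(r,s)\,dr=0$, which differs from the $J$-weighted inner integral arising in $T_2$ by the factor $J^{1/2}-1=O(r)$. More precisely,
\[
\int_{-\delta}^{\delta}\varphi(r/\e)\,\zeta(r)\,\phi^\bot\,J\,dr=\int_{-\delta}^{\delta}\varphi(r/\e)\,\zeta(r)\,\phi^\bot\,J^{1/2}\,\bigl(J^{1/2}-1\bigr)\,dr.
\]
By Cauchy--Schwarz,
\[
\biggl|\int\varphi\zeta\phi^\bot(J^{1/2}-1)J^{1/2}dr\biggr|^2\lesssim \biggl(\int r^2\varphi^2(r/\e)\,dr\biggr)\biggl(\int(\phi^\bot)^2 J\,dr\biggr)=O(\e^3)\int(\phi^\bot)^2 J\,dr,
\]
after substituting $r=\e z$ and using the exponential decay of $\varphi$. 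Therefore, with the prefactor $\e^{-1/2}Z(s)$ absorbed,
\[
|T_2|\lesssim \e\,\|Z\|_{L^2(\Gamma_t)}\,\|\phi^\bot\|_{L^2(\Gamma_t(\delta))}.
\]

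\textbf{Step 5 (conclusion).} Combining Steps 3--4,
\[
\int_{\Gamma_t(\delta)}\phi^2\,dx=\eta_0\int_{\Gamma_t}Z^2\,d\mathcal{H}^{N-1}+\int_{\Gamma_t(\delta)}(\phi^\bot)^2\,dx+O(\e)\Bigl(\|Z\|_{L^2(\Gamma_t)}^2+\|\phi^\bot\|_{L^2(\Gamma_t(\delta))}^2\Bigr),
\]
where the $O(\e)$ error in Step 3 has been absorbed via Young's inequality with the $T_2$ bound. Choosing $\e_0$ small enough that the error term is dominated by, say, half of the main terms yields both inequalities in \eqref{decompose L2norm} with $\Lambda_5$ depending only on $\eta_0$ and the geometry.

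\textbf{Main obstacle.} The only nontrivial step is Step 4: the orthogonality is with respect to the wrong weight, and one needs to trade the residual $J^{1/2}-1=O(r)$ for an $\e$-power via the scaling $r=\e z$ together with the exponential concentration of $\varphi$ around $z=0$. Everything else is bookkeeping.
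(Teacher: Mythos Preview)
Your proof is correct, but you take a more laborious route than the paper for \eqref{decompose L2norm}. The paper avoids your ``main obstacle'' in Step~4 entirely by a one-line trick: since $J$ is bounded above and below on $\Gamma_t(\delta)$ (from \eqref{boundjacobian}), one has $J\gtrsim J^{1/2}$ and $J\lesssim J^{1/2}$ pointwise, so
\[
\int_{\Gamma_t(\delta)}\phi^2\,dx=\int_{\Gamma_t}\int_{-\delta}^{\delta}\phi^2 J\,dr\;\gtrsim\;\int_{\Gamma_t}\int_{-\delta}^{\delta}\phi^2 J^{1/2}\,dr.
\]
With the weight $J^{1/2}$, the cross term is \emph{exactly} zero by \eqref{orthogonality}, and the $\check\phi^2$ term equals $\eta(s)\int_{\Gamma_t}Z^2$ identically by the very definition \eqref{apptension}. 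Then Step~1 (which you and the paper do the same way) gives $\eta(s)=\eta_0+O(\e)$ and the inequality follows; the reverse direction is the same with $J\lesssim J^{1/2}$. Your approach instead keeps the weight $J$ and shows the cross term is $O(\e)$ by trading $J^{1/2}-1=O(r)$ for a power of $\e$ against the concentration of $\varphi(r/\e)$. This is sound and even gives a sharper asymptotic identity for $\int\phi^2$ than the paper states, but the extra precision is not needed here, and the paper's weight-swap is considerably shorter.
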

\begin{proof}
The asymptotic formula of $\eta$  follows from a change of variable together with \eqref{difference of kernal},
\begin{equation*}
\begin{split}
\eta(s)
&=\int_{-\frac{\delta}{\varepsilon}}^{\frac{\delta}{\varepsilon}}\big(\varphi(z)- \theta'(z)+ \theta'(z)\big)^2\zeta^2(\varepsilon z)J^{\frac{1}{2}}(\varepsilon z,s)dz
\\&=\int_{-\frac{\delta}{\varepsilon}}^{\frac{\delta}{\varepsilon}}\(\theta'(z)\)^2\zeta^2(\varepsilon z)J^{\frac{1}{2}}(\varepsilon z,s)dz+O(e^{-\frac C\e}).
\end{split}
\end{equation*}
Then expanding $\zeta^2(r)J^{\frac 12}(r,s)$ at $r=0$ and using \eqref{jacobian} lead to \eqref{tension}.   To prove \eqref{decompose L2norm}, it follows from  \eqref{fubini}, \eqref{boundjacobian} and  (\ref{orthogonality})   that
\begin{equation*}
\begin{split}
\int_{\Gamma_t(\delta)}\phi^2(x)dx
&=\int_{\Gamma_t}\int_{-\delta}^{\delta}\phi^2(r,s)J(r,s)dr\nonumber\\&\gtrsim\int_{\Gamma_t}\int_{-\delta}^{\delta}\phi^2(r,s)J^{\frac{1}{2}}(r,s)dr
\\&= \int_{\Gamma_t} Z^2(s)\(\e^{-1}\int_{-\delta}^{\delta}\varphi^2(\tfrac r\e)
\zeta^2(r)J^{\frac{1}{2}}(r,s)dr\)
+\int_{\Gamma_t}\int_{-\delta}^{\delta}\big(\phi^\bot\big)^2J^{\frac{1}{2}}(r,s)dr.
\end{split}
\end{equation*}
This together with  \eqref{apptension},  \eqref{tension} and \eqref{boundjacobian}    implies the first inequality in \eqref{decompose L2norm}. Reversing the above estimates gives the second inequality and the proof is completed.
\end{proof}
Decomposition \eqref{eq:decom1} is not convenient to manipulate as it involves the eigenfunction $\varphi$. Due to \eqref{difference of kernal}, we shall instead  decompose $\phi$ through $\theta'$. To this end, we set
$\rho(r)=\varepsilon^{-\frac{1}{2}}\big(\varphi(\tfrac{r}{\varepsilon})-\theta'(\tfrac{r}{\varepsilon})\big).$
Thanks to (\ref{difference of kernal}) there holds
\begin{equation}
  \rho(r),\rho'(r),\rho''(r)=O(e^{-\frac C\e}).\label{decompose difference}
\end{equation}
We rewrite \eqref{eq:decom1} by
\begin{equation}
  \phi(x)=\underbrace{\rho(r)Z(s)\zeta(r)}_{\triangleq\phi_e(r,s)}
  +\underbrace{\varepsilon^{-\frac{1}{2}} Z(s)\theta'(\tfrac{r}{\varepsilon})\zeta(r)}_{\triangleq\phi^\top(r,s)}+\phi^\bot(x),\label{decompose phi}
\end{equation}
according to which we can expand the terms on the left hand side of \eqref{spectral condition} by   
\begin{equation}\label{split1}
  \begin{split}
    \frac{1}{\e^2}\int_{\Omega}|\mathscr{L}_\e[\phi]|^2dx
=&\frac{1}{\e^2}\int_{\Omega}\(|\mathscr{L}_\e[\phi_e]|^2+|\mathscr{L}_\e[\phi^\top]|^2+|\mathscr{L}_\e[\phi^\bot]|^2\)dx\\
&+\frac{2}{\e^2}\int_{\Omega}\(\mathscr{L}_
\e[\phi_e]\mathscr{L}_\e[\phi^\top+\phi^\bot]+\mathscr{L}_\e[\phi^\top]
\mathscr{L}_\e[\phi^\bot]\)dx,\\
\frac{1}{\varepsilon^3}\int_{\Omega} f'''(\phi_a)\mu_a\phi^2dx
&=\frac{1}{\varepsilon^3}\int_{\Omega} f'''(\phi_a)\mu_a\((\phi_e)^2+(\phi^\bot)^2+(\phi^\top)^2\)dx\\
&+\frac{2}{\varepsilon^3}\int_{\Omega} f'''(\phi_a)\mu_a\(\phi_e(\phi^\top+\phi^\bot)+\phi^\bot\phi^\top \) dx.
  \end{split}
\end{equation}
Using (\ref{decompose difference}), the terms in the above formulas that include $\phi_e$ can be treated by
\begin{equation}\label{split2}
  \begin{split}
    &\frac{1}{\varepsilon^2}\int_{\Omega}|\mathscr{L}_\e[\phi_e]|^2dx
+\frac{2}{\varepsilon^2}\int_{\Omega}\mathscr{L}_\e[\phi^\bot+\phi^\top]\mathscr{L}_\e[\phi_e]dx
\\
&+\frac{1}{\varepsilon^3}\int_{\Omega} f'''(\phi_a)\mu_a\big(\phi_e\big)^2dx+\frac{2}{\varepsilon^3}\int_{\Omega} f'''(\phi_a)\mu_a\phi_e(\phi^\top+\phi^\bot) dx\\&\geq O(e^{-\frac{C}{\varepsilon}})\sup_{|r|\leq \delta}\int_{\Gamma_t} |\Delta_{\G}Z(s)|^2+O(e^{-\frac{C}{\varepsilon}})\int_{\Gamma_t}Z^2(s)
+O(e^{-\frac{C}{\varepsilon}})\int_{\Omega}
|\mathscr{L}_\e[\phi^\bot]|^2dx.
  \end{split}
\end{equation}
Here    $\sup_{|r|\leq \delta}$ appears because the coefficients of $\Delta_\G$ depend on $r$ smoothly, according to \eqref{eq:laplace1}. Note that  $O(e^{-\frac{C}{\varepsilon}})$ does not have a fixed sign.
Combining \eqref{split2} with \eqref{split1} yields
\begin{align}
&\frac{1}{\varepsilon^2}\int_{\Omega}|\mathscr{L}_\e[\phi]|^2dx+\frac{1}{\varepsilon^3}\int_{\Omega} f'''(\phi_a)\mu_a\phi^2dx\nonumber\\&\geq O(e^{-\frac{C}{\varepsilon}})\sup_{|r|\leq \delta}\int_{\Gamma_t} \big(\Delta_{\G}Z(s)\big)^2+O(e^{-\frac{C}{\varepsilon}})\int_{\Gamma_t}Z^2(s)
+I_1+I_2+I_3,\label{decompose energy}
\end{align}
where $I_1,I_2,I_3$ are defined by
\begin{subequations}\label{nasty}
  \begin{align}
 I_1&=\frac{1}{\varepsilon^2}
 \int_{\Omega}|\mathscr{L}_\e[\phi^\top]|^2dx+\frac{1}{\varepsilon^3}\int_{\Omega} f'''(\phi_a)\mu_a\big(\phi^\top\big)^2dx,\label{decompose energy-1}\\
 I_2&=\frac{2}{\varepsilon^2}\int_{\Omega} \mathscr{L}_\e[\phi^\top]\mathscr{L}_\e[\phi^\bot] dx+\frac{2}{\varepsilon^3}\int_{\Omega} f'''(\phi_a)\mu_a\phi^\top\phi^\bot dx,\label{decompose energy-2}\\
  I_3&=\frac{1+O(e^{-\frac{C}{\varepsilon}})}{\varepsilon^2}\int_{\Omega}|\mathscr{L}_\e[\phi^\bot]|^2dx+\frac{1}{\varepsilon^3}\int_{\Omega} f'''(\phi_a)\mu_a\big(\phi^\bot\big)^2dx\triangleq I_{31}+I_{32}.\label{decompose energy-3}
\end{align}
\end{subequations}
Theorem \ref{spectral theorem} will be a consequence of the estimates of \eqref{decompose energy-1}-\eqref{decompose energy-3}, which will be done in the following three sections.


%
\section{Spectrum Condition: Estimates of kernel terms.}\label{section:spectrum}


  The main result of this section is  stated below concerning \eqref{decompose energy-1}. The proof is given in the end of this section after establishing a few technical lemmas.
 \begin{proposition}\label{propofI1}
 There exists $\Lambda_4>0$  and  generic constant $C>0,\e_0>0$ such that 
 \begin{equation}\label{I1:lower bound}
I_1 \geq \Lambda_4\|Z\|^2_{H^2(\Gamma_t)} -C\|Z\|^2_{L^2(\Gamma_t)},\qquad\forall \e\in (0,\e_0).
\end{equation}
 \end{proposition}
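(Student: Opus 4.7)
The plan is to compute $\mathscr L_\e[\phi^\top]$ explicitly in the fast variable $z=r/\varepsilon$, exploit the ODE \eqref{ODE-0} to remove the most singular contributions, and then extract the $H^2(\Gamma_t)$ coercivity from the surviving tangential Laplacian. Substituting $\phi^\top=\varepsilon^{-1/2}Z(s)\theta'(r/\varepsilon)\zeta(r)$ into \eqref{omegadef} and using the decomposition \eqref{eq:laplace1} together with $\partial_r\theta'(r/\varepsilon)=\varepsilon^{-1}\theta''(z)$, the Taylor expansion $f''(\phi_a)=f''(\theta(z))+\varepsilon^2f'''(\theta)\widetilde\phi^{(2)}+O(\varepsilon^3)$ from Proposition \ref{construction of app solution} gives, on $\{\zeta=1\}$,
\begin{align*}
\mathscr L_\e[\phi^\top]=\varepsilon^{-3/2}Z\bigl(f''(\theta)\theta'-\theta'''\bigr)-\varepsilon^{1/2}\theta'(z)\Delta_\Gamma Z-\varepsilon^{-1/2}Z\theta''(z)\,\partial_r\ln\sqrt g+\varepsilon^{1/2}Zf'''(\theta)\widetilde\phi^{(2)}\theta'+R,
\end{align*}
where $R$ gathers the $O(\varepsilon^{3/2})$ inner corrections and exponentially small cut-off contributions. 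By \eqref{ODE-0} the $\varepsilon^{-3/2}$ bracket vanishes identically, leaving three genuine pieces to analyse.

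Squaring, dividing by $\varepsilon^2$, and integrating via \eqref{fubini} with the change of variable $r=\varepsilon z$, the self-pairing of the tangential piece $-\varepsilon^{1/2}\theta'\Delta_\Gamma Z$ yields
\begin{align*}
\Bigl(\int_{\mathbb R}(\theta')^2\,dz\Bigr)\int_{\Gamma_t}\bigl(\Delta_\Gamma\big|_{r=0}Z\bigr)^2\,d\mathcal H^{N-1}+O(\varepsilon)\|Z\|_{H^2(\Gamma_t)}^2,
\end{align*}
which by the elliptic regularity \eqref{laplace1} bounds $c_0\|Z\|_{H^2(\Gamma_t)}^2-C\|Z\|_{L^2(\Gamma_t)}^2$ from below. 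The main obstacle is to show that all other contributions, in particular the highly singular self-pairing of $\varepsilon^{-1/2}Z\theta''\partial_r\ln\sqrt g$ (which by \eqref{eq:2.1} produces a term of order $\varepsilon^{-2}\int_{\Gamma_t}Z^2 h^2\int(\theta'')^2\,dz$), and the cross terms among the three surviving pieces, cancel against the second integrand $\varepsilon^{-3}f'''(\phi_a)\mu_a(\phi^\top)^2$. Using the expansion $\mu_a=\sum_{i=0}^{2}\varepsilon^i\widetilde\mu^{(i)}(r/\varepsilon,x,t)+O(\varepsilon^3)$ from \eqref{mu12}, Taylor-expanding $\Delta d^{(i)}$ and $J(r,s)$ in $r$ near $0$ via \eqref{eq:2.1}, \eqref{jacobian}, and invoking the key algebraic identities $\int_{\mathbb R}f'''(\theta)(\theta')^3\,dz=0$ and $\int_{\mathbb R}zf'''(\theta)(\theta')^3\,dz=2\int_{\mathbb R}(\theta'')^2\,dz$ (both consequences of $f'''(\theta)\theta'=[f''(\theta)]'$ and $\theta'''=f''(\theta)\theta'$ combined with integration by parts), I expect the singular contributions at orders $\varepsilon^{-2}$ and $\varepsilon^{-1}$ to cancel exactly. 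This reflects precisely the role of the designed coefficients $(\widetilde\phi^{(2)},\widetilde\mu^{(1)},\widetilde\mu^{(2)})$ from \eqref{phi12}-\eqref{mu12}, and the bookkeeping of weighted moments $\int z^j\theta^{(a)}(z)\theta^{(b)}(z)\,dz$ that carries out the cancellations is what the paper calls the ``cancellation lemmas''.

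Once the cancellations are in place, any remaining terms are either of order $\varepsilon^\alpha$ with $\alpha>0$ (absorbable into the coercive leading piece for small $\varepsilon$) or have the form $C\|Z\|_{L^2(\Gamma_t)}^2$; possible first-derivative remainders $\|\nabla_\Gamma Z\|_{L^2(\Gamma_t)}^2$ are handled by the interpolation $\|\nabla_\Gamma Z\|_{L^2}^2\leq\eta\|Z\|_{H^2(\Gamma_t)}^2+C_\eta\|Z\|_{L^2(\Gamma_t)}^2$ with $\eta$ small. Combining these gives \eqref{I1:lower bound} with some $\Lambda_4>0$.
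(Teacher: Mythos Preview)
Your proposal is essentially the paper's approach: expand $\mathscr L_\e[\phi^\top]$ via \eqref{eq:laplace1}, extract the coercive $(\Delta_\Gamma Z)^2$ contribution, and verify that the $\varepsilon^{-2}$ singular pieces from $(\varepsilon^{-1/2}Z\theta''h)^2$ and from $\varepsilon^{-3}f'''(\phi_a)\mu_a(\phi^\top)^2$ cancel through the identity $\int_{\mathbb R}\bigl((\theta'')^2-3z\theta(\theta')^3\bigr)\,dz=0$ (equivalent to your $\int zf'''(\theta)(\theta')^3\,dz=2\int(\theta'')^2\,dz$). One point of precision: at the intermediate orders the paper does not rely on \emph{exact} algebraic cancellation but rather on parity---Lemma~\ref{reduce2} and Lemma~\ref{evenodd1} show that the relevant $z$-integrands are odd (e.g.\ $\widetilde\phi^{(2)}\widetilde\mu^{(0)}$, $\gamma_\ell(z)\theta(\theta')^3$), which gains an extra factor of $\varepsilon$ and reduces those contributions to $O(1)\int_{\Gamma_t}Z^2$; this is also what handles the cross term $\int Z\,\Delta_\Gamma Z\,\theta'\theta''\,\partial_r\ln\sqrt g$ in $I_{12}$, which you pass over.
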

We shall decompose $I_1$ into several parts, and treat them in a series   of lemmas.   To start with, we need the decomposition of Euclidean Laplace operator. Recall \eqref{eq:laplace1} and \eqref{decompose phi}, we can expand $\Delta\phi^\top$ by
\begin{equation}\label{eq:laplace2}
\begin{split}
\Delta\phi^\top&=\p_r^2\phi^\top+\p_r (\ln \sqrt{g})\partial_{r}\phi^\top+\Delta_{\Gamma}\phi^\top
\\&=\varepsilon^{-\frac{5}{2}} Z(s)\theta'''(z)\zeta(r)+\varepsilon^{-\frac{3}{2}}\p_r (\ln \sqrt{g}) Z(s)\theta''(z)\zeta(r)
\\&\quad+\varepsilon^{-\frac{1}{2}}\Delta_{\Gamma}Z(s) \theta'(z)\zeta(r)+Z(s)A(r,s),~\text{with}~z=r(x,t)/\e
\end{split}
\end{equation}
and   $\Delta_{\Gamma}$ is defined by  \eqref{eq:laplace1}. Moreover, according to \eqref{cutoff},
\begin{align}
A(r,s)=2\varepsilon^{-\frac{3}{2}} \theta''(\tfrac r\e)\zeta'(r)+\varepsilon^{-\frac{1}{2}} \theta'(\tfrac r\e)\zeta''(r)+\varepsilon^{-\frac{1}{2}} \p_r (\ln \sqrt{g})\theta'(\tfrac r\e)\zeta'(r)= O(e^{-\frac{C}{\varepsilon}}).\label{zeta term}
\end{align}

\begin{lemma}
 The following expansions hold in $\G_t(\delta)$ with $z=r(x,t)/\e$:
  \begin{subequations}
    \begin{align}
\label{good  term}
\theta'''(z)-&f''(\phi_a)\theta'(z)=-\varepsilon^2f'''(\theta)\widetilde{\phi}^{(2)}\theta'(z)+
\theta'(z)O(\varepsilon^3),\\
\label{expansion2}
  f'''(\phi_a)\mu_a&= -6h(s)
    \theta\theta'+3\varepsilon h^2(s) z\theta\theta'+6\varepsilon^2\big(\theta\widetilde{\mu}^{(2)}+
\widetilde{\phi}^{(2)}\widetilde{\mu}^{(0)}\big)\nonumber\\&-6\varepsilon^2\theta\theta'\( a(s)z^2-\Delta d^{(2)}
- c(s) z^2+  zD^{(1)}- h(s)b(s) z^2\)
+O(\varepsilon^3).
    \end{align}
  \end{subequations}
\end{lemma}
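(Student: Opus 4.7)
The plan is to combine Taylor expansion of $f''$ and $f'''$ about the leading profile $\theta$ with the inner expansion of $(\phi_a,\mu_a)$ recorded in \eqref{eq:expan phia}, \eqref{phi12} and \eqref{mu12}. A pleasant feature is that $f(u)=\tfrac14(u^2-1)^2$ gives $f'''(u)=6u$, $f^{(4)}\equiv 6$ and $f^{(k)}\equiv 0$ for $k\geq 5$, so every Taylor series involved terminates after finitely many terms and no subtle remainder estimates are needed.

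For \eqref{good term}, differentiating the profile equation \eqref{ODE-0} once yields $\theta'''(z)=f''(\theta(z))\theta'(z)$, so that
\[
\theta'''(z)-f''(\phi_a)\theta'(z)=\bigl(f''(\theta)-f''(\phi_a)\bigr)\theta'(z).
\]
Proposition \ref{construction of app solution} together with $\widetilde\phi^{(1)}=0$ gives $\phi_a-\theta=\varepsilon^2\widetilde\phi^{(2)}+O(\varepsilon^3)$; a first-order Taylor expansion then produces $f''(\phi_a)-f''(\theta)=\varepsilon^2 f'''(\theta)\widetilde\phi^{(2)}+O(\varepsilon^3)$, which immediately yields \eqref{good term}.

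For \eqref{expansion2}, I will write $f'''(\phi_a)=6\phi_a=6\theta+6\varepsilon^2\widetilde\phi^{(2)}+O(\varepsilon^3)$ and $\mu_a=\widetilde\mu^{(0)}+\varepsilon\widetilde\mu^{(1)}+\varepsilon^2\widetilde\mu^{(2)}+O(\varepsilon^3)$, then multiply and collect by powers of $\varepsilon$. Each smooth coefficient $\Delta d^{(i)}(x,t)$ appearing through \eqref{mu12} must itself be Taylor-expanded in the normal variable $r=\varepsilon z$ about $r=0$, using \eqref{eq:2.1} to identify the coefficients $h(s),b(s),a(s),c(s)$ and the definition \eqref{def:D0D1} together with \eqref{equ:gradient d} to evaluate $D^{(0)}$ and $D^{(1)}$ on $\Gamma_t$. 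The order-$\varepsilon^0$ contribution $-6\Delta d^{(0)}\theta\theta'$ reduces to $-6h(s)\theta\theta'$ upon restricting to $r=0$; the order-$\varepsilon^1$ piece comes from combining $6\theta\widetilde\mu^{(1)}=6\theta\theta'\bigl(-\Delta d^{(1)}+zD^{(0)}\bigr)$ with the linear-in-$r$ tail of $-6\Delta d^{(0)}\theta\theta'$, and the ensuing cross-cancellations leave only the single term $3\varepsilon h^2(s)z\theta\theta'$. At order $\varepsilon^2$ the contributions are kept semi-explicit as $6\varepsilon^2(\theta\widetilde\mu^{(2)}+\widetilde\phi^{(2)}\widetilde\mu^{(0)})$ together with the bracket $-6\varepsilon^2\theta\theta'\bigl(az^2-\Delta d^{(2)}-cz^2+zD^{(1)}-hbz^2\bigr)$ collecting the remaining Taylor tails.

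The main obstacle is precisely this order-$\varepsilon^2$ bookkeeping: four distinct sources contribute simultaneously (the quadratic tail of $\Delta d^{(0)}$, the linear tails of $\Delta d^{(1)}$ and $D^{(0)}$, and the boundary values $\Delta d^{(2)}|_{r=0}$, $D^{(1)}|_{r=0}$), each producing one of the five summands in the bracket, and the matching requires tracking the identifications $\partial_r\Delta r=b(s)+c(s)r+O(r^2)$ and $\Delta r=h(s)+b(s)r+a(s)r^2+O(r^3)$ from \eqref{eq:2.1} together with the explicit form \eqref{def:D0D1} of $D^{(i)}$. Nothing is conceptually deep beyond the expansions of $f$, but the sign conventions and the careful identification of geometric coefficients make the algebra lengthy; the polynomial structure of $f$ ensures all terms of order $\varepsilon^3$ and higher can be absorbed into the final $O(\varepsilon^3)$ remainder without further analysis.
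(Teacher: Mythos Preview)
Your proposal is correct in spirit and uses the same ingredients as the paper's proof, but there is one organizational point where your description is imprecise and where the paper proceeds differently.

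You write that ``each smooth coefficient $\Delta d^{(i)}(x,t)$ \ldots\ must itself be Taylor-expanded in the normal variable $r=\varepsilon z$ about $r=0$, using \eqref{eq:2.1} to identify the coefficients $h(s),b(s),a(s),c(s)$.'' But \eqref{eq:2.1} is the expansion of $\Delta r$ (the Laplacian of the signed distance to the \emph{$\varepsilon$-dependent} surface $\Gamma_t$), not of the individual $\Delta d^{(i)}$; the Taylor coefficients of $\Delta d^{(0)}$ in $r$ are not $h(s),b(s),a(s)$ on the nose. The paper handles this by first invoking \eqref{app distance} and \eqref{d} to \emph{recombine} the inner-expansion coefficients back into $r$: one writes
\[
-6\theta\theta'\bigl(\Delta d^{(0)}+\varepsilon\Delta d^{(1)}\bigr)= -6\theta\theta'\Delta r+6\varepsilon^2\theta\theta'\Delta d^{(2)}+O(\varepsilon^3),
\]
and similarly
\[
6\varepsilon z\theta\theta' D^{(0)}=6\varepsilon z\theta\theta'\,\nabla\Delta r\cdot\nabla r+3\varepsilon z\theta\theta'(\Delta r)^2-6\varepsilon^2 z\theta\theta' D^{(1)}+O(\varepsilon^3),
\]
and \emph{only then} applies \eqref{eq:2.1}. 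This makes the $\varepsilon^1$ cancellation you describe transparent (the $b(s)$-terms from $\Delta r$ and from $\nabla\Delta r\cdot\nabla r$ cancel, leaving $3h^2 z\theta\theta'$), and it produces directly the $\varepsilon^2$ bracket $-6\theta\theta'(az^2-\Delta d^{(2)}-cz^2+zD^{(1)}-hbz^2)$ without having to track separate Taylor expansions of $\Delta d^{(0)}$, $\Delta d^{(1)}$ and $D^{(0)}$ in $r$. Your individual-expansion route could be pushed through, but you would need to supply those coefficients from scratch rather than read them off \eqref{eq:2.1}; the paper's recombination via \eqref{app distance} is the missing step that makes the bookkeeping clean.
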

\begin{proof}
Recall that $f(u)=\tfrac{1}{4}(u^2-1)^2$ and that  $\theta''=f'(\theta)$. Then \eqref{good  term} follows from  \eqref{eq:expan phia} and the Taylor expansion.
To prove \eqref{expansion2} we need
$
f'''(\phi_a)=6\theta+6\varepsilon^2\widetilde{\phi}^{(2)}+
O(\varepsilon^3),$
following from  \eqref{eq:expan phia} and $f'''(u)=6u$.
Then it follows from \eqref{eq:expan phia},  \eqref{mu12} and \eqref{def:D0D1} that
\begin{align*}
f'''(\phi_a)\mu_a&=6\theta\widetilde{\mu}^{(0)}+
6\varepsilon\theta\widetilde{\mu}^{(1)}
+6\varepsilon^2\(\theta\widetilde{\mu}^{(2)}+
\widetilde{\phi}^{(2)}\widetilde{\mu}^{(0)}\)+O(\varepsilon^3)
\nonumber\\&=
-6\theta\theta'\big(\Delta d^{(0)}+\varepsilon\Delta d^{(1)}\big)+6\varepsilon z\theta\theta'D^{(0)}+6\varepsilon^2\big
(\theta\widetilde{\mu}^{(2)}+\widetilde{\phi}^{(2)}\widetilde{\mu}^{(0)}
\big)+O(\varepsilon^3)\\
&= -6\theta\theta'\Delta r+6\varepsilon^2\theta\theta'\Delta d^{(2)}+6\varepsilon z\theta\theta'\nabla\Delta r \cdot\nabla r+3\varepsilon z\theta\theta'(\Delta r)^2
\nonumber\\&\quad-6\varepsilon^2 z\theta\theta'D^{(1)}+
6\varepsilon^2\big(\theta\widetilde{\mu}^{(2)}+
\widetilde{\phi}^{(2)}\widetilde{\mu}^{(0)}\big)
+ O(\e^3).
\end{align*}
Note that in the last step, we use \eqref{app distance} to express the leading order terms  in term of $r$. Then using \eqref{eq:2.1}, we can expand various terms about $r$ and then replace them by $z=r/\e$. These   lead to the following formula which implies \eqref{expansion2}
\begin{equation*}
\begin{split}
  f'''(\phi_a)\mu_a&= -6h(s)
    \theta\theta'+3\varepsilon h^2(s) z\theta\theta'-6\varepsilon^2 a(s)z^2\theta\theta'+6\varepsilon^2\theta\theta'\Delta d^{(2)}
+6\varepsilon^2 c(s) z^2\theta\theta'
\\&\qquad-6\varepsilon^2 z\theta\theta'D^{(1)}
+6\varepsilon^2 h(s)b(s) z^2\theta\theta'
+6\varepsilon^2\big(\theta\widetilde{\mu}^{(2)}+
\widetilde{\phi}^{(2)}\widetilde{\mu}^{(0)}\big)
+O(\varepsilon^3).
  \end{split}
  \end{equation*}
\end{proof}

To treat the integrals in \eqref{decompose energy-1}, we need the following  three lemmas.  \begin{lemma}\label{reduce2}
Assume $\omega\in C^1(\overline{\Gamma_t(\delta)})$ and  $\tilde{\theta}$ satisfies
 \begin{equation}\label{optimalfileclass}
 \tilde{\theta}(z)\in L^1(\mathbb{R}),~ \tilde{\theta}(-z)=-\tilde{\theta}(z),~\text{and}~\lim_{z\to\pm \infty}\tilde{\theta}(z)=0~\text{exponentially}.
 \end{equation}
Then   there exists an $\e$-independent constant $C$ such   that
  \begin{equation*}
    \left|\int_{\Gamma_t} \(\int_{-\delta}^\delta u(s)\omega(r,s) \tilde{\theta}(\tfrac r\e) dr\)\right|\leq C \e^2 \int_{\Gamma_t} |u(s)|,\quad \forall u\in L^1(\G_t).
  \end{equation*}

\end{lemma}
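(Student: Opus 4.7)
The plan is to reduce the inner integral in $r$ to an $O(\varepsilon^2)$ quantity uniformly in $s$ by combining the change of variable $z=r/\varepsilon$ with the oddness of $\tilde{\theta}$, and only then integrate in $s$ against $|u(s)|$. The key observation is that the leading contribution, after rescaling, is the product of $\omega(0,s)$ with $\int \tilde{\theta}(z)\,dz$, which vanishes identically by the symmetry of $\tilde{\theta}$; the next contribution picks up an additional factor of $\varepsilon$ from the $C^1$ regularity of $\omega$.

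More precisely, the first step is to perform the substitution $r=\varepsilon z$ in the inner integral, which yields
\begin{equation*}
\int_{-\delta}^\delta \omega(r,s)\,\tilde{\theta}(r/\varepsilon)\,dr
=\varepsilon \int_{-\delta/\varepsilon}^{\delta/\varepsilon}\omega(\varepsilon z,s)\,\tilde{\theta}(z)\,dz.
\end{equation*}
Next, I would split $\omega(\varepsilon z,s)=\omega(0,s)+\bigl[\omega(\varepsilon z,s)-\omega(0,s)\bigr]$. Because $\tilde{\theta}$ is odd and the interval $(-\delta/\varepsilon,\delta/\varepsilon)$ is symmetric about $0$, the contribution of $\omega(0,s)$ is exactly zero. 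For the remainder, the $C^1$ hypothesis on $\omega$ gives $|\omega(\varepsilon z,s)-\omega(0,s)|\leq \|\partial_r\omega\|_{L^\infty(\Gamma_t(\delta))}\,\varepsilon|z|$, and the exponential decay assumption on $\tilde{\theta}$ ensures $\int_{\mathbb{R}}|z\,\tilde{\theta}(z)|\,dz<\infty$. Putting this together,
\begin{equation*}
\left|\int_{-\delta}^\delta \omega(r,s)\,\tilde{\theta}(r/\varepsilon)\,dr\right|
\leq \varepsilon^2\,\|\partial_r\omega\|_{L^\infty(\Gamma_t(\delta))}\int_{\mathbb{R}}|z\,\tilde{\theta}(z)|\,dz
\leq C\varepsilon^2,
\end{equation*}
with $C$ independent of $\varepsilon$ and $s$. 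The final step is simply to multiply by $|u(s)|$ and integrate over $\Gamma_t$ via \eqref{fubini}, producing the claimed estimate.

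There is no real obstacle here; the lemma is essentially a rescaled moment estimate dressed up as a surface integral. The only minor subtlety is that one must cut off the $z$-integration at $\pm\delta/\varepsilon$ rather than $\pm\infty$, but since the leading odd-integral vanishes over any symmetric interval, no exponentially small correction is even incurred at this stage, and the $C^1$-remainder is controlled by the first moment of $\tilde{\theta}$ over $\mathbb{R}$, which is finite by the exponential decay in \eqref{optimalfileclass}.
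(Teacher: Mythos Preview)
Your proposal is correct and follows essentially the same approach as the paper's proof: both use the oddness of $\tilde{\theta}$ on a symmetric interval to kill the $\omega(0,s)$ contribution, and then exploit the $C^1$ regularity of $\omega$ together with the finite first moment of $\tilde{\theta}$ to extract the second factor of $\varepsilon$. The only cosmetic difference is the order of operations---the paper applies the mean-value expansion $\omega(r,s)=\omega(0,s)+r\,\partial_r\omega(\xi(r),s)$ before the change of variable $r=\varepsilon z$, whereas you rescale first and then split---but the substance is identical.
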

\begin{proof}
Expanding  $\omega(r,s)$ in terms of $r$ and using the oddness of $\tilde{\theta}$,
  \begin{equation*}
    \begin{split}
      \int_{\Gamma_t} \int_{-\delta}^\delta u(s)\omega(r,s) \tilde{\theta}(\tfrac r\e) dr
    =& \int_{\Gamma_t} \int_{-\delta}^\delta u(s)\(\omega(0,s)+r\p_r \omega(\xi(r),s)\) \tilde{\theta}(\tfrac r\e) dr\\
    =&\e\int_{\Gamma_t} \int_{-\delta}^\delta u(s)\p_r \omega(\xi(r),s)  \tilde{\theta}(\tfrac r\e)\tfrac r\e dr.
    \end{split}
  \end{equation*}
  This leads to the desired result after a change of variable $r=\e z$.
\end{proof}
The next lemma handles integrals involving $\Delta_{\G}$:
\begin{lemma}\label{roperatorlemma}
Assume $\tilde{\theta}(z)$ satisfies \eqref{optimalfileclass} and $\omega\in C^1(\overline{\Gamma_t(\delta)})$. Then for any $ u\in L^2(\Gamma_t)$ and $v\in H^2(\Gamma_t)$,
  \begin{equation*}
  \begin{split}
  &\left|\int_{\Gamma_t} \int_{-\delta}^\delta  u(s)\Delta_{\G}v(s) \tilde{\theta}(\tfrac r\e) \omega(r,s) dr\right|\\
  \leq  &\nu\e^2\int_{\Gamma_t} \(\big|\Delta_{\G}\big|_{r=0}v(s)\big|^2+|v(s)|^2\)+ \tfrac{C \e^2}{\nu}  \int_{\Gamma_t}    u^2(s) , \quad\forall \nu>\e^{\frac 1{100}},
  \end{split}
\end{equation*}
where the constant $C$ depends on $\omega, \tilde{\theta}$ but not on $\varepsilon,\nu, u$ or $v$.
\end{lemma}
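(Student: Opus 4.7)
My plan is to reduce matters to Lemma \ref{reduce2} after peeling off the leading-order Laplace--Beltrami operator on $\G_t$. Using the identity \eqref{roperator}, I would decompose
\[
\Delta_{\G} v(r,s) = \Delta_{\G}\big|_{r=0} v(s) + r\,(\RR_{\G} v)(r,s),
\]
and split the integral under consideration as $I = A + B$ according to the two summands. The two pieces are structurally very different: in $A$ the function of $s$ appearing next to $u(s)\omega(r,s)\tilde\theta(r/\e)$ is independent of $r$, whereas in $B$ it genuinely depends on $r$ and is paired with an extra factor of $r$.

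For $A$, the product $u(s)\,\Delta_{\G}\big|_{r=0}v(s)$ depends only on $s$, so Lemma \ref{reduce2} applies directly with ``$u$'' replaced by this product, yielding
\[
|A|\;\le\; C\e^2 \int_{\G_t}\!\big|u(s)\,\Delta_{\G}\big|_{r=0}v(s)\big|\,d\mathcal{H}^{N-1}.
\]
Cauchy--Schwarz on $\G_t$ followed by Young's inequality with parameter $\nu$ then gives $|A|\le \tfrac{\nu}{2}\e^2\big\|\Delta_{\G}\big|_{r=0}v\big\|_{L^2(\G_t)}^2 + \tfrac{C}{\nu}\e^2\|u\|_{L^2(\G_t)}^2$, which is already in the desired form.

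The main obstacle is the term $B$: the prefactor $r\,\tilde\theta(r/\e)$ is \emph{even} in $r$, so the oddness cancellation that powered Lemma \ref{reduce2} is no longer available, and $(\RR_{\G}v)(r,s)$ genuinely depends on $r$ so Lemma \ref{reduce2} cannot be invoked as a black box. Instead I would extract the missing power of $\e$ from the crude rescaling bound
\[
\int_{-\delta}^\delta |r\,\tilde\theta(r/\e)|\,dr = \e^2\!\!\int_{-\delta/\e}^{\delta/\e}|z\,\tilde\theta(z)|\,dz \le C\e^2,
\]
which only relies on the exponential decay of $\tilde\theta$. Combined with Cauchy--Schwarz in $s$ and the bound $\|(\RR_{\G}v)(r,\cdot)\|_{L^2(\G_t)}\lesssim \|v\|_{H^2(\G_t)}$ uniformly in $|r|\le\delta$ (which follows from \eqref{roperatorest} and the smoothness in $r$ of the coefficients of $\RR_{\G}$), this gives $|B|\le C\e^2\|u\|_{L^2(\G_t)}\|v\|_{H^2(\G_t)}$. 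A second application of Young's inequality and a final use of \eqref{roperatorest} to bound $\|v\|_{H^2(\G_t)}^2 \lesssim \big\|\Delta_{\G}\big|_{r=0}v\big\|_{L^2(\G_t)}^2 + \|v\|_{L^2(\G_t)}^2$ produce a contribution of the same shape as the estimate for $|A|$. Summing the two bounds concludes the proof. The hypothesis $\nu > \e^{1/100}$ does not appear essential for the argument itself; it is presumably imposed so that the error term $C\e^2/\nu$ remains a controllable power of $\e$ when this lemma is invoked in the later sections.
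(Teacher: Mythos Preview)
Your proof is correct and follows essentially the same route as the paper: both split via $\Delta_{\G} = \Delta_{\G}\big|_{r=0} + r\,\RR_{\G}$, handle the first piece by Lemma~\ref{reduce2} plus Young's inequality, and extract the $\e^2$ from the second piece via the rescaling bound $\int_{-\delta}^\delta |r\,\tilde\theta(r/\e)|\,dr = O(\e^2)$ together with \eqref{roperatorest}. Your remark on the hypothesis $\nu > \e^{1/100}$ is also consistent with the paper's convention \eqref{cauchy-schwarz}.
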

\begin{proof}
Recall \eqref{roperator},  we have the  expansion
$\Delta_{\G}= \Delta_{\G}\big|_{r=0}+r   \RR_{\G}$.
So
  \begin{equation*}
  \begin{split}
  &\int_{\Gamma_t} \int_{-\delta}^\delta  u(s)\Delta_{\G}v(s) \tilde{\theta}(\tfrac r\e) \omega(r,s) dr\\
  =&\int_{\Gamma_t} \int_{-\delta}^\delta  u(s)\big(\Delta_{\G}\big|_{r=0}v(s)\big) \tilde{\theta}(\tfrac r\e) \omega(r,s) dr+\e\int_{\Gamma_t} \int_{-\delta}^\delta  u(s) \RR_{\G} v(s)\(\tfrac r\e\tilde{\theta}(\tfrac r\e)\) \omega(r,s) dr.
  \end{split}
\end{equation*}

The first integral can be handled by Lemma \ref{reduce2}: for any $\nu>\e^{\frac 1{100}}$,
\begin{align*}
 \left| \int_{\Gamma_t} \int_{-\delta}^\delta  u(s)\big(\Delta_{\G}\big|_{r=0}v(s)\big) \tilde{\theta}(\tfrac r\e) \omega(r,s) dr\right|&\leq   \frac{\nu \e^2}2 \int_{\Gamma_t} \big(\Delta_{\G}\big|_{r=0}v\big)^2 + C\e^2 \nu^{-1}\int_{\Gamma_t} u^2 .
\end{align*}

For  the second one, we use \eqref{roperatorest} and  the Cauchy-Schwarz inequality to  yield
   \begin{equation*}
  \begin{split}
  \left|\int_{\Gamma_t} \int_{-\delta}^\delta  u(s) \RR_{\G} v(s)\(\tfrac r\e\tilde{\theta}(\tfrac r\e)\) \omega(r,s) dr\right|
  \leq \frac{\nu \e}2 \int_{\Gamma_t} \(\big(\Delta_{\G}\big|_{r=0}v\big)^2+|v|^2\) + \e C\nu^{-1}\int_{\Gamma_t} u^2,
  \end{split}
\end{equation*}
where $C$ depends on $\omega,\tilde{\theta}$.
The above three estimates  together imply  the desired result.
\end{proof}

\begin{lemma}
Assume  $\tilde{\theta}(z)$ satisfies \eqref{optimalfileclass}.  Then there exist $\Lambda_1,\Lambda_2>0$  such that, for every $\e<\e_0$ and every $v\in H^2(\Gamma_t)$, the following two inequalities hold
  \begin{subequations}
    \begin{align}
      \label{laplace2}
 &\Lambda_1 \e \int_{\Gamma_t} \big(\Delta_{\G}\big|_{r=0}v(s)\big)^2 \leq     \int_{\Gamma_t}\int_{-\delta}^{\delta} |\Delta_{\G}v(s)|^2
      \tilde{\theta}^2(\tfrac r\e)J(r,s)dr+\e\int_{\Gamma_t} v^2(s),\\
      &\int_{\Gamma_t}\int_{-\delta}^{\delta} |\Delta_{\G}v(s)|^2
      \tilde{\theta}^2(\tfrac r\e)J(r,s)dr\leq  \Lambda_2 \e\int_{\Gamma_t} \(v^2(s)+ \big(\Delta_{\G}\big|_{r=0}v\big)^2\).\label{laplace3}
    \end{align}
  \end{subequations}
\end{lemma}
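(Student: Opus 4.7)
The plan is to reduce both inequalities to the decomposition $\Delta_{\Gamma} = \Delta_{\Gamma}|_{r=0} + r\mathscr{R}_{\Gamma}$ from \eqref{roperator}, and then to exploit the scaling properties of $\tilde{\theta}^2(r/\varepsilon)$ through a change of variable $r = \varepsilon z$. Two facts drive everything: first, because $\tilde{\theta}$ decays exponentially at infinity,
\begin{equation*}
\int_{-\delta}^{\delta}\tilde{\theta}^2(\tfrac{r}{\varepsilon})J(r,s)\,dr = \varepsilon\int_{-\delta/\varepsilon}^{\delta/\varepsilon}\tilde{\theta}^2(z)J(\varepsilon z,s)\,dz = \varepsilon\,c_0(s) + O(\varepsilon^2),
\end{equation*}
where $c_0(s) = \int_{\mathbb{R}}\tilde{\theta}^2(z)\,dz \cdot J(0,s) > 0$ is uniformly bounded away from $0$ by \eqref{boundjacobian}; second,
\begin{equation*}
\int_{-\delta}^{\delta} r^2\,\tilde{\theta}^2(\tfrac{r}{\varepsilon})J(r,s)\,dr = \varepsilon^3\int_{-\delta/\varepsilon}^{\delta/\varepsilon} z^2\tilde{\theta}^2(z)J(\varepsilon z,s)\,dz = O(\varepsilon^3).
\end{equation*}

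For the upper bound \eqref{laplace3}, I would expand $|\Delta_{\Gamma}v|^2 \leq 2(\Delta_{\Gamma}|_{r=0}v)^2 + 2r^2(\mathscr{R}_{\Gamma}v)^2$, integrate in $r$ against $\tilde{\theta}^2(r/\varepsilon)J(r,s)$, and apply the two scalings above to obtain a pointwise-in-$s$ bound of the form $C\varepsilon(\Delta_{\Gamma}|_{r=0}v)^2 + C\varepsilon^3(\mathscr{R}_{\Gamma}v)^2$. Integrating over $\Gamma_t$ and invoking the elliptic estimate \eqref{roperatorest}, which gives $\|\mathscr{R}_{\Gamma}v\|_{L^2(\Gamma_t)}^2 \lesssim \|\Delta_{\Gamma}|_{r=0}v\|_{L^2(\Gamma_t)}^2 + \|v\|_{L^2(\Gamma_t)}^2$, completes this direction (with $\varepsilon^3 \leq \varepsilon$ for small $\varepsilon$).

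For the lower bound \eqref{laplace2}, I would apply the elementary inequality $(a+b)^2 \geq \tfrac12 a^2 - b^2$ with $a = \Delta_{\Gamma}|_{r=0}v$ and $b = r\mathscr{R}_{\Gamma}v$ to get
\begin{equation*}
\int_{-\delta}^{\delta}|\Delta_{\Gamma}v|^2\,\tilde{\theta}^2(\tfrac{r}{\varepsilon})J\,dr \;\geq\; \tfrac12 (\Delta_{\Gamma}|_{r=0}v)^2\int_{-\delta}^{\delta}\tilde{\theta}^2 J\,dr - (\mathscr{R}_{\Gamma}v)^2\int_{-\delta}^{\delta} r^2\tilde{\theta}^2 J\,dr.
\end{equation*}
Applying the two scalings yields a lower bound of the form $c\varepsilon(\Delta_{\Gamma}|_{r=0}v)^2 - C\varepsilon^3(\mathscr{R}_{\Gamma}v)^2$ pointwise in $s$, and after integrating over $\Gamma_t$ and using \eqref{roperatorest} once more the negative contribution becomes $-C\varepsilon^3\int_{\Gamma_t}(\Delta_{\Gamma}|_{r=0}v)^2 - C\varepsilon^3\int_{\Gamma_t}v^2$. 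For $\varepsilon$ sufficiently small the first of these is absorbed into $c\varepsilon\int(\Delta_{\Gamma}|_{r=0}v)^2$, leaving a remainder bounded by $\varepsilon\int_{\Gamma_t}v^2$, which is exactly the form required.

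No step looks genuinely difficult; the only mild subtlety is making sure the cutoff in $r$ (range $|r|\leq\delta$) does not spoil the leading constants after the rescaling, but this is automatic since $\tilde{\theta}$ is exponentially small at $|z| \gtrsim \delta/\varepsilon$, so replacing $\int_{-\delta/\varepsilon}^{\delta/\varepsilon}$ by $\int_{\mathbb{R}}$ introduces an $O(e^{-C/\varepsilon})$ error absorbable into the main $O(\varepsilon)$ term. The elliptic-regularity bound \eqref{roperatorest}, which controls the curvature-type operator $\mathscr{R}_{\Gamma}$ by $\Delta_{\Gamma}|_{r=0}$ plus $L^2$, is the one non-trivial input, and it has already been supplied in the previous subsection.
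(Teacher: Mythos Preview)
Your argument is correct, and the route differs from the paper's in a useful way. The paper proves \eqref{laplace2} (and says \eqref{laplace3} is identical) by a one-line appeal to the equivalence \eqref{laplace1}: bound $J\geq 1/\Lambda$, swap the order of integration, pull out $\inf_{|r|\leq\delta}\|\Delta_\Gamma v\|_{L^2(\Gamma_t)}^2$ from the $r$-integral, and note that this quantity plus $\|v\|_{L^2}^2$ already dominates $\|\Delta_\Gamma|_{r=0}v\|_{L^2}^2$ by elliptic regularity. No decomposition of $\Delta_\Gamma$ is ever written down. Your version instead makes the mechanism explicit via $\Delta_\Gamma=\Delta_\Gamma|_{r=0}+r\mathscr{R}_\Gamma$ and the scaling $\int r^{2k}\tilde{\theta}^2(r/\varepsilon)\,dr=O(\varepsilon^{2k+1})$; what you gain is that the role of the factor $r$ (hence of the localisation $|r|\lesssim\varepsilon$) is transparent, at the price of a slightly longer computation and the extra input \eqref{roperatorest}.

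One small imprecision to fix: the operator $\mathscr{R}_\Gamma$ has $r$-dependent coefficients, so $(\mathscr{R}_\Gamma v)(r,s)$ genuinely depends on $r$ and cannot be pulled outside the $r$-integral as you wrote in the displayed lower bound. The remedy is immediate: keep $\int_{-\delta}^{\delta} r^2|\mathscr{R}_\Gamma v|^2\tilde{\theta}^2 J\,dr$ intact, integrate over $\Gamma_t$, swap the order of integration, and use that $\sup_{|r|\leq\delta}\|\mathscr{R}_\Gamma v\|_{L^2(\Gamma_t)}\lesssim\|v\|_{H^2(\Gamma_t)}$ (this is what \eqref{roperatorest} gives, with an $r$-independent constant because the coefficients are smooth on $\overline{\Gamma_t(\delta)}$). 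After that correction your estimates go through verbatim.
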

\begin{proof}
We only prove the first inequality. The other one  follows in exactly the same manner.
  In view of \eqref{jacobian}, we have $J(r,s)\geq 1/2$.  Since $\tilde{\theta}^2(z)$ decays exponentially, so does $z\tilde{\theta}^2(z)$. As a result, we employ \eqref{laplace1} and obtain  for some $\Lambda_1>0$ that
  \begin{equation*}
 \begin{split}
 &\e\int_{\Gamma_t} v^2(s)+ \int_{\Gamma_t}\int_{-\delta}^{\delta} |\Delta_{\G}v(s)|^2
      \tilde{\theta}^2(\tfrac r\e)J(r,s)dr\\
      \geq   &\e\int_{\Gamma_t} v^2(s)+\tfrac 1\Lambda\int_{\Gamma_t}\inf_{|r|\leq \delta} |\Delta_{\G}v(s)|^2
     \(\int_{-\delta}^{\delta} \tilde{\theta}^2(\tfrac r\e)dr\)\geq \e\Lambda_1 \int_{\Gamma_t} \big(\Delta_{\G}\big|_{r=0}v\big)^2.
 \end{split}
  \end{equation*}
\end{proof}
We are in a position to estimate \eqref{decompose energy-1}.  For simplicity, we  shall suppress  the arguments of a function if it is clear from the context, and omit $d\mathcal{H}^{N-1}$ in a surface  integral   $\int_{\G_t}$ (recall \eqref{fubini}).
 Meanwhile, we shall often employ the  following  inequality without mentioning  \begin{equation}\label{cauchy-schwarz}
 ab\leq \nu a^2+\ 4b^2/\nu,~\text{with the convention that}~\nu\geq \e^{\frac 1{100}}.
 \end{equation}
So the choice of  $\nu$ will not significantly affect the order of $\e$.
Using \eqref{omegadef} and    \eqref{eq:laplace2} yields
 \begin{equation}\label{expansion phi top}
\begin{split}
\mathscr{L}_\e[\phi^\top]=&\varepsilon^{-\frac{3}{2}} Z(s)\big(\theta'''(z)-f''(\phi_a)\theta'(z)\big)\zeta(r)
+\varepsilon^{-\frac{1}{2}}\p_r (\ln \sqrt{g}) Z(s)\theta''(z)\zeta(r)\\
&+\varepsilon^{\frac{1}{2}}\Delta_{\Gamma}Z(s) \theta'(z)\zeta(r)+\varepsilon Z(s)A(r,s).
\end{split}
 \end{equation}
Substituting the above formula into \eqref{decompose energy-1} leads to the following expansion:
 \begin{align}
I_1&=\varepsilon^{-2}\int_{\Gamma_t}\int_{-\delta}^{\delta}\bigg(\varepsilon^{-\frac{3}{2}} Z(s)\big(\theta'''(z)-f''(\phi_a)\theta'(z)\big)\zeta(r)
+\varepsilon^{-\frac{1}{2}}\p_r (\ln \sqrt{g}) Z(s)\theta''(z)\zeta(r)\nonumber\\&\qquad\qquad\qquad+\varepsilon^{\frac{1}{2}}\Delta_{\Gamma}Z(s) \theta'(z)\zeta(r)+\varepsilon Z(s)A(r,s)\bigg)^2J(r,s)dr
\nonumber\\&\quad+\varepsilon^{-4}\int_{\Gamma_t}\int_{-\delta}^{\delta}f'''(\phi_a)\mu_aZ^2(s)\big(\theta'(z)\big)^2\zeta^2(r)J(r,s)dr\triangleq I_{11}+I_{12}+I_{13}+I_{14},\label{I1}
\end{align}
where we define
\begin{align*}
I_{11}&=\varepsilon^{-3}\int_{\Gamma_t}
\int_{-\delta}^{\delta}Z^2(s)\Big(\varepsilon^{-1} \big(\theta'''(z)-f''(\phi_a)\theta'(z)\big)+ \p_r (\ln \sqrt{g}) \theta''(z)\Big)^2\zeta^2(r)J(r,s)dr,
\\ I_{12}&=2\varepsilon^{-3}\int_{\Gamma_t}\int_{-\delta}^{\delta}Z(s)\Delta_{\Gamma}Z(s)\theta'(z)\big(\theta'''(z)-f''(\phi_a)\theta'(z)\big)\zeta^2(r)J(r,s)dr
\nonumber\\&\quad+2\varepsilon^{-2} \int_{\Gamma_t}\int_{-\delta}^{\delta}Z(s)\Delta_{\Gamma}Z(s)\theta''(z)\theta'(z) \zeta^2(r)\p_r (\ln \sqrt{g})J(r,s) dr
\nonumber\\&\quad+\varepsilon^{-1}\int_{\Gamma_t}\int_{-\delta}^{\delta} \big(\Delta_{\Gamma}Z(s)\big)^2\big(\theta'(z)\big)^2\zeta^2(r)J(r,s)dr,
\\ I_{13}&=\varepsilon^{-4}\int_{\Gamma_t}\int_{-\delta}^{\delta}f'''(\phi_a)\mu_aZ^2(s)\big(\theta'(z)\big)^2\zeta^2(r)J(r,s)dr,
\\ I_{14}&=2\varepsilon^{-1}\int_{\Gamma_t}\int_{-\delta}^{\delta}\bigg(\varepsilon^{-\frac{3}{2}} Z(s)\big(\theta'''(z)-f''(\phi_a)\theta'(z)\big)\zeta(r)
+\varepsilon^{-\frac{1}{2}} Z(s)\theta''(z)\zeta(r)\p_r (\ln \sqrt{g})\nonumber\\&\qquad+\varepsilon^{\frac{1}{2}}\Delta_{\Gamma}Z(s) \theta'(z)\zeta(r)\bigg)Z(s)A(r,s)J(r,s)dr
+\int_{\Gamma_t}\int_{-\delta}^{\delta}Z^2(s)A^2(r,s)J(r,s)dr.
\end{align*}

\begin{lemma}
There exist $\e_0>0$ and $C>0$ such that   
\begin{equation}
  \begin{split}
I_{11}\geq\varepsilon^{-3}\int_{\Gamma_t}
\int_{-\delta}^{\delta}Z^2(s)h^2(s)\big(\theta''\big)^2\zeta^2(r)dr-C\int_{\Gamma_t} Z^2(s),\qquad\forall \e\in (0,\e_0).\label{I11}
  \end{split}
\end{equation}
\end{lemma}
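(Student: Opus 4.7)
The plan is to expand the squared integrand in $I_{11}$ as $(A+B)^2=A^2+2AB+B^2$, where
\begin{equation*}
A\triangleq\varepsilon^{-1}\big(\theta'''(z)-f''(\phi_a)\theta'(z)\big),\qquad B\triangleq\p_r(\ln\sqrt{g})\,\theta''(z),
\end{equation*}
and to extract $h^2(s)(\theta'')^2$ from $B^2$ as the dominant contribution. Using \eqref{good term}, one has $A=-\varepsilon f'''(\theta)\widetilde{\phi}^{(2)}\theta'+O(\varepsilon^2)\theta'$, so $A^2=O(\varepsilon^2)(\theta')^2$. After the change of variable $r=\varepsilon z$, the contribution of $A^2$ to $I_{11}$ is bounded by $C\int_{\Gamma_t}Z^2$ and can be absorbed into the error. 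Since $A^2\geq 0$, it suffices afterwards to obtain a lower bound on $\int (2AB+B^2)\zeta^2 J\,dr$.

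Next I would expand $B^2 J$ using \eqref{eq:2.1} and \eqref{jacobian}, writing $\p_r\ln\sqrt{g}=h(s)+b(s)r+a(s)r^2+O(r^3)$ and $J=1+h(s)r+e(s)r^2+O(r^3)$, which gives
\begin{equation*}
B^2 J=h^2(s)(\theta'')^2+\bigl(2h(s)b(s)+h^3(s)\bigr)r(\theta'')^2+O(r^2)(\theta'')^2.
\end{equation*}
Under $r=\varepsilon z$ the linear-in-$r$ term becomes $\varepsilon z(\theta''(z))^2$; since $(\theta''(z))^2$ and $\zeta^2(r)$ are both even in $z$ while $z$ is odd, this term vanishes against the symmetric weight. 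The remaining $O(r^2)(\theta'')^2$ contribution yields $O(\varepsilon^3)$ after $r$-integration, which the $\varepsilon^{-3}$ prefactor turns into an absorbable $O(1)\int_{\Gamma_t}Z^2$.

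The main obstacle will be the cross term $2AB$. Its dominant piece is
\begin{equation*}
-2\varepsilon h(s)f'''(\theta)\widetilde{\phi}^{(2)}\theta'\theta''=-12\varepsilon h(s)D^{(0)}\theta(z)\bigl(\theta'(z)\bigr)^2\alpha(z)\theta''(z),
\end{equation*}
which naively would contribute $O(\varepsilon^{-1})$ to $I_{11}$ after the $\varepsilon^{-3}$ prefactor --- far too large to absorb into $-C\|Z\|_{L^2}^2$. The saving comes from parity: by Lemma \ref{evenodd1}, $\alpha(z)$ is odd, and combined with $\theta,\theta''$ being odd and $\theta'$ even, the $z$-integrand is odd and hence vanishes against the even weight $\zeta^2(\varepsilon z)$. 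Including the expansion $J=1+\varepsilon zh+O(\varepsilon^2 z^2)$ restores even parity only at order $\varepsilon$, producing an overall $O(\varepsilon^3)$ contribution to the $r$-integral. The same parity mechanism, combined with the extra $\varepsilon$-smallness of the remainders in $A$ and $B$, handles every subleading term in $2AB$. Collecting the three contributions proves \eqref{I11}. The key technical point will be disciplined bookkeeping of parities for each product of $\theta,\theta',\theta'',\alpha$ and the powers of $z$ coming from expansions of $\p_r\ln\sqrt{g}$ and $J$, since a single overlooked odd factor would generate an unabsorbable $O(\varepsilon^{-1})$ remainder.
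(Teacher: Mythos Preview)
Your proposal is correct and follows essentially the same strategy as the paper: expand the square, identify $h^2(s)(\theta'')^2$ as the main term, and kill the dangerous $O(\varepsilon^{-1})$ contributions from the $\varepsilon$-scale cross terms by parity in $z$. The only organizational difference is that the paper packages the parity cancellation into Lemma~\ref{reduce2} (applied with $\tilde{\theta}=f'''(\theta)(\theta')^2\theta''\alpha$ and with $\tilde{\theta}=z(\theta'')^2$), and expands $J$ only at the very end, whereas you carry out the parity argument by hand and expand $B^2J$ jointly; the content is the same.
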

\begin{proof}
It follows from  \eqref{phi12} and \eqref{jacobian} that $\widetilde{\phi}^{(2)}, h(s),b(s)$ are uniformly bounded functions.
  Using \eqref{good  term}, \eqref{eq:2.1} and \eqref{phi12}, we can   treat $I_{11}$ by
\begin{align*}
I_{11}&=\frac 1{\varepsilon^3}\int_{\Gamma_t}\int_{-\delta}^{\delta}Z^2(s)\(-\varepsilon f'''(\theta)\widetilde{\phi}^{(2)}\theta'+\theta'O(\varepsilon^2)
+\theta''\big(h(s)+\varepsilon zb(s)+z^2 O(\varepsilon^2)\big)\)^2\zeta^2(r)J dr
\nonumber\\&=\frac 1{\varepsilon^3}\int_{\Gamma_t}\int_{-\delta}^{\delta}Z^2(s)
\bigg(\big(\theta''\big)^2h^2(s)-2\varepsilon f'''(\theta)\theta'\theta''(z)\widetilde{\phi}^{(2)}h(s)
+2\varepsilon z(\theta'')^2h(s)b(s)\nonumber\\& \qquad\qquad+
\underbrace{\theta'O(\varepsilon^2)+\theta''(z)O(\varepsilon^2)
+z\theta'O(\varepsilon^2)+
z^2\big(\theta''\big)^2O(\varepsilon^2)}_{\triangleq \mathcal{J}}+O(\e^3)\bigg)\zeta^2(r)J dr.
\nonumber
\end{align*}
 Note that  the terms corresponding to $\mathcal{J}$ decay in $z$ exponentially to $0$. So we can  gain a factor  $\varepsilon$  after   a change of variable $r\to \e z$ and yield 
\begin{equation*}
\begin{split}
  I_{11}&\geq\frac 1{\varepsilon^3}\int_{\Gamma_t}\int_{-\delta}^{\delta}Z^2 ( \theta'')^2h^2\zeta^2(r)J dr\\
  &-\frac 2{\e^2}\int_{\G_t}\int_{-\delta}^\delta Z^2
\(   f'''(\theta)\theta'\theta''\widetilde{\phi}^{(2)}h
- z(\theta'')^2hb\)\zeta^2(r)J dr-C\int_{\Gamma_t} Z^2.
\end{split}
\end{equation*}
It remains to treat the second term on the right hand side. Recall
\eqref{phi12} (or \eqref{expression of phi2}) that $\widetilde{\phi}^{(2)}(z,x,t)=D^{(0)}(x,t)\theta'(z)\alpha(z)$ for some bounded odd function $\alpha(z)$. So applying Lemma \ref{reduce2} with $\tilde{\theta}=f'''(\theta)\theta'^2\theta''\alpha$, $\omega(r,s)=J(r,s)D^{(0)}h(s)\zeta^2(r)$ and $u(s)=Z^2(s)$ gives the estimate for   its first part, and similar argument gives the estimate of the second part. Altogether, we have
\begin{equation*}
  -\frac 2{\e^2}\int_{\G_t}\int_{-\delta}^\delta Z^2
\( f'''(\theta)\theta'\theta''\widetilde{\phi}^{(2)}h
- z(\theta'')^2hb\)\zeta^2(r)J dr\geq -C\int_{\Gamma_t} Z^2.
\end{equation*}
This together with \eqref{jacobian} leads to
\begin{equation*}
I_{11}\geq\varepsilon^{-3}\int_{\Gamma_t}
\int_{-\delta}^{\delta}Z^2(s)h^2(s)\big(\theta''\big)^2\zeta^2(r)\(1+h(s)r+e(s)r^2+O(r^3)\)dr-C\int_{\Gamma_t} Z^2(s).
\end{equation*}
Finally, as $\big(\theta''\big)^2\zeta^2(r) r$ is odd in $r$,    the integral including $h(s)$  will vanish. So we arrive at the desired result by a change of variable.
\end{proof}
\begin{lemma}
There exist $\Lambda_1>0$, $\e_0>0$ and $C>0$ such that \begin{align}
I_{12}
&\geq   \frac{\Lambda_1}2 \int_{\Gamma_t}  \big(\Delta_{\Gamma}\big|_{r=0}Z(s)\big)^2-C\int_{\Gamma_t} Z^2(s),\qquad \forall \e\in (0,\e_0).
\label{I12}
\end{align}
\end{lemma}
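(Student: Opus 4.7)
The plan is to decompose $I_{12}$ into its three natural pieces $A,B,C$ (corresponding respectively to the $\e^{-3}$, $\e^{-2}$, and $\e^{-1}$ summands in the definition) and to show that $C$ alone supplies the claimed coercive $(\Delta_\Gamma|_{r=0}Z)^2$ contribution, while $A$ and $B$ can be absorbed.

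The positive piece $C$ is handled immediately by the inequality \eqref{laplace2}: taking $\tilde\theta=\theta'$ (the cutoff $\zeta^2$ only produces exponentially small corrections because $\theta'$ decays exponentially), one obtains
\[
C\ \ge\ \Lambda_1\int_{\Gamma_t}\bigl(\Delta_\Gamma|_{r=0}Z\bigr)^2\ -\ \int_{\Gamma_t}Z^2.
\]
For the $\e^{-2}$ piece $B$, the key observation is that $\theta'\theta''$ is odd in $z$ and exponentially decaying, so Lemma \ref{roperatorlemma} applies with $u=v=Z$, $\omega(r,s)=\zeta^2(r)\p_r(\ln\sqrt g)J(r,s)$, and $\tilde\theta=\theta'\theta''$. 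This gives $|B|\le C\nu\int_{\Gamma_t}(\Delta_\Gamma|_{r=0}Z)^2+C\nu^{-1}\int_{\Gamma_t}Z^2$ for any $\nu>\e^{1/100}$.

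The main obstacle is the $\e^{-3}$ piece $A$, whose explicit prefactor is too singular for direct attack. The remedy is the cancellation identity \eqref{good term}, which replaces $\theta'''-f''(\phi_a)\theta'$ by $-\e^2 f'''(\theta)\widetilde\phi^{(2)}\theta'+O(\e^3)\theta'$; the $\e^2$ gain reduces the prefactor to $\e^{-1}$. Substituting $\widetilde\phi^{(2)}=D^{(0)}\theta'\alpha(z)$ and $f'''(\theta)=6\theta$ from \eqref{phi12} produces an integrand proportional to $Z\Delta_\Gamma Z\cdot\theta(z)(\theta'(z))^3\alpha(z)$. Since $\theta$ and $\alpha$ are odd while $(\theta')^3$ is even, this profile is \emph{even} in $z$, so neither Lemma \ref{reduce2} nor Lemma \ref{roperatorlemma} applies. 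Instead I would perform the change of variables $r=\e z$ by hand (whose Jacobian $\e$ cancels the $\e^{-1}$ prefactor) and then expand $\Delta_\Gamma=\Delta_\Gamma|_{r=0}+r\RR_\Gamma$ using \eqref{roperator}. The leading contribution collapses to $\int_{\Gamma_t} c(s)\,Z\,\Delta_\Gamma|_{r=0}Z$ with $c(s)$ bounded, which Young's inequality controls by $\nu\int(\Delta_\Gamma|_{r=0}Z)^2+C\nu^{-1}\int Z^2$; the $r\RR_\Gamma$ correction carries an extra $\e$ and is estimated via \eqref{roperatorest}, and the $O(\e^3)$ remainder from \eqref{good term} is handled identically.

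Summing the three estimates and choosing $\nu$ small enough that the total coefficient of $\int(\Delta_\Gamma|_{r=0}Z)^2$ from $A$ and $B$ does not exceed $\Lambda_1/2$ yields the claim. The step I expect to be most delicate is the treatment of $A$: the apparent $\e^{-3}$ singularity must first be defused by the algebraic identity \eqref{good term}, and then one is forced to unfold the change of variables manually, because the cancellation lemmas developed earlier in this section do not apply to the even profile $\theta(\theta')^3\alpha$ that emerges after substitution.
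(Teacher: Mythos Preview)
Your proposal is correct and follows essentially the same route as the paper: the three-term decomposition, the use of \eqref{laplace2} for the coercive piece, Lemma \ref{roperatorlemma} with $\tilde\theta=\theta'\theta''$ for the $\e^{-2}$ cross term, and the cancellation identity \eqref{good term} followed by a change of variable and Cauchy--Schwarz for the $\e^{-3}$ piece are exactly what the paper does. The only minor difference is that for the piece $A$ the paper does not bother expanding $\Delta_\Gamma=\Delta_\Gamma|_{r=0}+r\RR_\Gamma$ explicitly; it simply bounds by $\nu\sup_{|r|<\delta}\int_{\Gamma_t}(\Delta_\Gamma Z)^2$ and then invokes \eqref{laplace1} to pass to $\int_{\Gamma_t}(\Delta_\Gamma|_{r=0}Z)^2+\int_{\Gamma_t}Z^2$, which is equivalent to (and slightly quicker than) your explicit splitting.
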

\begin{proof}
We write $I_{12}=I_{121}+I_{122}+I_{123}$ with obvious definitions.
The first part  can be treated by \eqref{good  term}, a change of variable $r=\e z$ and the Cauchy-Schwarz inequality:
\begin{align*}
I_{121}=&
2\varepsilon^{-3}\int_{\Gamma_t}\int_{-\delta}^{\delta}Z(s)
\Delta_{\Gamma}Z(s)\theta'\(-\varepsilon^2f'''(\theta)
\widetilde{\phi}^{(2)}\theta'+\theta'(z)O(\varepsilon^3)\)
\zeta^2(r)J(r,s)dr\\
\geq &-C\int_{\Gamma_t} Z^2(s)-\nu\sup_{|r|<\delta}\int_{\Gamma_t} \big(\Delta_{\Gamma} Z(s)\big)^2.
\end{align*}
Retain that the coefficients of $\Delta_\G$ depends on $r$ smoothly,
 $I_{122}$ can be treated by Lemma \ref{roperatorlemma} with $\tilde{\theta}=\theta''\theta'$ and $\omega=\zeta^2 \p_r(\ln \sqrt{g})J(r,s)$: for any  $\nu>\e^{\frac 1{100}}$,
\begin{equation*}
  I_{122}\geq -\frac C\nu\int_{\Gamma_t} Z^2(s)-\nu\int_{\Gamma_t} \big(\Delta_{\Gamma}\big|_{r=0}Z(s)\big)^2.
\end{equation*}
 $I_{123}$ is nonnegative and can be treated by \eqref{laplace2}
 \begin{equation*}
   I_{123}\geq \Lambda_1 \int_{\Gamma_t}  \big(\Delta_{\Gamma}\big|_{r=0}Z(s)\big)^2-C\int_{\Gamma_t} Z^2(s).
 \end{equation*}
 By choosing a sufficiently small $\nu$,
the above three inequalities and  \eqref{laplace1} imply \eqref{I12}.
\end{proof}

\begin{lemma}
There exist $\e_0>0$ and $C>0$ such that    
\begin{align}
I_{13}\geq-3\varepsilon^{-3}\int_{\Gamma_t}\int_{-\delta}^{\delta}Z^2(s)h^2(s)z\theta\theta'^3\zeta^2(r)dr-C\int_{\Gamma_t} Z^2(s),\qquad \forall \e\in (0,\e_0).\label{I4:lower bound}
\end{align}
\end{lemma}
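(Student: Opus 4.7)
The plan is to substitute expansion \eqref{expansion2} of $f'''(\phi_a)\mu_a$ into the definition \eqref{decompose energy-3} of $I_{13}$, group by powers of $\e$, and exploit the parity of $\theta\theta'^3$ in the fast variable $z=r/\e$ together with the Jacobian expansion \eqref{jacobian}. Write $I_{13}=A_1+A_2+A_3+A_4+A_5$, where $A_1,A_2$ carry the formally most singular factors $\e^{-4}$ and $\e^{-3}$ coming from $-6h\theta\theta'$ and $3\e h^2 z\theta\theta'$ in \eqref{expansion2}, $A_3$ and $A_4$ come from the two $O(\e^2)$ brackets $6\e^2(\theta\widetilde{\mu}^{(2)}+\widetilde{\phi}^{(2)}\widetilde{\mu}^{(0)})$ and $-6\e^2\theta\theta'(az^2-\Delta d^{(2)}-cz^2+zD^{(1)}-hbz^2)$, and $A_5$ absorbs the $O(\e^3)$ remainder.

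First I would handle $A_1=-6\e^{-4}\int_{\G_t}\int_{-\delta}^{\delta} Z^2 h(s)\,\theta\theta'^3\zeta^2 J\,dr$. Expanding $J=1+rh(s)+r^2 e(s)+O(r^3)$ and using that $\theta\theta'^3$ is odd in $z$ and exponentially decaying, the $J\equiv 1$ piece is $O(e^{-C/\e})$; the $rh(s)$ piece produces an even integrand whose change-of-variable $r=\e z$ yields $-6\e^{-2}c_0\int_{\G_t} Z^2 h^2$, where $c_0:=\int_{\mathbb{R}} z\theta\theta'^3\,dz$, and the $r^2$ piece is again odd and contributes only $O(1)\int Z^2$. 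For $A_2$ the integrand $z\theta\theta'^3$ is already even, so the $J\equiv 1$ term yields $+3\e^{-2}c_0\int_{\G_t} Z^2 h^2$ and the $(J-1)$ correction is bounded by the same parity mechanism combined with Lemma~\ref{reduce2}. Adding, $A_1+A_2=-3\e^{-2}c_0\int_{\G_t} Z^2 h^2+O(1)\int Z^2$, which is exactly $-3\e^{-3}\int_{\G_t}\int_{-\delta}^{\delta}Z^2 h^2 z\theta\theta'^3\zeta^2\,dr+O(1)\int Z^2$ after rewriting the explicit $z$-integral back into $r$-integral form.

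The main obstacle is $A_3+A_4$: each is a priori of order $\e^{-2}$, so only an explicit cancellation can keep the sum bounded. Using \eqref{phi12}--\eqref{mu12} together with Lemma \ref{evenodd1} (namely $\alpha$ is odd, $\gamma_1,\gamma_3$ are even, and $\gamma_2=-z^2/2$), a direct parity check on every monomial shows that the integrands in $A_3$ and $A_4$ are all odd in $z$ \emph{except} for the pair $+6\e^2 D^{(1)}z\theta\theta'^3$ arising from the $D^{(1)}z\theta'$ component of $\widetilde{\mu}^{(2)}$ and $-6\e^2 D^{(1)}z\theta\theta'^3$ arising from the $zD^{(1)}$ term in the last bracket of \eqref{expansion2}. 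These two even contributions cancel exactly; every remaining odd integrand, after rescaling $r=\e z$ and picking up the $rh$-piece of $J$ to restore evenness at the price of an extra factor $\e$, contributes only $O(1)\int Z^2$. The $O(\e^3)$ remainder $A_5$ is absorbed into $C\int Z^2$ by a direct estimate. The hard part is therefore this explicit cancellation, which relies crucially on the matching-procedure-determined coefficient of $z\theta'$ in $\widetilde{\mu}^{(2)}$ being precisely $D^{(1)}$; without it one would be left with an uncontrollable $\e^{-1}$ singularity, and \eqref{I4:lower bound} would fail.
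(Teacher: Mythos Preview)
Your proposal is correct and follows essentially the same route as the paper: split according to the expansion \eqref{expansion2}, handle the $\e^{-4}$ and $\e^{-3}$ pieces by expanding $J$ via \eqref{jacobian} and using the oddness of $\theta\theta'^3$ (this is the paper's $I_{131}$), and control the $\e^{-2}$ pieces by the parity of the surviving $z$-profiles together with Lemma~\ref{reduce2} (this is the paper's $I_{132}$). Your emphasis on the exact cancellation between the $D^{(1)}z\theta'$ contribution of $\widetilde{\mu}^{(2)}$ and the $-zD^{(1)}$ term in the second bracket of \eqref{expansion2} is precisely the mechanism the paper uses, though the paper carries it out implicitly by substituting \eqref{mu12} and listing only the four surviving terms; you have made the structural point more visible.

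Two minor remarks on precision. First, your phrase ``picking up the $rh$-piece of $J$'' undersells what is needed: the coefficients such as $\mu_2(x,t)$, $\Delta d^{(0)}D^{(0)}$, etc.\ depend on $x=X(s,r)$ and hence on $r$, so the full $r$-integrand is not odd; Lemma~\ref{reduce2} handles this by Taylor-expanding the entire smooth prefactor $\omega(r,s)$, not just $J$. Second, the $J\equiv 1$ piece of $A_1$ and the $r^2e(s)$ piece actually vanish exactly (since $\zeta$ is even), not merely as $O(e^{-C/\e})$. Neither point affects the validity of the argument.
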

\begin{proof}
To treat $I_{13}$, we employ \eqref{expansion2}.
Note that  $\theta$ is odd and $\zeta,\theta'$ are even. So in the expression for $I_{13}$, the terms about $a(s),b(s),c(s)$ can be treated by Lemma \ref{reduce2}. As a result
\begin{equation}\label{I4}
  \begin{split}
&I_{13} \geq\underbrace{\varepsilon^{-4}\int_{\Gamma_t}\int_{-\delta}^{\delta}\(-6h(s)
\theta\theta'+3\varepsilon h^2(s) z\theta\theta'\)Z^2(s) \theta'^2\zeta^2(r)Jdr}_{\triangleq I_{131}}-C\int_{\Gamma_t} Z^2(s)
\\&\quad+\underbrace{6\varepsilon^{-2}\int_{\Gamma_t}\int_{-\delta}^{\delta}
\(\theta\theta'\big(\Delta d^{(2)}-zD^{(1)}\big)
+\big(\theta\widetilde{\mu}^{(2)}+
\widetilde{\phi}^{(2)}\widetilde{\mu}^{(0)}\big)\)Z^2 \theta'^2\zeta^2(r)Jdr.}_{\triangleq I_{132}}
  \end{split}
\end{equation}

Employing  the expansion of $J(r,s)$ in \eqref{jacobian}, we can write $I_{131}$ by
\begin{equation*}
  \begin{split}
    I_{131}&=\varepsilon^{-4}\int_{\Gamma_t}\int_{-\delta}^{\delta}\(-6h(s)
\theta\theta'+3\varepsilon h^2(s) z\theta\theta'\)Z^2(s)\theta'^2
\\&\qquad\qquad\qquad\cdot\(1+\varepsilon zh(s)+\varepsilon^2z^2e(s)+z^3O(\varepsilon^3)\)\zeta^2(r)dr
\\&=-6\varepsilon^{-4}\int_{\Gamma_t}
\int_{-\delta}^{\delta}Z^2(s)h(s)\theta\theta'^3\zeta^2(r)dr
-3\varepsilon^{-3}\int_{\Gamma_t}\int_{-\delta}^{\delta}Z^2(s)h^2(s)z\theta\theta'^3\zeta^2(r) dr
\\&\quad+3\varepsilon^{-2}\int_{\Gamma_t}\int_{-\delta}^{\delta}Z^2(s)h(s)\big(h^2(s)-2e(s)\big)z^2\theta\theta'^3\zeta^2(r) dr
\\&\quad+{\varepsilon^{-1}\int_{\Gamma_t}\int_{-\delta}^{\delta}Z^2(s)\tilde{\theta}(\tfrac r\e)O(1)\zeta^2(r) dr},
  \end{split}
\end{equation*}
where $\tilde{\theta}(z)\in L^1(\mathbb{R})$ and decays exponentially to $0$ at $\pm\infty$.
Since the first and the third term above vanish, we obtain 
\begin{equation}
  \begin{split}
 I_{131}&\geq-3\varepsilon^{-3}\int_{\Gamma_t}\int_{-\delta}^{\delta}Z^2(s)h^2(s)z\theta\theta'^3\zeta^2(r)dr-C\int_{\Gamma_t} Z^2(s).\label{I41}
  \end{split}
\end{equation}

We now turn to $I_{132}$ in \eqref{I4}. Note that
 the analytic expression for $\widetilde{\mu}^{(2)},\widetilde{\phi}^{(2)},\widetilde{\mu}^{(0)}$ are given in \eqref{phi12} and \eqref{mu12} respectively, and  $\{d^{(i)}\}_{i=0}^2$ are smooth functions in $\G(2\delta)$ that are defined by \eqref{distance law}, \eqref{equation of d1} and  \eqref{equation of dk} successively.  Combining Lemma \ref{reduce2} with  the fact that $\widetilde{\phi}^{(2)}\widetilde{\mu}^{(0)}$ is odd in $z$, we infer that
\begin{align*}
&6\varepsilon^{-2}\int_{\Gamma_t}\int_{-\delta}^{\delta}\widetilde{\phi}
^{(2)}\widetilde{\mu}^{(0)}Z^2(s)\theta'^2
\zeta^2(r)Jdr
 \geq-C\int_{\Gamma_t} Z^2(s).
\end{align*}
The terms about $\Delta d^{(2)}$ can be treated similarly, so we employ the last formula in \eqref{mu12}
\begin{align*}
I_{132}&\geq6\varepsilon^{-2}\int_{\Gamma_t}\int_{-\delta}^{\delta}
\(-\theta\theta'zD^{(1)}
+\theta\widetilde{\mu}^{(2)}\) Z^2(s)\theta'^2\zeta^2(r)Jdr-C\int_{\Gamma_t} Z^2(s)
\nonumber\\&=6\varepsilon^{-2}\int_{\Gamma_t}\int_{-\delta}^{\delta}Z^2(s)\Delta d^{(0)}D^{(0)}\gamma_1(z)\theta\theta'^3\zeta^2(r)Jdr
\nonumber\\&\quad+6\varepsilon^{-2}\int_{\Gamma_t}\int_{-\delta}^{\delta}Z^2(s)\nabla d^{(0)}\cdot\nabla D^{(0)}\gamma_2(z)\theta\theta'^3\zeta^2(r)Jdr
\nonumber\\&\quad+6\varepsilon^{-2}\int_{\Gamma_t}
\int_{-\delta}^{\delta}Z^2(s)\theta\theta'^3\mu_2(x,t)\zeta^2(r)Jdr
\nonumber\\&\quad+6\varepsilon^{-2}\int_{\Gamma_t}
\int_{-\delta}^{\delta}Z^2(s)\chi^{(0)}d^{(0)}\gamma_3(z)
\theta\theta'^3\zeta^2(r)Jdr-C\int_{\Gamma_t} Z^2(s).
\end{align*}
 Retaining Lemma \ref{evenodd1}  that   $\gamma_\ell(z)\theta\theta'^3$ (with $\ell=1,2,3$) are odd functions. So it follows from  Lemma \ref{reduce2}  that
$I_{132}\geq-C\int_{\Gamma_t} Z^2(s)$. This together with
  (\ref{I4}), (\ref{I41})  yields \eqref{I4:lower bound}.
\end{proof}

 \begin{proof}[Proof of Proposition \ref{propofI1}]
 It follows from (\ref{zeta term})  and \eqref{laplace1}
 that
 \begin{align}
I_{14}&\geq-\e\int_{\Gamma_t} Z^2(s)-\e\int_{\Gamma_t} \big(\Delta_{\Gamma}\mid_{r=0}Z(s)\big)^2.\label{I14}
\end{align}
  Substituting  \eqref{I11},   \eqref{I12}, (\ref{I4:lower bound}) and (\ref{I14})  into (\ref{I1}) leads to
\begin{align*}
I_{1}&\geq\varepsilon^{-3}\int_{\Gamma_t}Z^2(s)
h^2(s)\int_{-\delta}^{\delta} \( \theta''^2-3z\theta\theta'^3\)\zeta^2(r)dr
+\frac{\Lambda_1}{2}\int_{\Gamma_t} \big(\Delta_{\Gamma}\mid_{r=0}Z(s)\big)^2
-C\int_{\Gamma_t} Z^2(s)
\end{align*}
{To treat the first integral on the right,  we use the exponential  decay of the integrand:
\begin{equation}\label{vanishing integral}
\int_{-\delta}^{\delta}\( \theta''^2-3z\theta\theta'^3\)\zeta^2(r)dr=O(e^{-C/\e})+\e\int_{\mathbb{R}}\( \theta''^2-3z\theta\theta'^3\)\,dz.
\end{equation}
If the last integral vanishes, we shall have
\begin{align*}
I_{1}&\geq  \frac{\Lambda_1}{2}
\int_{\Gamma_t} \big(\Delta_{\Gamma}\mid_{r=0}Z(s)\big)^2
-C\int_{\Gamma_t} Z^2(s).\end{align*}
On the other hand, it follows from \eqref{gauss} that
\begin{equation}
 \int_{\Gamma_t}\left|\nabla_{\Gamma}\big|_{r=0}Z(s)\right|^2
 =-\int_{\Gamma_t}Z(s) \(\Delta_{\G}\big|_{r=0}Z(s)\).
\end{equation}
Using   the above two statements and choosing a large constant $C$ lead to \eqref{I1:lower bound}.}
To compute the last integral in \eqref{vanishing integral}, we first note that $\theta$ satisfies
 \begin{equation}\label{struggle1}
 \sqrt{2}\theta'=1-\theta^2,~\text{and}~-\sqrt{2}\theta\theta'=\theta''.
 \end{equation}
  Applying these formulas together with  integration by parts gives
\begin{align*}
& \int_{\mathbb{R}}\( \theta''^2-3z\theta  \theta'^3\)dz
= \int_{\mathbb{R}}\( \theta''^2+\tfrac{3}{\sqrt{2}}z\theta'' \theta'^2\)dz
= \int_{\mathbb{R}}\(2\(\theta\theta'\)^2-\tfrac{1}{\sqrt{2}}\theta'^3\)dz.
\end{align*}
Applying \eqref{struggle1} again and performing a change of variable,
\begin{align*}
& \int_{\mathbb{R}}\(2\(\theta\theta'\)^2-\tfrac{1}{\sqrt{2}}\theta'^3\)dz
=\tfrac{1}{\sqrt{2}}\int_{\mathbb{R}}\big(\theta'\big)^2\big(2\sqrt{2}\theta^2-\theta'\big)dz
=\tfrac{1}{2\sqrt{2}}\int_{-1}^1\big(1-\theta^2\big)\big(5\theta^2-1\big)d\theta =0.
\end{align*}
\end{proof}
\section{Spectrum Condition: Estimates of  cross terms}\label{section:spectumI2}
 We shall prove  the following result in the end of this section.\begin{proposition}\label{propcross}
 There exist $C>0$ and $\e_0>0$  such that
 \begin{equation}
\begin{split}
I_{2}&\geq-C\int_{\Omega}\phi^2dx -\nu\int_{\Gamma_t} \big(\Delta_{\G}\big|_{r=0}Z(s)\big)^2
-\frac{1}{2}I_{31}-\nu\varepsilon^{-4}\int_{\G_t(\delta)}
\big(\phi^\bot\zeta\big)^2dx\\&\quad
-\nu\varepsilon^{-2}\int_{\G_t(\delta)}\big|\nabla_{\Gamma}
\big(\phi^\bot\zeta\big)\big|^2dx
 -\frac{C \e}\nu  \int_{\G_t(\delta)}\big(\Delta_
{\Gamma}\big|_{r=0}\phi^\bot\big)^2\zeta^2dx\label{I2:lower bound}
\end{split}
\end{equation}
holds for every $\e\in (0,\e_0)$ and $\nu \in (\e^{\frac 1{100}},1)$.
 \end{proposition}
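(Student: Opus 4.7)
We attack $I_2$ by decomposing both factors, gaining powers of $\varepsilon$ through the near-orthogonality \eqref{orthogonality} combined with $\theta'\approx\varphi$ from \eqref{difference of kernal}, and then redistributing the resulting cross terms via Cauchy--Schwarz so that the $\phi^\bot$-parts are absorbed into $\tfrac12 I_{31}$ or the $\nu$-weighted $\phi^\bot$-terms on the right of \eqref{I2:lower bound}, while the $Z$-parts yield at most $C\int_{\Gamma_t} Z^2$ (which is controlled by $C\int_\Omega\phi^2$ via Lemma \ref{equinorm}) and $\nu\int_{\Gamma_t}(\Delta_\G|_{r=0}Z)^2$. A throughout device is Lemma \ref{reduce2}, which converts integrals against odd $\tilde\theta(r/\e)$ into $O(\e^2)$, and Lemma \ref{roperatorlemma}, which trades a $\Delta_\G$ acting on a slow variable for $\Delta_\G|_{r=0}$ plus an $r \mathscr{R}_\G$ remainder.

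\textbf{Step 1: treating the $\mathscr{L}_\e$-product.} Substitute \eqref{expansion phi top} into $\frac{2}{\varepsilon^2}\int \mathscr{L}_\e[\phi^\top]\mathscr{L}_\e[\phi^\bot]\,dx$ to split it into four pieces according to the summands of $\mathscr{L}_\e[\phi^\top]$. The piece carrying $A(r,s)$ is exponentially small by \eqref{zeta term}. The piece with $\varepsilon^{1/2}\Delta_\G Z\,\theta'\zeta$ is estimated by Cauchy--Schwarz: one half becomes $\nu\int(\Delta_\G Z)^2$, which reduces to $\nu\int(\Delta_\G|_{r=0}Z)^2 + C\int Z^2$ via \eqref{roperatorest}, and the other half is controlled by $\nu^{-1}\varepsilon^{-1}\int|\mathscr{L}_\e[\phi^\bot]|^2\theta'^2(\tfrac r\e)\zeta^2\,dx$, which after a change of variable $r=\e z$ is bounded by a small multiple of $I_{31}$ provided $\nu\lesssim 1$. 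The piece with $\varepsilon^{-1/2}\p_r(\ln\sqrt g)Z\theta''\zeta$ is analogous, with an additional exponentially decaying factor in $z$. The principal piece uses \eqref{good  term}: the factor $\theta'''-f''(\phi_a)\theta'$ drops by two powers of $\varepsilon$ to $-\varepsilon^2 f'''(\theta)\widetilde\phi^{(2)}\theta' + \varepsilon^3\theta' O(1)$, reducing it to the same form as the preceding pieces; the same Cauchy--Schwarz scheme then yields the three acceptable right-hand-side contributions.

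\textbf{Step 2: treating the $f'''(\phi_a)\mu_a$-product.} Insert \eqref{expansion2}: $f'''(\phi_a)\mu_a=-6h(s)\theta\theta' + \varepsilon R(z,x,t)$ with $R$ smooth in $x$ and polynomially growing in $z$. The leading contribution to the second integral of $I_2$ is
\begin{equation*}
-\frac{12}{\varepsilon^{7/2}}\int_\Omega Z(s)\,h(s)\,\theta(\tfrac r\e)\theta'^2(\tfrac r\e)\zeta(r)\,\phi^\bot\,dx.
\end{equation*}
Writing $\theta'=\varphi-\e^{1/2}\rho$ in one of the $\theta'$ factors by \eqref{decompose difference}, the $\rho$-part is exponentially small; for the $\varphi$-part we Taylor-expand the remaining smooth weight $h(s)\,\theta(\tfrac r\e)\theta'(\tfrac r\e)\zeta(r)J^{-1/2}(r,s)$ in $r$ around $r=0$. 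The $r$-independent term integrates against $\int\phi^\bot\varphi\zeta J^{1/2}dr=0$ by \eqref{orthogonality}, so only the remainder survives, carrying at least one factor of $r=\e z$. A weighted Cauchy--Schwarz then splits the resulting expression into $\nu\varepsilon^{-4}\int(\phi^\bot\zeta)^2$ and $C\int Z^2$. The subleading contribution involving $\varepsilon R$ is smaller by a factor of $\varepsilon$ and handled identically, occasionally invoking Lemma \ref{reduce2} on the $r$-odd parts. Collecting all contributions and using Lemma \ref{equinorm} to convert $C\int_{\Gamma_t}Z^2$ into $C\int_\Omega\phi^2$ produces \eqref{I2:lower bound}.

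\textbf{Main obstacle.} The chief difficulty is the $\varepsilon^{-7/2}$ singularity present in both principal contributions of $I_2$. Two independent mechanisms are required to tame it: \emph{(i)} the cancellation \eqref{good  term}, which replaces the near-kernel combination $\theta'''-f''(\phi_a)\theta'$ by an $O(\varepsilon^2)$ quantity, and \emph{(ii)} the orthogonality \eqref{orthogonality} combined with Taylor expansion in $r$, which converts any $r$-polynomial weight against $\phi^\bot$ into a factor $(\varepsilon z)^k$. Without \emph{both} mechanisms, no Cauchy--Schwarz splitting can close at the scale of $\nu\varepsilon^{-4}\|\phi^\bot\zeta\|_{L^2}^2$, since the $\phi^\bot$-side would be forced to carry unit-order weights that $I_{31}$ cannot absorb. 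Keeping careful track of where tangential derivatives $\Delta_\G$ or $\nabla_\G$ land (on $Z$ versus on $\phi^\bot$), and pairing them via Lemma \ref{roperatorlemma} against the corresponding $\nu$-weighted quantities on the right of \eqref{I2:lower bound}, is the principal bookkeeping task.
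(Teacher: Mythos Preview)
Your proposal has a genuine gap at the $\varepsilon^{-7/2}$ level, and neither of your two mechanisms actually closes it.

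\textbf{Step 1, the $\theta''$ piece.} The term $I_{22}=-2\varepsilon^{-5/2}\int Z\,\theta''\zeta\,\p_r(\ln\sqrt g)\,\mathscr{L}_\e[\phi^\bot]J$ cannot be dispatched by Cauchy--Schwarz alone. Any splitting that keeps the $\phi^\bot$-side at $\tfrac18 I_{31}$ forces the $Z$-side to be at least $C\varepsilon^{-2}\int_{\Gamma_t}Z^2$, which is two powers of $\varepsilon$ too singular (check: $\int(\theta'')^2\,dr\sim\varepsilon$, so the $Z$-side of any balanced split scales like $\varepsilon^{-5}\cdot\varepsilon\cdot\varepsilon^2=\varepsilon^{-2}$). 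The same arithmetic defeats your treatment of the $\Delta_\G Z\,\theta'$ piece ($I_{23}$): a balanced split yields either $\nu^{-1}I_{31}$ or $\varepsilon^{-2}\int(\Delta_\G Z)^2$, neither acceptable for $\nu<1$.

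\textbf{Step 2, the orthogonality argument.} Your plan to Taylor-expand the ``smooth weight $h(s)\theta(\tfrac r\e)\theta'(\tfrac r\e)\zeta(r)J^{-1/2}$'' in $r$ fails because $\theta(\tfrac r\e)\theta'(\tfrac r\e)$ varies on the fast scale $\varepsilon$: its $r$-derivative is of order $\varepsilon^{-1}$, so the Taylor remainder carries $r/\varepsilon$, not $r$. The orthogonality relation \eqref{orthogonality} annihilates only $r$-independent weights multiplying $\varphi\zeta J^{1/2}$, so you gain nothing here. Consequently the leading contribution to $I_{25}$ remains at scale $\varepsilon^{-7/2}$.

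\textbf{What the paper does instead.} The paper does \emph{not} try to bound $I_{22}$ and $I_{25}$ separately. It integrates by parts inside $I_{22}$ (moving $\mathscr{L}_\e$ onto $Z h\theta''\zeta$), extracting the leading piece $2\varepsilon^{-7/2}\int Zh(\theta''''-\theta''f''(\theta))\phi^\bot\zeta J$. This is then paired with the leading piece $-2\varepsilon^{-7/2}\int Zh\,f'''(\theta)\theta'^2\,\phi^\bot\zeta J$ of $I_{25}$, and the two cancel \emph{exactly} via the ODE identity $\theta''''=\theta''f''(\theta)+f'''(\theta)\theta'^2$ (differentiate $\theta''=f'(\theta)$ twice). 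For $I_{23}$ the paper likewise integrates by parts in $r$ twice, uses \eqref{good term} to kill the principal part, and reduces the remaining $\Delta_\G Z\cdot\Delta_\G\phi^\bot$ coupling to a term $J_*$ that is controlled only after applying $\Delta_\G|_{r=0}$ to the orthogonality relation \eqref{orthogonality}. Both of these structural cancellations are absent from your sketch and are not recoverable by Cauchy--Schwarz bookkeeping alone.
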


Recall  \eqref{nasty} for the definitions of $I_2$ and $I_{31}$.
Under  coordinates  $(r,s)\in (-2\delta,2\delta)\times\Gamma_t$, we can employ \eqref{eq:laplace2} to write  $I_2$ as
\begin{equation}\label{decompose:I2}
  I_2= I_{21}+I_{22}+I_{23}+I_{24}+I_{25},
\end{equation}
where we define
\begin{subequations}
  \begin{align}
I_{21}=&-2\varepsilon^{-\frac{7}{2}}\int_{\Gamma_t}\int_{-\delta}^{\delta} Z(s)\big(\theta'''(z)-f''(\phi_a)\theta'(z)\big)\zeta(r)
\mathscr{L}_\e[\phi^\bot]J(r,s)dr,\\
I_{22}=&-2\varepsilon^{-\frac{5}{2}}\int_{\Gamma_t}\int_{-\delta}^{\delta}Z(s)\theta''(z)\zeta(r)\p_r (\ln \sqrt{g})
\mathscr{L}_\e[\phi^\bot]
J(r,s)dr,
\\I_{23}=&-2\varepsilon^{-\frac{3}{2}}\int_{\Gamma_t}
\int_{-\delta}^{\delta}\Delta_{\Gamma}Z(s) \theta'(z)\zeta(r)
\mathscr{L}_\e[\phi^\bot]J(r,s)dr,
\\I_{24}=&-2\varepsilon^{-1}\int_{\Gamma_t}\int_{-\delta}^{\delta} Z(s)A(r,s)\mathscr{L}_\e[\phi^\bot]J(r,s)dr,
\\I_{25}=&2\varepsilon^{-\frac{7}{2}}\int_{\Gamma_t}\int_{-\delta}^{\delta}f'''(\phi_a)\mu_a Z(s)\theta'(z)\zeta(r)\phi^\bot J(r,s)dr.\label{def:I25}
\end{align}
\end{subequations}
%

%

\begin{lemma}
 There exist $C>0$ and $\e_0>0$   such that  \begin{equation}\label{I21}
I_{21}\geq-C\int_{\Gamma_t} Z^2(s)-\frac{1}{4}I_{31},\qquad \forall \e\in (0,\e_0).
\end{equation}
\end{lemma}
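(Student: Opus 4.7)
My plan is to estimate $I_{21}$ directly by exploiting the smallness of $\theta'''(z)-f''(\phi_a)\theta'(z)$ given in \eqref{good term}, then applying Cauchy--Schwarz to split the integrand into a piece absorbable into $I_{31}$ and a piece controlled by $\int_{\Gamma_t}Z^2$.

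First, I would substitute the expansion \eqref{good term}, which asserts
\begin{equation*}
\theta'''(z)-f''(\phi_a)\theta'(z)=-\varepsilon^2 f'''(\theta)\widetilde{\phi}^{(2)}\theta'(z)+\theta'(z)O(\varepsilon^3),
\end{equation*}
into the definition of $I_{21}$. The crucial point is that the two powers of $\varepsilon$ in the bracket exactly compensate part of the $\varepsilon^{-7/2}$ prefactor, yielding
\begin{equation*}
I_{21}=2\varepsilon^{-3/2}\int_{\Gamma_t}\int_{-\delta}^{\delta} Z(s)\,\theta'(\tfrac{r}{\varepsilon})\bigl(f'''(\theta)\widetilde{\phi}^{(2)}+O(\varepsilon)\bigr)\zeta(r)\,\mathscr{L}_\e[\phi^\bot]\,J(r,s)\,drds,
\end{equation*}
where the kernel multiplying $Z(s)\mathscr{L}_\e[\phi^\bot]$ is uniformly bounded and decays exponentially in $z=r/\varepsilon$.

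Next I would apply Cauchy--Schwarz by distributing the prefactor $\varepsilon^{-3/2}$ as $\varepsilon^{-1/2}\cdot\varepsilon^{-1}$:
\begin{equation*}
|I_{21}|\leq \nu\,\varepsilon^{-2}\int_{\Gamma_t(\delta)}|\mathscr{L}_\e[\phi^\bot]|^2\,dx\;+\;\frac{C}{\nu}\,\varepsilon^{-1}\int_{\Gamma_t}\int_{-\delta}^{\delta}Z^2(s)\bigl(\theta'(\tfrac{r}{\varepsilon})\bigr)^2\zeta^2(r)\,J(r,s)\,drds.
\end{equation*}
For the second term, the change of variable $r=\varepsilon z$ cancels the $\varepsilon^{-1}$ factor, and the exponential decay of $\theta'$ together with the boundedness of $J$ via \eqref{boundjacobian} yields
\begin{equation*}
\varepsilon^{-1}\int_{\Gamma_t}\int_{-\delta}^{\delta}Z^2(s)\bigl(\theta'(\tfrac{r}{\varepsilon})\bigr)^2\zeta^2(r)\,J(r,s)\,drds\lesssim\int_{\Gamma_t}Z^2(s).
\end{equation*}
For the first term, comparing with \eqref{decompose energy-3} and noting that $\Gamma_t(\delta)\subset\Omega$ while $1+O(e^{-C/\varepsilon})>1/2$ for $\varepsilon$ small, we have $\nu\,\varepsilon^{-2}\int_{\Gamma_t(\delta)}|\mathscr{L}_\e[\phi^\bot]|^2\,dx\leq 2\nu\,I_{31}$. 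Choosing $\nu=1/8$ (which satisfies the restriction $\nu>\varepsilon^{1/100}$ for small $\varepsilon$) absorbs this into $\tfrac14 I_{31}$, and combining these two estimates gives exactly \eqref{I21}.

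There is no genuine obstacle here; the argument is essentially a weighted Cauchy--Schwarz. The only subtle point is matching the power of $\varepsilon$: the factor $\varepsilon^2$ gained from \eqref{good term} is exactly what is needed to split $\varepsilon^{-7/2}$ symmetrically as $\varepsilon^{-1/2}\cdot\varepsilon^{-1}$ so that the $Z^2$ side converges after the scaling $r=\varepsilon z$ and the $\mathscr{L}_\e[\phi^\bot]$ side matches the $\varepsilon^{-2}$ weight appearing in $I_{31}$.
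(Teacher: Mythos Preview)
Your proposal is correct and follows essentially the same approach as the paper: both use \eqref{good term} to gain the crucial factor $\varepsilon^2$, reducing the prefactor to $\varepsilon^{-3/2}$, then apply a weighted Cauchy--Schwarz to split off a piece bounded by $\tfrac14 I_{31}$ and a piece that becomes $C\int_{\Gamma_t}Z^2$ after the change of variable $r=\varepsilon z$. The only cosmetic difference is that you introduce an explicit Young-inequality parameter $\nu=1/8$, whereas the paper writes the $-\tfrac14 I_{31}$ term directly.
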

\begin{proof}
  We first recall \eqref{phi12} for   $\widetilde{\phi}^{(2)}$ as well as (\ref{decompose energy-3}) for the definition of $I_{31}$.
It follows from (\ref{good  term}) and the Cauchy-Schwarz inequality that
\begin{equation}
  \begin{split}
    I_{21}&=2\varepsilon^{-\frac{3}{2}} \int_{\Gamma_t}\int_{-\delta}^{\delta}Z(s)f'''(\theta)\widetilde{\phi}^{(2)}\theta'\zeta(r)
\mathscr{L}_\e[\phi^\bot]J(r,s)dr
\\&\quad-2\varepsilon^{-\frac{1}{2}}\int_{\Gamma_t}\int_{-\delta}^{\delta} Z(s)\theta'O(1)\zeta(r)
\mathscr{L}_\e[\phi^\bot]J(r,s)dr
\\&\geq -C\varepsilon^{-1}\int_{\Gamma_t}\int_{-\delta}^{\delta}Z^2(s)\theta'^2\zeta^2(r)J(r,s)dr-\frac{1}{4}I_{31}.
  \end{split}
\end{equation}
So  the desired result  follows from \eqref{jacobian} and  a change of variable.
\end{proof}
\begin{lemma}
 There exist $C>0$ and $\e_0>0$   such that
\begin{align}
I_{22}+I_{25}&\geq-\nu\int_{\Gamma_t} \big(\Delta_{\Gamma}|_{r=0}Z\big)^2-C(1+\nu^{-1})\int_{\Gamma_t} Z^2(s)-\nu\varepsilon^{-4} \int_{\G_t(\delta)}
\big(\phi^\bot\zeta\big)^2dx\nonumber\\&\quad-Ce^{-\frac{C}{\varepsilon}}\int_{\G_t(\delta)\backslash\G_t(\delta/2)}\big(\phi^\bot\big)^2dx
-\nu\varepsilon^{-2}\int_{\G_t(\delta)}\big|\nabla_{\Gamma}
\big(\phi^\bot\zeta\big)\big|^2dx
-\frac{1}{8}I_{31}\label{I22I25}
\end{align}
holds for every $\e\in (0,\e_0)$ and $\nu\in (\e^{\frac 1{100}},1)$.
\end{lemma}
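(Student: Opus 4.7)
The strategy is to use the self-adjointness of $\mathscr{L}_\e$ to transfer the operator off of $\phi^\bot$ in $I_{22}$, and then observe that the resulting leading singular term cancels exactly with the leading singular term of $I_{25}$. Concretely, set $\Psi(x) = Z(s)\theta''(r/\e)\zeta(r)\p_r(\ln\sqrt{g})(r,s)$, extended by zero outside $\G_t(\delta)$; this is smooth and compactly supported in $\Omega$ because $\zeta\in C^\infty_c((-\delta,\delta))$. The self-adjointness of $\mathscr{L}_\e$ under the homogeneous Neumann boundary condition then gives
\[
I_{22} = -2\e^{-5/2}\int_\Omega \mathscr{L}_\e[\Psi]\,\phi^\bot\,dx.
\]
I then expand $\mathscr{L}_\e[\Psi] = -\e\Delta\Psi + \e^{-1}f''(\phi_a)\Psi$ in local coordinates $(r,s)$ via \eqref{eq:laplace1}. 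The most singular contribution, of order $\e^{-1}$, is
\[
-\e^{-1}Z(s)\zeta(r)\p_r(\ln\sqrt{g})\bigl[\theta''''(z) - f''(\theta(z))\theta''(z)\bigr] = -6\e^{-1}Z(s)\theta\theta'^2\zeta(r)\p_r(\ln\sqrt{g}),
\]
using the algebraic identity $\theta'''' - f''(\theta)\theta'' = f'''(\theta)(\theta')^2 = 6\theta(\theta')^2$ obtained by differentiating \eqref{ODE-0} twice and recalling $f'''(u) = 6u$.

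Combining with the expansion \eqref{expansion2} of $f'''(\phi_a)\mu_a = -6h(s)\theta\theta' + O(\e)$, the leading $\e^{-7/2}$ contributions to $I_{22}$ and $I_{25}$ are, respectively,
\[
12\e^{-7/2}\!\int Z(s)\theta\theta'^2\zeta(r)\p_r(\ln\sqrt{g})\,\phi^\bot\,dx,\qquad -12\e^{-7/2}\!\int h(s) Z(s)\theta\theta'^2\zeta(r)\,\phi^\bot\,dx.
\]
The first identity in \eqref{eq:2.1} gives $\p_r(\ln\sqrt{g})|_{r=0} = h(s)$, hence $\p_r(\ln\sqrt{g}) - h(s) = r b(s) + O(r^2)$ vanishes at $r=0$ and supplies the missing power of $\e$ after the change of variable $r = \e z$. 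A weighted Cauchy-Schwarz, balancing one factor as $\e^{-2}\phi^\bot\zeta$ in the spirit of \eqref{cauchy-schwarz}, then yields a residual of size $\nu\e^{-4}\int(\phi^\bot\zeta)^2 dx + C\nu^{-1}\int_{\G_t}Z^2$ for this block.

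The remaining sub-leading terms in $\mathscr{L}_\e[\Psi]$ split into: (i) $O(1)$-in-$\e$ pieces involving $\theta'''(z)$ and $(\p_r\ln\sqrt{g})^2$, $\p_r^2\ln\sqrt{g}$, bounded exactly like the residual above after the substitution $r = \e z$; (ii) the tangential Laplacian $-\e\Delta_\G\Psi = -\e\theta''(z)\zeta(r)\Delta_\G[Z(s)\p_r(\ln\sqrt{g})]$, whose expansion via the product rule produces a dominant piece proportional to $\Delta_\G Z$; integration by parts on $\G_t$ at fixed $r$ (with $\nabla_\G\zeta = 0$ since $\zeta=\zeta(r)$) transfers a tangential derivative onto $\phi^\bot\zeta$ and, followed by Cauchy-Schwarz and \eqref{laplace1}, produces the summands $\nu\int(\Delta_\G|_{r=0}Z)^2 + \nu\e^{-2}\int|\nabla_\G(\phi^\bot\zeta)|^2 dx + C\nu^{-1}\int Z^2$; (iii) terms supported in $\{\zeta'\neq 0\}\subset \G_t(\delta)\setminus\G_t(\delta/2)$, in which $\theta''', \theta''$ are evaluated at $|z|\geq \delta/(2\e)$ and produce the factor $e^{-C/\e}$, matching the annulus term; (iv) the $O(\e)$ correction in $f'''(\phi_a)\mu_a$, handled analogously. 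A final residual that cannot be pushed off $\phi^\bot$ (e.g.~from the $O(\e^2)$ error in $\phi_a - \theta(z)$ entering $f''(\phi_a)$) is controlled by a direct Cauchy-Schwarz against $\mathscr{L}_\e[\phi^\bot]$ and absorbed into $\tfrac18 I_{31}$.

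The main obstacle is verifying the exact cancellation at order $\e^{-7/2}$: a Cauchy-Schwarz estimate of either $I_{22}$ or $I_{25}$ in isolation produces a term of size $C\e^{-2}\int_{\G_t} Z^2$, incompatible with the estimate of $I_1$ in Proposition \ref{propofI1}. The cancellation depends on two independent ingredients dovetailing -- the ODE identity $\theta'''' - f''(\theta)\theta'' = 6\theta\theta'^2$ for the optimal profile, and the geometric identity $\p_r(\ln\sqrt{g})|_{r=0} = h(s)$ which matches the leading coefficient of the Jacobian expansion with the mean curvature that appears in the expansion of $\mu_a$. Once this cancellation is in place, all remaining bounds reduce to weighted Cauchy-Schwarz inequalities combined with the change of variable $r = \e z$ and the exponential decay of $\theta'$ and its derivatives.
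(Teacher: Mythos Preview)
Your strategy is essentially the paper's: both arguments use self-adjointness of $\mathscr{L}_\e$ to move the operator onto the explicit profile $Z(s)\theta''(z)\zeta(r)\times(\text{curvature factor})$, and both hinge on the same cancellation of the $\e^{-7/2}$ block via the ODE identity $\theta''''-f''(\theta)\theta''=f'''(\theta)(\theta')^2$ together with the geometric fact $\p_r(\ln\sqrt{g})|_{r=0}=h(s)$. Your bookkeeping differs slightly: the paper first replaces $\p_r(\ln\sqrt{g})$ by $h(s)$ (absorbing the $O(\e z)$ remainder into $\tfrac18 I_{31}$ by Cauchy--Schwarz against $\mathscr{L}_\e[\phi^\bot]$) and only then transfers the operator, whereas you keep the full $\p_r(\ln\sqrt{g})$ inside $\Psi$. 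Your route is in fact a bit cleaner here, because after self-adjointness every remainder is already paired against $\phi^\bot$ (not $\mathscr{L}_\e[\phi^\bot]$), so the $I_{31}$ sink is not really needed; your last paragraph's appeal to ``Cauchy--Schwarz against $\mathscr{L}_\e[\phi^\bot]$'' is therefore misplaced---those residuals go directly against $\e^{-2}\phi^\bot\zeta$.

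There is one genuine technical slip in item (ii). Integrating the $\Delta_\G Z$ piece by parts once leaves $\nabla_\G Z$ paired with $\e^{-1}\nabla_\G(\phi^\bot\zeta)$, and weighted Cauchy--Schwarz then produces $C\nu^{-1}\!\int_{\G_t}|\nabla_\G Z|^2$. Interpolating $\|\nabla_\G Z\|^2\le\|Z\|\,\|\Delta_\G Z\|$ and splitting with parameter $\nu^2$ yields $\nu\|\Delta_\G|_{r=0}Z\|^2+C\nu^{-3}\|Z\|^2$, not the $C\nu^{-1}\|Z\|^2$ you claim; so the lemma as stated (with the $C(1+\nu^{-1})$ coefficient) does not follow from this route. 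The fix is to \emph{not} integrate that particular piece by parts: pair $\e^{1/2}\theta''(\Delta_\G Z)$ directly against $\e^{-2}\phi^\bot\zeta$ and use $\int\theta''^2\,dz<\infty$ after $r=\e z$, which gives $\nu\e^{-4}\|\phi^\bot\zeta\|^2+C\nu^{-1}\e^2\|\Delta_\G Z\|^2$, and since $\nu>\e^{1/100}$ one has $C\nu^{-1}\e^2\le\nu$. This is exactly how the paper handles the $(\Delta_\G Z)h$ term; the $\nu\e^{-2}\|\nabla_\G(\phi^\bot\zeta)\|^2$ contribution then arises not from $\Delta_\G Z$ but from the cross piece $2\nabla_\G Z\cdot\nabla_\G[\p_r(\ln\sqrt{g})]$ in the product-rule expansion, which after one integration by parts becomes $Z$ against $\nabla_\G[\p_r(\ln\sqrt{g})]\cdot\nabla_\G(\phi^\bot\zeta)$ and is bounded by $C\nu^{-1}\|Z\|^2+\nu\e^{-2}\|\nabla_\G(\phi^\bot\zeta)\|^2$ with the correct $\nu$-power.
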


\begin{proof}
\underline{Estimate of $I_{22}$:}\  It follows from \eqref{eq:2.1},  a change of variable $z=r/\e$ and the Cauchy-Schwarz inequality that
\begin{align*}
I_{22}&=-2\varepsilon^{-\frac{5}{2}}\int_{\G_t(\delta)}Z(s)\theta''\zeta(r)\big(h(s)+ O(\e)z\big)
\mathscr{L}_\e[\phi^\bot]dx
\nonumber\\&\geq-2\varepsilon^{-\frac{5}{2}}\int_{\G_t(\delta)}Z(s)\theta''\zeta(r)h(s)
\mathscr{L}_\e[\phi^\bot]dx
-C\int_{\Gamma_t} Z^2(s)-\frac{1}{8}I_{31}.
\end{align*}
On the other hand, using (\ref{eq:expan phia})  we can write   $f''(\phi_a)=f''(\theta)+O(\e^2).$  This together with   integration by parts leads to
\begin{align}
I_{22}&\geq\underbrace{2\varepsilon^{-\frac{3}{2}}\int_{\G_t(\delta)}\Delta \(Z(s)h(s)\theta''\zeta(r)\)\phi^\bot dx}_{\triangleq I_{221}}
- 2\varepsilon^{-\frac{7}{2}}\int_{\Gamma_t}\int_{-\delta}^{\delta} Z(s)h(s)\theta''f''(\theta)\phi^\bot \zeta(r)J(r,s)dr
\nonumber\\&\quad-\underbrace{ \int_{\Gamma_t}\int_{-\delta}^{\delta} O(\varepsilon^{-\frac{3}{2}}) Z(s)h(s)\theta'' \phi^\bot \zeta(r) J(r,s) dr}_{\triangleq I_{222}}
-C\int_{\Gamma_t} Z^2(s)-\frac{1}{8}I_{31}.\label{I22-1}
\end{align}
 Regarding $I_{221}$, we have the following identity due to (\ref{eq:laplace1}),
 \begin{equation}\label{expansion11}
 \begin{split}
 &2\varepsilon^{-\frac{3}{2}} \Delta \big(Z(s)h(s)\theta''(z)\zeta(r)\big)\phi^\bot  \\&=2\varepsilon^{-\frac{3}{2}}\(\Delta Z(s)h(s)+2\nabla Z(s)\cdot \nabla h(s)+ Z(s)\Delta h(s)\)\theta''(z)\zeta(r)\phi^\bot
\\
&\quad+2\varepsilon^{-\frac{3}{2}}Z(s)h(s)\(\varepsilon^{-2}\theta''''(z)\zeta(r)
+2\varepsilon^{-1}\theta'''(z)\zeta'(r)+\theta''(z)\zeta''(r)\)\phi^\bot \\&\quad+2\varepsilon^{-\frac{3}{2}}Z(s)h(s)
\(\varepsilon^{-1}\theta'''(z)\zeta(r)+
\theta''(z)\zeta'(r)\)\p_r (\ln \sqrt{g})\phi^\bot,~\text{where}~z=r/\e.
 \end{split}
 \end{equation}
 Note that the term  $4\varepsilon^{-\frac{3}{2}}\nabla \big(Z(s)h(s)\big)\cdot\nabla\big(\theta''(z)\zeta(r)\big)\phi^\bot
$ which would have appeared in \eqref{expansion11} vanishes because of the decomposition \eqref{gradient} and orthogonality. Note that $\theta'',\theta'''=O(e^{-\frac C\e})$ in $\G_t(\delta)\backslash \G_t(\delta/2)$, and these relations hold for  the terms that are multiplied by  the derivatives of $\zeta(r)$. So   in $\G_t(\delta)$ 
\begin{equation*}
\begin{split}
&2\varepsilon^{-\frac{3}{2}} \Delta \big(Z(s)h(s)\theta''(\tfrac r\e)\zeta(r)\big)\phi^\bot
\\&=2\varepsilon^{-\frac{3}{2}}\(\Delta Z(s)h(s)+2\nabla Z(s)\cdot\nabla h(s)+Z(s)\Delta h(s)\)\theta''(\tfrac r\e)\zeta(r)\phi^\bot
\\&\quad+2\varepsilon^{-\frac{3}{2}}Z(s)h(s)
\(\varepsilon^{-2}\theta''''(\tfrac r\e)
+\varepsilon^{-1}\theta'''(\tfrac r\e)\p_r (\ln \sqrt{g})
\)\zeta(r)\phi^\bot+Z(s)h(s)\phi^\bot O(e^{-\frac C\e})\chi_{\G_t(\delta)\backslash \G_t(\delta/2)}
\end{split}
\end{equation*}
where  $\chi_A$ denotes the characteristic function of a set $A$.
Substituting the above formula into $I_{221}$ in  \eqref{I22-1}, integrating by parts (in $x$-variable) and using \eqref{gradient}, \eqref{eq:laplace1} yield 
\begin{align*}
I_{221}&=2\varepsilon^{-\frac{3}{2}}\int_{\G_t(\delta)}\Big(\Delta Z(s)h(s)+2\nabla Z(s)\cdot \nabla h(s)+ Z(s)\Delta h(s)\Big)\theta''(\tfrac r\e)\zeta(r)\phi^\bot dx
\nonumber\\&\quad+2\varepsilon^{-\frac{7}{2}}
\int_{\G_t(\delta)}Z(s)h(s)
\theta''''(\tfrac r\e)\zeta(r)\phi^\bot dx\nonumber\\&\quad+2\varepsilon^{-\frac{5}{2}}\int_{\G_t(\delta)}
Z(s)h(s) \theta'''(\tfrac r\e)\zeta(r)\p_r (\ln \sqrt{g})\phi^\bot dx+\int_{\G_t(\delta)\backslash \G_t(\delta/2)}Z(s)\phi^\bot O(e^{-\frac C\e}) dx
\nonumber\\&=2\varepsilon^{-\frac{3}{2}}\int_{\G_t(\delta)}
\(\Delta_\G Z(s)h(s)-Z(s)\Delta_\G h(s)\)\theta''(\tfrac r\e)\zeta(r)\phi^\bot dx
\nonumber\\&\quad-4\varepsilon^{-\frac{3}{2}}\int_{\G_t(\delta)}Z(s)\nabla_\G h(s)\cdot\nabla_\G\big(\theta''(\tfrac r\e)\zeta(r)\phi^\bot\big) dx
+2\varepsilon^{-\frac{7}{2}}
\int_{\G_t(\delta)}Z(s)h(s)
\theta''''(\tfrac r\e)\zeta(r)\phi^\bot dx\nonumber\\&\quad+2\varepsilon^{-\frac{5}{2}}\int_{\G_t(\delta)}
Z(s)h(s) \theta'''(\tfrac r\e)\p_r (\ln \sqrt{g})\zeta(r)\phi^\bot dx+\int_{\G_t(\delta)\backslash \G_t(\delta/2)}Z(s)\phi^\bot O(e^{-\frac C\e}) dx.
\end{align*}
In the second step  above  we used $\nabla h(s)\cdot\nabla\(\theta''(\tfrac r\e)\zeta(r)\phi^\bot\)=\nabla_\G h(s)\cdot\nabla_\G\(\theta''(\tfrac r\e)\zeta(r)\phi^\bot\).$ Then 
it follows from  the  change of variable $x\mapsto (r,s)$ that
\begin{align*}
&I_{221}=2\int_{\Gamma_t}
\int_{-\delta}^{\delta} \e^{\frac 12} \Big(\Delta_{\Gamma} Z(s)h(s)-Z(s)\Delta_{\Gamma} h(s)\Big)\theta'' \e^{-2}(\zeta\phi^\bot) Jdr
\nonumber\\&-4\int_{\Gamma_t}\int_{-\delta}^{\delta}\e^{-\frac 12}Z(s) \nabla_{\Gamma} h(s)\cdot\e^{-1}\nabla_{\Gamma}(\zeta\phi^\bot ) \theta''  J dr
+2\varepsilon^{-\frac{7}{2}}\int_{\Gamma_t}\int_{-\delta}^{\delta}Z(s)h(s)\theta'''' (\zeta\phi^\bot) Jdr\nonumber\\&
+2\int_{\Gamma_t}\int_{-\delta}^{\delta}
\e^{-\frac 12}Z(s)h(s) \theta'''(\tfrac r\e)\p_r (\ln \sqrt{g})  \e^{-2}(\zeta\phi^\bot) Jdr+\int_{\G_t(\delta)\backslash \G_t(\delta/2)}Z(s)\phi^\bot O(e^{-\frac C\e}) dx.
\end{align*}  In this formula, we write the power of $\e$ separately for the convenience of applying the Cauchy-Schwarz inequality \eqref{cauchy-schwarz}.
Note that in the above expansion of $I_{221}$, the leading term is the one with $\e^{-7/2}$, which shall be cancelled with the leading order term in $I_{25}$.  Recall that $h(s)$ is a smooth function on $\G_t$ and $\Delta_\G$ depends on $r$. We apply  the Cauchy-Schwarz inequality \eqref{cauchy-schwarz} to the first, second, and the fourth terms above and yield
\begin{equation}
\begin{split}
I_{221}&\geq -C\e^2 \sup_{|r|\leq \delta}\int_{\G_t}  |\Delta_{\G} Z|^2
+2\varepsilon^{-\frac{7}{2}}
\int_{\Gamma_t}\int_{-\delta}^{\delta}Z(s)h(s)\theta''''(\zeta
\phi^\bot) J dr \\&\quad
-C(1+\nu^{-1})\int_{\Gamma_t} Z^2(s)-\nu\varepsilon^{-4}
\int_{\G_t(\delta)}\big(\phi^\bot\zeta\big)^2dx\\&\quad
-Ce^{-\frac{C}{\varepsilon}}
\int_{\G_t(\delta)\backslash\G_t(\delta/2)}
\big(\phi^\bot\big)^2dx
-\nu\varepsilon^{-2}\int_{\G_t(\delta)}
\big|\nabla_{\Gamma}
 (\zeta \phi^\bot) \big|^2dx.\label{I22-2}
\end{split}
\end{equation}
In a similar way, we can treat  $I_{222}$ by
\begin{align}
I_{222}&=\int_{\Gamma_t}\int_{-\delta}^{\delta} O(\varepsilon^{-\frac{3}{2}}) Z(s)h(s)\theta''(z)f'''(\theta)\big(\phi^\bot \zeta+\phi^\bot (1-\zeta)\big) dr
\nonumber\\& \geq-C\int_{\Gamma_t} Z^2(s)-C\varepsilon^{-2}\int_{\G_t(\delta)}\big(\phi^\bot\zeta\big)^2dx
-Ce^{-\frac{C}{\varepsilon}}\int_{\G_t(\delta)\backslash\G_t(\delta/2)}\big(\phi^\bot\big)^2dx.\label{I22-3}
\end{align}
By substituting  (\ref{I22-2}) and (\ref{I22-3}) into (\ref{I22-1}) and then using \eqref{laplace1} to treat the first term on the right of   \eqref{I22-2}, we arrive at
\begin{equation}\label{I22-4}
  \begin{split}
    I_{22}&\geq2\varepsilon^{-\frac{7}{2}}\int_{\Gamma_t}\int_{-\delta}^{\delta} Z(s)h(s)\big(\theta''''(z)-\theta''(z)f''(\theta)\big)
(\phi^\bot \zeta) Jdr
\\&\quad-\nu\int_{\Gamma_t} \big(\Delta_{\Gamma}|_{r=0}Z\big)^2-C(1+\nu^{-1})\int_{\Gamma_t} Z^2(s) -\nu\varepsilon^{-4}\int_{\G_t(\delta)}\big(\phi^\bot\zeta\big)^2dx\\&\quad-Ce^{-\frac{C}{\varepsilon}}\int_{\G_t(\delta)\backslash\G_t(\delta/2)}\big(\phi^\bot\big)^2dx
-\nu\varepsilon^{-2}\int_{\G_t(\delta)}\big|\nabla_{\Gamma}\big(\phi^\bot\zeta\big)\big|^2dx
-\frac{1}{8}I_{31}.
  \end{split}
\end{equation}
\underline{Estimate of $I_{25}$:}
It follows from \eqref{d} and \eqref{app distance} that $\Delta r=\Delta d^{(0)}+\e \Delta d^{(1)}+O(\e^2)$. This together with \eqref{eq:expan phia}, \eqref{mu12} and \eqref{eq:2.1} implies  the following expansion successively 
 \begin{subequations}
   \begin{align}
     \mu_a&=-\Delta d^{(0)}\theta'-\e \Delta d^{(1)}\theta'+\e D^{(0)}z\theta'+O(\e^2)\nonumber\\
     &=-h\theta'+\e (D^{(0)}-b)z\theta'+(1+z^2)O(\e^2),~\text{with}~z=r/\e.\\
     f'''(\phi_a)\mu_a &=-h\theta'f'''(\theta)+\e (D^{(0)}-b)z\theta'f'''(\theta)+(1+z^2)O(\e^2),~\text{with}~z=r/\e.
   \end{align}
 \end{subequations}
Substituting into \eqref{def:I25} yields
\begin{equation}
\begin{split}
I_{25}
&=-2\varepsilon^{-\frac{7}{2}}\int_{\Gamma_t}\int_{-\delta}^{\delta}Z(s)h(s)f'''(\theta)\theta'^2(\phi^\bot \zeta)Jdr
\\&\quad+\int_{\Gamma_t}\int_{-\delta}^{\delta}\e^{-\frac 12}Z(s)(D^{(0)}-b)zf'''(\theta)\theta'^2 \e^{-2}(\phi^\bot \zeta) J dr\\&\quad
+\e^{-\frac 32} \int_{\G_t}\int_{-\delta}^{\delta} O(1)\theta'(z)(1+z^2)Z(s)(\phi^\bot\zeta)Jdr,~\text{with}~z=r/\e.
\end{split}
\end{equation}
Finally we  apply \eqref{cauchy-schwarz} to the last two components and this yields
\begin{align}
I_{25}&\geq-2\varepsilon^{-\frac{7}{2}}\int_{\Gamma_t}\int_{-\delta}^{\delta}Z(s)h(s)f'''(\theta)\theta'^2\phi^\bot\zeta J dr
-\frac C\nu\int_{\Gamma_t} Z^2(s)-\nu\varepsilon^{-4}\int_{\G_t(\delta)}
\big(\phi^\bot\zeta\big)^2dx.\label{I25}
\end{align}
On the other hand,  differentiating the identity   $\theta''(z)=f'(\theta)$ twice leads to $\theta''''(z)=\theta''(z)f''(\theta)+f'''(\theta)\big(\theta'(z)\big)^2.$
This together with (\ref{I22-4}) and (\ref{I25}) eliminates the $\e^{-7/2}$ order term in $I_{22}+I_{25}$. So we have proved  \eqref{I22I25}.
\end{proof}
\begin{lemma}
 There exist $C>0$ and $\e_0>0$   such that
 \begin{equation}\label{I23I24}
  \begin{split}
I_{23}&\geq-\nu\int_{\Gamma_t} \big(\Delta_{\G}\big|_{r=0}Z(s)\big)^2-C\int_{\Gamma_t} Z^2(s)-\nu\varepsilon^{-4}\int_{\G_t(\delta)}
\big(\phi^\bot\zeta\big)^2dx
\\&\quad-\frac C{\e \nu} \int_{\G_t(\delta)}\big(\phi^\bot\zeta\big)^2dx
-\frac{C \e}\nu  \int_{\G_t(\delta)}\big(\Delta_{\G}\big|_{r=0}\phi^\bot\big)^2\zeta^2dx
  \end{split}
\end{equation}
holds for every $\e\in (0,\e_0)$ and $\nu\in (\e^{\frac 1{100}},1)$.
 \end{lemma}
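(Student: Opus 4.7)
The plan is to integrate by parts to move the operator $\mathscr{L}_\e$ from $\phi^\bot$ onto the explicit profile and then harvest cancellations. Using the self-adjointness of $\mathscr{L}_\e$ together with the compact support of $\zeta$ in $(-\delta,\delta)$, I would rewrite
$$I_{23} = -2\varepsilon^{-3/2}\int_\Omega\mathscr{L}_\e\bigl[\Delta_\G Z(s)\,\theta'(r/\e)\,\zeta(r)\bigr]\,\phi^\bot\,dx.$$
Expanding $\Delta[\Delta_\G Z\,\theta'\zeta]$ via \eqref{eq:laplace1} and using the identity $\theta'''(z) = f''(\theta)\theta'(z)$ (obtained by differentiating \eqref{ODE-0}) together with $f''(\phi_a) = f''(\theta) + O(\varepsilon^2)$ from \eqref{eq:expan phia}, the most singular $\varepsilon^{-2}$ piece cancels exactly; the contributions from $\zeta',\zeta''$ are $O(e^{-C/\varepsilon})$ since $\theta',\theta''$ decay exponentially on $\mathrm{supp}(\zeta')\cup\mathrm{supp}(\zeta'')$. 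What remains has the schematic form
$$\mathscr{L}_\e[\Delta_\G Z\,\theta'\zeta] = -\partial_r(\ln\sqrt g)\,\theta''\,\zeta\,\Delta_\G Z \;-\; \varepsilon\,\theta'\,\zeta\,\Delta_\G(\Delta_\G Z) \;+\; O(\varepsilon)\,\theta'\zeta\,\Delta_\G Z \;+\; O(e^{-C/\varepsilon}).$$

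Substituting back produces three integrals. For the integral involving $\Delta_\G(\Delta_\G Z)$, I would integrate by parts on each closed hypersurface $\G_t^r$ using \eqref{gauss}: since $\theta'\zeta$ depends only on $r$, we get $\int_{\G_t^r}\Delta_\G(\Delta_\G Z)\phi^\bot\,d\mathcal{H}^{N-1} = \int_{\G_t^r}(\Delta_\G Z)(\Delta_\G \phi^\bot)\,d\mathcal{H}^{N-1}$, converting the fourth-order term into
$$2\varepsilon^{-1/2}\int (\Delta_\G Z)\,\theta'(r/\e)\,\zeta\,\Delta_\G\phi^\bot\,J\,dr\,ds.$$
Cauchy--Schwarz with parameter $\nu$, followed by Lemma \ref{laplace3} on the $Z$ side (which absorbs a factor of $\varepsilon$ and gives $\nu\int(\Delta_\G|_{r=0}Z)^2 + C\int Z^2$) and the decomposition $\Delta_\G = \Delta_\G|_{r=0} + r\mathcal{R}_\G$ together with \eqref{roperatorest} on the $\phi^\bot$ side, produces the $\frac{C\varepsilon}{\nu}\int(\Delta_\G|_{r=0}\phi^\bot)^2\zeta^2\,dx$ contribution.

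For the $\varepsilon^{-3/2}$-singular integral with the $\theta''$ kernel, I would decompose $\partial_r(\ln\sqrt g) = h(s) + O(r)$ via \eqref{eq:2.1} (the $O(r)$ remainder gains a factor of $\varepsilon$ after the change of variable $r = \varepsilon z$). A direct Cauchy--Schwarz with the $Z$-side weighted by $\theta''^2$ (an admissible kernel for Lemma \ref{laplace3}) yields $\nu\int(\Delta_\G|_{r=0}Z)^2 + C\int Z^2$, while the $\phi^\bot$-side, exploiting the exponential localization of $\theta''(r/\e)$ in a layer of width $O(\varepsilon)$, yields the $\frac{C}{\varepsilon\nu}\int(\phi^\bot\zeta)^2\,dx$ contribution (the overall prefactor $\varepsilon^{-3}$ is reduced to $\varepsilon^{-1}$ by the $\varepsilon$ gained from Lemma \ref{laplace3} and the $\varepsilon$ from $dr = \varepsilon dz$). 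The residual $O(\varepsilon^{-1/2})$ integral gives the $\nu\varepsilon^{-4}\int(\phi^\bot\zeta)^2\,dx$ term as the $\phi^\bot$-side of an unbalanced Young splitting.

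The main obstacle is the surface integration-by-parts step: without moving one $\Delta_\G$ off $\Delta_\G Z$ onto $\phi^\bot$, one would be left with a surface fourth-derivative norm of $Z$, which is not available in $\mathcal K(t)$. A secondary subtlety is the distribution of inverse powers of $\varepsilon$ between the two sides of each Cauchy--Schwarz so that the bound on $\phi^\bot$ ends up proportional to $\varepsilon\|\Delta_\G|_{r=0}\phi^\bot\|^2$ rather than $\varepsilon^{-1}\|\cdots\|^2$; this is precisely what the factor of $\varepsilon$ from Lemma \ref{laplace3} applied to the $Z$-side buys us, and what forces the choice $\nu \in (\varepsilon^{1/100},1)$ in the final bound.
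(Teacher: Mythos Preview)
Your overall strategy---move $\mathscr{L}_\e$ onto the explicit profile, cancel the leading $\e^{-2}$ piece via $\theta'''=f''(\theta)\theta'$, and handle the remaining terms by Cauchy--Schwarz---matches the paper's decomposition $I_{23}=J_1+J_2+J_3$ and the treatment of $J_1+J_2$. The gap is in your estimate of the bi-Laplacian piece (the paper's $J_3$).

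After your surface integration by parts you arrive at
\[
2\e^{-1/2}\int_{\G_t}\int_{-\delta}^\delta (\Delta_\G Z)\,(\Delta_\G\phi^\bot)\,\theta'\zeta\,J\,dr,
\]
and you then apply Cauchy--Schwarz with parameter $\nu$, invoking \eqref{laplace3} on the $Z$-side. But count the powers of $\e$: the split $\e^{-1/2}|ab|\le \nu\e^{-1}a^2+\tfrac{C}{\nu}b^2$ with $a=(\Delta_\G Z)\theta'$ and $b=(\Delta_\G\phi^\bot)\zeta$ gives, after \eqref{laplace3},
\[
\nu\,\e^{-1}\!\int_{-\delta}^\delta(\Delta_\G Z)^2(\theta')^2J\,dr\;\lesssim\;\nu\int_{\G_t}\Bigl((\Delta_\G|_{r=0}Z)^2+Z^2\Bigr)
\]
on the $Z$-side, while the $\phi^\bot$-side is
\[
\frac{C}{\nu}\int_{\G_t(\delta)}(\Delta_\G\phi^\bot)^2\zeta^2\,dx,
\]
not $\frac{C\e}{\nu}$. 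The factor of $\e$ produced by \eqref{laplace3} is already spent cancelling the $\e^{-1}$ in the Young split; there is no second factor of $\e$ to transfer to the $\phi^\bot$-side. The decomposition $\Delta_\G=\Delta_\G|_{r=0}+r\RR_\G$ you mention on the $\phi^\bot$-side does not help either, since the leading $r=0$ piece carries no smallness.

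The missing ingredient is the orthogonality \eqref{orthogonality}. The paper first replaces $\theta' J$ by $\varphi J^{1/2}$ (at cost $O(\e)$), so that the leading part of $J_3$ becomes
\[
J_{51}=\e^{-1/2}\int_{\G_t}\int_{-\delta}^\delta(\Delta_\G|_{r=0}Z)\,(\Delta_\G|_{r=0}\phi^\bot)\,\varphi\zeta J^{1/2}\,dr.
\]
Applying $\Delta_\G|_{r=0}$ to the orthogonality relation $\int_{-\delta}^\delta\phi^\bot\varphi\zeta J^{1/2}dr=0$ converts the inner integral $\int\varphi\zeta(\Delta_\G|_{r=0}\phi^\bot)J^{1/2}dr$ into terms involving only $\nabla_\G|_{r=0}\phi^\bot$ and $\phi^\bot$ (derivatives land on $J^{1/2}$), so $J_{51}$ drops to a first-derivative term on $\phi^\bot$. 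Only the remainder pieces $J_{52},J_{53},J_{54}$, which carry explicit factors of $r=\e z$, produce $(\Delta_\G|_{r=0}\phi^\bot)^2$---and now with the needed prefactor $C\e^2/\nu$ (later relaxed to $C\e/\nu$ after interpolating the gradient term). Without invoking orthogonality, you cannot reach the $\frac{C\e}{\nu}$ coefficient stated in \eqref{I23I24}, and the resulting $\frac{C}{\nu}\int(\Delta_\G|_{r=0}\phi^\bot)^2\zeta^2$ would fail to be absorbed in the proof of Theorem~\ref{spectral theorem}.
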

 \begin{proof}
Using \eqref{eq:laplace1}, \eqref{cutoff} and integrating by parts in $r$ twice, we can write   $I_{23}$ by
\begin{align}
I_{23}&=2\varepsilon^{-\frac{3}{2}}\int_{\Gamma_t}
\int_{-\delta}^{\delta}\Delta_{\Gamma}Z(s) \theta'\zeta(r)
\(\e\Delta_{\G} \phi^\bot+ \tfrac {\e}{\sqrt{g}} \p_{r}\(\sqrt{g} \p_{r} \phi^\bot\)
-\varepsilon^{-1}f''(\phi_a)\phi^\bot\)
J\,dr\nonumber \\
&=2\varepsilon^{-\frac{1}{2}}\int_{\Gamma_t}\int_{-\delta}^{\delta}
\p_r\(\sqrt{g}\p_r\(\Delta_{\Gamma}Z(s)
 \theta'\zeta(r)\)\)\phi^\bot \tfrac{1}{\sqrt{g}(0,s)}dr,\nonumber\\
 &\quad-2\varepsilon^{-\frac{5}{2}}
\int_{\Gamma_t}\int_{-\delta}^{\delta}
\Delta_{\Gamma}Z(s) \theta'\zeta(r)
 f''(\phi_a)\phi^\bot Jdr\nonumber\\
&\quad+2\varepsilon^{-\frac{1}{2}}\int_{\Gamma_t}
\int_{-\delta}^{\delta}\Delta_{\Gamma}Z(s) \Delta_{\Gamma}\phi^\bot\zeta(r) \theta' Jdr
\triangleq J_1+J_2+J_3.
\label{I231}
\end{align}
To treat $J_1$, we notice that its leading order term corresponds to the case when $\theta'(\tfrac r\e)$ is differentiated twice, which gives rise to a factor $\e^{-2}$. For the rest terms, the differential operator $\p_r$ will apply to $\Delta_{\G} Z(s)$, whose coefficients depends on $r$ smoothly. So for any $\ell\geq 0$, we employ \eqref{laplace1} to obtain
  \begin{equation}\label{ellipticest}
    \int_{\G_t}\int_{-\delta}^{\delta}|\p_r^\ell \Delta_{\G} Z(s)|^2\leq C_\ell \|Z\|^2_{H^2(\G_t)}\leq C\inf_{|r|\leq \delta}\int_{\G_t} \(|\Delta_{\G} Z(s)|^2+| Z(s)|^2\).
  \end{equation}
   As a result, we can subtract the leading order terms in the expansion of  $J_1+J_2$ and the remaining ones can be controlled effectively:
  \begin{equation*}
    \begin{split}
      &J_1+J_2-\underbrace{2\e^{-\frac 52}\int_{\G_t}\int_{-\delta}^{\delta}\Delta_{\G} Z(s)\(\theta'''(z)-\theta'(z)f''(\phi_a)\)\zeta(r) \phi^\bot J dr}_{\triangleq \hat{J}}\\
      \geq &-\nu\int_{\Gamma_t} \big(\Delta_{\Gamma}{\big|_{r=0}}Z(s)\big)^2\int_{-\frac{2}
{\varepsilon}}^{\frac{2}{\varepsilon}}\(\theta''(z)^2+\theta'(z)^2\)dz
-C\int_{\Gamma_t} Z^2(s)-\nu\varepsilon^{-4}\int_{\G_t(\delta)}
\big(\phi^\bot\zeta\big)^2dx.
    \end{split}
  \end{equation*}
  Employing \eqref{good  term} will gain a factor  $\e^2$ for  $\hat{J}$. This together with \eqref{laplace2} implies  \begin{equation}\label{lowerboundJ1J2}
J_1+J_2
      \geq -\nu\int_{\Gamma_t} \big(\Delta_{\Gamma}{\big|_{r=0}}Z(s)\big)^2
-C\int_{\Gamma_t} Z^2(s)-\nu\varepsilon^{-4}\int_{\G_t(\delta)}
\big(\phi^\bot\zeta\big)^2dx,
  \end{equation}
by choosing a  sufficiently  small $\nu>0$.

Concerning $J_3$,
we shall employ the relation  $\varphi J^{\frac 12}- \theta'J =O(e^{-\frac C\e})+z\theta'(z) O(\e)$ with $z=r/\e$. 
This  is a consequence of
$\varphi- \theta'=O(e^{-\frac{C}{\varepsilon}})$ and $J-J^{\frac{1}{2}}=O(\varepsilon)z$,
which follow from  \eqref{difference of kernal} and  \eqref{jacobian} respectively. So we can treat $J_3$  by the Caucy-Schwarz inequality and \eqref{laplace3},
\begin{equation}\label{J_3terms}
 \begin{split}
    J_3&\geq 2 \underbrace{\varepsilon^{-\frac{1}{2}}\int_{\Gamma_t}
\int_{-\delta}^{\delta}\Delta_{\Gamma}Z(s) \Delta_{\Gamma}\phi^\bot\zeta(r)\varphi J^{\frac 12}dr}_{\triangleq J_*}
\\&-C\e\int_{\Gamma_t} \(\big(\Delta_{\Gamma}|_{r=0}Z\big)^2+Z^2(s)\)-C\e\int_{\G_t(\delta)} \big(\Delta_{\Gamma}\phi^\bot\big)^2\zeta^2dx.
 \end{split}
\end{equation}
The estimate of $J_*$ is derived at \eqref{estJ5} in the sequel. 
So substituting  \eqref{lowerboundJ1J2}, \eqref{J_3terms} and   \eqref{estJ5} into 
\eqref{I231} and then choosing a sufficiently small $\nu$ yield
\begin{equation}
\begin{split}
I_{23}\geq &-\nu\int_{\Gamma_t} \big(\Delta_{\G}\big|_{r=0}Z(s)\big)^2-C\int_{\Gamma_t} Z^2(s)-\nu\varepsilon^{-4}\int_{\G_t(\delta)}
\big(\phi^\bot\zeta\big)^2dx
\\&\quad-\frac C\nu \int_{-\delta}^{\delta}\(\int_{\Gamma_t}
\left|\nabla_{\Gamma}\big|_{r=0} \phi^\bot \right|^2 \)\zeta^2dr
-C \e  \int_{\G_t(\delta)}\big(\Delta_{\G}\big|_{r=0}
(\phi^\bot\zeta)\big)^2dx.
\end{split}
\end{equation}
To eliminate the term with  $\nabla_\G|_{r=0}\,\phi^\bot$, we employ  \eqref{gauss} and obtain
\begin{align*}
 \int_{\Gamma_t}\left|\nabla_{\Gamma}\big|_{r=0}\phi^\bot\right|^2
 =-\int_{\Gamma_t}\phi^\bot\big(\Delta_{\G}\big|_{r=0}\phi^\bot\big)
 \leq\varepsilon^{-1}\int_{\Gamma_t}\big(\phi^\bot\big)^2+\frac{\varepsilon}{4}\int_{\Gamma_t}\big(\Delta_{\G}\big|_{r=0}\phi^\bot\big)^2.
\end{align*}
The above two inequalities together lead to \eqref{I23I24}.
\end{proof}
{
\begin{proof}[Proof of Proposition \ref{propcross}]Retaining \eqref{zeta term}, $I_{24}$ has the same lower bound as $I_{23}$. So   substituting (\ref{I21}), (\ref{I22I25}) and (\ref{I23I24}) into (\ref{decompose:I2}), and using (\ref{decompose L2norm}) lead to \eqref{I2:lower bound}.
It remains to treat $J_*$ in \eqref{J_3terms}. We shall show  there exist $\e_0>0$ and $C>0$ such that
  \begin{equation}\label{estJ5}
    \begin{split}
   J_*  & \geq-\nu\int_{\Gamma_t} \big(\Delta_{\G}\big|_{r=0}Z(s)\big)^2 -\frac C \nu \int_{\Gamma_t}\int_{-\delta}^{\delta}
   \left|\nabla_{\Gamma}\big|_{r=0}(\phi^\bot\zeta)\right|^2  dr-\frac C \nu \int_{\G_t(\delta)}\big(\phi^\bot\zeta\big)^2dx\\
   &\qquad -C\int_{\Gamma_t} Z^2(s)-\frac{C\e^2}\nu \int_{\G_t(\delta)}
 \big(\Delta_{\G}\big|_{r=0}\phi^\bot\zeta\big)^2dx
    \end{split}
  \end{equation}
  holds for every $\e\in (0,\e_0)$ and every $\nu\in (\e^{\frac 1{100}},1)$. To this end, we first  recall \eqref{roperator}    that $\Delta_{\G}=\Delta_{\G}\big|_{r=0}+r\RR_{\G}$. So 
\begin{equation*}
\begin{split}
J_*=&\varepsilon^{-\frac{1}{2}}\int_{\Gamma_t}
\int_{-\delta}^{\delta}\big(\Delta_{\G}\big|_{r=0}Z(s)\big)\big( \Delta_{\G}\big|_{r=0}\phi^\bot\big)\varphi(\tfrac r\e)\zeta(r)J^{\frac 12}dr\\
&+\varepsilon^{\frac{1}{2}}\int_{\Gamma_t}
\int_{-\delta}^{\delta}\big(\Delta_{\G}\big|_{r=0}Z(s)\big) \RR_{\G}\phi^\bot \tfrac r\e\varphi(\tfrac r\e) \zeta(r)J^{\frac 12}dr\\
&+\varepsilon^{\frac{1}{2}}\int_{\Gamma_t}
\int_{-\delta}^{\delta}\RR_{\G}Z(s)\big
(\Delta_{\G}\big|_{r=0}\phi^\bot\big) \tfrac r\e\varphi(\tfrac r\e) \zeta(r)J^{\frac 12}dr\\
&+\varepsilon^{\frac{3}{2}}\int_{\Gamma_t}
\int_{-\delta}^{\delta}\RR_{\G}Z(s) \RR_{\G}\phi^\bot \varphi(\tfrac r\e)(\tfrac r\e)^2\zeta(r)J^{\frac 12}dr\triangleq \sum_{1\leq i\leq 4}J_{5i}.
\end{split}
\end{equation*}
 To treat the highest order term $J_{51}$, we apply $\Delta_{\G}\big|_{r=0}$ to  (\ref{orthogonality}) and obtain
 \begin{equation*}
   0=\Delta_{\G}\big|_{r=0}\bigg(\int_{-\delta}^{\delta}
\varphi(\tfrac{r}{\varepsilon})\phi^\bot(r,s)\zeta(r)J^{\frac{1}{2}}(r,s)dr\bigg).
 \end{equation*}
This implies the following identity:
\begin{align*}
&\int_{-\delta}^{\delta}{\varphi}(\tfrac{r}{\varepsilon})
\zeta(r)\big(\Delta_{\G}\big|_{r=0}\phi^\bot(r,s)\big)J^{\frac{1}{2}}dr
\\&=-2\int_{-\delta}^{\delta}{\varphi}(\tfrac{r}{\varepsilon})\zeta(r)
\big(\nabla_{\Gamma}\big|_{r=0}\phi^\bot(r,s)\cdot
 \nabla_{\Gamma}\big|_{r=0}J^{\frac{1}{2}}\big)dr
-\int_{-\delta}^{\delta}{\varphi}(\tfrac{r}{\varepsilon})\zeta(r)\phi^\bot(r,s)\big(\Delta_{\G}\big|_{r=0}J^{\frac{1}{2}}\big)dr.
\end{align*}
Multiplying by $\Delta_{\G}\big|_{r=0}Z(s)$ and integrating  over $\G_t$ gives
\begin{align*}
J_{51}
&=
-2\varepsilon^{-\frac{1}{2}} \int_{\G_t} \int_{-\delta}^{\delta}\big(\Delta_{\G}\big|_{r=0}Z(s)\big){\varphi}(\tfrac{r}{\varepsilon})\zeta(r)\big(\nabla_{\Gamma}\big|_{r=0}\phi^\bot(r,s)\cdot
 \nabla_{\Gamma}\big|_{r=0}J^{\frac{1}{2}}(r,s)\big)dr
\nonumber\\&-\varepsilon^{-\frac{1}{2}}\int_{\G_t} \int_{-\delta}^{\delta} \big(\Delta_{\G}\big|_{r=0}Z(s)\big){\varphi}
(\tfrac{r}{\varepsilon})\phi^\bot\zeta(r)
\big(\Delta_{\G}\big|_{r=0}J^{\frac{1}{2}}(r,s)\big) dr\\
&\geq-\nu\int_{\Gamma_t} \big(\Delta_{\G}\big|_{r=0}Z(s)\big)^2-\frac C \nu \int_{\Gamma_t}\int_{-\delta}^{\delta}
\left|\nabla_{\Gamma}\big|_{r=0}(\phi^\bot\zeta)\right|^2  dr-\frac C \nu \int_{\G_t(\delta)}\big(\phi^\bot\zeta\big)^2dx.
\end{align*}
Retain that $\RR_{\G}$  is a second order operator acting on $s$ while its coefficients depend  on $(r,s)\in (-2\delta,2\delta)\times \G_t$ smoothly. Using the  Cauchy-Schwarz inequality and \eqref{roperatorest} give the estimate of $J_{52}+J_{53}+J_{54}$:
\begin{align*}
 \sum_{2\leq i\leq 4}I_{5i}&\geq-\nu\int_{\Gamma_t} \(\big(\Delta_{\G}\big|_{r=0}Z(s)\big)^2+Z^2(s)\)-\frac{C\e^2}\nu \int_{\G_t(\delta)}
 \(\big(\Delta_{\G}\big|_{r=0}\phi^\bot\zeta\big)^2+(\phi^\bot \zeta)^2\)dx.\end{align*}
 The above two estimates lead to \eqref{estJ5}.
\end{proof}}

\section{Proof of Theorem \ref{spectral theorem}}\label{section:spectrumI3}
\indent
The 
proof of Theorem \ref{spectral theorem} will be done by the end of this section after establishing  two lemmas. The first one is concerned with   a lower bound of the   Allen-Cahn operator \eqref{omegadef}.
\begin{lemma}
There exists $\e_0>0$ such that for every $\e\in (0,\e_0)$,
\begin{align}
\tfrac{1}{\varepsilon^2}\int_{\Omega}|\mathscr{L}_\e[\phi^\bot]|^2 dx
 \gtrsim  \int_{\Omega}\big(\Delta\phi^\bot\big)^2dx+\tfrac{1}
{\varepsilon^2}\int_{\Omega}\big|\nabla\phi^\bot\big|^2dx+ \tfrac{1}{\varepsilon^4}\int_{\Omega}\big(\phi^\bot\big)^2dx.\label{I3-13}
\end{align}
\end{lemma}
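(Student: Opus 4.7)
The plan is to split the inequality into its three constituent lower bounds---the $L^2$ bound $\varepsilon^{-2}\|\mathscr L_\varepsilon[\phi^\bot]\|^2\gtrsim\varepsilon^{-4}\|\phi^\bot\|^2$, the $H^2$ bound with $\|\Delta\phi^\bot\|^2$, and the $H^1$ bound with $\varepsilon^{-2}\|\nabla\phi^\bot\|^2$---and to deduce the last two from the first by elementary elliptic arguments, so that the whole statement reduces to the $L^2$ coercivity.

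For the $L^2$ bound I first observe that $\phi^\bot$ inherits homogeneous Dirichlet and Neumann conditions on $\partial\Omega$ from $\phi=\phi_\varepsilon-\phi_a$ (since $\phi_a\equiv 1$ near $\partial\Omega$ and the kernel part $\check\phi$ is supported in $\Gamma_t(\delta)$ via the cut-off $\zeta$), so that integration by parts yields
\[\int_\Omega\phi^\bot\mathscr L_\varepsilon[\phi^\bot]\,dx=\int_\Omega\bigl(\varepsilon|\nabla\phi^\bot|^2+\varepsilon^{-1}f''(\phi_a)(\phi^\bot)^2\bigr)\,dx.\]
Outside $\Gamma_t(\delta)$ the potential $f''(\phi_a)\approx f''(\pm 1)=2$ is uniformly positive and directly gives $\gtrsim\varepsilon^{-1}\int(\phi^\bot)^2$. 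Inside $\Gamma_t(\delta)$ I pass to the $(r,s)$ parametrisation, discard the non-negative tangential part $\varepsilon|\nabla_\Gamma\phi^\bot|^2$, rescale $z=r/\varepsilon$, and use \eqref{orthogonality} together with the exponential bound \eqref{difference of kernal} and the expansion $J^{1/2}(r,s)=1+O(r)$ from \eqref{jacobian} to see that for each fixed $s$ the restriction $\phi^\bot(\varepsilon\cdot,s)$ is orthogonal to $\varphi$ in $L^2(I_\varepsilon)$ up to exponentially small error. Replacing $f''(\phi_a)$ by $f''(\theta)+O(\varepsilon^2)$ via \eqref{eq:expan phia} and applying the one-dimensional spectral gap \eqref{eigen:f_2der} slice-by-slice, then integrating in $s$, produces the same $\gtrsim\varepsilon^{-1}\int(\phi^\bot)^2$ lower bound in $\Gamma_t(\delta)$. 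Combining the two regions and invoking Cauchy--Schwarz in the form $\int\phi^\bot\mathscr L_\varepsilon[\phi^\bot]\leq\|\mathscr L_\varepsilon[\phi^\bot]\|\|\phi^\bot\|$ yields $\|\mathscr L_\varepsilon[\phi^\bot]\|\gtrsim\varepsilon^{-1}\|\phi^\bot\|$, i.e.\ the $L^2$ component of \eqref{I3-13}.

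Given this $L^2$ bound the other two components are routine. Rearranging the definition \eqref{omegadef} as $-\varepsilon\Delta\phi^\bot=\mathscr L_\varepsilon[\phi^\bot]-\varepsilon^{-1}f''(\phi_a)\phi^\bot$ and taking $L^2$ norms gives $\varepsilon\|\Delta\phi^\bot\|\leq\|\mathscr L_\varepsilon[\phi^\bot]\|+C\varepsilon^{-1}\|\phi^\bot\|\lesssim\|\mathscr L_\varepsilon[\phi^\bot]\|$; squaring and dividing by $\varepsilon^2$ produces the $\|\Delta\phi^\bot\|^2$ term. Testing $\mathscr L_\varepsilon[\phi^\bot]=g$ against $\phi^\bot$ and using the $L^2$ bound $\|\phi^\bot\|\lesssim\varepsilon\|g\|$ once more gives
\[\varepsilon\|\nabla\phi^\bot\|^2=\int g\phi^\bot-\varepsilon^{-1}\!\int f''(\phi_a)(\phi^\bot)^2\leq\|g\|\|\phi^\bot\|+C\varepsilon^{-1}\|\phi^\bot\|^2\lesssim\varepsilon\|g\|^2,\]
so $\|\nabla\phi^\bot\|^2\lesssim\|g\|^2$, whence $\varepsilon^{-2}\|\nabla\phi^\bot\|^2\lesssim\varepsilon^{-2}\|g\|^2$. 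The delicate step is the $L^2$ coercivity: reducing the global inequality to the one-dimensional eigenvalue estimate \eqref{eigen:f_2der} requires threading together the $(r,s)$-geometry, the rescaling $r=\varepsilon z$, the Jacobian expansion of $J^{1/2}$ around $r=0$, the $O(e^{-C/\varepsilon})$ discrepancy between $\varphi$ and $\theta'$, the $O(\varepsilon^2)$ replacement of $f''(\phi_a)$ by $f''(\theta)$, and the matching to the outer region where $f''(\phi_a)$ is uniformly positive, all with constants uniform in $\varepsilon$.
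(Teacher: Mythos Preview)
Your proposal is correct and follows essentially the same route as the paper. The paper's proof has identical architecture: it first establishes the coercivity $\int_\Omega\mathscr L_\varepsilon[\phi^\bot]\phi^\bot\,dx\gtrsim\int_\Omega\bigl(\varepsilon|\nabla\phi^\bot|^2+\varepsilon^{-1}(\phi^\bot)^2\bigr)dx$ (labelled \eqref{I3-4}), combines it with Cauchy--Schwarz to get the $\varepsilon^{-4}\|\phi^\bot\|^2$ and $\varepsilon^{-2}\|\nabla\phi^\bot\|^2$ terms, and then uses $(a+b)^2\geq a^2/2-b^2$ on $\mathscr L_\varepsilon[\phi^\bot]=-\varepsilon\Delta\phi^\bot+\varepsilon^{-1}f''(\phi_a)\phi^\bot$ to recover $\|\Delta\phi^\bot\|^2$. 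The single difference is that the paper delegates the coercivity \eqref{I3-4} to a citation (\cite[Lemma~2.4]{chen1994spectrum}), whereas you sketch its proof via the slice-by-slice reduction to the one-dimensional spectral gap \eqref{eigen:f_2der}; your sketch is exactly what that cited lemma does, including the handling of the weight $\zeta J^{1/2}$ and the $O(\varepsilon^2)$ perturbation of the potential.
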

 \begin{proof}
{
We first note that for sufficiently small $\e$, the above estimate holds trivially when $\Omega$ is replaced by $\Omega\backslash \G_t(\delta)$. This is because away from $\G_t$, there holds $f''(\phi_a)\geq \Lambda_a$ for some positive constant $\Lambda_a$. To proceed,  we recall
the following coercivity estimate
\begin{equation}\label{I3-4}
\int_\Omega\mathscr{L}_\e[\phi^\bot]\phi^\bot dx
 \gtrsim\int_{\Omega}
 \(\varepsilon\big|\nabla\phi^\bot\big|^2+
 \tfrac{1}{\varepsilon}\big(\phi^\bot\big)^2\)dx,
\end{equation}
which follows by applying  \cite[Lemma 2.4]{chen1994spectrum} with $\psi=\phi^\bot$.}
This combined with the Cauchy-Schwarz inequality  yields
\begin{equation}\label{I3-5}
\tfrac{1}{\varepsilon^2}\int_{\Omega}|\mathscr{L}_\e[\phi^\bot]|^2dx
\geq \tfrac{C}
{\varepsilon^2}\int_{\Omega}\big|\nabla\phi^\bot\big|^2dx+ \tfrac{C}{\varepsilon^4}\int_{\Omega}\big(\phi^\bot\big)^2dx.
\end{equation}
As a result of the inequality $(a+b)^2\geq a^2/2-b^2$,
we arrive at
\begin{equation}\label{Lest1}
  \tfrac{1}{\varepsilon^2}\int_{\Omega}|\mathscr{L}_\e[\phi^\bot]|^2dx
\geq \tfrac{1}
{2}\int_{\Omega}\big(\Delta\phi^\bot\big)^2dx-\tfrac{C_1}{\varepsilon^4}\int_{\Omega}\big(\phi^\bot\big)^2dx.
\end{equation}
Finally, multiplying \eqref{I3-5} by a sufficiently large constant and adding up to \eqref{Lest1} will lead to the desired inequality.
\end{proof}
The coercivity  of $\Delta \phi^\bot$ on the right of \eqref{I3-13} is given below:
\begin{lemma}There exists $\e_0>0$ such that for every $\e\in (0,\e_0)$,
\begin{align}
\int_{\Omega}\big(\Delta\phi^\bot\big)^2dx&\geq {\frac{1}{4}}\int_{\G_t(\delta)}\(\Delta_{\Gamma}\phi^\bot\)^2\zeta^2dx-C\int_\Omega\big(\big|\nabla \phi^\bot\big|^2+(\phi^\bot)^2\big)dx\nonumber
\\ &\quad+{\frac{1}{4}}\int_{\G_t(\delta)}\(\p_r^2(\phi^\bot\zeta)\)^2dx
+ {\frac{1}{4}}\int_{\G_t(\delta)}\big|\p_r \nabla_{\G}(\phi^\bot\zeta)\big|^2 dx.\label{lapalce improve}
\end{align}
\end{lemma}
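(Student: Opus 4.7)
\smallskip\noindent\textbf{Proof proposal.} The plan is to exploit the cutoff $\zeta$ to localize near $\G_t$, transfer $\zeta$ inside the Laplacian, and then apply a Bochner-type identity in the local coordinates $(r,s)$ from Section \ref{gemmetry}. Since $0\leq \zeta\leq 1$ and $\zeta$ is supported in $(-\delta,\delta)$, the product rule $\zeta\Delta\phi^\bot=\Delta(\phi^\bot\zeta)-2\nabla\zeta\cdot\nabla\phi^\bot-\phi^\bot\Delta\zeta$ together with the elementary inequality $(a-b-c)^2\geq \tfrac12 a^2-2b^2-2c^2$ yields
\[
\int_\Omega (\Delta\phi^\bot)^2\,dx\geq \int_{\G_t(\delta)}\zeta^2(\Delta\phi^\bot)^2\,dx\geq \tfrac12\int_{\G_t(\delta)}\bigl(\Delta(\phi^\bot\zeta)\bigr)^2\,dx-C\int_\Omega\bigl(|\nabla\phi^\bot|^2+(\phi^\bot)^2\bigr)\,dx,
\]
with $C$ depending on $\|\zeta\|_{W^{2,\infty}}$. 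It therefore suffices to bound $\int (\Delta v)^2\,dx$ from below by the three second-derivative quantities, where $v\triangleq \phi^\bot\zeta$ is compactly supported in $\G_t(\delta)$.

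For this key step I would use the decomposition $\Delta v=\Delta_\G v+\p_r^2 v+(\p_r\ln\sqrt{g})\p_r v$ from \eqref{eq:laplace1}, square it, and write
\[
(\Delta v)^2=(\Delta_\G v)^2+(\p_r^2 v)^2+2(\Delta_\G v)(\p_r^2 v)+R,
\]
where $R$ collects terms linear in $\p_r v$ and is pointwise bounded by $C(|\nabla v|^2+v^2)$. The cross term is converted into the desired mixed-derivative norm by integration by parts: on each leaf $\G_t^r$, the tangential divergence theorem \eqref{gauss} gives $\int_{\G_t^r}(\Delta_\G v)(\p_r^2 v)\,d\mathcal{H}^{N-1}=-\int_{\G_t^r}\nabla_\G v\cdot\nabla_\G\p_r^2 v\,d\mathcal{H}^{N-1}$; commuting $\nabla_\G$ with $\p_r^2$ produces only smooth-coefficient lower-order terms, and a further integration by parts in $r$ (valid since $v$ has compact support in $r$) yields
\[
\int_{\G_t(\delta)}(\Delta_\G v)(\p_r^2 v)\,dx=\int_{\G_t(\delta)}|\p_r\nabla_\G v|^2\,dx+\mathrm{l.o.t.},
\]
where the lower-order terms, coming from $[\p_r,\nabla_\G]$ and $\p_r\sqrt{g}$, are handled by Cauchy--Schwarz with a small weight so as not to spoil the leading coefficient. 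Combining with the observation that $\zeta=\zeta(r)$ implies $\nabla_\G\zeta=0$, hence $\Delta_\G v=\zeta\,\Delta_\G\phi^\bot$ and $(\Delta_\G v)^2=\zeta^2(\Delta_\G\phi^\bot)^2$, produces
\[
\int_\Omega(\Delta\phi^\bot)^2\,dx\geq \tfrac12\int_{\G_t(\delta)}\Bigl[\zeta^2(\Delta_\G\phi^\bot)^2+(\p_r^2 v)^2+|\p_r\nabla_\G v|^2\Bigr]dx-C\int_\Omega(|\nabla\phi^\bot|^2+(\phi^\bot)^2)\,dx,
\]
which is strictly stronger than \eqref{lapalce improve} (the coefficient $1/4$ in the statement leaves a safety margin for the Cauchy--Schwarz absorptions).

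The main technical obstacle is the integration-by-parts step for the cross term. One must track the commutators $[\p_r,\nabla_\G]$ and $[\p_r,\Delta_\G]$ together with all $r$-derivatives of the Jacobian $\sqrt{g}(r,s)$; each of these produces a term of strictly lower differential order that must be bounded by $\|\nabla\phi^\bot\|_{L^2}^2+\|\phi^\bot\|_{L^2}^2$ via a weighted Cauchy--Schwarz inequality, with the small weight chosen to protect the leading coefficient in front of $|\p_r\nabla_\G v|^2$. The localization step is routine, but this bookkeeping of lower-order remainders, carried out in the curvilinear frame \eqref{basis}, is the delicate part of the argument.
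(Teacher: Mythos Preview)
Your approach is essentially the same as the paper's: localize with $\zeta$, expand $(\Delta v)^2$ via \eqref{eq:laplace1}, and convert the cross term $2\int(\Delta_\G v)(\p_r^2 v)$ into $\int|\p_r\nabla_\G v|^2$ by tangential integration by parts on each leaf followed by integration by parts in $r$, with commutator remainders absorbed by Cauchy--Schwarz.

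There is one slip worth flagging. The remainder $R=2(\Delta_\G v+\p_r^2 v)(\p_r\ln\sqrt{g})\p_r v+\bigl((\p_r\ln\sqrt{g})\p_r v\bigr)^2$ is \emph{not} pointwise bounded by $C(|\nabla v|^2+v^2)$, since the first two summands contain second derivatives. What actually happens (and what the paper does) is that these cross terms are absorbed by Young's inequality into the leading squares, at the cost of reducing their coefficients from $1$ to $\tfrac12$: for instance $2(\Delta_\G v)(\p_r\ln\sqrt{g})\p_r v\geq -\tfrac12(\Delta_\G v)^2-C(\p_r v)^2$. This is exactly why the paper's intermediate inequality reads $\tfrac12\int(\Delta_\G v)^2+\tfrac12\int(\p_r^2 v)^2+2\int(\Delta_\G v)(\p_r^2 v)-C\int(\p_r v)^2$ rather than with coefficient $1$, and together with the initial factor $\tfrac12$ from the localization step this is the source of the $\tfrac14$ in the statement. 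You clearly know this absorption technique (you invoke it for the commutator terms), so this is a bookkeeping correction rather than a gap in strategy.
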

\begin{proof}
It follows from \eqref{cutoff} and the inequality $(a+b)^2\geq a^2/2-b^2$ that
\begin{equation}\label{I3-7}
  \begin{split}
    \int_\Omega\big(\Delta\phi^\bot\big)^2dx& \geq \int_\Omega\big(\Delta\phi^\bot\zeta\big)^2dx =\int_\Omega\(\Delta\big(\phi^\bot\zeta\big)
    -2\nabla\phi^\bot\cdot\nabla\zeta-\phi^\bot\Delta\zeta\)^2dx
\\&\geq \frac{1}{2}\int_\Omega\big(\Delta\big(\phi^\bot\zeta\big)\big)^2dx-C\int_\Omega\big(\big|\nabla \phi^\bot\big|^2+(\phi^\bot)^2\big)dx.
  \end{split}
\end{equation}

It follows from \eqref{cutoff},   (\ref{eq:laplace1})   and the  Cauchy-Schwarz inequality that
\begin{equation}\label{lap im-1}
\begin{split}
\int_\Omega\big(\Delta\big(\phi^\bot\zeta\big)\big)^2dx
=&\int_{\G_t(\delta)} \(\Delta_{\G} (\phi^\bot\zeta)+\tfrac{\p_r g}{2g}\p_r (\phi^\bot\zeta) +\p_r^2(\phi^\bot\zeta)\)^2dx
\\=&\int_{\G_t(\delta)} \Big[\(\Delta_{\G} (\phi^\bot\zeta)\)^2+2\Delta_{\G} (\phi^\bot\zeta)\tfrac{\p_rg}{2g}\p_r(\phi^\bot\zeta)+2\Delta_{\G} (\phi^\bot\zeta)\p_r^2(\phi^\bot\zeta)
\\ &+\(\tfrac{\p_r g}{2g}\p_r (\phi^\bot\zeta)\)^2+2\tfrac{\p_r g}{2g}\p_r (\phi^\bot\zeta) \p_r^2(\phi^\bot\zeta)+\(\p_r^2(\phi^\bot\zeta)\)^2\Big]
\\ \geq &\tfrac{1}{2}\int_{\G_t(\delta)} \(\Delta_{\G} (\phi^\bot\zeta)\)^2dx-C\int_{\G_t(\delta)} \big(\p_r (\phi^\bot\zeta)\big)^2dx
\\  &+\tfrac{1}{2}\int_{\G_t(\delta)}\(\p_r^2(\phi^\bot\zeta)\)^2dx+2\int_{\G_t(\delta)}  \Delta_{\G} (\phi^\bot\zeta)\p_r^2(\phi^\bot\zeta)dx.
\end{split}
\end{equation}

To treat the last integral, using \eqref{divergence} and the coarea formula \eqref{coareaformula} yields
\begin{equation*}
\begin{split}
&\int_{\G_t(\delta)}  \Delta_{\G} (\phi^\bot\zeta)\p_r^2(\phi^\bot\zeta)dx
\\=& \int_{-\delta}^{\delta}\int_{\G_t^{r}}  \Div_{\G}\nabla_{\G} (\phi^\bot\zeta)\p_r^2 (\phi^\bot\zeta) d\mathcal{H}^{N-1}dr
\\=& \int_{-\delta}^{\delta}\int_{\G_t^{r}}  \Div_{\G}\(\nabla_{\G} (\phi^\bot \zeta)  \p_r^2(\phi^\bot\zeta) \) d\mathcal{H}^{N-1}dr-\int_{\G_t(\delta)}  \nabla_{\G} (\phi^\bot\zeta)  \cdot\nabla_{\G}\p_r^2(\phi^\bot\zeta)  dx.
\end{split}
\end{equation*}
Here $\mathcal{H}^{N-1}$ denotes the Hausdorff measure on $\G_t^{r}$ and we have $d\mathcal{H}^{N-1}=\sqrt{g}(r,s) ds$ under local coordinates.
The first term vanishes due to the divergence theorem \eqref{gauss} and   the second term can be treated using the identity $\p_r^2\nabla_{\G}-\nabla_{\G}\p_r^2
  =\p_r[\p_r,\nabla_{\G}]+[\p_r,\nabla_{\G}]\p_r$: 
\begin{equation*}
\begin{split}
&\int_{\G_t(\delta)}  \Delta_{\G} (\phi^\bot\zeta)\p_r^2(\phi^\bot\zeta)dx\\
 &=-\int_{\G_t}\int_{-\delta}^{\delta}  \nabla_{\G} (\phi^\bot\zeta)  \cdot\p_r^2 \nabla_{\G}(\phi^\bot \zeta) J(r,s) dr
 \\&\quad+\int_{\G_t}\int_{-\delta}^{\delta}  \nabla_{\G} ( \phi^\bot\zeta)  \cdot \big(\p_r[\p_r,\nabla_{\G}]+[\p_r,\nabla_{\G}]\p_r\big)
 (\phi^\bot\zeta) J(r,s) dr
\triangleq I_{31}+I_{32}.
\end{split}
\end{equation*}
As for $I_{31}$, we  can  integrate by parts twice in $r$ to get
\begin{align}
I_{31}
 =\int_{\G_t}\int_{-\delta}^{\delta} \big|\p_r \nabla_{\G}(\phi^\bot\zeta)\big|^2 J(r,s)dr-\frac{1}{2}\int_{\G_t}\int_{-\delta}^{\delta} \big|\nabla_{\G}(\phi^\bot\zeta) \big|^2 \p_{r}^2J(r,s)dr.\label{I31}
\end{align}
As for $I_{32}$, it can be verified that    $ [\nabla_{\G},\p_r]$ is a first-order tangential differential operator. 
\footnote{
For any tangential differential operator $\sum_{1\leq i\leq N-1}a_i(r,s)\p_{s_i}$, its commutator with $\p_r$  is a first-order 
\begin{equation}\label{commutator}
\bigg[\sum_{1\leq i\leq N-1}a_i(r,s)\p_{s_i},\p_r\bigg] f=-\sum_{1\leq i\leq N-1}\p_r a_i(r,s) \p_{s_i}f.
\end{equation}
}
So the leading order terms of $I_{32}$ are the mixed derivatives of second order and can be controlled by the first term on the right of  (\ref{I31}):\begin{align*}
 I_{32}\geq-\frac 12\int_{\G_t}\int_{-\delta}^{\delta} \big|\p_r \nabla_{\G}(\phi^\bot\zeta)\big|^2 J(r,s)dr-C\int_{\G_t}\int_{-\delta}^{\delta} \big|\nabla_{\G}(\phi^\bot\zeta) \big|^2 dr.
\end{align*}
Adding up the previous two inequalities leads to 
\begin{align*}
 \int_{\G_t(\delta)}  \Delta_{\G} (\phi^\bot\zeta)\p_r^2(\phi^\bot\zeta)dx\geq&\frac 12\int_{\G_t}\int_{-\delta}^{\delta} \big|\p_r \nabla_{\G}(\phi^\bot\zeta)\big|^2 J(r,s)dr
-C\int_{\G_t} \int_{-\delta}^{\delta}\big|\nabla_{\G}(\phi^\bot\zeta) \big|^2 dr.
\end{align*}
Substituting this inequality into (\ref{lap im-1})  and combining with  (\ref{I3-7}) imply \eqref{lapalce improve}.
\end{proof}

\begin{proof}[{\bf Proof of Theorem \ref{spectral theorem}}]
{We first recall \eqref{decompose energy-3} for the definitions of $I_{31}$ and   of $I_3$. The term $-\frac 12 I_{31}$ on the right hand side of \eqref{I2:lower bound} can be absorbed by  $I_3$ by choosing a small $\e$.
 It follows from (\ref{decompose energy}), (\ref{I1:lower bound}), (\ref{I2:lower bound}) and  \eqref{laplace1}  that
\begin{equation}
\begin{split}
&\frac{1}{\varepsilon^2}\int_\Omega|\mathscr{L}_\e[\phi]|^2dx+\frac{1}{\varepsilon^3}\int_\Omega f'''(\phi_a)\mu_a\phi^2dx
\\&\geq\frac{1}{4\varepsilon^2}\int_\Omega
 |\mathscr{L}_\e[\phi^\bot]|^2dx+\frac{1}{\varepsilon^3}\int_\Omega f'''(\phi_a)\mu_a\big(\phi^\bot\big)^2dx-\nu\varepsilon^{-4}\int_{\G_t(\delta)}\big(\phi^\bot\zeta\big)^2dx\\&\quad-C\varepsilon^{-\frac{C}{\varepsilon}}\int_{\G_t(\delta)\backslash\G_t(\delta/2)}\big(\phi^\bot\big)^2dx
-\nu\varepsilon^{-2}\int_{\G_t(\delta)}\big|\nabla_{\Gamma}\big(\phi^\bot\zeta\big)\big|^2dx
\\&\quad +O(\e) \int_{\G_t(\delta)}\big(\Delta_
{\Gamma}\big|_{r=0}\phi^\bot\big)^2\zeta^2dx + \Lambda_4\|Z\|_{H^2(\G_t)}^2
-C\int_{\Omega}\phi^2dx,\label{I1I2I3:lower bound}
\end{split}
\end{equation}}
where  $\nu$ is a small positive constant to be determined later.
Combining (\ref{I3-13}) and (\ref{lapalce improve}) with (\ref{I1I2I3:lower bound}), and choosing   $\nu$ and $\varepsilon$ sufficiently small, we  deduce
\begin{equation}\label{spectrum}
\begin{split}
&\frac{1}{\varepsilon^2}\int_\Omega|\mathscr{L}_\e[\phi]|^2dx+\frac{1}{\varepsilon^3}\int_\Omega f'''(\phi_a)\mu_a\phi^2dx
\\&\geq \Lambda_4 \|Z\|^2_{H^2(\G_t)}
-\hat{C}\int_{\Omega}\phi^2dx+C_2\int_{\Omega}\(|\Delta \phi^\bot|^2+\e^{-2}\big|\nabla\phi^\bot\big|^2+ \e^{-4}\big(\phi^\bot\big)^2\)dx
\\&\geq 
\tilde{C} \mathcal{K}(t)-\hat{C}\int_{\Omega}\phi^2dx.
\end{split}
\end{equation}
Note that in the last step we apply the Sobolev embedding and then choose  a smaller $\e$ to recover \eqref{def K(t)}.  So  we obtain the desired estimate \eqref{spectral condition}. 
\end{proof}

\section{Proof of Theorem \ref{main theorem}}\label{section:estimate}
With  the spectral condition \eqref{spectrum}, we can prove  Theorem \ref{main theorem}. Recalling \eqref{omegadef} and \eqref{error equation}, one can verify that the nonlinear terms $\mathcal{H}$ can be written as
$\mathcal{H}=\mathcal{H}_1+\mathcal{H}_2$ where
\begin{subequations}\label{nonlinearterms}
  \begin{align}
\mathcal{H}_1&=\e^{-2}\Delta \mathcal{T}-\e^{-4}f''(\phi_{\e})\mathcal{T}-\e^{-3}(f''(\phi_{\e})-f''(\phi_a))\mathscr{L}_\e[\phi]-3\e^{-3}\phi^2\mu_a,
\\ \mathcal{H}_2&=-\Delta \mathfrak{R}_2\e^{k-3}
+ f''(\phi_{\e})\mathfrak{R}_2\e^{k-5}-\mathfrak{R}_1\e^{k-4}.
\end{align}
\end{subequations}
Here $\mu= \mathscr{L}_\e[\phi]+\e^{-1}\mathcal{T}-\mathfrak{R}_2\e^{k-2},$ and $\mathcal{T}$ is defined by
\begin{equation}\label{defT}
  \mathcal{T}\triangleq f'(\phi_{\e})-f'(\phi_a)-f''(\phi_a)\phi=3\phi^2\phi_a+\phi^3.
\end{equation}
It follows from \eqref{omegadef} and   the inequality $(a+b)^2\geq\frac{1}{2}a^2-b^2$  that
\begin{align*}
\e^4\(\frac{1}{\varepsilon^2}\int_{\Omega}|\mathscr{L}_\e[\phi]|^2dx+\frac{1}{\varepsilon^3}\int_{\Omega} f'''(\phi_a)\mu_a\phi^2dx\)\geq \e^4\(\frac{1}{2}\int_{\Omega}(\Delta\phi)^2dx-\frac{C}{\varepsilon^4}\int_{\Omega}\phi^2dx\).
\end{align*}
Adding   the above  inequality to \eqref{spectral condition} and then  choosing a small $\e$ leads to
{\begin{equation}
\frac{1}{\varepsilon^2}\int_\Omega|\mathscr{L}_\e[\phi]|^2dx+
\frac{1}{\varepsilon^3}\int_\Omega f'''(\phi_a)\mu_a\phi^2dx\geq \frac{\e^4}{4}\int_{\Omega}(\Delta\phi)^2dx -C\int_{\Omega}\phi^2dx.
\end{equation}
This together with  \eqref{error energy-1}  and  \eqref{def K(t)}  leads to
\begin{equation}\label{spectrum-modi}
\begin{split}
\frac 12\frac d{dt}\|\phi\|^2_{L^2(\Omega)}+ \frac{\e^4}{8}\int_{\Omega}(\Delta\phi)^2dx+\frac{\tilde{C}}2\,\mathcal{K}(t)
-C\|\phi\|^2_{L^2(\Omega)}\leq  \int_{\Omega}\mathcal{H}_1\phi+\int_{\Omega}\mathcal{H}_2\phi.
\end{split}
\end{equation}}
It remains to estimate the integrals on the right hand side. Using \eqref{omegadef} and formula 
\begin{equation}\label{diff f''}
f''(\phi_{\e})-f''(\phi_a)=3\phi^2+6\phi\phi_a,
\end{equation}
 we can rewrite $\mathcal{H}_1$ by
\begin{align}
\mathcal{H}_1
 =-\e^{-3}\mathscr{L}_\e[\mathcal{T}]-\e^{-3}(3\phi^2+6\phi\phi_a)\mathscr{L}_\e[\phi]-\e^{-4}(3\phi^2+6\phi\phi_a)\mathcal{T}-3\e^{-3}\phi^2\mu_a.
\end{align}
Using \eqref{defT}, we can write 
\begin{equation}\label{H1H2sum}
\begin{split}
\int_{\Omega} \mathcal{H}_1\phi\, dx&=-\e^{-3}\int_{\Omega} (4\phi^3+9\phi^2\phi_a) \mathscr{L}_\e[\phi]\\
&-\e^{-4}\int_{\Omega}(3\phi^3+6\phi^2\phi_a)(3\phi^2\phi_a+\phi^3)-3\e^{-3}\int_{\Omega}\phi^3\mu_a.
\end{split}
\end{equation}
Regarding  the first  term on the right of \eqref{H1H2sum},
 it follows from \eqref{decompose phi}, an integration by parts  and $\phi\mid_{\p\Omega}=0$  that
\begin{equation}
\begin{split}
-\e^{-3}&\int_{\Omega}(4\phi^3+9\phi^2\phi_a) \mathscr{L}_\e[\phi]=-12\e^{-2}\int_{\Omega} |\nabla \phi|^2\phi^2-4\e^{-4}\int_{\Omega} f''(\phi_a) \phi^4
\\&\underbrace{-9\e^{-3}\int_{\Omega}\phi^2\phi_a\mathscr{L}_\e[\phi^\top]}_{\triangleq \mathcal{N}_1}\underbrace{-9\e^{-3}\int_{\Omega}\phi^2\phi_a \mathscr{L}_\e[\phi^\bot]}_{\triangleq \mathcal{N}_2}\underbrace{-9\e^{-3}\int_{\Omega}\phi^2\phi_a \mathscr{L}_\e[\phi_e]}_{\triangleq \mathcal{N}_*}
.\label{H1H2sum1}
\end{split}
\end{equation}
 
%
Observe that the second term on the right of \eqref{H1H2sum}  involves polynomials of $\phi$, and the leading order one is $-3\e^{-4}\int_{\Omega}\phi^6$. Moreover $\phi_a$ and $\mu_a$ are uniformly bounded according to Proposition \ref{construction of app solution}. So we substitute \eqref{H1H2sum1} into \eqref{H1H2sum}, move the good terms to the left and then apply Cauchy-Schwarz inequality to absorb the term involving  $\phi^5$. This yields 
\begin{equation}\label{H1H2sumb}
\begin{split}
\int_{\Omega} \mathcal{H}_1\phi  +12\e^{-2}\int_{\Omega} |\nabla \phi|^2\phi^2+2\e^{-4}\int_\Omega\phi^6 \leq  \mathcal{N}_1+\mathcal{N}_2+\mathcal{N}_*+C\underbrace{\int_{\Omega} \(\e^{-4} \phi^4+\e^{-3}|\phi|^3 \)}_{\triangleq \mathcal{N}_3}.
\end{split}
\end{equation}
In view of \eqref{decompose difference} and \eqref{decompose phi}, the structures of $\mathcal{N}_1$ and $\mathcal{N}_*$ are the same except that the later  is of scale  $e^{-C/\e}$. So we shall omit its estimate for the sake of simplicity, and focus on the estimates of $\mathcal{N}_1,\mathcal{N}_2$ and $\mathcal{N}_3$.

In the sequel, we shall frequently employ the following Sobolev embedding theorems:
\begin{subequations}
\begin{align}
H^{1/3}(\G_t)\hookrightarrow L^3(\G_t),\quad H^{1/2}(\G_t)\hookrightarrow L^4(\G_t),~\text{where}~\G_t~\text{is a 2D surface}.\label{sobolev1}\\
H^{1/2}(\Omega)\hookrightarrow L^3(\Omega),\quad H^{3/4}(\Omega)\hookrightarrow L^4(\Omega),~\text{where}~\Omega~\text{is a 3D domain}. \label{sobolev2}
\end{align}
\end{subequations}

\begin{lemma}
There exists $\e_0>0$ such that for every $\e\in (0,\e_0)$ and $\nu \in (\e^{\frac 1{100}},1)$,
\begin{equation}\label{estimateN1c}
\begin{split}
{\mathcal{N}_1  \lesssim C(\nu)\(\e^{-14/3}\|\phi\|_{L^2(\Omega)}^{ 10/3}+\varepsilon^{-8}\|\phi\|_{L^2(\Omega)}^6+ \varepsilon^{-16}\|\phi\|_{L^2(\Omega)}^{10}\)+\nu\mathcal{K}(t).}
\end{split}
\end{equation}
\end{lemma}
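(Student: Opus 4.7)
The goal is to control $\mathcal{N}_1=-9\e^{-3}\int_\Omega \phi^2\phi_a\mathscr{L}_\e[\phi^\top]\,dx$. The plan is to (i) extract the precise $\e$-scaling of $\mathscr{L}_\e[\phi^\top]$ from the cancellation \eqref{good  term}, (ii) decompose $\phi=\phi^\top+\phi^\bot+\phi_e$ inside $\phi^2$ and treat the three resulting polynomial pieces by different Hölder pairings, and (iii) convert the remaining $\mathcal{K}^c$ factors with $c<1$ into $\nu\mathcal{K}$ via Young's inequality. First I would record that the highly singular $\e^{-3/2}Z(s)\bigl(\theta'''(z)-f''(\phi_a)\theta'(z)\bigr)\zeta$ term in \eqref{expansion phi top} is in fact of size $\e^{1/2}$ by \eqref{good  term}, so the dominant contribution of $\mathscr{L}_\e[\phi^\top]$ is the $\e^{-1/2}\p_r(\ln\sqrt{g})Z(s)\theta''(r/\e)\zeta$ piece (with a tangential correction of order $\e^{1/2}|\Delta_\G Z|\,|\theta'(r/\e)|\zeta$). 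Changing variables $r=\e z$ then gives the $L^p(\Omega)$-bounds
\[
\|\phi^\top\|_{L^p(\Omega)}\lesssim \e^{-\tfrac12+\tfrac1p}\|Z\|_{L^p(\Gamma_t)},\qquad \|\mathscr{L}_\e[\phi^\top]\|_{L^p(\Omega)}\lesssim \e^{-\tfrac12+\tfrac1p}\|Z\|_{L^p(\Gamma_t)}+\e^{\tfrac12+\tfrac1p}\|\Delta_\G Z\|_{L^p(\Gamma_t)},
\]
together with the companion $L^2$-equivalence $\|Z\|_{L^2(\Gamma_t)}\lesssim\|\phi\|_{L^2(\Omega)}$ from \eqref{decompose L2norm} and $\|Z\|_{H^2(\Gamma_t)}^2\le\mathcal{K}(t)$.

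Next I would treat the \emph{diagonal piece} $\e^{-3}\int(\phi^\top)^2\phi_a\mathscr{L}_\e[\phi^\top]$ by a Hölder pairing with exponents $(3/2,3)$, which gives $\lesssim\e^{-3}\|\phi^\top\|_{L^3}^2\|\mathscr{L}_\e[\phi^\top]\|_{L^3}\lesssim \e^{-7/2}\|Z\|_{L^3(\Gamma_t)}^3$. The two-dimensional embedding $H^{1/3}(\Gamma_t)\hookrightarrow L^3(\Gamma_t)$ in \eqref{sobolev1}, the interpolation $\|Z\|_{H^{1/3}}\lesssim\|Z\|_{L^2}^{5/6}\|Z\|_{H^2}^{1/6}$, and the above $L^2$-bound then produce $\e^{-7/2}\|\phi\|_{L^2}^{5/2}\mathcal{K}^{1/4}$. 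Young's inequality with conjugate exponents $(4/3,4)$ absorbs $\mathcal{K}^{1/4}$ into $\nu\mathcal{K}$ and yields the first claimed term $C(\nu)\e^{-14/3}\|\phi\|_{L^2}^{10/3}$.

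The remaining \emph{cross} and \emph{smooth-remainder} pieces $\e^{-3}\int\phi^\top\phi^\bot\phi_a\mathscr{L}_\e[\phi^\top]$ and $\e^{-3}\int(\phi^\bot)^2\phi_a\mathscr{L}_\e[\phi^\top]$ are handled by Hölder pairings of the form $L^4\!\cdot\!L^4\!\cdot\!L^2$ (or $L^\infty\!\cdot\!L^2\!\cdot\!L^2$ for the last one) that put $\phi^\bot$ on the favorable side. The 3D embedding $H^{3/4}(\Omega)\hookrightarrow L^4(\Omega)$ in \eqref{sobolev2} combined with the interpolation $\|\phi^\bot\|_{H^{3/4}}\lesssim\|\phi^\bot\|_{L^2}^{5/8}\|\phi^\bot\|_{H^2}^{3/8}$ (and analogously $\|\phi^\bot\|_{L^\infty}\lesssim\|\phi^\bot\|_{L^2}^{1/8}\|\phi^\bot\|_{H^2}^{7/8}$), with $\|\phi^\bot\|_{L^2}\lesssim\|\phi\|_{L^2}$ and $\|\phi^\bot\|_{H^2}\le\mathcal{K}^{1/2}$, yields intermediate estimates of the form $\e^{-4}\|\phi\|_{L^2}^3\mathcal{K}^{1/2}$ and $\e^{-8}\|\phi\|_{L^2}^5\mathcal{K}^{1/2}$, respectively. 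Young's inequality with conjugate exponents $(2,2)$ applied to each then gives $C(\nu)\e^{-8}\|\phi\|_{L^2}^{6}$ and $C(\nu)\e^{-16}\|\phi\|_{L^2}^{10}$ modulo $\nu\mathcal{K}$.

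The main obstacle is the careful bookkeeping of $\e$-powers: the profile component $\phi^\top$ is sharply concentrated in an $\e$-tube (so its normal derivatives carry factors of $\e^{-1}$), whereas $\phi^\bot$ is bulk-distributed with favourable weights $\e^2,\e,1$ appearing in $\mathcal{K}(t)$. Distributing the $\mathcal{K}$-powers across the three product pieces so that every intermediate bound has $\mathcal{K}^{c}$ with $c<1$ (enabling Young's inequality to return $\nu\mathcal{K}$ rather than $\mathcal{K}^{>1}$) is what forces the particular Hölder/Sobolev exponents used above, and is responsible for the three structurally different terms $(\e^{-14/3},\e^{-8},\e^{-16})$ appearing in the final bound.
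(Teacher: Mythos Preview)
Your strategy is essentially the paper's: isolate the two leading pieces of $\mathscr{L}_\e[\phi^\top]$ via \eqref{good  term}, split $\phi^2$ through \eqref{decompose phi}, and close with the surface/bulk Sobolev embeddings \eqref{sobolev1}--\eqref{sobolev2}, interpolation, and Young. The diagonal computation $\e^{-7/2}\|Z\|_{L^3}^3\to C(\nu)\e^{-14/3}\|\phi\|_{L^2}^{10/3}+\nu\mathcal{K}$ is exactly what the paper does.

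There is, however, a genuine gap in how you package the tangential correction. Your blanket Hölder pairing $\|\phi^\top\|_{L^3}^2\|\mathscr{L}_\e[\phi^\top]\|_{L^3}$ cannot be completed, because the $\e^{1/2}\Delta_\G Z\,\theta'\zeta$ contribution to $\mathscr{L}_\e[\phi^\top]$ puts $\|\Delta_\G Z\|_{L^3(\Gamma_t)}$ on the right-hand side, and $\mathcal{K}(t)$ only controls $\|Z\|_{H^2(\Gamma_t)}$, i.e.\ $\|\Delta_\G Z\|_{L^2(\Gamma_t)}$. The paper avoids this by never letting $\Delta_\G Z$ leave $L^2$: it treats the $\e^{-5/2}\int\phi^2\phi_a\,\Delta_\G Z\,\theta'\zeta$ term separately, pairing $\Delta_\G Z$ in $L^2$ against $\|Z\|_{L^4(\Gamma_t)}^2$ (yielding the $\e^{-8}\|\phi\|_{L^2}^6$ term) and against $\|\phi^\bot\|_{L^4(\Omega)}^2$ (yielding, after $\|\phi^\bot\|_{L^4}\lesssim\|\phi^\bot\|_{L^2}^{5/8}\|\phi^\bot\|_{H^2}^{3/8}$ and Young with exponents $(8,8/7)$, the $\e^{-16}\|\phi\|_{L^2}^{10}$ term). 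You should reorganize so that the $\Delta_\G Z$ factor always sits in $L^2$.

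Two smaller issues: the 3D Gagliardo--Nirenberg exponent for $L^\infty$ is $\|\phi^\bot\|_{L^\infty}\lesssim\|\phi^\bot\|_{L^2}^{1/4}\|\phi^\bot\|_{H^2}^{3/4}$, not $(1/8,7/8)$; and the intermediate bounds ``$\e^{-4}\|\phi\|_{L^2}^3\mathcal{K}^{1/2}$'' and ``$\e^{-8}\|\phi\|_{L^2}^5\mathcal{K}^{1/2}$'' do not match the Hölder pairings you describe. The paper in fact dispenses with the cross piece $\phi^\top\phi^\bot$ altogether by using $\phi^2\lesssim(\phi^\top)^2+(\phi^\bot)^2$, which keeps the bookkeeping cleaner and lands directly on the three claimed powers.
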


\begin{proof} 
 We first  recall \eqref{expansion phi top} and \eqref{good term}, which yields
 \begin{equation}\label{expansion phi top1}
\begin{split}
\mathscr{L}_\e[\phi^\top]=&\varepsilon^{-\frac{1}{2}}\p_r (\ln \sqrt{g}) Z(s)\theta''(z)\zeta(r)-\e^{\frac 12}Zf'''(\theta)\widetilde{\phi}^{(2)}\theta'\zeta(r)\\
&+\varepsilon^{\frac{1}{2}}\Delta_{\Gamma}Z(s) \theta'(z)\zeta(r)+\varepsilon Z(s)A(r,s)+O(\e^{\frac 32})Z\theta'\zeta(r).
\end{split}
 \end{equation}
In view of \eqref{def K(t)}, among the five terms on the right of \eqref{expansion phi top1}, the first and the third are of leading order. The second and fifth terms have   similar structures as the first but are of order  $\e^1$ better. The term involving  $A(r,s)$ is much more regular because of \eqref{zeta term}. To simplify the presentation, we shall focus on the leading order terms:
 \begin{equation}\label{simplifyN1}
 \mathcal{N}_1=-9\e^{-\frac 72}\int_{\Omega}\phi^2\phi_a\p_r (\ln \sqrt{g}) Z(s)\theta''(z)\zeta(r)-9\e^{-\frac 52}\int_{\Omega}\phi^2\phi_a\Delta_{\Gamma}Z(s) \theta'(z)\zeta(r)+\cdots.
 \end{equation} 
Using \eqref{def K(t)}, we apply Sobolev embedding \eqref{sobolev1},  the interpolation inequality and   Young's inequality successively to obtain
\begin{equation*}
\begin{split}
\e^{-\frac 72}\|Z\|^3_{L^3(\G_t)}\lesssim \e^{-\frac 72}\|Z\|^3_{H^{\frac 13}(\G_t)}\lesssim \e^{-\frac 72}\|Z\|^{\frac 52}_{L^2(\G_t)}\|Z\|^{\frac 12}_{H^2(\G_t)}\lesssim  C(\nu) \e^{-14/3}\|Z\|^{\frac {10}3}_{L^2(\G_t)}+\nu \mathcal{K}(t).
\end{split}
\end{equation*}
{Observing that $z=r/\e$,  we apply  H\"{o}lder's inequality, a change of variable and Sobolev embedding \eqref{sobolev2} and yield \begin{equation*}
\begin{split}
\e^{-\frac 72}\left|\int_{\Omega}Z(s)(\phi^\bot)^2\theta''(z)\zeta(r)\right|&\lesssim\e^{-3}\|Z\|_{L^2(\G_t)}\|\phi^\bot\|^2_{L^4(\Omega)}
\\&\lesssim\e^{-3/2}\|Z\|_{L^2(\G_t)}\|\phi^\bot\|_{L^2(\Omega)}^{\frac{1}{2}}\, \|\phi^\bot\|_{H^1(\Omega)}^{\frac{3}{2}}\e^{-3/2}
\\&\lesssim C(\nu)\e^{-6}\|Z\|_{L^2(\G_t)}^4\|\phi^\bot\|_{L^2(\Omega)}^2+\nu\varepsilon^{-2}\|\phi^\bot\|_{H^1(\Omega)}^2
\\&\lesssim C(\nu)\e^{-6}\|\phi\|_{L^2(\Omega)}^{6}+\nu\mathcal{K}(t).
\end{split}
\end{equation*}
Note that in the last step we employed \eqref{decompose L2norm} and  \eqref{def K(t)}.
With the above two estimates we can treat the first term on the right hand side of \eqref{simplifyN1}. Combining \eqref{decompose phi}\footnote{Here the contributions due to $\phi_e$ is omitted as they  are of much lower order.}  with the above two inequalities lead to
\begin{equation}\label{estimateN1a}
\begin{split}
\varepsilon^{-\frac{7}{2}}\left|\int_{\Omega}\phi^2\phi_a \p_r (\ln \sqrt{g}) Z(s)\theta''(z)\zeta(r)\right|&\lesssim\e^{-\frac 72}\int_{\G_t}|Z|^3+\e^{-\frac 72}\int_{\Omega}\left|Z(s)(\phi^\bot)^2\theta''(z)\zeta(r)\right|
\\\lesssim C(\nu)\e^{-\frac {14}3}&\|\phi\|_{L^2(\Omega)}^{\frac {10}3}+C(\nu)\e^{-6}\|\phi\|_{L^2(\Omega)}^{6}+\nu \mathcal{K}(t).
\end{split}
\end{equation}}
For  the second term on the right hand side of \eqref{simplifyN1}, we employ \eqref{sobolev1} and estimate 
\begin{equation*}
\begin{split}
\e^{-\frac 52}\left|\int_{\G_t}Z^2 \Delta_{\Gamma}Z(s)\theta'(z)\zeta(r)\right|&\lesssim\varepsilon^{-2}\|Z\|_{L^4(\G_t)}^2\| Z\|_{H^2(\G_t)}
\\&\lesssim\varepsilon^{-2}\|Z\|_{L^2(\G_t)}^{\frac{3}{2}}\|Z\|_{H^2(\G_t)}^{\frac{3}{2}}
\lesssim C(\nu) \varepsilon^{-8}\|Z\|_{L^2(\G_t)}^6+\nu\mathcal{K}(t).
\end{split}
\end{equation*}
Moreover, we need to estimate the following integral using \eqref{sobolev2}, the interpolation inequality and Young's inequality:
\begin{equation*}
\begin{split}
\e^{-\frac 52}\left|\int_{\Omega}\Delta_{\Gamma}Z(s)(\phi^\bot)^2\theta''(z)\zeta(r)\right|&\lesssim\varepsilon^{-2}\|Z\|_{H^2(\G_t)}\|\phi^\bot\|^2_{L^4(\Omega)}
\\ \lesssim  \varepsilon^{-2} & \|Z\|_{H^2(\G_t)}\|\phi^\bot\|_{H^2(\Omega)}^{\frac{3}{4}}\|\phi^\bot\|_{L^2(\Omega)}^{\frac{5}{4}}
\\ \lesssim   \nu & \((\|Z\|_{H^2(\G_t)}\|\phi^\bot\|_{H^2(\Omega)}^{\frac{3}{4}}\)^{8/7}+C(\nu)\varepsilon^{-16}\|\phi^\bot\|_{L^2(\Omega)}^{10}
\\ \lesssim   \nu & \mathcal{K}(t)+C(\nu)\varepsilon^{-16}\|\phi^\bot\|_{L^2(\Omega)}^{10},
\end{split}
\end{equation*}
Combining the above two results gives the estimate  of the second term on the right hand side of \eqref{simplifyN1}:
\begin{equation}\label{estimateN1b}
\begin{split}
&\varepsilon^{-\frac{5}{2}}\left|\int_{\Omega}9\phi^2\phi_a \Delta_{\Gamma}Z(s) \theta'(z)\zeta(r)\right|\\
&\lesssim\e^{-\frac 52}\left|\int_{\G_t}Z^2 \Delta_{\Gamma}Z(s)\theta'(z)\zeta(r)\right|+\e^{-\frac 52}\left|\int_{\Omega}\Delta_{\Gamma}Z(s)(\phi^\bot)^2\theta'(z)\zeta(r)\right|
\\&\lesssim C(\nu)\(\varepsilon^{-8}\|\phi\|_{L^2(\Omega)}^6+ \varepsilon^{-16}\|\phi\|_{L^2(\Omega)}^{10}\)+\nu\mathcal{K}(t).
\end{split}
\end{equation}
Finally substituting \eqref{estimateN1a} and \eqref{estimateN1b} into \eqref{simplifyN1} yields \eqref{estimateN1c}.
\end{proof}

\begin{lemma}
There exists $\e_0>0$ such that for every $\e\in (0,\e_0)$ and $\nu\in (\e^{\frac 1{100}},1)$ there holds
\begin{equation}\label{estimateN2}
\begin{split}
\mathcal{N}_2+\mathcal{N}_3  &\lesssim C(\nu) \big( \e^{-\frac{14}3}\|\phi\|_{L^2(\Omega)}^{\frac {10}3}+\e^{-\frac{24}5}\|\phi\|_{L^2(\Omega)}^{\frac{18}{5}}+ \varepsilon^{-10}\|\phi\|_{L^2(\Omega)}^6+ \varepsilon^{-16}\|\phi\|_{L^2(\Omega)}^{10}\big)+\nu\mathcal{K}(t).
\end{split}
\end{equation}
\end{lemma}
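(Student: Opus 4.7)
The plan is to estimate $\mathcal{N}_2$ and $\mathcal{N}_3$ term-by-term using the decomposition $\phi = \phi^\top + \phi^\bot + \phi_e$ of \eqref{decompose phi} (with the $\phi_e = O(e^{-C/\e})$ contribution negligible by \eqref{decompose difference}), together with H\"older, the Sobolev embeddings \eqref{sobolev1}--\eqref{sobolev2}, interpolation, and Young's inequality. Two preliminary ingredients will be used throughout. First, from \eqref{I3-13} one obtains $\|\mathscr{L}_\e[\phi^\bot]\|_{L^2}^2 \leq C(\e^2\|\Delta\phi^\bot\|_{L^2}^2 + \e^{-2}\|\phi^\bot\|_{L^2}^2)\leq C\e^2\mathcal{K}(t)$, hence $\|\mathscr{L}_\e[\phi^\bot]\|_{L^2}\lesssim \e\, \mathcal{K}^{1/2}$. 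Second, Gagliardo--Nirenberg interpolation in 3D gives $\|\phi^\bot\|_{L^\infty}\lesssim \|\phi^\bot\|_{L^2}^{1/4}\|\phi^\bot\|_{H^2}^{3/4}\lesssim \e^{1/2}\mathcal{K}^{1/2}$, whose $\e^{1/2}$ gain is crucial for controlling the $\mathcal{K}$-powers in the mixed tangential-normal terms. Finally, the tangential profile itself satisfies the $L^p$-scaling $\|\phi^\top\|_{L^p}^p \sim \e^{1-p/2}\|Z\|_{L^p(\G_t)}^p$ via the change of variables $r = \e z$ in the explicit formula for $\phi^\top$.

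For $\mathcal{N}_2 = -9\e^{-3}\int\phi^2\phi_a \mathscr{L}_\e[\phi^\bot]$, I expand $\phi^2 = (\phi^\top)^2 + 2\phi^\top\phi^\bot + (\phi^\bot)^2$ and bound each piece by Cauchy--Schwarz against $\|\mathscr{L}_\e[\phi^\bot]\|_{L^2}$. The pure tangential contribution mimics the treatment of \eqref{estimateN1a}: using $\|\phi^\top\|_{L^4}^2\lesssim \e^{-1/2}\|Z\|_{L^2}^{3/2}\|Z\|_{H^2}^{1/2}$ in the 3D case, together with $\|Z\|_{L^2}\leq \|\phi\|_{L^2}$ and Young's inequality with exponents $(4/3, 4)$ applied to $\|Z\|_{H^2}^{1/2}\mathcal{K}^{1/2}$, produces $\nu\mathcal{K} + C(\nu)\e^{-10}\|\phi\|^6$. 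The cross piece is the most delicate: I plan an asymmetric H\"older split, such as $\|\phi^\top\|_{L^\infty}\|\phi^\bot\|_{L^2}\|\mathscr{L}_\e[\phi^\bot]\|_{L^2}$ with $\|\phi^\top\|_{L^\infty}\lesssim \e^{-1/2}\|Z\|_{L^2}^{1/2}\|Z\|_{H^2}^{1/2}$, so that the $\e^{1/2}$-gain in $\|\phi^\bot\|_{L^\infty}$ (when applied later) balances the accumulated $\mathcal{K}$-factors to give an $\e^{-16}\|\phi\|^{10}$-type contribution after Young's. The purely normal piece is the mildest in $\mathcal{K}$-counting thanks to $\|\phi^\bot\|_{L^2}\lesssim \e^2\mathcal{K}^{1/2}$. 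Similarly for $\mathcal{N}_3$: expanding $\phi^k = \sum\binom{k}{j}(\phi^\top)^{k-j}(\phi^\bot)^j$, the tangential pieces $\e^{-3}\int|\phi^\top|^3$ and $\e^{-4}\int(\phi^\top)^4$ produce the $\e^{-14/3}\|\phi\|^{10/3}$ and $\e^{-10}\|\phi\|^6$ terms exactly as in \eqref{estimateN1a}--\eqref{estimateN1c}, while the normal pieces use the 3D $L^4$-interpolation $\|\phi^\bot\|_{L^4}^2\lesssim \|\phi^\bot\|_{L^2}^{5/4}\|\phi^\bot\|_{H^2}^{3/4}$: for instance $\e^{-3}\int|\phi^\bot|^3 \lesssim \e^{-3}\|\phi^\bot\|_{L^2}\|\phi^\bot\|_{L^4}^2 \lesssim \e^{-3}\|\phi\|^{9/4}\mathcal{K}^{3/8}$ gives the sharp $\e^{-24/5}\|\phi\|^{18/5}$ after Young's with $(8/3, 8/5)$, and an analogous bound on $\e^{-4}\int(\phi^\bot)^4$ yields $\e^{-16}\|\phi\|^{10}$. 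All mixed polynomial terms are absorbed into these via the $\|\phi^\bot\|_{L^\infty}\lesssim \e^{1/2}\mathcal{K}^{1/2}$ gain.

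The main technical obstacle will be ensuring that the cumulative power of $\mathcal{K}$ in each resulting bound does not exceed one, since any contribution scaling as $\mathcal{K}^\alpha$ with $\alpha>1$ cannot be absorbed into $\nu\mathcal{K}$ by Young's inequality. This is precisely why the $\e^{1/2}$-gain in the $L^\infty$ bound for $\phi^\bot$, and the asymmetric choice of Sobolev exponents for $\phi^\top$ versus $\phi^\bot$, are essential: they allow transferring $\mathcal{K}^{1/2}$-units onto $\e$-powers and onto $\|\phi\|_{L^2}$-powers via \eqref{decompose L2norm}. The four exponents $(14/3,10/3),(24/5,18/5),(10,6),(16,10)$ appearing on the right-hand side of \eqref{estimateN2} will emerge from different such balancings, with the Young's-exponents in each case determined by the requirement that the power of $\mathcal{K}$ on one side collapse to exactly one.
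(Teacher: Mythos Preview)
Your approach is correct and will yield the stated bound, but it takes a noticeably longer path than the paper's. The paper avoids expanding $\phi^2$ into pieces inside $\mathcal{N}_2$: instead it first splits $\mathscr{L}_\e[\phi^\bot]=-\e\Delta\phi^\bot+\e^{-1}f''(\phi_a)\phi^\bot$ and applies a single Cauchy--Schwarz with parameter $\nu$ to each piece, obtaining
\[
\mathcal{N}_2\lesssim \e^{-2}\int_\Omega\phi^2|\Delta\phi^\bot|+\e^{-4}\int_\Omega\phi^2|\phi^\bot|
\lesssim C(\nu)\,\e^{-4}\int_\Omega\phi^4+\nu\mathcal{K}(t).
\]
This folds $\mathcal{N}_2$ entirely into the quartic part of $\mathcal{N}_3$. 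Then, for $\e^{-4}\int|\phi|^4$ and $\e^{-3}\int|\phi|^3$, the paper uses the pointwise inequality $|\phi|^p\lesssim |\phi^\top|^p+|\phi^\bot|^p$ (up to the exponentially small $\phi_e$) so that only the two ``pure'' terms survive and no mixed polynomial terms ever appear; these pure terms are then interpolated exactly as you describe, giving the four exponents $(14/3,10/3)$, $(24/5,18/5)$, $(10,6)$, $(16,10)$.

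What your route buys is a more hands-on understanding of why each of the four exponents arises, and your $\|\phi^\bot\|_{L^\infty}\lesssim \e^{1/2}\mathcal{K}^{1/2}$ observation is a nice tool for the mixed terms (though note your sketch of the cross term in $\mathcal{N}_2$ actually lands at $\e^{-10}\|\phi\|^6$, not $\e^{-16}\|\phi\|^{10}$, once the arithmetic is carried through). What the paper's route buys is economy: by reducing everything to $\int|\phi|^p$ and using convexity to kill the cross terms, it needs neither the $L^\infty$ bound on $\phi^\bot$ nor any case-by-case treatment of mixed products.
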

\begin{proof}
Applying the Cauchy-Schwartz inequality and \eqref{def K(t)} yields
\begin{equation*}
\begin{split}
\mathcal{N}_2\lesssim \e^{-2}\int_{\Omega} \phi^2|\Delta \phi^\bot|+\e^{-4}\int_{\Omega} \phi^2\left|\phi^\bot\right|
\lesssim   C(\nu)\e^{-4}\int_{\Omega}|\phi|^4+\nu\mathcal{K}(t).
\end{split}
\end{equation*}
To estimate $\mathcal{N}_3$, 
owing to \eqref{decompose phi} and \eqref{decompose L2norm}, we   apply \eqref{sobolev1} and \eqref{sobolev2} and obtain 
\begin{equation}
\begin{split}
\e^{-4}\int_{\Omega}|\phi|^4&\lesssim\e^{-5}\int_{\Gamma_t}Z^4+\e^{-4}\int_{\Omega}|\phi^\perp|^4
\\&\lesssim\e^{-5}\|Z\|_{L^2(\G_t)}^{3}\|Z\|_{H^2(\G_t)}+\e^{-4}\|\phi^\bot\|_{L^2(\Omega)}^{\frac{5}{2}}\|\phi^\bot\|_{H^2(\Omega)}^{\frac{3}{2}}
\\&\lesssim C(\nu)\(\e^{-10}\|Z\|_{L^2(\G_t)}^{6}+ \e^{-16}\|\phi^\perp\|_{L^2(\Omega)}^{10}\)+\nu \mathcal{K}(t)
\\&\lesssim C(\nu)\(\varepsilon^{-10}\|\phi\|_{L^2(\Omega)}^6+ \varepsilon^{-16}\|\phi\|_{L^2(\Omega)}^{10}\)+\nu\mathcal{K}(t).\label{H1H2sumL4}
\end{split}
\end{equation} 
In a similar way, we have
\begin{equation}\label{H1H2sumL5}
\begin{split}
\e^{-3}\int_{\Omega}|\phi|^3&\lesssim\e^{-\frac{7}{2}}\int_{\Gamma_t}|Z|^3+\e^{-3}\int_{\Omega}|\phi^\perp|^3
\\&\lesssim\e^{-\frac{7}{2}}\|Z\|^{\frac 52}_{L^2(\G_t)}\|Z\|^{\frac 12}_{H^2(\G_t)}+\e^{-3}\|\phi^\bot\|_{L^2(\Omega)}^{\frac{9}{4}}\|\phi^\bot\|_{H^2(\Omega)}^{\frac{3}{4}}
\\&\lesssim C(\nu)\(\e^{-\frac{14}{3}}\|Z\|_{L^2(\G_t)}^{\frac{10}{3}}+ \e^{-\frac{24}{5}}\|\phi^\perp\|_{L^2(\Omega)}^{\frac{18}{5}}\)+\nu \mathcal{K}(t).
\end{split}
\end{equation}
So the desired estimate follows from the above inequalities.
\end{proof}
%
%
%
\begin{proof}[{\bf Proof of Theorem \ref{main theorem}}]
{The construction of the approximate solution fulfilling \eqref{app model} is already given in 
Proposition \ref{construction of app solution}. We focus on  the estimate of the difference \eqref{time error}.}
Submitting \eqref{estimateN1c}, \eqref{estimateN2}  into \eqref{H1H2sumb} leads to 
\begin{equation}\label{estimateH1phi}
\begin{split}
\int_{\Omega}\mathcal{H}_1\phi\lesssim C(\nu)\(\frac 1{\e^{\frac {14}3}}\|\phi\|_{L^2(\Omega)}^{\frac {10}3}+\frac{1}{\e^{\frac{24}{5}}}\|\phi\|_{L^2(\Omega)}^{\frac{18}{5}}+\frac{1}{\varepsilon^{10}}\|\phi\|_{L^2(\Omega)}^6+\frac{1}{\varepsilon^{16}}\|\phi\|_{L^2(\Omega)}^{10}\)+\nu\mathcal{K}(t),
\end{split}
\end{equation} 
for any $\e\in (0,\e_0)$ and $\nu\in (\e^{\frac 1{100}},1)$.
 Now we treat $\mathcal{H}_2$ in \eqref{nonlinearterms}. It follows from Theorem \ref{spectral theorem} that $\mathfrak{R}_1,\mathfrak{R}_2$ are uniformly bounded in $\e$ and $(x,t)$.
This together with \eqref{diff f''}, the Cauchy-Schwarz inequality and \eqref{H1H2sumL5}  leads to
\begin{align}
\int_{\Omega}\mathcal{H}_2\phi
&\lesssim \e^{k-5}\int_\Omega \(|\Delta \phi|+|\phi|\)+\e^{k-5}\int_\Omega |\phi|^3\nonumber\\
&\lesssim \e^{k-5}+ \e^{k-5}\int_\Omega \(\phi^2+|\Delta \phi|^2\)+\e^{k-2}\e^{-3}\int_\Omega |\phi|^3.
\end{align}
We can  use \eqref{H1H2sumL5} to treat the last term and add it  to \eqref{estimateH1phi}. This brings 
\begin{equation}
\begin{split}
\int_{\Omega}(\mathcal{H}_1+\mathcal{H}_2)\phi & \lesssim C(\nu)\(\frac 1{\e^{\frac{14}3}}\|\phi\|_{L^2(\Omega)}^{\frac {10}3}+\frac{1}{\e^{\frac{24}{5}}}\|\phi\|_{L^2(\Omega)}^{\frac{18}{5}}+\frac{1}{\varepsilon^{10}}\|\phi\|_{L^2(\Omega)}^6+\frac{1}{\varepsilon^{16}}\|\phi\|_{L^2(\Omega)}^{10}\)\\
&+\nu\mathcal{K}(t)+{{\e^{k-5}}}+\e^{k-5}\int_\Omega \(\phi^2+|\Delta \phi|^2\).
\end{split}
\end{equation}
We can  substitute the above estimate into \eqref{spectrum-modi} and choose $\nu$ sufficiently small (but independent of $\e$) and $k\geq 10$  to obtain    
\begin{equation}\label{gronwall}
\begin{split}
&\frac d{dt}\|\phi\|^2_{L^2(\Omega)}
-C\|\phi\|^2_{L^2(\Omega)}\\
\leq& C_6\(\frac 1{\e^{14/3}}\|\phi\|_{L^2(\Omega)}^{\frac {10}3}+\frac{1}{\e^{\frac{24}{5}}}\|\phi\|_{L^2(\Omega)}^{\frac{18}{5}}+\frac{1}{\varepsilon^{10}}\|\phi\|_{L^2(\Omega)}^6+\frac{1}{\varepsilon^{16}}\|\phi\|_{L^2(\Omega)}^{10}+{{\e^{k-5}}}\),~\forall \e\in (0,\e_0),
\end{split}
\end{equation}
where
$C,C_6$   are  positive constants   independent of $\e$.

We shall use the Gr\"{o}nwall inequality and the continuity method to close the energy estimate under the  assumption \eqref{initial error}. We set  
\begin{equation}\label{continuity}
T_\e=\sup \{\tau \in (0,T_{max}]\mid \|\phi\|_{C([0,\tau),L^2(\Omega))}\leq  \Lambda\e^{7/2}\}
\end{equation}
with $\Lambda>0$ and $T_{max}\in (0,T]$ being determined later on.
 It  can be verified from \eqref{gronwall} that, on the time interval $(0,T_\e)$ there holds
 \begin{equation}
 \frac d{dt}\|\phi\|^2_{L^2(\Omega)} 
-C\|\phi\|^2_{L^2(\Omega)}
\leq C_6\(\Lambda^{4/3}+\e^{4/5}\Lambda^{8/5}+\e^4\Lambda^4+\e^{12}\Lambda^8\)\|\phi\|^2_{L^2(\Omega)}+C_6 \e^5.
 \end{equation}
{We shall work with   $\e\in (0,\e_1)$ where 
\begin{equation}\label{eps1 choice}
\e_1=\min (\e_0,1,\Lambda^{-2},\Lambda^{-1}, T^{-1}C_6^{-1}, e^{-T(C+19 C_6)}),
\end{equation}
 then  $\frac d{dt}\|\phi\|^2_{L^2(\Omega)}
\leq \(C+C_6\Lambda^{4/3}+3C_6\) \|\phi\|^2_{L^2(\Omega)}+C_6 \e^5.$
Applying the Gr\"{o}nwall inequality and \eqref{initial error} yields
 \begin{equation}\label{continuity1}
\begin{split}
\|\phi\|_{C([0,T_{\e}];L^2(\Omega))}\leq &\exp ( T_\e  (C+C_6\Lambda^{4/3}+3C_6))\(\|\phi\mid_{t=0}\|_{L^2(\Omega)}+T_\e C_6 \e^5\)\\\leq & \exp ( T_{max} (C+C_6\Lambda^{4/3}+3C_6))(C_{in}\e^{7/2}+T_{max} C_6 \e^5).
\end{split}
 \end{equation}
\noindent $\bullet$ For an   arbitrary $C_{in}>0$,
  we shall choose $\Lambda=10 C_{in}$ in \eqref{continuity} and 
\begin{equation}
T_{max}=\min\left\{ C_{in}/C_6, (C+C_6\Lambda^{4/3}+3C_6)^{-1},T\right\}.
\end{equation}
Then it follows from \eqref{continuity1} and \eqref{continuity} that $T_\e\geq T_{\max}$, and thus \eqref{time error} holds. 

\noindent $\bullet$
If $C_{in}\leq \exp \(-T(C+19 C_6)\)$,  we shall choose  $\Lambda=8$ and $T=T_{max}$. Then it follows from \eqref{eps1 choice} and \eqref{continuity1} that 
 $\|\phi\|_{C([0,T_{\e}];L^2(\Omega))}\leq 2\e^{7/2}$. This combined with \eqref{continuity} leads to   
 \eqref{time error}.}
 \end{proof}

\begin{rmk}\label{verification}We discuss the admissible initial data satisfying \eqref{initial error}. 
Because of \eqref{eq:expan phia} and \eqref{inn-01},  the approximate solution satisfies
 \begin{equation}
 \phi_a(x,t)=\(\theta(z)+\varepsilon^2\widetilde{\phi}^{(2)}
\(z,x,t\)+\varepsilon^3\widetilde{\phi}^{(3)}
\(z,x,t\)\)\Big|_{z=r(x,t)/\e}+O(\varepsilon^4)\quad\text{in}~\G^0(\delta).
 \end{equation}
Here $\widetilde{\phi}^{(2)}$  has an explicit form \eqref{expression of phi2}, and $\widetilde{\phi}^{(2)},\widetilde{\phi}^{(3)}$  both decay exponentially to $0$ as $z\to \pm\infty$. 
  On the other hand, due to \eqref{app distance}, \eqref{d} and the construction made in Appendix \ref{innerexpan}, we have $r(x,0)=d^{(0)}(x,0)$ (the signed-distance to the initial interface   $\G^0_0$). So 
   \begin{equation}\label{verify2}
 \phi_a(x,0)= \theta(\tfrac{d^{(0)}(x,0)}\e)+\varepsilon^2\widetilde{\phi}^{(2)}
\(\tfrac{d^{(0)}(x,0)}\e,x,0\)+\varepsilon^3\widetilde{\phi}^{(3)}
\(\tfrac{d^{(0)}(x,0)}\e,x,0\)+O(\varepsilon^4).
 \end{equation}
 If we choose $\phi_\e(x,0)=\theta(\frac{d^{(0)}(x,0)}\e)+\varepsilon^2\widetilde{\phi}^{(2)}
\(\frac{d^{(0)}(x,0)}\e,x,0\)$, then by a change of variable in $\G_0^0(\delta)$ (the $\delta$-tubular neighborhood of $\G_0^0$), we arrive at 
\begin{equation}
\|\phi_\e(x,0)-\phi_a(x,0)\|_{L^2(\G_0^0(\delta))}=O(\e^{7/2}).
\end{equation}
  This
  together with an appropriate choice of $\phi_\e(x,0)$ outside $\G_0^0(\delta)$ leads to \eqref{initial error}.
   \end{rmk}

\appendix
\section{Formally matched expansions}\label{innerexpan}
\indent
We use the matched asymptotic expansions
to obtain   \eqref{phi12} and \eqref{mu12}, and thus give a complete proof of Proposition \ref{construction of app solution}, and of  \eqref{app model}. The construction will  employ the solvability  of an ODE, see   \cite[Lemma 4.1]{Alikakos1994}.
\begin{lemma}\label{ODEsolver-special case}
Let $\ell,m, n\in \{0, 1, 2\}$ and  $\widetilde{A}(z,x,t):\mathbb{R}\times\G^0(\delta)\to \mathbb{R}$ 
be a function   satisfying 
 \begin{equation}\label{decay-0}
\p_z^\ell\p_x^m\p_t^n  \widetilde{A}(z,x,t)=O(e^{-C|z|})~\text{as}~z\to \pm\infty,~\text{uniformly in}~(x,t)\in \G^0(\delta),
\end{equation}
 and the following compatibility condition holds,
\begin{equation}\label{compatiblity-1}
  \int_{\mathbb{R}}\widetilde{A}(z,x,t)\theta'(z)dz=0,~(x,t)\in \G^0(\delta).
\end{equation}
 Then  the equation $\mathscr{L} \tilde{U}=\widetilde{A}$ has a bounded solution so  that
 \begin{equation}\label{decay-1}
\p_z^\ell\p_x^m\p_t^n  \widetilde{U}(z,x,t)=O(e^{-C|z|})~\text{as}~z\to \pm\infty,~\text{uniformly in}~(x,t)\in \G^0(\delta).
\end{equation}
 Moreover, there exists a smooth function $U(x,t)$ such that
\begin{equation}\label{odesolver}
{\widetilde{U}(z,x,t)=U(x,t)\theta'(z)+\left[\int_{0}^{z}(\theta'(\zeta))^{-2}\(\int_{\zeta}^{+\infty}\tilde{A}(\tau,x,t)\theta'(\tau)d\tau \) d\zeta\,\right]\theta'(z).}
\end{equation}
The  unique solution satisfying $\widetilde{U}(0,x,t)=0$ corresponds to $U\equiv 0$.
\end{lemma}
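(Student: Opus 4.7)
\medskip

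\noindent\textbf{Proof proposal.} The plan is to use reduction of order, exploiting the fact that $\theta'$ lies in the kernel of $\mathscr{L}$. Indeed, differentiating \eqref{ODE-0} once gives $\theta'''=f''(\theta)\theta'$, i.e.\ $\mathscr{L}[\theta']=0$, and a second independent solution grows like $e^{\sqrt{2}|z|}$.

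I would first make the ansatz $\widetilde{U}(z,x,t)=V(z,x,t)\theta'(z)$ and compute
\[
\mathscr{L}[V\theta']=-V''\theta'-2V'\theta''=-\tfrac{1}{\theta'}\partial_z\bigl(V'\theta'^{\,2}\bigr),
\]
so that $\mathscr{L}[\widetilde{U}]=\widetilde{A}$ becomes $\partial_z(V'\theta'^{\,2})=-\widetilde{A}\theta'$. Integrating from $z$ to $+\infty$ (the integral converges by \eqref{decay-0}) yields
\[
V'(z)\theta'(z)^{2}=\int_{z}^{+\infty}\widetilde{A}(\tau,x,t)\theta'(\tau)\,d\tau,
\]
and integrating once more from $0$ to $z$ gives exactly the bracketed expression in \eqref{odesolver}. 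The general solution is then obtained by adding a multiple of the kernel element $\theta'$, namely $U(x,t)\theta'(z)$.

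The compatibility condition \eqref{compatiblity-1} is what makes $\widetilde{U}$ bounded on the whole real line. Without it, the right-hand side above would tend to the nonzero constant $\int_{\mathbb{R}}\widetilde{A}\theta'\,d\tau$ as $z\to -\infty$, forcing $V'\sim \theta'^{\,-2}\sim e^{2\sqrt{2}|z|}$, and hence $\widetilde{U}=V\theta'$ to blow up. With \eqref{compatiblity-1} in force, I rewrite $\int_{z}^{+\infty}\widetilde{A}\theta'\,d\tau=-\int_{-\infty}^{z}\widetilde{A}\theta'\,d\tau$, which is $O(e^{-(\sqrt{2}+C)|z|})$ as $z\to-\infty$ by \eqref{decay-0} and the decay of $\theta'$. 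Combined with $\theta'(z)=O(e^{-\sqrt{2}|z|})$, this gives $V'(z)=O(e^{(\sqrt{2}-C)|z|})$, and finally $\widetilde{U}(z,x,t)=V(z)\theta'(z)=O(e^{-C|z|})$ at both infinities, which is the $\ell=m=n=0$ case of \eqref{decay-1}.

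For the $(x,t)$-derivative estimates, I would differentiate the explicit formula \eqref{odesolver} under the integral sign; since \eqref{decay-0} provides uniform exponential decay of $\partial_x^m\partial_t^n\widetilde{A}$ for $m,n\le 2$, exactly the same bookkeeping applies. The $z$-derivative bounds then follow by reading them off $\widetilde{U}''=f''(\theta)\widetilde{U}-\widetilde{A}$ and its differentiated versions. For uniqueness, any two bounded solutions differ by an element of $\ker \mathscr{L}$ that is bounded at both infinities; since the other independent solution grows exponentially, the difference must be a multiple of $\theta'$. Imposing $\widetilde{U}(0,x,t)=0$ together with $\theta'(0)\ne 0$ then forces $U\equiv 0$. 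The main obstacle is the bookkeeping at $-\infty$, where the compatibility condition must be invoked to rewrite the tail integral; once that is handled, the rest is a routine application of reduction of order.
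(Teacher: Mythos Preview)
Your argument is correct and is precisely the standard reduction-of-order (variation of parameters) proof one expects here. The paper does not give its own proof of this lemma; it simply cites \cite[Lemma~4.1]{Alikakos1994}, and your write-up is essentially a reconstruction of that reference's argument.
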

Recall from Section \ref{levelset} that $d^{(0)}$ is the signed-distance to $\G^0$.   We need the following lemma whose proof can be found   in  \cite{MR3383330}.
\begin{lemma}\label{equivalentdistance}
The interface $\G^0$ evolves under the Willmore flow \eqref{interface law-geometric1} if and only if  $d^{(0)}$ fulfills 
\begin{align}
\partial_td^{(0)}+\Delta^2 d^{(0)}=\Delta d^{(0)}{D^{(0)}}
+\nabla d^{(0)}\cdot\nabla{D^{(0)}}\ \ \ \text{on}\ \ \Gamma^0.
\label{distance law}
\end{align}
\end{lemma}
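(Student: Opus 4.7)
The lemma is a pointwise algebraic identity on $\Gamma^0$: I will rewrite both sides of the geometric Willmore equation in terms of the signed-distance $d^{(0)}$ using standard identities, and check that the two expressions agree on $\Gamma^0$. The key step is the cancellation between $\Delta^2 d^{(0)}$ and $\nabla d^{(0)}\cdot\nabla D^{(0)}$, together with the identification $\Delta d^{(0)}=H$ and the Weingarten-type identity $\nabla d^{(0)}\cdot\nabla\Delta d^{(0)}=-|A|^2$ on $\Gamma^0$.

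\textbf{Step 1: Kinematic identity.} Since each material point $X(t)\in\Gamma_t^0$ satisfies $d^{(0)}(X(t),t)=0$, differentiating in time and using $\nabla d^{(0)}=\nn$ and $V=\dot X\cdot\nn$ yields $\partial_t d^{(0)}=-V$ on $\Gamma^0$.

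\textbf{Step 2: Geometry from $|\nabla d^{(0)}|^2=1$.} Differentiating this eikonal identity once gives $D^2 d^{(0)}\,\nabla d^{(0)}=0$, so $\nabla d^{(0)}$ is a zero eigenvector of the Hessian and the remaining $N-1$ eigenvalues on $\Gamma^0$ are precisely the principal curvatures $\kappa_1,\dots,\kappa_{N-1}$. Hence $\Delta d^{(0)}=H$ and $|D^2 d^{(0)}|^2=|A|^2$ on $\Gamma^0$. Differentiating $D^2 d^{(0)}\,\nabla d^{(0)}=0$ once more and taking a trace produces the well-known
\[
\nabla d^{(0)}\cdot\nabla\Delta d^{(0)}=-|D^2 d^{(0)}|^2,
\]
so by \eqref{def:D0D1} with $i=0$,
\[
D^{(0)}\big|_{\Gamma^0}=\nabla d^{(0)}\cdot\nabla\Delta d^{(0)}+\tfrac12(\Delta d^{(0)})^2=-|A|^2+\tfrac12 H^2.
\]

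\textbf{Step 3: Cancellation identity for $\Delta^2 d^{(0)}$.} Using the Laplace--Beltrami decomposition \eqref{eq:laplace1} together with $\partial_r\ln\sqrt g=\Delta r=\Delta d^{(0)}$ from \eqref{eq:2.1}, the Euclidean Laplacian acting on any smooth function $f$ in a tubular neighborhood of $\Gamma^0$ reads
\[
\Delta f=\Delta_{\Gamma} f+\partial_r^2 f+\Delta d^{(0)}\,\partial_r f,\qquad \partial_r=\nabla d^{(0)}\cdot\nabla.
\]
Applying this with $f=\Delta d^{(0)}$ gives
\[
\Delta^2 d^{(0)}=\Delta_{\Gamma}(\Delta d^{(0)})+\partial_r^2(\Delta d^{(0)})+\Delta d^{(0)}\,\partial_r(\Delta d^{(0)}).
\]
On the other hand, since $D^{(0)}=\partial_r(\Delta d^{(0)})+\tfrac12(\Delta d^{(0)})^2$ as a function on the whole tubular neighborhood, direct differentiation yields
\[
\nabla d^{(0)}\cdot\nabla D^{(0)}=\partial_r D^{(0)}=\partial_r^2(\Delta d^{(0)})+\Delta d^{(0)}\,\partial_r(\Delta d^{(0)}).
\]
Subtracting and restricting to $\Gamma^0$ eliminates the $\partial_r$-derivatives and gives
\[
\bigl[\Delta^2 d^{(0)}-\nabla d^{(0)}\cdot\nabla D^{(0)}\bigr]_{\Gamma^0}=\Delta_{\Gamma^0} H.
\]

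\textbf{Step 4: Conclusion.} Combining Steps 1--3, the quantity
\[
\partial_t d^{(0)}+\Delta^2 d^{(0)}-\Delta d^{(0)}\,D^{(0)}-\nabla d^{(0)}\cdot\nabla D^{(0)}
\]
equals $-V+\Delta_{\Gamma^0}H-H\bigl(-|A|^2+\tfrac12 H^2\bigr)=-V+\Delta_{\Gamma^0}H+H|A|^2-\tfrac12 H^3$ on $\Gamma^0$. This vanishes if and only if \eqref{interface law-geometric1} holds, proving the equivalence. The only delicate point is consistently fixing the sign conventions in Step 2 (the outward-normal convention $\nabla d^{(0)}=\nn$ fixes $\Delta d^{(0)}=H$ with the summed-principal-curvature convention matching $|A|^2=\sum\kappa_i^2$); once these are pinned down, the remainder is a direct computation.
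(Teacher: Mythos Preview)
Your proof is correct. The paper itself does not supply a proof of this lemma but simply cites \cite{MR3383330}; you have instead written out a clean self-contained argument. The approach you take---identifying $\partial_t d^{(0)}=-V$, using the eikonal relations $\Delta d^{(0)}=H$, $\nabla d^{(0)}\cdot\nabla\Delta d^{(0)}=-|A|^2$, and the tangential/normal splitting of $\Delta^2 d^{(0)}$ to produce the $\Delta_{\Gamma^0}H$ term---is exactly the standard computation behind the cited reference, so there is no substantive difference in strategy; you have just made explicit what the paper outsources.
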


Following \cite{Alikakos1994}, we employ the stretched variable $z=\frac{d_\e}{\varepsilon}\in\mathbb{R}$ (see \eqref{d} for the definition of $d_\e$) and set the following Ansatz in $\Gamma^0(3\delta)$ for the inner expansion
\begin{align}
\widetilde{\phi}^\varepsilon(z,x,t)=\sum_{\ell\geq 0}\varepsilon^\ell \widetilde{\phi}^{(\ell)}(z,x,t),\qquad 
\widetilde{\mu}^\varepsilon(z,x,t)=\sum_{\ell\geq 0}\varepsilon^\ell \widetilde{\mu}^{(\ell)}(z,x,t),\label{inex-1}
\end{align}
which should fulfill the following   matching conditions in $(x,t)\in\Gamma^0(3\delta)$:
\begin{align}
&D_z^\gamma D_x^\alpha D_t^\beta \Big(\widetilde{\phi}^{(i)}(z,x,t)-\phi^{(i)}_{\pm}(x,t)\Big)=O(e^{-\nu|z|}),\label{mc-1}\\
&D_z^\gamma D_x^\alpha D_t^\beta \Big(\widetilde{\mu}^{(i)}(z,x,t) -\mu^{(i)}_{\pm}(x,t)\Big)=O(e^{-\nu|z|}).\label{mc-2}
\end{align}
Here $\nu$ is a positive fixed constant  and $0\leq\alpha,\beta,\gamma\leq 2$. It follows from the Taylor expansion and (\ref{inex-1}) that,
\begin{subequations}\label{f expansion}
\begin{align}
f'(\widetilde{\phi}^\varepsilon)&=f'(\widetilde{\phi}^{(0)})+ f''(\widetilde{\phi}^{(0)})\sum_{i\geq 1}\varepsilon^i\widetilde{\phi}^{(i)}+\sum_{i\geq 1} \e^{i} g_{i-1}\big(\widetilde{\phi}^{(0)},\cdots,
\widetilde{\phi}^{(i-1)}\big),\\
f''(\widetilde{\phi}^\varepsilon)&=f''(\widetilde{\phi}^{(0)})+ f'''(\widetilde{\phi}^{(0)})\sum_{i\geq 1}\e^i\widetilde{\phi}^{(i)}+ \sum_{i\geq 1} \e^i g_{i-1}^*\big(\widetilde{\phi}^{(0)},\cdots,
\widetilde{\phi}^{(i-1)}\big),
\end{align}
\end{subequations}
where $g_i,g_i^*$ enjoy the following property:
\begin{lemma}\label{polynomials}
For $i\geq 1$, $g_i(x_0,\cdots,x_i)$ and $g^*_i(x_0,\cdots,x_i)$ are  polynomials of $i+1$ variables. Moreover, they vanish when $x_1=\cdots =x_{i-1}=0$, and $g_0=g^*_0=0$.
\end{lemma}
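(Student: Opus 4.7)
The plan is to exploit the fact that $f(u)=\tfrac14(u^2-1)^2$ is a quartic polynomial, so $f'(u)=u^3-u$ and $f''(u)=3u^2-1$ are themselves polynomials of degrees three and two respectively. As a consequence, Taylor's formula around $\widetilde{\phi}^{(0)}$ terminates after finitely many terms with no remainder: writing $w=\widetilde{\phi}^\varepsilon-\widetilde{\phi}^{(0)}=\sum_{j\geq 1}\varepsilon^j\widetilde{\phi}^{(j)}$, I would use the exact identities
\begin{align*}
f'(\widetilde{\phi}^\varepsilon)&=f'(\widetilde{\phi}^{(0)})+f''(\widetilde{\phi}^{(0)})\,w+\tfrac{1}{2}f'''(\widetilde{\phi}^{(0)})\,w^2+\tfrac{1}{6}f^{(4)}(\widetilde{\phi}^{(0)})\,w^3,\\
f''(\widetilde{\phi}^\varepsilon)&=f''(\widetilde{\phi}^{(0)})+f'''(\widetilde{\phi}^{(0)})\,w+\tfrac{1}{2}f^{(4)}(\widetilde{\phi}^{(0)})\,w^2,
\end{align*}
and read off the coefficient of $\varepsilon^i$ on both sides.

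The next step is to identify $g_{i-1}$ (resp.\ $g_{i-1}^*$) with the contribution from the $w^k$ terms with $k\geq 2$. Expanding $w^k=\sum \varepsilon^{j_1+\cdots+j_k}\,\widetilde{\phi}^{(j_1)}\cdots\widetilde{\phi}^{(j_k)}$, the coefficient of $\varepsilon^i$ is a finite sum over multi-indices $(j_1,\dots,j_k)$ with $j_m\geq 1$ and $\sum_m j_m=i$. Since $k\geq 2$, each individual index satisfies $j_m\leq i-(k-1)\leq i-1$. Hence the coefficient is a polynomial expression in $\widetilde{\phi}^{(0)},\widetilde{\phi}^{(1)},\dots,\widetilde{\phi}^{(i-1)}$ alone, which gives the first assertion. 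Moreover, every monomial in this polynomial contains at least one factor $\widetilde{\phi}^{(j_m)}$ with $1\leq j_m\leq i-1$, so the polynomial vanishes identically when $x_1=\cdots=x_{i-1}=0$. This is exactly the claimed vanishing property.

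Finally, for the base case $i=1$ we note that the condition $\sum_m j_m=1$ with $k\geq 2$ and $j_m\geq 1$ is impossible, so no $w^k$ with $k\geq 2$ can contribute at this order. Therefore $g_0=g_0^*=0$ as claimed. No real obstacle arises here; the only point that deserves care is bookkeeping the index shift (the Taylor term $f''(\widetilde{\phi}^{(0)})w$ supplies the $f''(\widetilde{\phi}^{(0)})\sum_{i\geq 1}\varepsilon^i\widetilde{\phi}^{(i)}$ written out explicitly in \eqref{f expansion}, while all higher $w^k$'s feed into $g_{i-1}$ and $g_{i-1}^*$), and verifying that the $k\leq 3$ truncation indeed captures everything because of the polynomial nature of $f$.
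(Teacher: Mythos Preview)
Your argument is correct, and in fact the paper states this lemma without proof, treating it as an elementary consequence of the Taylor expansion of the quartic $f$. Your approach---terminating the Taylor series exactly at third order in $w$ because $f^{(5)}\equiv 0$, then reading off the $\varepsilon^i$-coefficient of $w^k$ for $k\geq 2$ as a sum over compositions $(j_1,\dots,j_k)$ of $i$ with parts $\geq 1$---is precisely the natural one and gives all the claimed properties.

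One small bookkeeping remark: as written, you prove that $g_{i-1}$ vanishes when $x_1=\cdots=x_{i-1}=0$; after the index shift $i-1\mapsto i$ this says $g_i$ vanishes when $x_1=\cdots=x_i=0$. The lemma's wording asks for the (formally stronger) vanishing of $g_i$ already when $x_1=\cdots=x_{i-1}=0$, i.e.\ allowing $x_i\neq 0$. This follows immediately from your same combinatorics: in a composition $j_1+\cdots+j_k=i+1$ with $k\geq 2$ and $j_m\geq 1$, at most one part can equal $i$ (since $2i>i+1$ for $i\geq 2$), so every monomial contains at least one factor $x_{j_m}$ with $1\leq j_m\leq i-1$. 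For $i=1$ the lemma's vanishing hypothesis is vacuous and one relies instead on the separate assertion $g_0=g_0^*=0$, which you handled correctly. In any case, the only use the paper makes of this lemma is the consequence \eqref{partial zero}, which your argument certainly delivers.
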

Using $|\nabla d_\e|=1$ and the chain-rule, we have the following expansions
\begin{align*}
\p_t \widetilde{\phi}^\varepsilon(\tfrac{d_\e}{\varepsilon},x,t)&=\partial_t\widetilde{\phi}^\varepsilon+\varepsilon^{-1}\partial_z\widetilde{\phi}^\varepsilon\partial_td_\e,\\
\Delta\widetilde{\mu}^\varepsilon(\tfrac{d_\e}\e,x,t)&=\varepsilon^{-2}\p_z^2\widetilde{\mu}^\varepsilon+2\varepsilon^{-1}\nabla_x\partial_z\widetilde{\mu}^\varepsilon\cdot\nabla d_\e+\varepsilon^{-1}\partial_z\widetilde{\mu}^\varepsilon\Delta d_\e+\Delta_x\widetilde{\mu}^\varepsilon,\\
\Delta\widetilde{\phi}^\varepsilon(\tfrac{d_\e}\e,x,t)&=\varepsilon^{-2}\p_z^2\widetilde{\phi}^\varepsilon+2\varepsilon^{-1}\nabla_x\partial_z\widetilde{\phi}^\varepsilon\cdot\nabla d_\e+\varepsilon^{-1}\partial_z\widetilde{\phi}^\varepsilon\Delta d_\e+\Delta_x\widetilde{\phi}^\varepsilon.
\end{align*}
We expect   $(\widetilde{\phi}^\varepsilon(z,x,t),\widetilde{\mu}^\varepsilon(z,x,t))|_{z= d_\e/\varepsilon}$ to satisfy \eqref{model} up to a high order term in $\e$ and thus determine the terms in \eqref{inex-1}:
\begin{align}
\p_z^2\widetilde{\mu}^\varepsilon-f''(\widetilde{\phi}^\varepsilon)\widetilde{\mu}^\varepsilon+2\varepsilon\nabla_x\partial_z\widetilde{\mu}^\varepsilon\cdot\nabla d_\e+\varepsilon\partial_z\widetilde{\mu}^\varepsilon\Delta d_\e
-\varepsilon^2\partial_z\widetilde{\phi}^\varepsilon\partial_td_\e\nonumber\\+\varepsilon^2\Delta_x\widetilde{\mu}^\varepsilon-\varepsilon^3\partial_t\widetilde{\phi}^\varepsilon
= O(\e^{k}),\label{newequ-1}\\
-\p_z^2\widetilde{\phi}^\varepsilon+f'(\widetilde{\phi}^\varepsilon)-2\varepsilon\nabla_x\partial_z\widetilde{\phi}^\varepsilon\cdot\nabla d_\e-\varepsilon\partial_z\widetilde{\phi}^\varepsilon\Delta d_\e-\varepsilon\widetilde{\mu}^\varepsilon-\varepsilon^2\Delta_x\widetilde{\phi}^\varepsilon= O(\e^{k}).\label{newequ-2}
\end{align}
 Since $z=\frac{d_\e}{\varepsilon}$, we need the above two equations to hold merely on $$S^\e\triangleq\{(z,x,t)\in\mathbb{R}\times \G^0(3\delta):z= d_\e/\e\}.$$ So we can add  in \eqref{newequ-1} terms which are multiplied by  $d_\e-\varepsilon z$. These terms will give more degrees of  freedom to construct and to  solve the  equations for $d^{(\ell)}$. See Remark \ref{deterd_0-re}  below  and \cite{Alikakos1994}.
So we modify \eqref{newequ-1}  as follows
\begin{align}
\p_z^2\widetilde{\mu}^\varepsilon-f''(\widetilde{\phi}^\varepsilon)\widetilde{\mu}^\varepsilon&+2\varepsilon\nabla_x\partial_z\widetilde{\mu}^\varepsilon\cdot\nabla d_\e+\varepsilon\partial_z\widetilde{\mu}^\varepsilon\Delta d_\e-\varepsilon^2\partial_z\widetilde{\phi}^\varepsilon\partial_td_\e\nonumber\\&+\varepsilon^2\Delta_x\widetilde{\mu}^\varepsilon-\varepsilon^3\partial_t\widetilde{\phi}^\varepsilon
+\varepsilon^2\chi^\varepsilon(d_\e-\varepsilon z)\eta'= O(\e^{k}),\label{newequ-1-m}
\end{align}
where $\eta(z)$ is a smooth non-decreasing function satisfying
\begin{align}
\eta(z)=0\ \text{if}\ z\leq -1;\ \eta(z)=1\ \text{if}\ z\geq 1;\ \ \eta'(z)\ \text{is\ even},
\end{align}
and $\chi^{\varepsilon}(x, t)=\sum_{i=0}^{\infty}\varepsilon^{i}\chi^{(i)}(x, t)$ with $\chi^{(i)}$ being  determined later on.
\begin{definition}\label{convention}
We shall  use {\bf   $\e^\ell$-scale} to denote  the terms of form $\e^\ell g(z,x,t)$. The  {\bf{$\ell$-order}} will  refer to  those    indexed by  $\ell$ if  $\ell\geq 0$, and by $0$ if $\ell <0$.
Moreover, $\tilde{\Psi}^{(\ell)}(z,x,t)$ and $\Xi^{(\ell)}(x,t)$ will denote   {\bf generic terms which might change from line to line}, and will  depend on terms of order at most $\ell$. 
\end{definition}

\subsection{$\varepsilon^1$-scale}\label{subsection matching 01}
Collecting all terms of  $\varepsilon^0$-scale in (\ref{newequ-2})-(\ref{newequ-1-m}),  we have $\p_z^2\widetilde{\phi}^{(0)}=f'(\widetilde{\phi})$ and $\p_z^2\widetilde{\mu}^{(0)}=f''(\widetilde{\phi}^{(0)})\widetilde{\mu}^{(0)}$.
Together with  matching condition \eqref{mc-1} and \eqref{mc-2} yields 
\begin{equation}
\widetilde{\phi}^{(0)}=\theta(z),\quad \widetilde{\mu}^{(0)}(z,x,t)=\mu_0(x,t)\theta'(z),
\end{equation}
for some function  $\mu_0$ which will be determined later on.
To proceed we recall the operator $\mathscr{L}$ defined at \eqref{1doperator}, which enjoys
\begin{equation}\label{commutator2}
\mathscr{L}~\text{is self-adjoint},~
  \p_z\mathscr{L}-\mathscr{L}\p_z=f'''(\theta)\theta'\mathcal{I}.\end{equation}
   Collecting all terms of  $\varepsilon^1$-scale in  (\ref{newequ-2})-(\ref{newequ-1-m}), and using $\widetilde{\phi}^{(0)}=\theta(z)$, we have 
\begin{subequations}\label{inequ-1-1 new}
\begin{align}
\mathscr{L}\widetilde{\mu}^{(1)}&=-f'''(\theta)\widetilde{\phi}^{(1)}\widetilde{\mu}^{(0)}
+2\nabla_x\partial_z\widetilde{\mu}^{(0)}\cdot\nabla d^{(0)}+\partial_z\widetilde{\mu}^{(0)}\Delta d^{(0)},\label{inequ-1-1}\\
\mathscr{L}\widetilde{\phi}^{(1)}&=\widetilde{\mu}^{(0)}+\partial_z\theta\Delta d^{(0)}.\label{inequ-1-2}
\end{align}
\end{subequations}
Here $\mu_0$ is chosen such that \eqref{inequ-1-2} fulfills \eqref{compatiblity-1}, i.e. $\mu_0=-\Delta d^{(0)}$. Thus
 \begin{equation}\label{eq:muphi1}
 \widetilde{\mu}^{(0)}(z,x,t)=-\Delta d^{(0)}\theta',~\widetilde{\phi}^{(1)}(z,x,t)=0.
 \end{equation}
  This together with Lemma \ref{polynomials} implies
\begin{align}
g_{1}=g_{2}=g_{1}^*=g_{2}^*=0.\label{partial zero}
\end{align}
Substituting this  into (\ref{inequ-1-1}) yields 
${\mathscr{L}\widetilde{\mu}^{(1)}}=-{2D^{(0)}}\theta''$
where  $D^{(0)}$ is given by
\begin{equation}\label{def:D0D1app}
D^{(0)}(x,t)=\nabla\Delta d^{(0)}\cdot\nabla d^{(0)}+\tfrac{1}{2}(\Delta d^{(0)})^2.
\end{equation}
Using \eqref{odesolver}, for some $\mu_1(x,t)$ which will  be determined later on, there holds
\begin{align}\label{eq:1.2}
\widetilde{\mu}^{(1)}(z,x,t)&=D^{(0)}z\theta'(z)+\mu_1(x,t)\theta'(z).
\end{align}

\subsection{$\varepsilon^2$-scale}\label{subsection matching 2}
Substituting (\ref{inex-1}) into   (\ref{newequ-1})-(\ref{newequ-2}),  and collecting all terms of  $\varepsilon^2$-scale, we obtain 
\begin{align*}
0&=\p_z^2\widetilde{\mu}^{(2)}-f''(\theta)\widetilde{\mu}^{(2)}-f'''(\theta)\widetilde{\phi}^{(1)}\widetilde{\mu}^{(1)}
-\big(f'''(\theta)\widetilde{\phi}^{(2)}+g_1^*\big(\widetilde{\phi}^{(0)},\widetilde{\phi}^{(1)}\big)\big)\widetilde{\mu}^{(0)}
\nonumber\\&\quad+2\nabla_x\partial_z\widetilde{\mu}^{(1)}\cdot\nabla d^{(0)}
+2\nabla_x\partial_z\widetilde{\mu}^{(0)}\cdot\nabla d^{(1)}
+\partial_z\widetilde{\mu}^{(1)}\Delta d^{(0)}+\partial_z\widetilde{\mu}^{(0)}\Delta d^{(1)}
\nonumber\\&\quad-\partial_z\theta\partial_td^{(0)}+
\Delta_x\widetilde{\mu}^{(0)}+\chi^{(0)} d^{(0)}\eta',\\
0&=-\p_z^2\widetilde{\phi}^{(2)}+f''(\theta)\widetilde{\phi}^{(2)}
+g_1\big(\widetilde{\phi}^{(0)},\widetilde{\phi}^{(1)}\big)
-2\nabla_x\partial_z\widetilde{\phi}^{(1)}\cdot\nabla d^{(0)}-2\nabla_x\partial_z\theta\cdot\nabla d^{(1)}
\nonumber\\&\quad-\partial_z\widetilde{\phi}^{(1)}\Delta d^{(0)}-\partial_z\theta\Delta d^{(1)}-\widetilde{\mu}^{(1)}-\Delta_x\widetilde{\phi}^{(0)}.
\end{align*}
In view of \eqref{partial zero},   the above two equations can be simplified as
\begin{subequations}
  \begin{align}
{\mathscr{L}\widetilde{\mu}^{(2)}}&=
-f'''(\theta)\widetilde{\phi}^{(2)}\widetilde{\mu}^{(0)}
+2\nabla_x\partial_z\widetilde{\mu}^{(1)}\cdot\nabla d^{(0)}
+2\nabla_x\partial_z\widetilde{\mu}^{(0)}\cdot\nabla d^{(1)}
\nonumber\\&\quad+\partial_z\widetilde{\mu}^{(1)}\Delta d^{(0)}+\partial_z\widetilde{\mu}^{(0)}\Delta d^{(1)}-\partial_z\theta\partial_td^{(0)}+
\Delta_x\widetilde{\mu}^{(0)}+\chi^{(0)} d^{(0)}\eta',\label{inequ-2-1-s}\\
{\mathscr{L}\widetilde{\phi}^{(2)}}&=\partial_z\theta\Delta d^{(1)}+\widetilde{\mu}^{(1)}
=\big(\Delta d^{(1)}+\mu_1\big)\theta'+{D^{(0)}}z\theta'(z).\label{inequ-2-2-s}
\end{align}
\end{subequations}
Recall \eqref{eq:1.2} that   $\mu_1$ shall be determined such that \eqref{inequ-2-2-s} fulfills \eqref{compatiblity-1}, i.e.
$\big(\Delta d^{(1)}+\mu_1\big)\sigma=0$,
where $\sigma\triangleq \int_{\mathbb{R}}(\theta')^2dz$.
This leads to the formula for $\mu_1$ and completes formula \eqref{eq:1.2}:
\begin{align}
\mu_1(x,t)=-\Delta d^{(1)},\qquad \widetilde{\mu}^{(1)}(z,x,t)&=D^{(0)}z\theta'(z)-\Delta d^{(1)}\theta'(z).\label{eq:muphi2}
\end{align} 
As a result, \eqref{inequ-2-2-s} is  simplified to
\begin{align}
{\mathscr{L}\widetilde{\phi}^{(2)}}={D^{(0)}}z\theta'(z).\label{inequ-2-2-s0}
\end{align}
Using $\int_{\mathbb{R}}z(\theta')^2dz=0$ and formula \eqref{odesolver}, we can solve (\ref{inequ-2-2-s0}) and yield
\begin{align}
\widetilde{\phi}^{(2)}(z,x,t)={D^{(0)}}\theta'(z) \alpha(z),~\text{with}~\alpha(z)\triangleq\int_{0}^{z}(\theta'(\zeta))^{-2}\int_{\zeta}^{+\infty}\tau(\theta'(\tau))^2d\tau d\zeta\label{expression of phi2}
\end{align}
being an odd function. This  implies   that $\widetilde{\phi}^{(2)}$ is odd with respect to $z$. On the other hand, $\chi^{(0)}$ is determined so that the right hand side of (\ref{inequ-2-1-s}) fulfills \eqref{compatiblity-1}, e.g.
\begin{align}
\chi^{(0)}d^{(0)}\sigma^{-1}\overline{\sigma}=\mathscr{G}_0d^{(0)},~\text{with}~\overline{\sigma}=\int_{\mathbb{R}}\eta'\theta' dz,\label{distance law-0}\\
\mathscr{G}_0d^{(0)}\triangleq \partial_td^{(0)}+\Delta^2 d^{(0)}-\Delta d^{(0)}{D^{(0)}}
-\nabla d^{(0)}\cdot\nabla{D^{(0)}}.
\end{align}
Note that we used the following formula which is due to   \eqref{commutator2} and \eqref{inequ-2-2-s0}:
\begin{align}
\int_{\mathbb{R}}f'''(\theta)\widetilde{\phi}^{(2)}(\theta')^2dz=\int_{\mathbb{R}}\partial_z\big({\mathscr{L}\widetilde{\phi}^{(2)}}\big)\theta'dz-\int_{\mathbb{R}} \mathscr{L}\(\partial_z\widetilde{\phi}^{(2)}\)\theta' dz=\tfrac{\sigma}{2}  D^{(0)}.\label{zero condition}
\end{align}
Combining   \eqref{distance law-0} and  \eqref{distance law} leads to the choice of $\chi^{(0)}$: 
\begin{eqnarray}\label{formula:chi0}
\chi^{(0)}\triangleq\left \{
\begin {array}{llr}
\sigma(\overline{\sigma})^{-1}\(\mathscr{G}_0d^{(0)}\)/d^{(0)}, \  &\forall (x,t)\in\Gamma^0(3\delta)\backslash\Gamma^0,\\
\sigma(\overline{\sigma})^{-1}\nabla\(\mathscr{G}_0d^{(0)}\)\cdot\nabla d^{(0)}, \  &\forall (x,t)\in\Gamma^0.
\end{array}
\right.
\end{eqnarray}
\begin{rmk}\label{deterd_0-re}
If we do not modify the equation  \eqref{newequ-1} into \eqref{newequ-1-m}, then we would require  the equation \eqref{distance law} to hold in $\Gamma^0(3\delta)$, which is not compatible with $|\nabla d^{(0)}|=1$ in general. 
\end{rmk}
The formula \eqref{distance law-0} reduces (\ref{inequ-2-1-s}) to
\begin{align}
\mathscr{L}\widetilde{\mu}^{(2)}&=
\big(f'''(\theta)(\theta')^2\alpha+z\theta''\big)\Delta d^{(0)}{D^{(0)}}
+\big(\theta'+2z\theta''\big)\nabla d^{(0)}\cdot\nabla{D^{(0)}}\nonumber\\&
-2\theta''{D^{(1)}}
+\chi^{(0)}d^{(0)}\eta'-
\sigma^{-1}\overline{\sigma}\chi^{(0)}d^{(0)}\theta',\label{inequ-2-1-ss}\\
D^{(1)}&=\nabla\Delta d^{(1)}\cdot\nabla d^{(0)}+\nabla\Delta d^{(0)}\cdot\nabla d^{(1)}+\Delta d^{(0)}\Delta d^{(1)}.\label{def:D0D1app2}
\end{align}
Note that  $D^{(1)}$ is consistent with   \eqref{def:D0D1}.
We can solve \eqref{inequ-2-1-ss} by employing  \eqref{odesolver},
\begin{align}
\widetilde{\mu}^{(2)}(z,x,t)=&\Delta d^{(0)}{D^{(0)}}\theta'(z)\gamma_1(z)
+\nabla d^{(0)}\cdot\nabla{D^{(0)}}\theta'(z)\gamma_2(z)
\nonumber\\&+{D^{(1)}}z\theta'
+\mu_2(x,t)\theta'
+\chi^{(0)}d^{(0)}\theta'(z)\gamma_3(z),\label{expression of mu2-0}
\end{align}
where  $\mu_2(x,t)$ is a smooth function which  will be determined by the $\varepsilon^3$-scale below, and $\gamma_1(z)$ and $\gamma_2(z)$ and  $\gamma_3(z)$ are three  even functions defined by
\begin{equation}\label{eq:2.7}
\begin{split}
&\gamma_1(z)=\int_{0}^{z}(\theta'(\zeta))^{-2}\int_{\zeta}^{+\infty}\theta'(\tau)\big(f'''(\theta)(\theta')^2\alpha+\tau\theta''\big)(\tau)d\tau d\zeta,
\quad\gamma_2(z)=-z^2/2,
\\&\gamma_3(z)=\int_{0}^{z}(\theta'(\zeta))^{-2}\int_{\zeta}^{+\infty}\theta'(\tau)\big(\eta'(\tau)-\sigma^{-1}\overline{\sigma}\theta'(\tau)\big)d\tau d\zeta.
\end{split}
\end{equation}
\subsection{$\varepsilon^3$-scale}
Substituting (\ref{inex-1}) into   (\ref{newequ-2})-(\ref{newequ-1-m}), then  using (\ref{partial zero}) and collecting all terms of  $\varepsilon^3$-scale lead to 
\begin{subequations}
  \begin{align}
\mathscr{L}\widetilde{\mu}^{(3)}&=-f'''(\theta)\widetilde{\phi}^{(2)}\widetilde{\mu}^{(1)}
-f'''(\theta)\widetilde{\phi}^{(3)}\widetilde{\mu}^{(0)}+\big(\chi^{(0)}d^{(1)}+\chi^{(1)}d^{(0)}\big)
\eta'-\chi^{(0)}z\eta'
\nonumber\\&\quad+2\nabla_x\partial_z\widetilde{\mu}^{(2)}\cdot\nabla d^{(0)}
+2\nabla_x\partial_z\widetilde{\mu}^{(0)}\cdot\nabla d^{(2)}+2\nabla_x\partial_z\widetilde{\mu}^{(1)}\cdot\nabla d^{(1)}
\nonumber\\&\quad+\partial_z\widetilde{\mu}^{(2)}\Delta d^{(0)}+\partial_z\widetilde{\mu}^{(0)}\Delta d^{(2)}+\partial_z\widetilde{\mu}^{(1)}\Delta d^{(1)}
-\partial_z\theta\partial_td^{(1)}+\Delta_x\widetilde{\mu}^{(1)}
,\label{inequ-3-1}\\
{\mathscr{L}\widetilde{\phi}^{(3)}}&=
2\nabla_x\partial_z\widetilde{\phi}^{(2)}\cdot\nabla d^{(0)}
+\partial_z\widetilde{\phi}^{(2)}\Delta d^{(0)}+\partial_z\theta\Delta d^{(2)}+\widetilde{\mu}^{(2)}.\label{inequ-3-2}
\end{align}
\end{subequations}
We  determine    $\mu_2(x,t)$ in \eqref{expression of mu2-0} so that    (\ref{inequ-3-2}) satisfies (\ref{compatiblity-1}), i.e.
\begin{align}
\big(\Delta d^{(2)}+\mu_2\big)\sigma
&=-\(\nabla d^{(0)}\cdot\nabla{D^{(0)}}
+\tfrac{1}{2}\Delta d^{(0)}{D^{(0)}}\)\int_{\mathbb{R}}\int_{z}^{+\infty}\tau(\theta'(\tau))^2d\tau dz
\nonumber\\&\quad-\Delta d^{(0)}{D^{(0)}}\int_{\mathbb{R}}(\theta')^2\gamma_1(z)dz
-\nabla d^{(0)}\cdot\nabla{D^{(0)}}\int_{\mathbb{R}}(\theta')^2\gamma_2(z)dz
\nonumber\\&\quad-\chi^{(0)}d^{(0)}\int_{\mathbb{R}}
(\theta'(z))^2\gamma_3(z)dz.\label{expression of mu2-1}
\end{align}
To prove \eqref{expression of mu2-1}, it follows from \eqref{expression of phi2} and integration by parts that
  \begin{align}
&\int_{\mathbb{R}}\(2\nabla_x\partial_z\widetilde{\phi}^{(2)}\cdot\nabla d^{(0)}
+\partial_z\widetilde{\phi}^{(2)}\Delta d^{(0)}+\partial_z\theta\Delta d^{(2)}\)\theta'dz
\nonumber\\&=-\big(2\nabla  d^{(0)}\cdot\nabla D^{(0)}+D^{(0)}\Delta d^{(0)}\big)\int_{\mathbb{R}}\alpha\theta'\theta''dz+\Delta d^{(2)}\sigma
\nonumber\\&=\(\nabla d^{(0)}\cdot\nabla{D^{(0)}}
+\tfrac{1}{2}\Delta d^{(0)}{D^{(0)}}\)\int_{\mathbb{R}}\int_{z}^{+\infty}\tau(\theta'(\tau))^2d\tau dz+\Delta d^{(2)}\sigma.\label{nasty3}
\end{align}
This together with \eqref{expression of mu2-0} leads to \eqref{expression of mu2-1}. So we can use \eqref{expression of mu2-1} to rewrite \eqref{expression of mu2-0} as
\begin{align}
\widetilde{\mu}^{(2)}(z,x,t)&=-\Delta d^{(2)}(x,t)\theta'(z)+D^{(1)}(x,t)z\theta'(z)+\tilde{\Psi}^{(0)}(z,x,t),\label{expression of mu2}
\end{align}
where $\tilde{\Psi}^{(0)}$ only depends on $0$-order terms:
\begin{align}
\tilde{\Psi}^{(0)}&=\Delta d^{(0)}D^{(0)}\theta'\gamma_1+\nabla  d^{(0)}\cdot\nabla D^{(0)}\theta'\gamma_2
\nonumber\\&\quad- (2\sigma)^{-1}\(\int_{\mathbb{R}}\int_{z}^{+\infty}\tau(\theta'(\tau))^2d\tau dz\)\bigg(\Delta d^{(0)}D^{(0)}+2\nabla  d^{(0)}\cdot\nabla D^{(0)}\bigg)\theta'
\nonumber\\&\quad-\sigma^{-1}\bigg(\Delta d^{(0)}{D^{(0)}}\int_{\mathbb{R}}(\theta')^2\gamma_1(z)dz
+\nabla d^{(0)}\cdot\nabla{D^{(0)}}\int_{\mathbb{R}}(\theta')^2\gamma_2(z)dz\bigg)\theta'
\nonumber\\&\quad-\sigma^{-1}\bigg(\int_{\mathbb{R}}(\theta'(z))^2
\gamma_3(z)dz\bigg)\chi^{(0)}d^{(0)}\theta'.\label{psi0term}
\end{align}
Finally, applying Lemma \ref{ODEsolver-special case} to  (\ref{inequ-3-2}) yields  a solution $\widetilde{\phi}^{(3)}$: 
 \begin{lemma}
$\tilde{\Psi}^{(0)}$ satisfies  \eqref{decay-0} and the equation  (\ref{inequ-3-2}) has 
 a unique smooth solution $\widetilde{\phi}^{(3)}$ depending up to $1$-order terms,  satisfying  $\widetilde{\phi}^{(3)}|_{z=0}=0$ and  (\ref{decay-1}).
 \end{lemma}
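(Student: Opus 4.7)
The plan is to check the hypotheses of Lemma \ref{ODEsolver-special case} directly for \eqref{inequ-3-2}, after first establishing the decay of $\tilde{\Psi}^{(0)}$. Reading off its structure from \eqref{psi0term}, every summand factorises as a smooth function of $(x,t)$ built from $d^{(0)}, D^{(0)}, \chi^{(0)}$ and their tangential derivatives, multiplied by $\theta'(z)$ or by $\theta'(z)\gamma_\ell(z)$ for $\ell\in\{1,2,3\}$. By Lemma \ref{evenodd1}, $\gamma_1$ and $\gamma_3$ grow at most quadratically and $\gamma_2(z)=-z^2/2$, so each product $\theta'(z)\gamma_\ell(z)$ decays exponentially in $z$, a property preserved by $\partial_z$, while $\partial_x^m \partial_t^n$ acts only on the smooth $(x,t)$-prefactors which are bounded in $\Gamma^0(3\delta)$ together with all their derivatives. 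Hence $\tilde{\Psi}^{(0)}$ satisfies \eqref{decay-0}.

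The key observation for the equation itself is a cancellation. Substituting \eqref{expression of mu2} into the right-hand side of \eqref{inequ-3-2}, the term $\partial_z\theta\,\Delta d^{(2)}=\theta'(z)\Delta d^{(2)}$ and the leading piece $-\Delta d^{(2)}\theta'(z)$ coming from $\widetilde{\mu}^{(2)}$ cancel, so that
\begin{equation*}
\mathscr{L}\widetilde{\phi}^{(3)} = 2\nabla_x\partial_z\widetilde{\phi}^{(2)}\cdot\nabla d^{(0)} + \partial_z\widetilde{\phi}^{(2)}\,\Delta d^{(0)} + D^{(1)}\,z\,\theta'(z) + \tilde{\Psi}^{(0)}.
\end{equation*}
This is precisely the cancellation engineered by the choice of $\mu_2$ in \eqref{expression of mu2-1}. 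Since $\widetilde{\phi}^{(2)}=D^{(0)}\theta'(z)\alpha(z)$ depends solely on $d^{(0)}$, and $D^{(1)}$ depends on $d^{(0)}$ and $d^{(1)}$, the right-hand side depends on no data beyond $1$-order quantities. Moreover, it decays exponentially in $z$ together with all $z$- and $(x,t)$-derivatives, because $\theta'(z)\alpha(z)$ and $z\theta'(z)$ do so and $\tilde{\Psi}^{(0)}$ was just shown to satisfy \eqref{decay-0}.

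It remains to verify the solvability condition $\int_{\mathbb{R}}(\mathrm{RHS})\theta'\,dz=0$, but this is exactly the identity by which $\mu_2$ was defined in \eqref{expression of mu2-1}; the computation \eqref{nasty3} shows that the integral of $2\nabla_x\partial_z\widetilde{\phi}^{(2)}\cdot\nabla d^{(0)} + \partial_z\widetilde{\phi}^{(2)}\Delta d^{(0)} + \partial_z\theta\,\Delta d^{(2)}$ against $\theta'$ is compensated by that of $\widetilde{\mu}^{(2)}$, and the cancellation just performed does not alter the value of this integral. Invoking Lemma \ref{ODEsolver-special case} then yields a bounded smooth solution $\widetilde{\phi}^{(3)}$ satisfying \eqref{decay-1}, via the explicit formula \eqref{odesolver}; its coefficients depend smoothly on $(x,t)$ because the integrand does, and inherit the announced dependence on at most $1$-order data. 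The constraint $\widetilde{\phi}^{(3)}(0,x,t)=0$ combined with $\theta'(0)\neq 0$ forces $U\equiv 0$ in \eqref{odesolver}, which pins $\widetilde{\phi}^{(3)}$ down uniquely. The only non-automatic ingredient in this scheme is the cancellation of the $\Delta d^{(2)}$-terms: this is the algebraic reason why the fourth-order induction is not polluted by $d^{(2)}$ at this stage and why $\widetilde{\phi}^{(3)}$ genuinely depends only on $1$-order quantities, in accordance with the convention of Definition \ref{convention}.
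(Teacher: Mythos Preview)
Your proof is correct and follows exactly the approach the paper intends: the paper itself offers no detailed proof here, simply stating that the lemma follows by ``applying Lemma \ref{ODEsolver-special case} to (\ref{inequ-3-2}),'' and you have carefully filled in the verification of the hypotheses (decay of $\tilde\Psi^{(0)}$, the cancellation of the $\Delta d^{(2)}$ terms via \eqref{expression of mu2}, the compatibility condition secured by the choice of $\mu_2$ in \eqref{expression of mu2-1}, and the uniqueness from $U\equiv 0$). Your identification of the $\Delta d^{(2)}$-cancellation as the reason $\widetilde\phi^{(3)}$ depends only on $1$-order data is precisely the point.
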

$d^{(1)}$ is determined so that the right hand side of  (\ref{inequ-3-1}) fulfills \eqref{compatiblity-1}:
\begin{lemma}\label{deterd_1}
There exists $\Xi^{(0)}(x,t)$  depending on $0$-order terms such that
\begin{align}\label{d1 equation}
&\mathscr{G}_1d^{(1)}=\tfrac{\overline{\sigma}}{\sigma}\big(\chi^{(0)}d^{(1)}
+\chi^{(1)}d^{(0)}\big)+\Xi^{(0)}~\text{in}~\G^0(3\delta),\\
&\qquad \text{where}\qquad \mathscr{G}_1d^{(1)}\triangleq \partial_td^{(1)}+\Delta^2d^{(1)}-\sum\limits_{i=0,1}\(\nabla D^{(i)}\cdot\nabla d^{(1-i)}+  D^{(i)}\Delta d^{(1-i)}\).\label{d1 equation new}
\end{align}
\end{lemma}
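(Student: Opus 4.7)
The plan is to enforce the compatibility condition \eqref{compatiblity-1} on the right-hand side of \eqref{inequ-3-1}: multiplying both sides by $\theta'(z)$ and integrating over $\mathbb{R}$, the self-adjointness of $\mathscr{L}$ together with $\mathscr{L}\theta' = 0$ makes the left-hand side vanish. The resulting identity in $(x,t) \in \G^0(3\delta)$ then yields the evolution equation for $d^{(1)}$. One next collects the $d^{(1)}$-linear contributions and identifies them with $\mathscr{G}_1 d^{(1)}$, while all other contributions—depending only on $0$-order terms—are absorbed into $\Xi^{(0)}$.

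To organize the computation, I would first handle the terms whose $d^{(1)}$-dependence is transparent. The term $-\partial_z\theta\, \partial_t d^{(1)}$ paired with $\theta'$ produces $-\sigma\, \partial_t d^{(1)}$. From $\Delta_x\widetilde{\mu}^{(1)} = \Delta_x(D^{(0)})\, z\theta' - \Delta^2 d^{(1)}\, \theta'$, the odd-in-$z$ piece vanishes and one gets $-\sigma \Delta^2 d^{(1)}$, plus an $\Xi^{(0)}$ contribution. The boundary-layer term $(\chi^{(0)} d^{(1)}+\chi^{(1)} d^{(0)})\eta'$ integrates against $\theta'$ to $\overline{\sigma}(\chi^{(0)} d^{(1)}+\chi^{(1)}d^{(0)})$, and $-\chi^{(0)} z \eta'$ contributes only to $\Xi^{(0)}$.

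Next, I would process the gradient- and divergence-type terms by substituting the explicit formulas \eqref{eq:muphi1}, \eqref{eq:muphi2}, and \eqref{expression of mu2} into the remaining pieces of \eqref{inequ-3-1}. The $d^{(1)}$-dependence enters through $\Delta d^{(1)}$, $\nabla d^{(1)}$, and the composite $D^{(1)}$ defined in \eqref{def:D0D1app2}. For instance, $\partial_z\widetilde{\mu}^{(2)}\Delta d^{(0)}$ contributes a $D^{(1)}\Delta d^{(0)}$-type term after integration against $\theta'$, while $2\nabla_x\partial_z\widetilde{\mu}^{(1)}\cdot\nabla d^{(1)}$ and similar cross-terms contribute $\nabla D^{(0)}\!\cdot\!\nabla d^{(1)}$-type terms. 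Careful bookkeeping—using that $\int z(\theta')^2\,dz=0$, $\int(\theta')^2\,dz=\sigma$, and the parity of $\alpha,\gamma_1,\gamma_2,\gamma_3$—will show that these pieces assemble exactly into $-\sigma\sum_{i=0,1}\bigl(\nabla D^{(i)}\cdot\nabla d^{(1-i)} + D^{(i)}\Delta d^{(1-i)}\bigr)$, mirroring the derivation of $\mathscr{G}_0 d^{(0)}$ at the previous scale.

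The main obstacle is the term $-f'''(\theta)\widetilde{\phi}^{(3)}\widetilde{\mu}^{(0)}$, since $\widetilde{\phi}^{(3)}$ itself depends implicitly on $d^{(1)}$ through $\widetilde{\mu}^{(2)}$ in \eqref{inequ-3-2}. To handle this without solving for $\widetilde{\phi}^{(3)}$ explicitly, I would invoke the commutator identity \eqref{commutator2}: for any admissible $u$,
\begin{equation*}
\int_{\mathbb{R}} f'''(\theta)\, u\, (\theta')^2\,dz \;=\; \int_{\mathbb{R}} \bigl(\partial_z\mathscr{L}u - \mathscr{L}\partial_z u\bigr)\theta'\,dz,
\end{equation*}
which (as already used in \eqref{zero condition}) converts the integral involving $\widetilde{\phi}^{(3)}$ into one involving only $\mathscr{L}\widetilde{\phi}^{(3)}$ and its $z$-derivative, both of which are given explicitly by \eqref{inequ-3-2}. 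Once this substitution is made, the $d^{(1)}$-pieces enter only through $\widetilde{\mu}^{(2)}$ and combine linearly with the terms treated above. Gathering everything and dividing through by $-\sigma$ then produces \eqref{d1 equation} with $\mathscr{G}_1$ as in \eqref{d1 equation new} and $\Xi^{(0)}$ the collection of $0$-order remainders, completing the derivation.
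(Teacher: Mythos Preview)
Your proposal is correct and follows essentially the same approach as the paper: impose the compatibility condition \eqref{compatiblity-1} on the right-hand side of \eqref{inequ-3-1}, use the commutator identity \eqref{commutator2} to reduce the $\widetilde{\phi}^{(3)}$-term to an integral of $\mathscr{L}\widetilde{\phi}^{(3)}$ against $\theta''$ (which is explicit via \eqref{inequ-3-2}), substitute \eqref{eq:muphi1}, \eqref{eq:muphi2}, \eqref{expression of mu2} into the remaining terms, and collect the $d^{(1)}$-linear pieces into $\mathscr{G}_1 d^{(1)}$. The only minor slip is the phrase ``dividing through by $-\sigma$'': after setting the integrated right-hand side to zero and moving the $-\sigma(\partial_t d^{(1)}+\Delta^2 d^{(1)})$ term across, one divides by $+\sigma$; also note that the term $-f'''(\theta)\widetilde{\phi}^{(2)}\widetilde{\mu}^{(1)}$ (which contributes $\tfrac{\sigma}{2}D^{(0)}\Delta d^{(1)}$ via \eqref{zero condition} and the oddness of $f'''(\theta)(\theta')^3\alpha(z)z$) deserves explicit mention alongside the gradient/divergence-type terms.
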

\begin{proof}
We note that $f'''(\theta)(\theta'(z))^3\alpha(z) z$ is odd, so it follows from 
  \eqref{eq:muphi2}, \eqref{expression of phi2} and (\ref{zero condition}) that 
\begin{align}\label{similar-1}
-\int_{\mathbb{R}}f'''(\theta)\widetilde{\phi}^{(2)}\widetilde{\mu}^{(1)}\theta'dz&
=\Delta d^{(1)}\int_{\mathbb{R}}f'''(\theta)\widetilde{\phi}^{(2)}(\theta')^2 dz=\frac{\sigma}{2}D^{(0)}\Delta d^{(1)}.
\end{align}
Using \eqref{eq:muphi1}, \eqref{inequ-3-2} and \eqref{expression of phi2}, we can proceed in the same way as we obtain (\ref{zero condition}) and yield 
\begin{align*}
&-\int_{\mathbb{R}}f'''(\theta)\widetilde{\phi}^{(3)}\widetilde{\mu}^{(0)}\theta'dz=\Delta d^{(0)}\int_{\mathbb{R}}f'''(\theta)\widetilde{\phi}^{(3)}(\theta')^2dz
\nonumber\\&=\Delta d^{(0)}\int_{\mathbb{R}}\partial_z\(2\nabla_x\partial_z\widetilde{\phi}^{(2)}\cdot\nabla d^{(0)}
+\partial_z\widetilde{\phi}^{(2)}\Delta d^{(0)}+\partial_z\theta\Delta d^{(2)}+\widetilde{\mu}^{(2)}\)\theta'dz
\nonumber\\&=-\Delta d^{(0)}\int_{\mathbb{R}}\widetilde{\mu}^{(2)}\theta''dz.
\end{align*}
This combined with   \eqref{expression of mu2} leads to 
\begin{align}\label{similar-2}
&-\int_{\mathbb{R}}f'''(\theta)\widetilde{\phi}^{(3)}\widetilde{\mu}^{(0)}\theta'dz
\nonumber\\&=-\Delta d^{(0)}\int_{\mathbb{R}}\big(-\Delta d^{(2)}\theta'(z)+D^{(1)}z\theta'(z)+\tilde{\Psi}^{(0)}(z,x,t)\big)\theta''dz
\nonumber\\&=\frac{\sigma}{2}D^{(1)}\Delta d^{(0)}-\Delta d^{(0)}\int_{\mathbb{R}}\tilde{\Psi}^{(0)}(z,x,t)\theta''dz.
\end{align}

We continue treating the terms on the right hand side of  (\ref{inequ-3-1}). It follows from \eqref{eq:muphi1}, \eqref{eq:muphi2}, and \eqref{expression of mu2}  that 
\begin{align*}
&\int_{\mathbb{R}}\(2\nabla_x\partial_z\widetilde{\mu}^{(2)}\cdot\nabla d^{(0)}
+2\nabla_x\partial_z\widetilde{\mu}^{(0)}\cdot\nabla d^{(2)}+2\nabla_x\partial_z\widetilde{\mu}^{(1)}\cdot\nabla d^{(1)}\)\theta'dz
\nonumber\\&=\sigma\big(\nabla D^{(1)}\cdot\nabla d^{(0)}+\nabla D^{(0)}\cdot\nabla d^{(1)}\big)
-2\nabla d^{(0)}\cdot\int_{\mathbb{R}}\nabla_x\tilde{\Psi}^{(0)}(z,x,t)\theta''dz.
\end{align*}
Moreover, we have the following two identities:
 \begin{align}
&\int_{\mathbb{R}}\(\partial_z\widetilde{\mu}^{(2)}\Delta d^{(0)}+\partial_z\widetilde{\mu}^{(0)}\Delta d^{(2)}+\partial_z\widetilde{\mu}^{(1)}\Delta d^{(1)}\)\theta'dz
\nonumber\\&\qquad =\frac{\sigma}{2}\big(D^{(1)}\Delta d^{(0)}+D^{(0)}\Delta d^{(1)}\big)
-\Delta d^{(0)}\int_{\mathbb{R}}\tilde{\Psi}^{(0)}(z,x,t)\theta''dz,\\
&\int_{\mathbb{R}}
\big(-\partial_z\theta\partial_td^{(1)}+\Delta_x\widetilde{\mu}^{(1)}\big)\theta'dz=-\sigma\big(\partial_td^{(1)}+\Delta^2d^{(1)}\big).
\end{align}
Therefore, using the notation \eqref{d1 equation new}
, we deduce that $d^{(1)}$ satisfies \eqref{d1 equation} and   \begin{align*}
 \Xi^{(0)}=-\frac2{\sigma}\nabla d^{(0)}\cdot\int_{\mathbb{R}}\nabla_x\tilde{\Psi}^{(0)}(z,x,t)\theta''dz-\frac 2{\sigma}\Delta d^{(0)}\int_{\mathbb{R}}\tilde{\Psi}^{(0)}(z,x,t)\theta''dz.
\end{align*}
\end{proof}
To determine $d^{(1)}$ and $\chi^{(1)}$ so that   \eqref{d1 equation} holds, we need  the following result:
\begin{corol}\label{deterd_1-c}
The following equation about    $d^{(1)}$ has a local in time classical solution:
\begin{align}\label{equation of d1}
\mathscr{G}_1d^{(1)}=\sigma^{-1}\overline{\sigma}\chi^{(0)}d^{(1)}+\Xi^{(0)}~\text{on}~\Gamma^0.
\end{align}
Moreover,   to have \eqref{d1 equation} holds in $\G^0(3\delta)$,  $\chi^{(1)}$ can be    defined by 
{\begin{eqnarray}\label{formula:chi1}
\chi^{(1)}\triangleq\left \{
\begin {array}{ll}
 \sigma (\overline{\sigma})^{-1}\(\mathscr{G}_1d^{(1)}-\sigma^{-1}\overline{\sigma}\chi^{(0)}d^{(1)}-\Xi^{(0)}\)/d^{(0)}& \ \text{in}~\Gamma^0(3\delta)\backslash\Gamma^0,\\
\sigma\big(\overline{\sigma}\big)^{-1}\nabla\big(
\mathscr{G}_1d^{(1)}-\sigma^{-1}\overline{\sigma}\chi^{(0)}d^{(1)}-\Xi^{(0)}\big)\cdot\nabla d^{(0)}& \ \text{on}~\Gamma^0.
\end{array}
\right.
\end{eqnarray}}
\end{corol}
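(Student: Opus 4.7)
The proof proceeds in two stages: first we construct $d^{(1)}\big|_{\Gamma^0}$ by solving the intrinsic parabolic equation \eqref{equation of d1} on the evolving surface, and then we extend $d^{(1)}$ off the surface and use this extension to define $\chi^{(1)}$ via \eqref{formula:chi1} so that \eqref{d1 equation} is enforced throughout $\Gamma^0(3\delta)$.

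For the first stage, the constraint $\nabla d^{(0)}\cdot \nabla d^{(1)}=0$ from \eqref{equ:gradient d} forces $d^{(1)}$ to be constant along the normals to $\Gamma^0$; equivalently, $d^{(1)}$ is determined in the tubular neighborhood $\Gamma^0(3\delta)$ by its trace $v\triangleq d^{(1)}\big|_{\Gamma^0}$. Under this normal extension, the Euclidean derivatives $\Delta d^{(1)}$ and $\Delta^2 d^{(1)}$, restricted to $\Gamma^0$, become second- and fourth-order intrinsic differential operators on $v$, whose respective principal parts are $\Delta_{\Gamma^0_t}v$ and $(\Delta_{\Gamma^0_t})^2 v$ modulo smooth lower-order terms involving the second fundamental form of $\Gamma^0_t$. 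Equation \eqref{equation of d1} therefore reduces to a \emph{linear} fourth-order parabolic equation for $v$ on the smoothly evolving compact hypersurface $\Gamma^0$, with smooth coefficients depending only on the geometry of $\Gamma^0$ and with smooth forcing $\sigma^{-1}\overline{\sigma}\chi^{(0)}v+\Xi^{(0)}\big|_{\Gamma^0}$. Classical parabolic theory on compact manifolds (e.g.\ via Galerkin approximation together with $H^k$-energy estimates, or a semigroup argument in parabolic H\"older spaces) then provides a unique classical solution $v$ on a short time interval $[0,T_*]$ for any prescribed smooth initial datum on $\Gamma^0_0$.

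In the second stage, we extend $v$ to a smooth $d^{(1)}$ on $\Gamma^0(3\delta)$ by declaring it constant along the normals (i.e., by solving the first equation of \eqref{equ:gradient d}), and set
\[
N(x,t)\triangleq\mathscr{G}_1 d^{(1)}-\sigma^{-1}\overline{\sigma}\,\chi^{(0)}d^{(1)}-\Xi^{(0)}.
\]
By construction, $N\in C^\infty(\Gamma^0(3\delta))$ vanishes identically on $\Gamma^0$ thanks to \eqref{equation of d1}. Since $d^{(0)}$ is the signed distance to $\Gamma^0$ with $|\nabla d^{(0)}|=1$ in the tubular neighborhood, Taylor expansion of $N$ in the normal direction yields $N/d^{(0)}\in C^\infty(\Gamma^0(3\delta)\setminus\Gamma^0)$ with smooth extension $\nabla N\cdot\nabla d^{(0)}$ on $\Gamma^0$. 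Defining $\chi^{(1)}=(\sigma/\overline{\sigma})N/d^{(0)}$ off $\Gamma^0$ and $\chi^{(1)}=(\sigma/\overline{\sigma})\nabla N\cdot\nabla d^{(0)}$ on $\Gamma^0$, precisely as in \eqref{formula:chi1}, produces a smooth function on $\Gamma^0(3\delta)$ for which \eqref{d1 equation} holds identically.

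The principal obstacle is the first stage: one must verify that the reduction of \eqref{equation of d1} to an intrinsic equation on $\Gamma^0$ yields a \emph{genuinely} fourth-order parabolic operator with regular coefficients. In particular, one must carefully track how the Euclidean bilaplacian acts on a normal extension of a surface function, and check that the third-order contributions hidden in $\nabla D^{(1)}\cdot\nabla d^{(0)}$ (via \eqref{def:D0D1app2}) combine to reproduce $(\Delta_{\Gamma^0_t})^2 v$ at leading order without disturbing parabolicity. Once this verification is carried out, local well-posedness follows from standard parabolic theory and the remaining steps are routine applications of Taylor expansion.
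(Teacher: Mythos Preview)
Your proposal is correct and follows essentially the same route as the paper: reduce to a function on $\Gamma^0$ via the normal constancy $\partial_r d^{(1)}=0$, identify the resulting equation as a linear fourth-order parabolic problem on the evolving surface with principal part $\Delta_{\Gamma^0_t}^2$, and then define $\chi^{(1)}$ by dividing the residual by $d^{(0)}$. The paper differs only in that it actually performs the verification you flag as the ``principal obstacle'': it isolates the highest-order normal contribution $(\nabla d^{(0)}\otimes\nabla d^{(0)}):\nabla^2\Delta d^{(1)}$ coming from $\nabla D^{(1)}\cdot\nabla d^{(0)}$ and shows explicitly, using the commutator identity \eqref{commutator} and the decomposition \eqref{eq:laplace1}, that $\Delta^2 d^{(1)}-(\nabla d^{(0)}\otimes\nabla d^{(0)}):\nabla^2\Delta d^{(1)}=\Delta_{\Gamma_0}^2 d^{(1)}$ modulo third-order tangential terms.
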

\begin{proof}
 Note that $d^{(1)}$ might not fulfill \eqref{equation of d1} in $\Gamma^0(3\delta)$.
Since  $\p_r d^{(1)}=0$ (see \eqref{equ:gradient d}), it suffices to determine $d^{(1)}$ on $\G^0$ and then extends  constantly in the normal direction. Using \eqref{commutator} we can convert  mixed  derivatives of $d^{(1)}$ into tangential ones.
This combined with  \eqref{def:D0D1app2} and   \eqref{equation of d1} yields \begin{equation}\label{solvability d0}
  \partial_td^{(1)}+\Delta^2d^{(1)}-(\nabla d^{(0)}\otimes \nabla d^{(0)}):\nabla^2 \Delta  d^{(1)} =\mathfrak{T}(d^{(1)}),
\end{equation}
where $\mathfrak{T}$ is a generic term that includes  at most third-order (tangential) derivatives of  $d^{(1)}$. Using   \eqref{commutator}, \eqref{gradient} and \eqref{eq:laplace1} yields  
\begin{equation}
 \begin{split}
    &\Delta^2d^{(1)}-(\nabla d^{(0)}\otimes \nabla d^{(0)}):\nabla^2 \Delta d^{(1)}\\
  =& \Div_{\G_0} (\nabla\Delta_{\G_0}  d^{(1)})=\Delta^2_{\G_0} d^{(1)}+\(\Div_{\G_0}\nn\)\p_r (\Delta_{\G_0}d^{(1)}).
 \end{split}
\end{equation}
So we can write \eqref{solvability d0} as (see \cite{Alikakos1994} for similar arguments)
\begin{equation}\label{solvability d0new}
  \partial_td^{(1)}+  \Delta_{\G_0}^2 d^{(1)} =\mathfrak{T}(d^{(1)}).
\end{equation}
 This is a surface evolutionary equation and has  a local in time smooth solution.
\end{proof}

\subsection{$\varepsilon^{K}$-scale:}
With Definition \ref{convention}, we set the following statements indexed by $K$:
\begin{subequations}\label{induction K}
\begin{align}
A_K&:\widetilde{\phi}^{(i)}~\text{depends on terms of order up to}~(i-2);~2\leq i\leq K+1,\label{nasty4}\\
B_K&:\widetilde{\mu}^{(i)}=-\Delta d^{(i)}\theta'+{D^{(i-1)}z\theta'}+\widetilde{\Psi}^{(i-2)}~\text{for}~2\leq i\leq K,\label{expression of mui}\\
C_K&:\widetilde{\mu}^{(K+1)}=\mu_{K+1}(x,t)\theta'+{D^{(K)}z\theta'}
+\tilde{\Psi}^{(K-1)},\label{expression of muk1-0}\\
D_K&:  (d^{(K)}, \chi^{(K)})~\text{depend on terms up to order $(K-1)$ through}\nonumber\\
&\qquad \mathscr{G}_K d^{(K)}=\sigma^{-1}\overline{\sigma}\big(\chi^{(0)}d^{(K)}+\chi^{(K)}d^{(0)}\big)+\Xi^{(K-1)},\label{equation of dk}
\end{align}
\end{subequations}
where    $D^{(i)}$ is defined by \eqref{def:D0D1},   $\tilde{\Psi}^{(K)}$ satisfy the decay property (\ref{decay-1}), and 
\begin{align}\label{def d_k}
&\mathscr{G}_K d^{(K)}\triangleq \partial_td^{(K)}+\Delta^2d^{(K)}-\sum_{\ell=0,K}\(\nabla D^{(\ell)}\cdot\nabla d^{(K-\ell)}+D^{(\ell)}\Delta d^{(K-\ell)}\),\\
&\chi^{(K)}\triangleq \left \{
\begin {array}{ll}
\sigma\overline{\sigma}^{-1}\(\mathscr{G}_K d^{(K)}-\sigma^{-1}\overline{\sigma}\chi^{(0)}d^{(K)}-\Xi^{(K-1)}\)/d^{(0)}& \ \text{in}~\Gamma^0(3\delta)\backslash\Gamma^0,\\
\sigma\overline{\sigma}^{-1}\nabla\big(\mathscr{G}_K d^{(K)}-\sigma^{-1}\overline{\sigma}\chi^{(0)}d^{(K)}-\Xi^{(K-1)}\big)\cdot\nabla d^{(0)}& \ \text{in}~\Gamma^0.
\end{array}
\right.\label{formula:chik}
\end{align}
\begin{lemma}
The statements $(A_1,B_1,C_1)$ and $(A_2,B_2,C_2,D_1)$ are valid.
\end{lemma}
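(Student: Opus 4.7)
The plan is to verify these base-case statements by consolidating identities already established in Subsections \ref{subsection matching 01}--\ref{subsection matching 2}, with one additional derivation carried out at the $\varepsilon^3$-scale. I would treat the four conditions in the order in which the relevant computations naturally unfold.

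First, $A_1$ together with the $i=2$ portion of $A_2$ is immediate from \eqref{expression of phi2}, since $\widetilde{\phi}^{(2)}=D^{(0)}\theta'(z)\alpha(z)$ and $D^{(0)}$ depends only on $d^{(0)}$ by \eqref{def:D0D1app}; the index range for $B_1$ is empty, so $B_1$ holds vacuously. For $C_1$, I would point to \eqref{expression of mu2-0}: before imposing the next-scale solvability, this formula already has the shape $\mu_2(x,t)\theta'+D^{(1)}z\theta'+\tilde\Psi^{(0)}$, with the residual piece (the $\gamma_1$, $\gamma_2$, and $\gamma_3$ contributions) depending only on $d^{(0)}$, $D^{(0)}$, and $\chi^{(0)}$. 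Then $B_2$ follows by substituting the explicit solvability formula \eqref{expression of mu2-1} for $\mu_2$: this converts $\mu_2\theta'$ into $-\Delta d^{(2)}\theta'$ plus additional 0-order terms which are absorbed into $\tilde\Psi^{(0)}$, producing \eqref{expression of mu2}. Finally, $D_1$ is exactly Lemma \ref{deterd_1} paired with Corollary \ref{deterd_1-c}: the former identifies $\Xi^{(0)}$ as depending only on 0-order inputs, while the latter solves \eqref{equation of d1} on $\Gamma^0$ via the surface parabolic equation \eqref{solvability d0new}, extends $d^{(1)}$ constantly in the normal direction (using $\partial_r d^{(1)}=0$), and defines $\chi^{(1)}$ by \eqref{formula:chi1} so that \eqref{d1 equation} holds throughout $\Gamma^0(3\delta)$.

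Next I would handle the remaining $i=3$ case of $A_2$ together with $C_2$, both based on the $\varepsilon^3$-scale equations \eqref{inequ-3-1}--\eqref{inequ-3-2}. For $A_2$, substituting \eqref{expression of mu2} into the right-hand side of \eqref{inequ-3-2} causes the $\partial_z\theta\,\Delta d^{(2)}$ term to cancel against the $-\Delta d^{(2)}\theta'$ component of $\widetilde{\mu}^{(2)}$, leaving a right-hand side whose dependence on the $d$-hierarchy runs only through $d^{(0)},d^{(1)}$ (via $D^{(1)}$). Since the solvability condition \eqref{compatiblity-1} is satisfied by construction (precisely the way $\mu_2$ was chosen, cf.\ \eqref{nasty3}), Lemma \ref{ODEsolver-special case} yields a unique $\widetilde{\phi}^{(3)}$ with $\widetilde{\phi}^{(3)}|_{z=0}=0$ that satisfies \eqref{decay-1} and depends on data up to order $1$. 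For $C_2$, I would solve \eqref{inequ-3-1}: by $D_1$ its right-hand side fulfills \eqref{compatiblity-1}, so Lemma \ref{ODEsolver-special case} produces a solution of the form $\widetilde{\mu}^{(3)}=\mu_3(x,t)\theta'(z)+\widetilde{V}(z,x,t)$ with $\widetilde{V}(0,x,t)=0$. The key structural claim, that $\widetilde{V}$ separates as $D^{(2)}z\theta'+\tilde\Psi^{(1)}$, amounts to isolating the coefficient of $-2\theta''$ in the right-hand side: the $d^{(2)}$-dependent piece $2\nabla_x\partial_z\widetilde{\mu}^{(0)}\cdot\nabla d^{(2)}+\partial_z\widetilde{\mu}^{(0)}\Delta d^{(2)}$ contributes $-(2\nabla\Delta d^{(0)}\cdot\nabla d^{(2)}+\Delta d^{(0)}\Delta d^{(2)})\theta''$, and the $(d^{(1)},d^{(1)})$-cross terms inside $2\nabla_x\partial_z\widetilde{\mu}^{(1)}\cdot\nabla d^{(1)}+\partial_z\widetilde{\mu}^{(1)}\Delta d^{(1)}$ contribute the complementary symmetric piece; reorganizing via the definition \eqref{def:D0D1} yields a total coefficient of $-2D^{(2)}$, and applying \eqref{odesolver} then converts $-2D^{(2)}\theta''$ into $D^{(2)}z\theta'$. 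Every other contribution involves only $d^{(0)},d^{(1)}$ and previously computed $\widetilde{\phi}^{(i)},\widetilde{\mu}^{(i)}$, so inverting $\mathscr{L}$ places them in the generic class $\tilde\Psi^{(1)}$.

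I expect the main obstacle to lie in this final extraction step. The definition \eqref{def:D0D1} of $D^{(2)}$ is a symmetric sum over $\ell=0,1,2$, and one must verify that the contributions from $(d^{(0)},d^{(2)})$ via $\widetilde{\mu}^{(0)}$ and from $(d^{(1)},d^{(1)})$ via $\widetilde{\mu}^{(1)}$ combine, after accounting for the factors of $2$ in the $\nabla_x\partial_z$ terms and for the curvature-like pieces coming from $\partial_z\widetilde{\mu}^{(i)}\Delta d^{(j)}$, into precisely the symmetric combination defining $D^{(2)}$. This parallels the cross-checking already carried out at the $\varepsilon^2$-scale but is more intricate due to the larger number of cross-terms, and it is this combinatorial identification that will govern the shape of the inductive step needed to extend $(A_K,B_K,C_K,D_{K-1})$ to $(A_{K+1},B_{K+1},C_{K+1},D_K)$ later on.
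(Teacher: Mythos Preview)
Your approach is essentially the same as the paper's: recall the formulas from Subsections \ref{subsection matching 01}--\ref{subsection matching 2} to read off $(A_1,B_1,C_1)$ and $(B_2,D_1)$ and the $i=2$ part of $A_2$, then work at the $\varepsilon^3$-scale to obtain $\widetilde{\phi}^{(3)}$ and the formula $\widetilde{\mu}^{(3)}=\mu_3\theta'+D^{(2)}z\theta'+\tilde{\Psi}^{(1)}$. The paper's proof is terser but follows the same line, recording the reduction $\mathscr{L}\widetilde{\mu}^{(3)}=-2D^{(2)}\theta''+\tilde{\Psi}^{(1)}$ and then invoking \eqref{odesolver}.

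There is, however, a concrete bookkeeping slip in your extraction of the coefficient of $-2\theta''$. You account for the $d^{(2)}$-dependence only through $2\nabla_x\partial_z\widetilde{\mu}^{(0)}\cdot\nabla d^{(2)}+\partial_z\widetilde{\mu}^{(0)}\Delta d^{(2)}$ and the $(d^{(1)},d^{(1)})$ cross terms, and then assert that ``every other contribution involves only $d^{(0)},d^{(1)}$''. But once $B_2$ is in force, $\widetilde{\mu}^{(2)}$ itself carries the term $-\Delta d^{(2)}\theta'$, so the pair $2\nabla_x\partial_z\widetilde{\mu}^{(2)}\cdot\nabla d^{(0)}+\partial_z\widetilde{\mu}^{(2)}\Delta d^{(0)}$ in \eqref{inequ-3-1} contributes an additional $-(2\nabla\Delta d^{(2)}\cdot\nabla d^{(0)}+\Delta d^{(0)}\Delta d^{(2)})\theta''$. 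Without this piece you would obtain only part of $D^{(2)}$; with it, the three contributions combine via \eqref{def:D0D1} to give exactly $-2D^{(2)}\theta''$, as the paper records in \eqref{nasty5}. This same mechanism (the highest-order $-\Delta d^{(K)}\theta'$ term inside $\widetilde{\mu}^{(K)}$ feeding back through $\nabla d^{(0)}$ and $\Delta d^{(0)}$) is also what drives the inductive step, so it is worth getting the pattern right here.
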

\begin{proof}
Recall the results in previous subsections.
Using  $d^{(1)}$ we can determine   $\widetilde{\mu}^{(1)}$ through   (\ref{eq:muphi2}). Using $d^{(2)}$ determined by \eqref{def d_k} with $K=2$,  we obtain $\widetilde{\mu}^{(2)}$ by \eqref{expression of mu2} and   $\widetilde{\phi}^{(3)}$  by solving (\ref{inequ-3-2}). Finally we can rewrite  \eqref{inequ-3-1} as 
\begin{align}\label{nasty5}
\mathscr{L}\widetilde{\mu}^{(3)}=-2\theta''D^{(2)}+\tilde{\Psi}^{(1)},~\text{where}~D^{(2)}=\sum\limits_{0\leq \ell\leq 2}\(\nabla\Delta d^{(\ell)}\cdot\nabla d^{(2-\ell)}+\tfrac 12\Delta d^{(\ell)}\Delta d^{(2-\ell)}\),
\end{align}
and $\tilde{\Psi}^{(1)}$  satisfies \eqref{decay-0}. Applying \eqref{odesolver}   yields 
\begin{align}
\widetilde{\mu}^{(3)}(z,x,t)=\mu_3(x,t)\theta'(z)+D^{(2)}(x,t)z\theta'(z)+\tilde{\Psi}^{(1)}(z,x,t).  \label{expression of mu3}
\end{align}
where  $\tilde{\Psi}^{(1)}$ satisfies \eqref{decay-0}, and $\mu_3(x,t)$ shall   be determined by the $\varepsilon^4$-scale.
\end{proof}
{\bf We argue by   induction on $K$}. Assuming  $\(A_K,B_K,C_K,D_{K-1}\)$.
We substitute (\ref{inex-1})  into   (\ref{newequ-2})-(\ref{newequ-1-m})  and use \eqref{f expansion} and  \eqref{partial zero} to sort  all terms of  $\varepsilon^{K+2}$-scale:
\begin{subequations}
  \begin{align}
\mathscr{L}\widetilde{\mu}^{(K+2)}
=&-\sum\limits_{2\leq i\leq K+2}\(f'''(\theta)\widetilde{\phi}^{(i)}+g_{i-1}^*\big(\widetilde{\phi}^{(0)},\cdots,
\widetilde{\phi}^{(i-1)}\big)\)\widetilde{\mu}^{(K+2-i)}
\nonumber\\&+2\sum\limits_{0\leq i\leq K+1}\nabla_x\partial_z\widetilde{\mu}^{(i)}\cdot\nabla d^{(K+1-i)}
+\sum\limits_{0\leq i\leq K+1}\partial_z\widetilde{\mu}^{(i)}\Delta d^{(K+1-i)}
\nonumber\\&-\sum\limits_{1\leq i\leq K}\partial_z\widetilde{\phi}^{(i)}\partial_td^{(K-i)}-\partial_z\theta\partial_td^{(K)}
+\Delta_x\widetilde{\mu}^{(K)}-\partial_t\widetilde{\phi}^{(K-1)}
\nonumber\\&+\Big(\chi^{(0)}d^{(K)}+\sum\limits_{1\leq i\leq K-1}\chi^{(i)}
d^{(K-i)}+\chi^{(K)}d^{(0)}\Big)\eta'-\chi^{(K-1)}z\eta',\label{inequ-k2-1}\\
\mathscr{L}\widetilde{\phi}^{(K+2)}=&-g_{K+1}^*\(\widetilde{\phi}^{(0)},\cdots,
\widetilde{\phi}^{(K+1)}\)+2\sum\limits_{2\leq i\leq K+1}\nabla_x\partial_z\widetilde{\phi}^{(i)}\cdot\nabla d^{(K+1-i)}
\nonumber\\&\quad+\theta'\Delta d^{(K+1)}+\sum\limits_{2\leq i\leq K+1} \partial_z\widetilde{\phi}^{(i)}\cdot\Delta d^{(K+1-i)}+\widetilde{\mu}^{(K+1)}+\Delta_x\widetilde{\phi}^{(K)}.
\label{inequ-k2-2}
\end{align}
\end{subequations}
Using $A_K$ and $C_K$, 
 we can write \eqref{inequ-k2-2} as
\begin{equation}\label{solving phi_K}
{\mathscr{L}\widetilde{\phi}^{(K+2)}=\theta'\Delta  d^{(K+1)}+\mu_{K+1}(x,t)\theta'+D^{(K)} z\theta'+\widetilde{\Psi}^{(K-1)}.}\end{equation}
To fulfill  the compatibility condition  \eqref{compatiblity-1}, we   choose   
$\mu_{K+1}=-\Delta d^{(K+1)}+{\Xi^{(K-1)}}$.
 This together with $C_K$ implies $B_{K+1}$,
 and reduce  \eqref{solving phi_K} to the following equation, which leads to $A_{K+1}$:
\begin{equation}\label{phiK+2}
{\mathscr{L}\widetilde{\phi}^{(K+2)}=D^{(K)} z\theta'+\widetilde{\Psi}^{(K-1)}.}
\end{equation}
 
\begin{proposition}
The equation \eqref{inequ-k2-1} can be written as
\begin{equation}\label{equ mu K+2}
\mathscr{L}\widetilde{\mu}^{(K+2)}={-2\sum_{\ell=0,K+1} \nabla\Delta d^{(\ell)}\cdot\nabla d^{(K+1-\ell)}\theta''}-2\Delta d^{(0)}\Delta d^{(K+1)}\theta''+\widetilde{\Psi}^{(K)},
\end{equation}
and its compatibility condition
 is guaranteed by $D_K$. 
 \end{proposition}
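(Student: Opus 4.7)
The plan is to prove the proposition in two steps: first, simplify the right-hand side of \eqref{inequ-k2-1} into the form displayed in \eqref{equ mu K+2}; second, show that the compatibility integral $\int_{\mathbb R}(\text{RHS})\,\theta'\,dz=0$ reduces exactly to the $D_K$-equation \eqref{equation of dk}, whose solvability then follows by the same reasoning as in Corollary~\ref{deterd_1-c}.

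For the first step I would expand every term on the right-hand side of \eqref{inequ-k2-1} using the inductive hypotheses $(A_K,B_K,C_K)$. Hypothesis $A_K$ forces the nonlinear contributions $f'''(\theta)\widetilde\phi^{(i)}\widetilde\mu^{(K+2-i)}$ and $g^*_{i-1}\widetilde\mu^{(K+2-i)}$ with $i\le K+1$ to depend only on quantities of order $\le K-1$, hence to lie inside $\widetilde\Psi^{(K)}$; hypothesis $B_K$ does the same for $\partial_z\widetilde\mu^{(i)}$ and $\nabla_x\partial_z\widetilde\mu^{(i)}$ with $2\le i\le K$. Thus the only locations where the undetermined quantity $d^{(K+1)}$ can appear are the four boundary summands ($i\in\{0,K+1\}$) of $2\nabla_x\partial_z\widetilde\mu^{(i)}\cdot\nabla d^{(K+1-i)}$ and of $\partial_z\widetilde\mu^{(i)}\Delta d^{(K+1-i)}$. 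Using $\partial_z\widetilde\mu^{(0)}=-\Delta d^{(0)}\theta''$ together with $C_K$ and the already-derived identity $\mu_{K+1}=-\Delta d^{(K+1)}+\Xi^{(K-1)}$ (coming from the compatibility of \eqref{solving phi_K}), these four contributions add up to precisely the three explicit $\theta''$-terms on the right-hand side of \eqref{equ mu K+2}; every remaining piece involves only $d^{(i)}$, $\widetilde\phi^{(i)}$, $\widetilde\mu^{(i)}$, $\chi^{(i)}$ of order $\le K$, and is therefore absorbed into $\widetilde\Psi^{(K)}$.

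For the second step, I would exploit the crucial fact that every explicit $\theta''$-term on the right-hand side of \eqref{equ mu K+2} contributes nothing to the compatibility integral, since $\int_{\mathbb R}\theta''\theta'\,dz=\tfrac12[(\theta')^2]_{-\infty}^{+\infty}=0$. Thus all of the $d^{(K+1)}$-dependence disappears, and compatibility reduces to $\int_{\mathbb R}\widetilde\Psi^{(K)}\theta'\,dz=0$, an equation whose highest-order unknown is $d^{(K)}$. Collecting the $d^{(K)}$-contributions inside $\widetilde\Psi^{(K)}$ is an elaborate but mechanical calculation: the term $-\partial_z\theta\,\partial_t d^{(K)}$ yields $-\sigma\partial_t d^{(K)}$; $\Delta_x\widetilde\mu^{(K)}$ via $B_K$ yields $-\sigma\Delta^2 d^{(K)}$; the $i\in\{1,K\}$ summands of the two sums, after an integration by parts as in \eqref{nasty3} and the use of the commutator identity \eqref{commutator2}, yield $\sigma\bigl(\nabla D^{(0)}\cdot\nabla d^{(K)}+\nabla D^{(K)}\cdot\nabla d^{(0)}+D^{(0)}\Delta d^{(K)}+D^{(K)}\Delta d^{(0)}\bigr)$; the $f'''(\theta)\widetilde\phi^{(K+2)}\widetilde\mu^{(0)}$ and $f'''(\theta)\widetilde\phi^{(2)}\widetilde\mu^{(K)}$ pieces, treated exactly as in \eqref{similar-1}--\eqref{similar-2}, contribute further $D^{(K)}\Delta d^{(0)}$ and $D^{(0)}\Delta d^{(K)}$ terms; and the $\eta'$-group yields $\overline\sigma(\chi^{(0)}d^{(K)}+\chi^{(K)}d^{(0)})$ plus lower-order. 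Everything else sits inside $\Xi^{(K-1)}$. The compatibility condition is therefore $-\sigma\,\mathscr G_K d^{(K)}+\overline\sigma(\chi^{(0)}d^{(K)}+\chi^{(K)}d^{(0)})+\sigma\Xi^{(K-1)}=0$, which is exactly \eqref{equation of dk}; solvability for $d^{(K)}$ on $\Gamma^0$ (extended normally to $\Gamma^0(3\delta)$ via $\partial_r d^{(K)}=0$) and the definition of $\chi^{(K)}$ through \eqref{formula:chik} are now verbatim repetitions of the $K=1$ argument in Corollary~\ref{deterd_1-c}.

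The main obstacle is the bookkeeping in the second step: the $K$-order unknown $d^{(K)}$ enters $\widetilde\Psi^{(K)}$ through several distinct channels (through $\widetilde\mu^{(K)}$ by $B_K$, through $\widetilde\phi^{(K+2)}$ via the $D^{(K)}z\theta'$ source term in \eqref{phiK+2}, and through the $D^{(K)}$ factor embedded in $\widetilde\mu^{(K+1)}$ via $C_K$), and the resulting contributions must combine with the correct signs and coefficients to reconstitute the linear operator $\sigma\mathscr G_K$ acting on $d^{(K)}$. No new analytical input is needed beyond what was already used at the base case $K=1$: the self-adjointness and commutator identity \eqref{commutator2} for $\mathscr L$, together with the oddness/evenness properties of the profile functions $\theta$, $\alpha(z)$, and $\gamma_j(z)$ that force many integrands to vanish.
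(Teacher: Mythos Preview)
Your proposal is correct and follows essentially the same route as the paper: expand the right-hand side of \eqref{inequ-k2-1} using $(A_K,B_K,C_K)$, isolate the explicit $\theta''$-terms carrying $d^{(K+1)}$, observe that these drop out of the compatibility integral via $\int_{\mathbb R}\theta''\theta'\,dz=0$, and then track the $d^{(K)}$-contributions through the remaining pieces to recover \eqref{equation of dk}. One small correction to your bookkeeping sketch: the $(z\theta')'$-contributions carrying $D^{(K)}$ arise from the $i=K+1$ summand (via $C_K$), not $i=K$, and the $\partial_z\widetilde\mu^{(i)}\Delta d^{(K+1-i)}$ and $2\nabla_x\partial_z\widetilde\mu^{(i)}\cdot\nabla d^{(K+1-i)}$ sums by themselves only give coefficient $\tfrac{\sigma}{2}$ in front of $D^{(0)}\Delta d^{(K)}+D^{(K)}\Delta d^{(0)}$; the remaining $\tfrac{\sigma}{2}$ for each comes from the $f'''(\theta)\widetilde\phi^{(2)}\widetilde\mu^{(K)}$ and $f'''(\theta)\widetilde\phi^{(K+2)}\widetilde\mu^{(0)}$ pieces via \eqref{zero condition} and \eqref{commutator2}, exactly as you anticipate.
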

\begin{proof}
We consider  the right hand side of \eqref{inequ-k2-1}. Using \eqref{partial zero}, \eqref{nasty4} and \eqref{expression of mui},
\begin{align*}
&-\sum\limits_{2\leq i\leq K+2}\(f'''(\theta)\widetilde{\phi}^{(i)}+
g_{i-1}^*\big(\widetilde{\phi}^{(0)},\cdots,
\widetilde{\phi}^{(i-1)}\big)\)\widetilde{\mu}^{(K+2-i)}
\nonumber\\&=-f'''(\theta)\widetilde{\phi}^{(2)}\widetilde{\mu}
^{(K)}-f'''(\theta)\widetilde{\phi}^{(K+2)}\widetilde{\mu}^{(0)}
-g_{K+1}^*\big(\widetilde{\phi}^{(0)},\cdots,
\widetilde{\phi}^{(K+1)}\big)
\nonumber\\&\qquad-\sum\limits_{3\leq i\leq K+1}
\(f'''(\theta)\widetilde{\phi}^{(i)}+g_{i-1}^*\big(\widetilde{\phi}^{(0)},\cdots,
\widetilde{\phi}^{(i-1)}\big)\)\widetilde{\mu}^{(K+2-i)}
\nonumber\\&=\Delta d^{(K)}f'''(\theta)\widetilde{\phi}^{(2)}\theta'
-f'''(\theta)
\widetilde{\phi}^{(K+2)}\widetilde{\mu}^{(0)}+\widetilde{\Psi}^{(K-1)}
\end{align*}
In a similar way,
\begin{align*}
&2\sum\limits_{0\leq i\leq K+1}\nabla_x\partial_z\widetilde{\mu}^{(i)}\cdot\nabla d^{(K+1-i)}\nonumber\\&=2\nabla_x\partial_z\widetilde{\mu}^{(0)}\cdot\nabla d^{(K+1)}+2\nabla_x\partial_z\widetilde{\mu}^{(1)}\cdot\nabla d^{(K)}+2\nabla_x\partial_z\widetilde{\mu}^{(K)}\cdot\nabla d^{(1)}
\nonumber\\&\quad+2\nabla_x\partial_z\widetilde{\mu}^{(K+1)}\cdot\nabla d^{(0)}+2\sum\limits_{2\leq i\leq K-1}\nabla_x\partial_z\widetilde{\mu}^{(i)}\cdot\nabla d^{(K+1-i)}
\nonumber\\&{=-2\sum_{\ell=0,1,K,K+1} \nabla\Delta d^{(\ell)}\cdot\nabla d^{(K+1-\ell)}\theta''}
\nonumber\\&\quad+2\(\nabla  D^{(K)}\cdot\nabla d^{(0)}+\nabla D^{(0)}\cdot\nabla d^{(K)}\)(z\theta')'+\widetilde{\Psi}^{(K-1)},
\end{align*}
\begin{align*}
&\quad \sum\limits_{0\leq i\leq K+1}\partial_z\widetilde{\mu}^{(i)}\Delta d^{(K+1-i)}
\nonumber\\&=\partial_z\widetilde{\mu}^{(0)}\Delta d^{(K+1)}+\partial_z\widetilde{\mu}^{(1)}\Delta d^{(K)}+\sum\limits_{2\leq i\leq K-1}\partial_z\widetilde{\mu}^{(i)}\Delta d^{(K+1-i)}
+\partial_z\widetilde{\mu}^{(K)}\Delta d^{(1)}+\partial_z\widetilde{\mu}^{(K+1)}\Delta d^{(0)}
\nonumber\\&=-2(\Delta d^{(0)}\Delta d^{(K+1)}+\Delta d^{(1)}\Delta d^{(K)})\theta''+(D^{(0)}\Delta d^{(K)}+D^{(K)}\Delta d^{(0)})(z\theta')'+\widetilde{\Psi}^{(K-1)}.
\end{align*}
Finally,
\begin{align*}
&-\sum\limits_{1\leq i\leq K}\partial_z\widetilde{\phi}^{(i)}\partial_td^{(K-i)}-\partial_z\theta\partial_td^{(K)}
+\Delta_x\widetilde{\mu}^{(K)}-\partial_t\widetilde{\phi}^{(K-1)}\\&\qquad
 +\big(\chi^{(0)}d^{(K)}+\sum\limits_{1\leq i\leq K-1}\chi^{(i)}d^{(K-i)}
+\chi^{(K)}d^{(0)}\big)\eta'-\chi^{(K-1)}z\eta'
\\&=-\big(\partial_td^{(K)}+\Delta^2d^{(K)}\big)\theta'+\eta' \big(\chi^{(0)}d^{(K)}+\chi^{(K)}d^{(0)}\big)+\widetilde{\Psi}^{(K-1)}.
\end{align*}
The above four results imply \eqref{equ mu K+2}. They also imply the  compatibility condition
\begin{align}
&\sigma\big(\partial_td^{(K)}+\Delta^2d^{(K)}\big)\nonumber\\
&=\Delta d^{(K)}\int_{\mathbb{R}}f'''(\theta)\widetilde{\phi}^{(2)}
(\theta')^2dz-\int_{\mathbb{R}}f'''(\theta)\widetilde{\phi}
^{(K+2)}\widetilde{\mu}^{(0)}\theta'dz+\int_{\mathbb{R}}\widetilde{\Psi}^{(K-1)}\theta' dz
\nonumber\\&\quad+\sigma\(\nabla  D^{(K)}\cdot\nabla d^{(0)}+\nabla D^{(0)}\cdot\nabla d^{(K)}+\tfrac{1}{2}\big(D^{(0)}\Delta d^{(K)}+D^{(K)}\Delta d^{(0)}\big)\)
.\label{compatibility-muk2}
\end{align}
It remains to calculate the first two terms on the right hand side of \eqref{compatibility-muk2}. Using \eqref{zero condition} yields
\begin{align}
\Delta d^{(K)}\int_{\mathbb{R}}f'''(\theta)\widetilde{\phi}^{(2)}(\theta')^2dz
=\frac{\sigma}{2}D^{(0)}\Delta d^{(K)}.\label{undetermined-k2}
\end{align}
With the aid  of \eqref{eq:muphi1} and \eqref{commutator2} we have
\begin{align*}
-\int_{\mathbb{R}}f'''(\theta)\widetilde{\phi}^{(K+2)}\widetilde{\mu}^{(0)}\theta'dz&=\Delta d^{(0)}\int_{\mathbb{R}}f'''(\theta)\widetilde{\phi}^{(K+2)}(\theta')^2dz
 \nonumber\\&=\Delta d^{(0)}\int_{\mathbb{R}}\(\partial_z\big(\mathscr{L}\widetilde{\phi}^{(K+2)})\theta'-\big(\mathscr{L}\partial_z\widetilde{\phi}^{(K+2)}\big)\theta'\)dz
 \nonumber\\&=-\Delta d^{(0)}\int_{\mathbb{R}}\mathscr{L}\widetilde{\phi}^{(K+2)}\theta''dz.
\end{align*}
In view of  \eqref{phiK+2},
  the above two formulas together lead to
\begin{align*}
-\int_{\mathbb{R}}f'''(\theta)\widetilde{\phi}^{(K+2)}
\widetilde{\mu}^{(0)}\theta'dz&=-\Delta d^{(0)}\int_{\mathbb{R}}\(D^{(K)}z\theta'+\tilde{\Psi}^{(K-1)}\)\theta''dz
=\frac{\sigma}{2}D^{(K)}\Delta d^{(0)}+\Xi^{(K-1)}.
\end{align*}
Substituting  \eqref{undetermined-k2} and the above formula into \eqref{compatibility-muk2} leads to $D_K$.
\end{proof}
Using \eqref{def:D0D1}, we can write \eqref{equ mu K+2}
by
\begin{equation}
\mathscr{L}\widetilde{\mu}^{(K+2)}=-2D^{(K+1)}\theta''+\tilde{\Psi}^{(K)}(z,x,t). 
\end{equation}
Applying Lemma \ref{ODEsolver-special case} to the above equation implies $C_{K+1}$.  To conclude $D_K$, we need:
\begin{corol}\label{deterd_k-c}
The following equation about    $d^{(K)}$ has a local in time classical solution
\begin{align}\label{equation of dk new}
\mathscr{G}_K d^{(K)}=\sigma^{-1} \overline{\sigma}\chi^{(0)}d^{(K)}+\Xi^{(K-1)}~\text{on}~\G^0.
\end{align}
Moreover, to have $D_K$ holds   in $\G^0(3\delta)$,  $\chi^{(K)}$ can be    defined by \eqref{formula:chik}
\end{corol}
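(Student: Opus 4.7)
The plan is to adapt the strategy used in Corollary \ref{deterd_1-c} for the case $K=1$: first solve \eqref{equation of dk new} as a fourth-order parabolic PDE on the evolving compact surface $\G^0_t$, and then construct an extension of $d^{(K)}$ to $\G^0(3\delta)$ and the associated $\chi^{(K)}$ via \eqref{formula:chik}.

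First I would reduce \eqref{equation of dk new} to a surface PDE with a parabolic principal part. Using the decomposition of the Euclidean bi-Laplacian introduced in Section \ref{gemmetry} together with $\nabla d^{(0)}=\nn$ on $\G^0$, one has
\[
\Delta^2 d^{(K)}-(\nabla d^{(0)}\otimes\nabla d^{(0)}):\nabla^2\Delta d^{(K)}=\Div_{\G^0}\bigl(\nabla \Delta_{\G^0} d^{(K)}\bigr)=\Delta^2_{\G^0} d^{(K)}+(\Div_{\G^0}\nn)\,\p_r(\Delta_{\G^0} d^{(K)}).
\]
The key observation is that by the recursion \eqref{equ:gradient d}, the normal derivative $\p_r d^{(K)}$ on $\G^0$ is a polynomial expression in the $\nabla d^{(\ell)}$ with $1\le \ell\le K-1$, so it depends only on lower-order data. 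Iterating \eqref{equ:gradient d} and invoking the commutator formula \eqref{commutator} to convert every mixed derivative of the form $\p_r^j D^{m}_{\G^0} d^{(K)}$ into a combination of purely tangential derivatives plus lower-order contributions, I can absorb all the resulting terms into the previously constructed quantity $\Xi^{(K-1)}$. This brings \eqref{equation of dk new} into the schematic form
\[
\p_t d^{(K)}+\Delta^2_{\G^0} d^{(K)} = \mathfrak{T}(d^{(K)})+\sigma^{-1}\overline{\sigma}\chi^{(0)} d^{(K)}+\tilde{\Xi}^{(K-1)},
\]
where $\mathfrak{T}$ is a third-order tangential differential operator with smooth coefficients depending on $d^{(0)},\dots,d^{(K-1)}$, and $\tilde{\Xi}^{(K-1)}$ still depends only on terms of order at most $K-1$.

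Next, since $\p_t+\Delta^2_{\G^0}$ is a strongly parabolic fourth-order operator on the compact smooth time-dependent surface $\G^0_t$, standard parabolic theory on manifolds (e.g.\ maximal regularity for analytic semigroups, or a linearize-and-contract argument in Sobolev scales $H^{s}(\G^0_t)$ with $s$ sufficiently large) yields a local-in-time classical solution of \eqref{equation of dk new}. This step produces $d^{(K)}$ on $\G^0$. For the extension to $\G^0(3\delta)$, the relations \eqref{equ:gradient d} determine $\p_r d^{(K)}|_{\G^0}$ from lower-order data; differentiating \eqref{equ:gradient d} in $r$ and solving iteratively similarly fixes all higher normal jets $\p_r^{j} d^{(K)}|_{\G^0}$ for $j\ge 2$ in terms of already-constructed quantities. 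A Borel-type extension (or, equivalently, a smooth lifting along the normal flow as in \cite{Alikakos1994}) then produces a smooth $d^{(K)}$ on $\G^0(3\delta)$ realizing these jets.

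Finally, $\chi^{(K)}$ is defined by \eqref{formula:chik}. In $\G^0(3\delta)\backslash\G^0$ the formula is well-defined since $d^{(0)}\neq 0$; at $\G^0$, the equation \eqref{equation of dk new} guarantees that the numerator $\mathscr{G}_K d^{(K)}-\sigma^{-1}\overline{\sigma}\chi^{(0)}d^{(K)}-\Xi^{(K-1)}$ vanishes, so L'H\^{o}pital's rule along the normal direction identifies the limiting value with $\sigma\overline{\sigma}^{-1}\nabla(\cdot)\cdot\nabla d^{(0)}$, matching the second branch of \eqref{formula:chik}. This gives a smooth $\chi^{(K)}$, and the pair $(d^{(K)},\chi^{(K)})$ satisfies $D_K$ throughout $\G^0(3\delta)$. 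The main obstacle is the bookkeeping in the reduction step: one must verify that after applying \eqref{equ:gradient d} and \eqref{commutator} the principal symbol of the operator acting on $d^{(K)}|_{\G^0}$ is exactly $\p_t+\Delta^2_{\G^0}$ (hence parabolic), with every remaining contribution being of order at most three in tangential derivatives and depending on $d^{(K)}$ only linearly through lower-order operators -- once this is checked, the existence and extension steps are routine.
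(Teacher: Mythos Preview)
Your proposal is correct and follows essentially the same route as the paper, which simply records that ``the local in time solution follows from the same argument for Corollary~\ref{deterd_1-c}'' without further elaboration. You have in fact supplied more detail than the paper does: in particular, you correctly note that for $K\ge 2$ the normal derivative $\p_r d^{(K)}$ is no longer zero (unlike $\p_r d^{(1)}=0$) but is determined by lower-order data via \eqref{equ:gradient d}, so the extension off $\G^0$ cannot be constant in $r$ and must instead realize the prescribed normal jets --- a point the paper's one-line proof leaves implicit.
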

Note that the local in time solution follows from the same argument for Corollary \ref{deterd_1-c}. {\bf So we have shown $(A_{K+1}, B_{K+1},C_{K+1}, D_K)$ and  the induction for  \eqref{induction K} is completed}. 

\begin{proposition}\label{endprop1}
Assume (\ref{interface law-geometric1}) has a smooth solution $\Gamma^0$ within $[0,T]$, starting from a smooth closed hypersurface $\Gamma^0_0\subset\mathbb{R}^N$, and let $d^{(0)}$  be the signed-distance, defined in  $\Gamma^0(3\delta)$. Then we can construct the inner expansion with Ansatz \eqref{inex-1}   so that for $i\geq 1$
\begin{align}\label{matching condition}
 D_x^\alpha D_t^\beta D_z^\gamma\widetilde{\phi}^{(i)}(z,x,t)=O(e^{-C|z|}),D_x^\alpha D_t^\beta D_z^\gamma\widetilde{\mu}^{(i)}(z,x,t)=O(e^{-C|z|}),
\end{align}
 as $z\rightarrow\pm\infty$
for  $(x,t)\in\Gamma^0(3\delta)$ and $0\leq\alpha,\beta,\gamma\leq 2$. Moreover,
  \begin{align}
\hat{\phi}^I_a(x,t)=\sum_{0\leq i\leq k}\varepsilon^i\tilde{\phi}^{(i)}(z,x,t)\big|_{z=
\frac{ d^{[k]}(x,t) }{\varepsilon}},~\hat{\mu}^I_a(x,t)=\sum_{0\leq i\leq k}\varepsilon^i\tilde{\mu}^{(i)}(z,x,t)\big|_{z=\frac{d^{[k]}(x,t)}{\varepsilon}},\label{inn-01}
\end{align}
satisfies for  $(x,t)\in\Gamma^0(3\delta)$
\begin{align}
\e^3\p_t\hat{\phi}_a^I&= \e^2\Delta \hat{\mu}_a^I-f''(\hat{\phi}_a^I)\hat{\mu}_a^I
+O(\varepsilon^{k}),\label{newapprequ-5}
\\ \e\hat{\mu}_a^I&=-\varepsilon^2\Delta \hat{\phi}_a^I+f'(\hat{\phi}_a^I) +O(\varepsilon^{k}).\label{newapprequ-6}
\end{align}
\end{proposition}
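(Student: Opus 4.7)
The plan is to assemble Proposition \ref{endprop1} from the inductive construction of Subsections A.1--A.4: statements $(A_K,B_K,C_K,D_K)$ give the profiles $\widetilde{\phi}^{(i)},\widetilde{\mu}^{(i)}$ and the distance corrections $d^{(i)},\chi^{(i)}$, and it only remains to verify the exponential decay \eqref{matching condition} and the residual estimate \eqref{newapprequ-5}--\eqref{newapprequ-6}. I would handle these in that order.

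For \eqref{matching condition} I would argue by strong induction on $i$. The base cases $\widetilde{\phi}^{(0)}=\theta$, $\widetilde{\phi}^{(1)}=0$, $\widetilde{\mu}^{(0)}=-\Delta d^{(0)}\theta'$, $\widetilde{\phi}^{(2)}=D^{(0)}\theta'(z)\alpha(z)$ and $\widetilde{\mu}^{(1)}$ from \eqref{eq:muphi2} already display the claimed decay: even where polynomial factors such as $z$ or $\alpha(z)$ (at most linear by Lemma \ref{evenodd1}) appear, they are dominated by the exponential decay of $\theta'$. For the inductive step, both $\widetilde{\phi}^{(K+2)}$ and $\widetilde{\mu}^{(K+2)}$ are bounded solutions of ODEs $\mathscr{L}\widetilde{U}=\widetilde{A}$ whose right-hand sides consist of polynomial-in-$z$ multiples of $\theta',\theta''$ plus the auxiliary $\widetilde{\Psi}^{(K)}$ that satisfies \eqref{decay-0} by the induction hypothesis. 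The representation \eqref{odesolver} of Lemma \ref{ODEsolver-special case} then yields the exponential decay of $\widetilde{U}$ and all of its $z$-, $x$- and $t$-derivatives (differentiation commutes with the construction since every coefficient is smooth in $(x,t)$, and the $(\theta')^{-2}$ integrand in \eqref{odesolver} is compensated by the Gaussian-like tail of the inner integral).

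For \eqref{newapprequ-5}--\eqref{newapprequ-6} I would substitute $\hat{\phi}_a^I,\hat{\mu}_a^I$ from \eqref{inn-01} into \eqref{model} and apply the chain rule with $z=d^{[k]}(x,t)/\varepsilon$. This reproduces the expansion underlying \eqref{newequ-1-m}--\eqref{newequ-2}, except that $d_\varepsilon$ is everywhere replaced by $d^{[k]}$. Two discrepancies appear: first, $|\nabla d^{[k]}|^2=1+O(\varepsilon^k)$ by \eqref{equ:gradient d} rather than the $|\nabla d_\varepsilon|^2=1$ used in the formal derivation; the extra contribution in $\Delta \hat{\mu}_a^I$ is $\varepsilon^{-2}\partial_z^2\widetilde{\mu}^\varepsilon\cdot O(\varepsilon^k)=O(\varepsilon^{k-2})$, and after multiplication by the prefactor $\varepsilon^2$ in \eqref{model} it becomes $O(\varepsilon^k)$; the same bound applies to $\varepsilon^2\Delta\hat{\phi}_a^I$. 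Second, the added source $\varepsilon^2\chi^\varepsilon(d_\varepsilon-\varepsilon z)\eta'$ in \eqref{newequ-1-m} disappears identically once we interpret the construction with $d^{[k]}$ in place of $d_\varepsilon$ (the factor $d^{[k]}-\varepsilon z$ is zero at the substitution). All remaining coefficients of $\varepsilon^j$ for $0\leq j\leq k$ vanish by the very design of $(\widetilde{\phi}^{(i)},\widetilde{\mu}^{(i)},d^{(i)},\chi^{(i)})$ in \eqref{induction K}: the profile ODEs absorb the $\varepsilon^j$-scale terms, the compatibility conditions for $\mathscr{L}$ fix the equations \eqref{distance law}, \eqref{equation of d1}, \eqref{equation of dk} for $d^{(i)}$, and the formulas \eqref{formula:chi0}, \eqref{formula:chi1}, \eqref{formula:chik} for $\chi^{(i)}$ extend those equations from $\Gamma^0$ to $\Gamma^0(3\delta)$.

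The main obstacle is not the proposition itself but the bookkeeping in the inductive construction it rests on: one must carefully track how each $\widetilde{\phi}^{(i)}$ and $\widetilde{\mu}^{(i)}$ depends on $d^{(j)}$, $D^{(j)}$ and the generic $\widetilde{\Psi}^{(j)}$ (hence the rigid structure of statements $(A_K,B_K,C_K,D_K)$), and one must establish parabolic solvability of \eqref{equation of dk new} on $\Gamma^0$. The latter is the delicate point: using \eqref{commutator}, \eqref{gradient} and \eqref{eq:laplace1} one converts the fourth-order mixed derivatives of $d^{(K)}$ into purely tangential ones, reducing the principal part to $\partial_t+\Delta_{\Gamma^0}^2$, at which stage standard surface-parabolic theory yields a local-in-time smooth solution, exactly as in Corollary \ref{deterd_1-c}. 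Granted these inductive statements, Proposition \ref{endprop1} follows by summation of the vanishing coefficients together with the $O(\varepsilon^k)$ error from $|\nabla d^{[k]}|^2-1$.
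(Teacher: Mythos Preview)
Your proposal is correct and follows essentially the same route as the paper: the paper's proof is little more than the chain-rule computation you describe for \eqref{newapprequ-5}--\eqref{newapprequ-6}, comparing the result with \eqref{newequ-1} after replacing $d_\varepsilon$ by $d^{[k]}$, and it leaves the exponential decay \eqref{matching condition} implicit in the inductive construction via Lemma \ref{ODEsolver-special case}. Your account is in fact more explicit than the paper's, correctly isolating the two discrepancies (the $|\nabla d^{[k]}|^2-1=O(\varepsilon^k)$ error and the vanishing of the $\chi^\varepsilon(d^{[k]}-\varepsilon z)\eta'$ term at substitution); one minor slip is that Lemma \ref{evenodd1} gives at most quadratic, not linear, growth for $\alpha(z)$, but this is immaterial since $\theta'(z)\alpha(z)$ still decays exponentially.
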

\begin{proof}
 It follows from \eqref{inex-1}  and  chain-rule that
\begin{align*}
&-\(\e^3\p_t\hat{\phi}^I_a-\e^2\Delta \hat{\mu}^I_a+f''(\hat{\phi}^I_a)\hat{\mu}^I_a\)
\nonumber\\ =&\p_z^2\widetilde{\mu}^\e\big|\nabla  d^{[k]}\big|^2-f''(\widetilde{\phi}^\e)\widetilde{\mu}^\e+2\varepsilon\nabla
\partial_z\widetilde{\mu}^\e\cdot\nabla d^{[k]}\nonumber\\&+\varepsilon\partial_z\widetilde{\mu}^\e\Delta d^{[k]}-
\varepsilon^2\partial_z\widetilde{\phi}^\e\partial_td^{[k]}+\varepsilon^2
\Delta_x\widetilde{\mu}^\e-\e^3 \p_t \widetilde{\phi}^\e,\qquad \text{with}~z=d^{[k]}(x,t)/\e.\end{align*}
If we replace $d_\e$ by $d^{[k]}$ in  \eqref{newequ-1}, and compare it with the above  formula, then we arrive at  \eqref{newapprequ-5}. In a similar way we can show \eqref{newapprequ-6} and the details are omitted.
  \end{proof}

\noindent{\it Acknowledgements}. M. Fei is partially supported by NSF of China under Grant 11871075. Y. Liu is partially supported by NSF of China under Grant  11971314.

\noindent {\bf Conflict of interest} The authors declare that they have no conflict of interest.



\end{document}